\tikzset{shorten <>/.style={shorten >=#1,shorten <=#1}}
\newcounter{nodemaker}
\tikzset{Rightarrow/.style={double equal sign distance,>={Implies},->},
triple/.style={-,preaction={draw,Rightarrow}},
quadruple/.style={preaction={draw,Rightarrow,shorten >=0pt},shorten >=1pt,-,double,double
distance=0.2pt}}
\tikzset{%
    symbol/.style={%
        draw=none,
        every to/.append style={%
            edge node={node [sloped, allow upside down, auto=false]{$#1$}}}
    }
}
\newcommand{\bigdoublevee}{\big@doubleop{\bigvee}}
\newcommand{\bigdoublewedge}{\big@doubleop{\bigwedge}}
\newcommand{\big@doubleop}[1]{%
  \DOTSB\mathop{\mathpalette\big@doubleop@aux{#1}}\slimits@
}
\newcommand\big@doubleop@aux[2]{%
  \sbox\z@{$\m@th#1#2$}%
  \makebox[1.35\wd\z@][s]{$\m@th#1#2\hss#2$}%
}
\newcommand*{\doublerightarrow}[2]{\mathrel{
  \settowidth{\@tempdima}{$\scriptstyle#1$}
  \settowidth{\@tempdimb}{$\scriptstyle#2$}
  \ifdim\@tempdimb>\@tempdima \@tempdima=\@tempdimb\fi
  \mathop{\vcenter{
    \offinterlineskip\ialign{\hbox to\dimexpr\@tempdima+1em{##}\cr
    \rightarrowfill\cr\noalign{\kern.5ex}
    \rightarrowfill\cr}}}\limits^{\!#1}_{\!#2}}}
\newcommand*{\triplerightarrow}[1]{\mathrel{
  \settowidth{\@tempdima}{$\scriptstyle#1$}
  \mathop{\vcenter{
    \offinterlineskip\ialign{\hbox to\dimexpr\@tempdima+1em{##}\cr
    \rightarrowfill\cr\noalign{\kern.5ex}
    \rightarrowfill\cr\noalign{\kern.5ex}
    \rightarrowfill\cr}}}\limits^{\!#1}}}
\newcommand{\hirayo}{\text{\usefont{U}{min}{m}{n}\symbol{'207}}}
\DeclareFontFamily{U}{min}{}
\DeclareFontShape{U}{min}{m}{n}{<-> udmj30}{}
\newcommand{\Spec}% spectrum
 {{\bf Spec}}
 \newcommand{\Alg}% subobject lattice
 {{\rm Alg}}
 \newcommand{\Lex}% Lex
 {{\bf Lex}}
 \newcommand{\Et}% etale map
 {{\bf Et}}
\newcommand{\Loc}% local map
 {{\bf Loc}}
  \newcommand{\Diag}% diagonally universal morphisms
 {{\bf Diag}}
 \newcommand{\GTop}% Topos
 {{\bf GTop}}
 \newcommand{\Geom}% Topos
 {{\bf Geom}}
 \newcommand{\Cat}% cat
 {{\bf Cat}}
\newcommand{\cod}% codomain
 {{\rm cod}}
\newcommand{\comp}% composition
 {\circ}
\newcommand{\Cont}% category of continuous G-sets
 {{\bf Cont}}
\newcommand{\dom}% domain
 {{\rm dom}}
\newcommand{\fp}% fp
{{\rm fp }}
\newcommand{\li}% limit
{{\textup{lim } }}
\newcommand{\oplaxlim}% limit
{{\textup{oplaxlim } }}
\newcommand{\oplaxcolim}% limit
{{\textup{oplaxcolim } }}
\newcommand{\laxcolim}% limit
{{\textup{laxcolim } }}
\newcommand{\bicolim}% bicolimit
{{\textup{bicolim } }}
\newcommand{\bilim}% bilimit
{{\textup{bilim } }}
\newcommand{\pscolim}% pscolimit
{{\textup{pscolim } }}
\newcommand{\pslim}% pslimit
{{\textup{pslim } }}
\newcommand{\lan}% Lan
{{\textup{lan } }}
\newcommand{\ran}% limit
{{\textup{ran } }}
\newcommand{\colim}% colimit
{{\textup{colim}}}
\newcommand{\coeq}% coeq
{{\textup{coeq}}}
\newcommand{\eq}% eq
{{\textup{eq}}}
\newcommand{\comma}[2]% comma object
{\mbox{$(#1\!\downarrow\!#2)$}}
\newcommand{\empstg}% empty string
 {[\,]}
\newcommand{\epi}% epimorphism
 {\twoheadrightarrow}
\newcommand{\hy}% hyphen (in math mode)
 {\mbox{-}}
\newcommand{\im}% image
 {{\rm im}}
\newcommand{\imp}% implication
 {\!\Rightarrow\!}
\newcommand{\Ind}% ind
 {{\rm Ind}}
 \newcommand{\Pro}% ind
 {{\rm Pro}}
\newcommand{\mono}% monomorphism 
 {\rightarrowtail}
\newcommand{\ob}% class of objects
 {{\rm ob}}
 \newcommand{\can}% canonical topology
 {{\rm can}}
 \newcommand{\Hom}% class of objects
 {{\rm Hom}}
\newcommand{\op}% opposite category
 {^{\rm op}}
 \newcommand{\pt}% points
 {{\bf pt}}
\newcommand{\Set}% category of sets
 {{\bf Set }}
\newcommand{\Sh}% category of sheaves
 {{\bf Sh}}
\newcommand{\sh}% category of sheaves
 {{\bf sh}}
\newcommand{\Sub}% subobject lattice
 {{\rm Sub}}
\newcommand{\Flat}% flat
 {{\bf Flat}}
\newcommand{\biIns}% biInserter
{{\textup{biIns } }}
\newcommand{\biInv}% biInverter
{{\textup{biInv } }}
\newcommand{\biend}% biend
{{\textup{bi}\int }}
\newcommand{\biLan}% biLan
{{\textup{biLan } }}
 \newcommand{\Cart}% cartesian map
 {{\bf Cart}}
  \newcommand{\ps}% pseudonat
 {{\bf ps}}
 \newcommand{\St}% bicategory of stacks
 {{\bf St}}
\newtheorem{theorem}{Theorem}[subsection]
\theoremstyle{proposition}
\newtheorem{proposition}[theorem]{Proposition}
\newtheorem{corollary}[theorem]{Corollary}
\newtheorem{corollary'}[theorem]{Corollary}
\newtheorem{lemma}[theorem]{Lemma}
\theoremstyle{definition}
\newtheorem{definition}[theorem]{Definition}
\theoremstyle{definition}
\newtheorem{remark}[theorem]{Remark}
\theoremstyle{definition}
\newtheorem{division}[theorem]{}
\theoremstyle{conjecture}
\title{Local right biadjoints, bistable pseudofunctors and 2-geometries for Grothendieck topoi}
\author{Axel Osmond}
\date{August 2021}
\begin{document}

\maketitle

\begin{abstract}
    We provide bicategorical analogs of several aspects of the notion of geometry in the sense of the theory of spectrum. We first introduce a notion of \emph{local right biadjoint}, and prove it to be equivalent to a notion of \emph{bistable} pseudofunctor, categorifying an analog 1-categorical result. We also describe further laxness conditions, giving some properties of the already known \emph{lax familial} pseudofunctors. We also describe 2-dimensional analogs of orthogonality and factorization systems, and use them to construct examples of bistable pseudofunctors through inclusion of \emph{left objects and left maps}. We apply the latter construction to several examples of factorization systems for geometric morphisms to produce geometry-like situations for Grothendieck topoi. In particular we prove a new (terminally connected, pro-etale) factorization theorem for geometric morphisms, which corresponds to a certain \emph{local geometry} involved in the general construction of spectra. 
\end{abstract}

\tableofcontents

\section*{Introduction}

Here we provide a 2-dimensional framework to formalize some observations that arose from the general theory of Spectrum. \\

The construction of the spectrum subsumes either algebraic geometry or Stone-like dualities into a generic process where one turns a categorical, algebraic-like situation into a geometric construction in order to correct some defect of universality in a universal property. Although several alternative ways exist to understand this process, one of its main explanation is the following: from a \emph{right multiadjoint} $U : \mathcal{A} \rightarrow \mathcal{B}$ into a locally finitely presentable category $ \mathcal{B}$ satisfying an additional ``approximability" condition, one can construct a functor $ \Spec $ associating to each object of $\mathcal{B}$ a space - its \emph{spectrum} - whose points correspond to local units under this object, and correcting the multiadjunction into a global adjunction; in this process, one deploys in fact hidden geometric behaviors which can be visualized by formal duality in the opposite category of $\mathcal{B}$. Though the details and the exact property of this construction are not the topic of this work, we would like to point out an interesting phenomenon which requires further precision. \\

In situations as evoked above, which we may call a \emph{geometry}, the objects of $\mathcal{A}$, said \emph{local objects}, will behave as duals of \emph{focal spaces}, that are spaces with a focal point, while a class of \emph{etale maps}, defined through some kind of strong left-orthogonality condition relatively to maps in the range of $U$, will behave as dual of inclusion of saturated compacts, amongst which one will distinguish basic compact open ones used to produce basis for spectral topologies on spectra of objects. \\

Though the spectrum constructed from those data is generally seen as a topological space, it defines also a Grothendieck topos; in fact, one can characterize the geometric properties of the spectrum of the different actors of a geometry as the following:\begin{itemize}
    \item the spectrum of a local object is always a \emph{local topos} 
    \item the spectrum of a \emph{local map} - that is, a map in the range of $U$ - is always a \emph{terminally connected} geometric morphism
    \item the spectrum of an etale map is always a \emph{etale} geometric morphism
\end{itemize}

Those observations led to the following interrogation: is there a way to define a notion of 2-dimensional geometry on the bicategory of Grothendieck topoi, involving local topoi, terminally connected and etale geometric morphisms, such that any ordinary 1-categorical geometry would be equipped, through its spectral functor, with a canonical ``morphism of geometries" above this one ? More generally, what should a general notion of 2-geometry be, and what aspects of the 1-categorical version should be categorified to formalize our observation ? If such a notion makes sense, are there other instances of 2-geometries on Grothendieck topoi, and what kind of alternative algebraic geometry would they convey ? \\

The purpose of this work is to provide a formal 2-categorical framework to answer those questions. Categorifying the components of the 1-dimensional notion of geometry as right multi-adjointness or its orthogonality and factorization aspects, we propose a particular bicategorical notion of \emph{bifactorization 2-geometry} suited for Grothendieck topoi. We should warn the reader that the notion of 2-geometry investigated in this paper is somewhat divergent in several aspects from the 1-dimensional notion of geometry, ignoring for instance the accessibility aspects - no categorification of which will ever apply to the (opposite) bicategory of Grothendieck topoi, in the same way as frames are not an accessible category - or also the approximability condition which is related in some way to the small object argument. A distinct, more faithful 2-categorification of the construction of spectra, involving 2-dimensional topos theory and accessibility, will also be proposed in an forthcoming work and applied to syntax-semantics adjunction, but while using partially the result of this paper, the strategy will be quite different. \\

We first investigate the 2-dimensional analogs of multi-adjointness and its generalization. Multiversal properties are situations where a universal property is jointly assumed by a family of solutions rather than a unique one. In particular, \emph{right multi-adjoints} are functors equipping each object of their codomain with a (small) cone of \emph{local units}, satisfying the property that any map from this object to the functor factorizes uniquely through one of those units. %The theory of right multi-adjoint, who enjoy their own version of several properties of ordinary right adjoints as a multi-adjoint functor theorem or a result of preservation of connected limits, has been extensively investigated by Diers. 
More generally, local right adjoint are functors whose each restriction at slices has a left adjoints, while \emph{stable functors} are defined through a unique factorization property involving a notion of \emph{candidates} (also called \emph{generic morphisms}), which are defined through a left orthogonality condition relatively to morphisms in the range of the functor. It can be shown, see \cite{osmond2020diersI} that local right adjoints and stable functors are actually the same thing, while right multi-adjoints are the local right adjoints satisfying the solution set condition; in particular local units and generic morphisms are two different ways to see the same kind of objects. \\

Here, we will present 2-dimensional analogs of local right adjoints and stable functors, namely \emph{local right biadjoints} and \emph{bistable pseudofunctors}. The latter are related to \cite{weber2004generic} notion of \emph{familial 2-functor}, while a lax version has been investigated in \cite{walker2020lax}. We first describe the general properties of local right biadjoints, as the 2-dimensional Beck-Chevalley condition relating the slice-wise left adjoint at \cref{BC 2dim} or the factorization data associated to them, and prove at \cref{bistable = local right biadjoint} that local right bi-adjoints and bistable pseudofunctors coincide. We will also discuss corresponding results for lax-familialness. \\ 

In a second time, we show how to construct a bistable pseudofunctor from factorization data: in a bicategory equipped with a \emph{bifactorization system} $(\mathcal{L}, \mathcal{R})$ in a bi-version of \cite{DUPONT200365} together with a biterminal object $ 1$, we prove that % while the \emph{right objects} - those whose terminal map is in $\mathcal{R}$ - form a 1,2-full bireflective sub-bicategory, 
the \emph{left objects} - those whose terminal map is in $\mathcal{L}$ - form altogether with left maps between them a (non-full) sub-bicategory whose inclusion is opposite to a bistable pseudofunctor, see \cref{Bistable inclusion of left objects}. Moreover, this result can be relativized above any object, or also refined by considering objects whose terminal map is in a subclass $ \mathcal{L}' \subseteq \mathcal{L}$ satisfying some specific cancellation property relatively to $\mathcal{L}$, see \cref{bistable for specified left objects}.\\% We also discuss how further laxness conditions about the factorization of globular 2-cells enhance this construction.

We then discuss how local right bi-adjoints also enjoy a geometric interpretation motivating our conventions in the aforementioned construction, in particular how in this context left objects can be seen as ``focal spaces", rights maps as ``etale maps" and local units as inclusions of focal components.\\  % and discuss what a notion of \emph{2-spectrum} in this setting should be, in particular when some truncation properties of the maps in $\mathcal{R}$ allow us to construct it explicitly. \\

In the last section, we apply this construction to several instances of factorization systems for geometric morphisms. In all those examples, the left maps express some level of connectedness as opposed to the right class which express some level of truncation; the key idea is that geometric properties of Grothendieck topoi are encoded in their terminal map, so that left objects in those examples correspond to specific, well-known classes of Grothendieck topoi. A first well-behaved example is the \emph{hyperconnected-localic} factorization. We also discuss three related factorizations: first we recall elements of the \emph{connected-etale} factorization for locally connected geometric morphisms, and a recent generalization of the latter into the \emph{terminally connected-etale} factorization for essential geometric morphisms; then we prove at \cref{terminally connected-proetale factorization} a further generalization of this to arbitrary geometric morphisms, still with terminally connected geometric morphisms on the left, but now \emph{pro-etale geometric} morphisms on the right; this latter factorization, related to \emph{Grothendieck-Verdier localization}, recovers exactly the one involved in the classical construction of the spectrum when combined with the \emph{local geometric morphisms} as specified left maps. \\

We should also make mention of the fact that we chose to develop the following theory for bicategory and pseudofunctors rather than in stricter analog, both for greater generality but also because our leading examples, Grothendieck topoi, are more a bicategory than a 2-category; however we will make no mention of the bicategorical structure, only of the canonical 2-cells provided by pseudofunctoriality. 

\section{Two-dimensional stability and local biadjunctions}

We here introduce a bi-version of the notion of local right adjoint and stable functors. We first describe \emph{local right biadjoint}, those pseudofunctors whose restrictions at pseudoslices have a left biadjoint. As in the 1-dimensional case, the pseudoslice-wise left adjoints are related by an automatic Beck-Chevalley condition due to the universal property of the local units, which expresses that postcomposing an arrow with an arrow in the range of the pseudofunctor does not modify the local unit. In a parallel way we introduce \emph{bistable pseudofunctors}, which involve factorization of 1-cells toward objects in their range through a class of \emph{generic} 1-cells defined through a strong orthogonality condition (analog of the \emph{diagonally universal morphisms of Diers}). We then prove those two notions to be equivalent. Though the idea behind this is essentially the same as in the 1-dimensional setting - that is, local units are the generic 1-cells - it involves quite technical aspects and deserves careful manipulation of the 2-dimensional data, for which we believe it is worth giving a detailed account of the intricate architecture encoded in those notions. 

\subsection{Local right biadjoint}

As in the 1-dimensional case, this notion can be alternatively reformulated in term of local adjunction.

\begin{definition}
A functor $ U : \mathcal{A} \rightarrow \mathcal{B}$ is a \emph{local right biadjoint} if for each object $ A$ in $ \mathcal{A}$, $U$ induces at the pseudoslice a biadjunction 
\[ % https://tikzcd.yichuanshen.de/#N4Igdg9gJgpgziAXAbVABwnAlgFyxMJZABgBpiBdUkANwEMAbAVxiRAB12BbOnACwDGjYAEEAvgHoRIMaXSZc+QigBM5KrUYs2nHvyENgAIUkBVABQiAlDI0woAc3hFQAMwBOELkjIgcEJABGagAjGDAoJABaAGZfemZWRBBTKRBqBjowhgAFBTwCNncsBz4cGTkQDy8g6n8fagTtZIAZAH1pUPDIxFjfTOy87ALlEGLS8tk3T29EX3rEYM1EnXY0GHc0dJAGLDAk8AJWaj4YOh7D-e2cOiwGNkgrjKyYXPylNj3sWFsxIA
\begin{tikzcd}
\mathcal{A}/A \arrow[rr, "U_A"', bend right] \arrow[rr, "\perp", phantom] &  & \mathcal{B}/U(A) \arrow[ll, "L_A"', bend right]
\end{tikzcd} \]
\end{definition}

\begin{remark}
Let us unfold this condition. In each $ A$ we have a local biadjunction $L_A \dashv U_A$, whose unit and counit, for each choice of $ f: B \rightarrow U(A)$ in $ \mathcal{B}/U(A)$ and $ u : A' \rightarrow A$ in $\mathcal{A}/A$, shall be denoted respectively as
\[ % https://tikzcd.yichuanshen.de/#N4Igdg9gJgpgziAXAbVABwnAlgFyxMJZABgBpiBdUkANwEMAbAVxiRACEQBfU9TXfIRQAmclVqMWbAKoAKAIIBKbrxAZseAkQCMpbePrNWiEHPkB9AGbKefDYJ1jqhqSe7iYUAObwioSwBOEAC2SGQgOBBIuhJGbJYq-kGhiOGRSKKxriAAFgB6FgnUDHQARjAMAAr8mkIgAVheOTiJIIEhGdTpiDEuxqbm8gAyg7LWIMVlFdX2WiYNTS22bclIAMxdUYiZfWwAOnswOHQFVgAEB3SRaBd72MEwAI4TIAxYYP2QHy85MHRQbC+rC6dCwDEBBGBrymVRqDhM72wsHcXCAA
\begin{tikzcd}
B \arrow[rr, "f"] \arrow[rd, "h^A_f"'] & {} \arrow[d, "\eta^A_f \atop \simeq", phantom] & U(A) \\
                                       & U(A_f) \arrow[ru, "U_AL_A(f)"']                &     
\end{tikzcd} \quad % https://tikzcd.yichuanshen.de/#N4Igdg9gJgpgziAXAbVABwnAlgFyxMJZARgBpiBdUkANwEMAbAVxiRAEEQBfU9TXfIRQAmUgAYqtRizbsA5N14gM2PASJjxk+s1aIOAfWABVA+wAUTAJRdFfVYKJkJ1HTP3dJMKAHN4RUAAzACcIAFskURAcCCRNKV02ABkzUwtrEGoGOgAjGAYABX41IRBgrB8ACxw7EBDwyOoYpDIE9xAYAD12AyZa+ojEVubEeLc9ED6eINDBgGYm2NHXaQmAHTWYNGwGAm7egAINuhi0I7XsMJgAR0yQBiwwCcgnu8qYOig2F9YmuiwGN8CL97rl8kUHOp9I9sLBPFwgA
\begin{tikzcd}
A_{U_A(u)} \arrow[rd, "L_AU_A(u)"'] \arrow[rr, "e^A_u"] & {} \arrow[d, "\epsilon^A_u \atop \simeq", phantom] & A' \arrow[ld, "u"] \\
                                                        & A                                                  &                   
\end{tikzcd} \]

Moreover, the unit and counit are pseudonatural in the following sense. For an arrow $ (g,\alpha) : f_1 \rightarrow f_2$ in $\mathcal{B}/U(A)$ as on the right, with its image $ L_{A}(g,\alpha) = (L_A(g), L_A(\alpha)) : L_A(f_1) \rightarrow L_A(f_2)$ as on the right
% https://q.uiver.app/?q=WzAsMyxbMCwwLCJCXzEiXSxbMiwwLCJCXzIiXSxbMSwxLCJVKEEpIl0sWzAsMSwiZyJdLFsxLDIsImZfMiJdLFswLDIsImZfMSIsMl0sWzMsMiwiXFxhbHBoYSBcXGF0b3AgXFxzaW1lcSIsMSx7InNob3J0ZW4iOnsic291cmNlIjoyMH0sInN0eWxlIjp7ImJvZHkiOnsibmFtZSI6Im5vbmUifSwiaGVhZCI6eyJuYW1lIjoibm9uZSJ9fX1dXQ==
\[\begin{tikzcd}
	{B_1} && {B_2} \\
	& {U(A)}
	\arrow[""{name=0, anchor=center, inner sep=0}, "g", from=1-1, to=1-3]
	\arrow["{f_2}", from=1-3, to=2-2]
	\arrow["{f_1}"', from=1-1, to=2-2]
	\arrow["{\alpha \atop \simeq}"{description}, Rightarrow, draw=none, from=0, to=2-2]
\end{tikzcd} \quad \quad % https://q.uiver.app/?q=WzAsMyxbMCwwLCJBX3tmXzF9Il0sWzIsMCwiQV97Zl8yfSJdLFsxLDEsIkEiXSxbMCwxLCJMX0EoZykiXSxbMSwyLCJMX0EoZl8yKSJdLFswLDIsIkxfQShmXzEpIiwyXSxbMywyLCJMX0EoXFxhbHBoYSkgXFxhdG9wIFxcc2ltZXEiLDEseyJzaG9ydGVuIjp7InNvdXJjZSI6MjB9LCJzdHlsZSI6eyJib2R5Ijp7Im5hbWUiOiJub25lIn0sImhlYWQiOnsibmFtZSI6Im5vbmUifX19XV0=
\begin{tikzcd}
	{A_{f_1}} && {A_{f_2}} \\
	& A
	\arrow[""{name=0, anchor=center, inner sep=0}, "{L_A(g)}", from=1-1, to=1-3]
	\arrow["{L_A(f_2)}", from=1-3, to=2-2]
	\arrow["{L_A(f_1)}"', from=1-1, to=2-2]
	\arrow["{L_A(\alpha) \atop \simeq}"{description}, Rightarrow, draw=none, from=0, to=2-2]
\end{tikzcd}\]
then we have an invertible 2-cell $ \eta^A_{(g,\alpha)}$
% https://q.uiver.app/?q=WzAsNCxbMCwxLCJVKEFfe2ZfMX0pIl0sWzIsMSwiVShBX3tmXzJ9KSJdLFswLDAsIkJfMSJdLFsyLDAsIkJfMiJdLFswLDEsIlUoTF9BKGcpKSIsMl0sWzIsMCwiaF5BX3tmXzF9IiwyXSxbMiwzLCJnIl0sWzMsMSwiaF5BX3tmXzJ9Il0sWzYsNCwiXFxldGFeQV97KGcsXFxhbHBoYSl9IFxcYXRvcCBcXHNpbWVxIiwxLHsic2hvcnRlbiI6eyJzb3VyY2UiOjIwLCJ0YXJnZXQiOjIwfSwic3R5bGUiOnsiYm9keSI6eyJuYW1lIjoibm9uZSJ9LCJoZWFkIjp7Im5hbWUiOiJub25lIn19fV1d
\[\begin{tikzcd}
	{B_1} && {B_2} \\
	{U(A_{f_1})} && {U(A_{f_2})}
	\arrow[""{name=0, anchor=center, inner sep=0}, "{U(L_A(g))}"', from=2-1, to=2-3]
	\arrow["{h^A_{f_1}}"', from=1-1, to=2-1]
	\arrow[""{name=1, anchor=center, inner sep=0}, "g", from=1-1, to=1-3]
	\arrow["{h^A_{f_2}}", from=1-3, to=2-3]
	\arrow["{\eta^A_{(g,\alpha)} \atop \simeq}"{description}, Rightarrow, draw=none, from=1, to=0]
\end{tikzcd}\]
such that $ \alpha$ decomposes as the following 2-cell
% https://q.uiver.app/?q=WzAsNSxbMSwxLCJVKEFfe2ZfMX0pIl0sWzMsMSwiVShBX3tmXzJ9KSJdLFsyLDIsIlUoQSkiXSxbMCwwLCJCXzEiXSxbNCwwLCJCXzIiXSxbMCwxLCJVKExfQShnKSkiLDFdLFsxLDIsIlUoTF9BKGZfMikpIiwxXSxbMCwyLCJVKExfQShmXzEpKSIsMV0sWzMsMCwiaF5BX3tmXzF9IiwxXSxbMyw0LCJnIl0sWzQsMSwiaF5BX3tmXzJ9IiwxXSxbMywyLCJmXzEiLDIseyJjdXJ2ZSI6NH1dLFs0LDIsImZfMiIsMCx7ImN1cnZlIjotNH1dLFs1LDIsIlUoTF9BKFxcYWxwaGEpKSBcXGF0b3AgXFxzaW1lcSIsMSx7ImxhYmVsX3Bvc2l0aW9uIjo0MCwic2hvcnRlbiI6eyJzb3VyY2UiOjIwfSwic3R5bGUiOnsiYm9keSI6eyJuYW1lIjoibm9uZSJ9LCJoZWFkIjp7Im5hbWUiOiJub25lIn19fV0sWzksNSwiXFxldGFeQV97KGcsXFxhbHBoYSl9IFxcYXRvcCBcXHNpbWVxIiwxLHsic2hvcnRlbiI6eyJzb3VyY2UiOjIwLCJ0YXJnZXQiOjIwfSwic3R5bGUiOnsiYm9keSI6eyJuYW1lIjoibm9uZSJ9LCJoZWFkIjp7Im5hbWUiOiJub25lIn19fV0sWzAsMTEsIlxcZXRhXkFfe2ZfMX0gXFxhdG9wIFxcc2ltZXEiLDEseyJzaG9ydGVuIjp7InRhcmdldCI6MjB9LCJzdHlsZSI6eyJib2R5Ijp7Im5hbWUiOiJub25lIn0sImhlYWQiOnsibmFtZSI6Im5vbmUifX19XSxbMSwxMiwiXFxldGFeQV97Zl8yfSBcXGF0b3AgXFxzaW1lcSIsMSx7InNob3J0ZW4iOnsidGFyZ2V0IjoyMH0sInN0eWxlIjp7ImJvZHkiOnsibmFtZSI6Im5vbmUifSwiaGVhZCI6eyJuYW1lIjoibm9uZSJ9fX1dXQ==
\[\begin{tikzcd}[sep=large]
	{B_1} &&&& {B_2} \\
	& {U(A_{f_1})} && {U(A_{f_2})} \\
	&& {U(A)}
	\arrow[""{name=0, anchor=center, inner sep=0}, "{U(L_A(g))}"{description}, from=2-2, to=2-4]
	\arrow["{U(L_A(f_2))}"{description}, from=2-4, to=3-3]
	\arrow["{U(L_A(f_1))}"{description}, from=2-2, to=3-3]
	\arrow["{h^A_{f_1}}"{description}, from=1-1, to=2-2]
	\arrow[""{name=1, anchor=center, inner sep=0}, "g", from=1-1, to=1-5]
	\arrow["{h^A_{f_2}}"{description}, from=1-5, to=2-4]
	\arrow[""{name=2, anchor=center, inner sep=0}, "{f_1}"', curve={height=40pt}, from=1-1, to=3-3]
	\arrow[""{name=3, anchor=center, inner sep=0}, "{f_2}", curve={height=-40pt}, from=1-5, to=3-3]
	\arrow["{U(L_A(\alpha)) \atop \simeq}"{description, pos=0.33}, Rightarrow, draw=none, from=0, to=3-3]
	\arrow["{\eta^A_{(g,\alpha)} \atop \simeq}"{description}, Rightarrow, draw=none, from=1, to=0]
	\arrow["{\eta^A_{f_1} \atop \simeq}"{description, pos=0.4}, Rightarrow, draw=none, from=2-2, to=2]
	\arrow["{\eta^A_{f_2} \atop \simeq}"{description, pos=0.4}, Rightarrow, draw=none, from=2-4, to=3]
\end{tikzcd}\]
Moreover, for a 2-cell $ \phi : (g, \alpha) \Rightarrow (g', \alpha')$ in $ \mathcal{B}/U(A)$, the composite 2-cell above factorizes as the 2-cell 
% https://q.uiver.app/?q=WzAsNSxbMSwxLCJVKEFfe2ZfMX0pIl0sWzMsMSwiVShBX3tmXzJ9KSJdLFsyLDIsIlUoQSkiXSxbMCwwLCJCXzEiXSxbNCwwLCJCXzIiXSxbMCwxLCJVKExfQShnJykpIiwxXSxbMSwyLCJVKExfQShmXzIpKSIsMV0sWzAsMiwiVShMX0EoZl8xKSkiLDFdLFszLDAsImheQV97Zl8xfSIsMV0sWzMsNCwiZyIsMCx7ImN1cnZlIjotM31dLFs0LDEsImheQV97Zl8yfSIsMV0sWzMsMiwiZl8xIiwyLHsiY3VydmUiOjR9XSxbNCwyLCJmXzIiLDAseyJjdXJ2ZSI6LTR9XSxbMyw0LCJnJyIsMV0sWzUsMiwiVShMX0EoXFxhbHBoYScpKSBcXGF0b3AgXFxzaW1lcSIsMSx7ImxhYmVsX3Bvc2l0aW9uIjozMCwic2hvcnRlbiI6eyJzb3VyY2UiOjIwfSwic3R5bGUiOnsiYm9keSI6eyJuYW1lIjoibm9uZSJ9LCJoZWFkIjp7Im5hbWUiOiJub25lIn19fV0sWzAsMTEsIlxcZXRhXkFfe2ZfMX0gXFxhdG9wIFxcc2ltZXEiLDEseyJsYWJlbF9wb3NpdGlvbiI6NDAsInNob3J0ZW4iOnsidGFyZ2V0IjoyMH0sInN0eWxlIjp7ImJvZHkiOnsibmFtZSI6Im5vbmUifSwiaGVhZCI6eyJuYW1lIjoibm9uZSJ9fX1dLFsxLDEyLCJcXGV0YV5BX3tmXzJ9IFxcYXRvcCBcXHNpbWVxIiwxLHsibGFiZWxfcG9zaXRpb24iOjQwLCJzaG9ydGVuIjp7InRhcmdldCI6MjB9LCJzdHlsZSI6eyJib2R5Ijp7Im5hbWUiOiJub25lIn0sImhlYWQiOnsibmFtZSI6Im5vbmUifX19XSxbOSwxMywiXFxwaGkiLDAseyJzaG9ydGVuIjp7InNvdXJjZSI6MjAsInRhcmdldCI6MjB9fV0sWzEzLDUsIlxcZXRhXkFfeyhnJyxcXGFscGhhJyl9IFxcYXRvcCBcXHNpbWVxIiwxLHsic2hvcnRlbiI6eyJzb3VyY2UiOjIwLCJ0YXJnZXQiOjIwfSwic3R5bGUiOnsiYm9keSI6eyJuYW1lIjoibm9uZSJ9LCJoZWFkIjp7Im5hbWUiOiJub25lIn19fV1d
\[\begin{tikzcd}[sep=large]
	{B_1} &&&& {B_2} \\
	& {U(A_{f_1})} && {U(A_{f_2})} \\
	&& {U(A)}
	\arrow[""{name=0, anchor=center, inner sep=0}, "{U(L_A(g'))}"{description}, from=2-2, to=2-4]
	\arrow["{U(L_A(f_2))}"{description}, from=2-4, to=3-3]
	\arrow["{U(L_A(f_1))}"{description}, from=2-2, to=3-3]
	\arrow["{h^A_{f_1}}"{description}, from=1-1, to=2-2]
	\arrow[""{name=1, anchor=center, inner sep=0}, "g", curve={height=-25pt}, from=1-1, to=1-5]
	\arrow["{h^A_{f_2}}"{description}, from=1-5, to=2-4]
	\arrow[""{name=2, anchor=center, inner sep=0}, "{f_1}"', curve={height=40pt}, from=1-1, to=3-3]
	\arrow[""{name=3, anchor=center, inner sep=0}, "{f_2}", curve={height=-40pt}, from=1-5, to=3-3]
	\arrow[""{name=4, anchor=center, inner sep=0}, "{g'}"{description}, from=1-1, to=1-5]
	\arrow["{U(L_A(\alpha')) \atop \simeq}"{description, pos=0.33}, Rightarrow, draw=none, from=0, to=3-3]
	\arrow["{\eta^A_{f_1} \atop \simeq}"{description, pos=0.4}, Rightarrow, draw=none, from=2-2, to=2]
	\arrow["{\eta^A_{f_2} \atop \simeq}"{description, pos=0.4}, Rightarrow, draw=none, from=2-4, to=3]
	\arrow["\phi", shorten <=2pt, shorten >=2pt, Rightarrow, from=1, to=4]
	\arrow["{\eta^A_{(g',\alpha')} \atop \simeq}"{description}, Rightarrow, draw=none, from=4, to=0]
\end{tikzcd}\]
(and similarly for the counit, though we wont have any use of it later).\\

Now the condition of biajdunction states that for each $ f : B \rightarrow U(A)$ in $\mathcal{B}/U(A)$ and $ u : A' \rightarrow A$ in $\mathcal{A}/A$, we get an adjunction between homcategories
\[ % https://tikzcd.yichuanshen.de/#N4Igdg9gJgpgziAXAbVABwnAlgFyxMJZABgBpiBdUkANwEMAbAVxiRAB12BbOnACwDGjYAEEAvgHoRAAmQAZAPoiAFADMAlKSYUQY0uky58hFACZyVWoxZtOPfkIbAAQpICqykeuSrSbpcpM6jp6Bth4BEQAjBbU9MysiBzsaDAATmi6ljBQAObwRKCqaRBcSGQgOBBIMSAARjBgUEgAtADMFfE2SQDiAHoiCsC+TGIg1Ax0DQwACoYRJiBpWLl8OLr6IMWlNdRV5dQNTa0dcdaJIABiA0MjYxNTMLPzxmzLq+tiFGJAA
\begin{tikzcd}
{\mathcal{A}/A [L_A(f),u]} \arrow[rr, "{G^A_{f,u}}"' description, bend right=20] & \perp & {\mathcal{B}/U(A)[f,U_A(u)]} \arrow[ll, "{F^A_{f,u}}"' description, bend right=20]
\end{tikzcd} \]
where $ F^A_{f,u}$ sends $ (g,\alpha)$ on the pasting of its image along $ L_A$ with the counit of $u$, while $ G^A_{f,u}$ sends $(w, \omega) $ on the pasting of its image along $U_A$ with the unit of $f$ 
\[ % https://tikzcd.yichuanshen.de/#N4Igdg9gJgpgziAXAbVABwnAlgFyxMJZABgBpiBdUkANwEMAbAVxiRAEEB9AMxAF9S6TLnyEUARnJVajFmy7AAqgAomASj79BIDNjwEiAJinV6zVog4ByLUL2iik8dLNzL7ftJhQA5vCKg3ABOEAC2SGQgOBBIkjLmbAAynOzKPmq2IMFhsdTRSMbxbiAwAHpcTJnZ4YiF+YgAzKayFiCVAoEhNXH1TUWtyakq6hnUDHQARjAMAArC+mIgWGDYsFVdEXkxjWOT03P2BpZBWD4AFjggzQmWg8rcGXwUfEA
\begin{tikzcd}[column sep=large, row sep=large]
A_f \arrow[r, "L_A(g)"] \arrow[rd, "L_A(f)"', "L_A(\alpha) \atop \simeq"{inner sep=1pt, near start}] & A_{U(u)} \arrow[r, "e^A_u"] \arrow[d, "L_A(U(u))" description] & A' \arrow[ld, "u", "\epsilon^A_u \atop \simeq"'{inner sep=3pt}] \\
                                              & A                                                              &                   
\end{tikzcd} \quad  % https://tikzcd.yichuanshen.de/#N4Igdg9gJgpgziAXAbVABwnAlgFyxMJZABgBpiBdUkANwEMAbAVxiRACEQBfU9TXfIRQBGclVqMWbAKoAKAIIB9AGYBKbrxAZseAkQBMY6vWatEIOfIDk6nnx2Cio4eJNTzl2+JhQA5vCJQZQAnCABbJFEQHAgkAGZjSTMLRXkAGVTZNRBqBjoAIxgGAAV+XSEQLDBsWA0g0IjEMmjYxASJUzZlHJA8wpKyx3NgrF8ACxw6kBDwpEMW+MTOj1kmW00ZxuaYyKX3EDGAPSVuu2mG3YXEebdkuQB3Ly4gA
\begin{tikzcd}[column sep=large, row sep=large]
B \arrow[rd, "f"', "\eta^A_f \atop \simeq"{inner sep=3pt}] \arrow[r, "h^A_f"] & U(A_f) \arrow[d, "U_AL_A(f)" description] \arrow[r, "U(w)"] & U(A') \arrow[ld, "U(u)", "U(\omega) \atop \simeq"'{inner sep=3pt}] \\
                                      & U(A)                                                        &                         
\end{tikzcd} \]
Moreover, the unit and counit, denoted as $ \mathfrak{i}^A_{f,u} : 1 \Rightarrow G_{f,u}^AF_{f,u}^A$ and $ \mathfrak{e}^A_{f,u} : F_{f,u}^AG_{f,u}^A \Rightarrow 1$ in the following, consist respectively, for $ (g,\alpha) $ and $ (w,\omega) $ of 2-cells in $ \mathcal{B}/U(A)$ and $ \mathcal{A}/A$
\[ % https://tikzcd.yichuanshen.de/#N4Igdg9gJgpgziAXAbVABwnAlgFyxMJZAZgBoAGAXVJADcBDAGwFcYkQBVACgEEB9YN2YBKAL7CQo0uky58hFABYK1Ok1btuPAOQSpM7HgJFyKmgxZtEIAEKTpIDIflEAjGbWXNvPgDM9Dk5yxigATKSuqhYa1loBBsEKyOFU5upWIJKqMFAA5vBEoL4AThAAtkjhIDgQSGQgjPQARjCMAAqyRgogWGDYsCBpXtYAFgB6-L72RaUViPU1SKaeMZxcADJ8PFy5wvEgJeVLNIuI7isZ3DATfCKDDc2tHc4hPX1YA-oHs5UntYjKEAtMBQJAAWmIy2iGSmNEaLXanRc1mKWFyIxw02+R3mfyQgOhmi2m22-nu8KeSNevX6bC+hzmy1OBPSRJ4JK4Qj25MeiJe3RpHzpDgZSHOzJowNB8yhrNiXDu9J+iCqp3OUrqsuGIFyWNFiAArHiAUNVtwOQAdC1MNAjeh7AAEVvoNTQTot2DKMAAjuTehlIGA2DQRjB6NLwARg9V6FhGOxA9GKXzEuxBZ9KKIgA
\begin{tikzcd}[sep=large]
B \arrow[r, "h^A_f" description] \arrow[rrd, "f"', bend right=20, " \eta^A_f \atop \simeq"{inner sep=3pt}] \arrow[rrrr, "g", bend left=20, " (\mathfrak{i}^A_{f,u})_{(g,\alpha)} \atop \simeq"'{inner sep=3pt}] & U(A_f) \arrow[rr, "U(L_A(g))"] \arrow[rd, "U_AL_A(f)" description] & {} \arrow[d, "U(L_A(\alpha)) \atop \simeq", phantom, near start] & U(A_{U(u)}) \arrow[r, "U(e^A_u)" description] \arrow[ld, "U_AL_A(U(u))" description] & U(A') \arrow[lld, "U(u)", bend left=15, "U(\epsilon^A_u) \atop \simeq"'{inner sep=3pt, near start}] \\
                                                                                            &                                                                    & U(A)                                                 &                                                                                      &                                     
\end{tikzcd} = \begin{tikzcd}[column sep=small]
B_1 \arrow[rd, "f_1"'] \arrow[rr, "g"] & {} \arrow[d, "\alpha \atop \simeq", phantom] & U(A') \arrow[ld, "f_2"] \\
                                       & U(A)                                         &                      
\end{tikzcd}   \]
(where the counit is left as an exercice for the reader), and those 2-cells must be invertible so the adjunction above is actually an equivalence between homcategories.
\end{remark}

\begin{remark}\label{expression of the BC mate}
For any arrow $ u : A_1 \rightarrow A_2$ in $ \mathcal{A}$, we have a pseudocommutative square expressing the pseudofunctoriality of $U$ trough a natural equivalence
\[ % https://tikzcd.yichuanshen.de/#N4Igdg9gJgpgziAXAbVABwnAlgFyxMJZABgBpiBdUkANwEMAbAVxiRAB12BbOnACwDGjYAEEAvgHoRAfQCMIMaXSZc+QigBM5KrUYs2nHvyENgAIUkBVABQzZASgVKQGbHgJEysnfWatEHNy8gsLiUtIaTspuakRa3tS++gGGwSbmVrYRjmI6MFAA5vBEoABmAE4QXEhkIDgQSLLUAEYwYFA1iXr+IJbh8tQMdK0MAAoq7uogDDClOFEgFVWddQ2IWrp+BkHGoZJMIIPDMGMTsQHlWAV884plldWITatIAMxdWyk7IaYWEjZMHLOJaPDb1N4fZK9cKRI4jcYxDwXK43BYglbgxDvTZQzjYLgwACOh2mx1OiKmWDA2FgJIYVJ6kDArGofBgdA6ASZLLqdCwDDY3IUFDEQA
\begin{tikzcd}
\mathcal{A}/A_1 \arrow[rr, "U_{A_1}"] \arrow[d, "\mathcal{A}/u"'] \arrow[rrd, "\alpha^U_{u, L_{A_1}} \atop \simeq", phantom] &  & \mathcal{B}/U(A_1) \arrow[d, "\mathcal{B}/U(u)"] \\
\mathcal{A}/A_2 \arrow[rr, "U_{A_2}"']                                                           &  & \mathcal{B}/U(A_2) 
\end{tikzcd}   \]
where $ \alpha^U_u$ has as component the natural isomorphisms provided by pseudofunctoriality of $U$, defined on each $ v : A \rightarrow A_1$ as $ \alpha_u : U(u)U(v) \simeq U(uv)$. \\

Now this natural equivalence has a canonical mate defined as follows. From the biadjunction in $A$, we have in each arrow $ f: B \rightarrow U(A_1)$ an equivalence of homcategories
\[ \mathcal{B}/U(A_2)[U(u)f, U_{A_2}(uL_{A_1}(f)) ] \simeq \mathcal{A}/A_2[L_{A_2}(U(u)f), uL_{A_1}(f)] \]

In particular the arrow $ (h^{A_1}_f, U(u)*\eta^{A_1}_f \alpha^U_{u, L_{A_1}(f)}) : U(u)f \rightarrow U_{A_2}(uL_{A_1}(f))$ obtained as the pasting below 
% https://q.uiver.app/?q=WzAsNCxbMCwwLCJCIl0sWzIsMCwiVShBX2YpIl0sWzEsMSwiVShBXzEpIl0sWzEsMiwiVShBXzIpIl0sWzAsMSwiaF57QV8xfV9mIl0sWzEsMiwiVUxfe0FfMX0oZikiLDFdLFswLDIsImYiLDJdLFsyLDMsIlUodSkiLDJdLFsxLDMsIlUodUxfe0FfMX0oZikpIiwwLHsiY3VydmUiOi0yfV0sWzQsMiwiXFxldGFee0FfMX1fZiBcXGF0b3AgXFxzaW1lcSIsMSx7InNob3J0ZW4iOnsic291cmNlIjoyMH0sInN0eWxlIjp7ImJvZHkiOnsibmFtZSI6Im5vbmUifSwiaGVhZCI6eyJuYW1lIjoibm9uZSJ9fX1dLFsyLDgsIlxcYWxwaGFeVV97dSwgTF97QV8xfShmKX0gXFxhdG9wIFxcc2ltZXEiLDEseyJzaG9ydGVuIjp7InRhcmdldCI6MjB9LCJzdHlsZSI6eyJib2R5Ijp7Im5hbWUiOiJub25lIn0sImhlYWQiOnsibmFtZSI6Im5vbmUifX19XV0=
\[\begin{tikzcd}[column sep=huge]
	B && {U(A_f)} \\
	& {U(A_1)} \\
	& {U(A_2)}
	\arrow[""{name=0, anchor=center, inner sep=0}, "{h^{A_1}_f}", from=1-1, to=1-3]
	\arrow["{UL_{A_1}(f)}"{description}, from=1-3, to=2-2]
	\arrow["f"', from=1-1, to=2-2]
	\arrow["{U(u)}"', from=2-2, to=3-2]
	\arrow[""{name=1, anchor=center, inner sep=0}, "{U(uL_{A_1}(f))}", curve={height=-20pt}, from=1-3, to=3-2]
	\arrow["{\eta^{A_1}_f \atop \simeq}"{description}, Rightarrow, draw=none, from=0, to=2-2]
	\arrow["{\alpha^U_{u, L_{A_1}(f)} \atop \simeq}"{description}, Rightarrow, draw=none, from=2-2, to=1]
\end{tikzcd} \]
is sent by the equivalence of homcategories to the following pasting
% https://q.uiver.app/?q=WzAsNCxbMCwwLCJBX3tVKHUpZn0iXSxbMiwwLCJBX3tVKHVMX3tBXzF9KGYpKX0iXSxbNCwwLCJBX2YiXSxbMiwxLCJBXzIiXSxbMCwxLCJMX3tBXzJ9KGhee0FfMX1fZikiXSxbMSwyLCJlXntBXzJ9X3t1TF97QV8xfShmKX0iXSxbMCwzLCJMX3tBXzJ9KGYpIiwyXSxbMSwzLCJMX3tBXzJ9KHVMX3tBXzF9KGYpKSIsMV0sWzIsMywidUxfe0FfMn0oZikiXSxbMCwyLCJzXnVfZiIsMCx7ImN1cnZlIjotM31dLFs5LDEsIj0iLDEseyJzaG9ydGVuIjp7InNvdXJjZSI6MjB9LCJzdHlsZSI6eyJib2R5Ijp7Im5hbWUiOiJub25lIn0sImhlYWQiOnsibmFtZSI6Im5vbmUifX19XSxbNiwxLCJMX3tBXzJ9KFUodSkqXFxldGFee0FfMX1fZiBcXGFscGhhXlVfe3UsIExfe0FfMX0oZil9KSBcXGF0b3AgXFxzaW1lcSIsMSx7InNob3J0ZW4iOnsic291cmNlIjoyMH0sInN0eWxlIjp7ImJvZHkiOnsibmFtZSI6Im5vbmUifSwiaGVhZCI6eyJuYW1lIjoibm9uZSJ9fX1dLFsxLDgsIlxcZXBzaWxvbl57QV8yfV97dUxfe0FfMX0oZil9IFxcYXRvcCBcXHNpbWVxIiwxLHsic2hvcnRlbiI6eyJ0YXJnZXQiOjIwfSwic3R5bGUiOnsiYm9keSI6eyJuYW1lIjoibm9uZSJ9LCJoZWFkIjp7Im5hbWUiOiJub25lIn19fV1d
\[\begin{tikzcd}[sep=huge]
	{A_{U(u)f}} && {A_{U(uL_{A_1}(f))}} && {A_f} \\
	&& {A_2}
	\arrow["{L_{A_2}(h^{A_1}_f)}"{description}, from=1-1, to=1-3]
	\arrow["{e^{A_2}_{uL_{A_1}(f)}}"{description}, from=1-3, to=1-5]
	\arrow[""{name=0, anchor=center, inner sep=0}, "{L_{A_2}(U(u)f)}"', shift right=1, curve={height=12pt}, from=1-1, to=2-3]
	\arrow["{L_{A_2}(uL_{A_1}(f))}"{description, pos=0.7}, from=1-3, to=2-3]
	\arrow[""{name=1, anchor=center, inner sep=0}, "{uL_{A_1}(f)}", shift left=1, curve={height=-12pt}, from=1-5, to=2-3]
	\arrow[""{name=2, anchor=center, inner sep=0}, "{s^u_f}", curve={height=-24pt}, from=1-1, to=1-5]
	\arrow["{L_{A_2}(U(u)*\eta^{A_1}_f \alpha^U_{u, L_{A_1}(f)}) \atop \simeq}"{description}, shift left=2, Rightarrow, draw=none, from=0, to=1-3]
	\arrow["{\epsilon^{A_2}_{uL_{A_1}(f)} \atop \simeq}"{description, pos=0.4}, shift left=2, Rightarrow, draw=none, from=1-3, to=1]
	\arrow["{=}"{description}, Rightarrow, draw=none, from=2, to=1-3]
\end{tikzcd} \]
we shall denote as 
\[  \begin{tikzcd}
	{A_{U(u)f}} && {A_f} \\
	& {A_2}
	\arrow["{L_{A_2}(U(u)f)}"', from=1-1, to=2-2]
	\arrow["{uL_{A_1}(f)}", from=1-3, to=2-2]
	\arrow[""{name=0, anchor=center, inner sep=0}, "{s^u_f}", from=1-1, to=1-3]
	\arrow["{\sigma^u_f \atop \simeq}"{description}, Rightarrow, draw=none, from=0, to=2-2]
\end{tikzcd} \]
Moreover, the pointwisely invertible unit $ \frak{i}^{A_2}_{h^{A_2}_{U(u)f}, uL_{A_1}(f)}$ of the equivalence of homcategories above provides us at this 2-cell with an invertible 2-cell we shall denote $ \nu_f$ for concision
\[ (h^{A_1}_f, U(u)*\eta^{A_1}_f \alpha^U_{u, L_{A_1}(f)}) \stackrel{\nu^u_f \atop \simeq}{\Rightarrow} ( U(s^u_f) h^{A_2}_{U(u)f}, U(\sigma^u_f)* h^{A_2}_{U(u)f} \eta^{A_2}_{U(u)f})  \]
This 2-cell provides itself a decomposition of the 2-cell above as depicted below 
% https://q.uiver.app/?q=WzAsNCxbMCwwLCJCIl0sWzEsMCwiVShBX3tVKHUpZn0pIl0sWzIsMCwiVShBX2YpIl0sWzEsMSwiVShBXzIpIl0sWzEsMiwiVShzXnVfZikiLDFdLFsyLDMsIlUodUxfe0FfMX0oZikpIiwwLHsiY3VydmUiOi0yfV0sWzEsM10sWzAsMSwiaF57QV8yfV97VSh1KWZ9IiwxXSxbMCwyLCJoXntBXzF9X2YiLDAseyJjdXJ2ZSI6LTR9XSxbMCwzLCJoXntBXzJ9X3tVKHUpZn0iLDIseyJjdXJ2ZSI6Mn1dLFsxLDksIlxcZXRhXntBXzJ9X3tVKHUpZn0gXFxhdG9wIFxcc2ltZXEiLDEseyJzaG9ydGVuIjp7InRhcmdldCI6MjB9LCJzdHlsZSI6eyJib2R5Ijp7Im5hbWUiOiJub25lIn0sImhlYWQiOnsibmFtZSI6Im5vbmUifX19XSxbOCwxLCJcXG51XnVfZiBcXGF0b3AgXFxzaW1lcSIsMSx7InNob3J0ZW4iOnsic291cmNlIjoyMH0sInN0eWxlIjp7ImJvZHkiOnsibmFtZSI6Im5vbmUifSwiaGVhZCI6eyJuYW1lIjoibm9uZSJ9fX1dLFsxLDUsIlUoXFxzaWdtYV51X2YpIFxcYXRvcCBcXHNpbWVxIiwxLHsic2hvcnRlbiI6eyJzb3VyY2UiOjIwfSwic3R5bGUiOnsiYm9keSI6eyJuYW1lIjoibm9uZSJ9LCJoZWFkIjp7Im5hbWUiOiJub25lIn19fV1d
\[\begin{tikzcd}[sep=huge]
	B & {U(A_{U(u)f})} & {U(A_f)} \\
	& {U(A_2)}
	\arrow["{U(s^u_f)}"{description}, from=1-2, to=1-3]
	\arrow[""{name=0, anchor=center, inner sep=0}, "{U(uL_{A_1}(f))}", curve={height=-12pt}, from=1-3, to=2-2]
	\arrow["UL_{A_2}(U(u)f)"{description, near end}, from=1-2, to=2-2]
	\arrow["{h^{A_2}_{U(u)f}}"{description}, from=1-1, to=1-2]
	\arrow[""{name=1, anchor=center, inner sep=0}, "{h^{A_1}_f}", start anchor=45, curve={height=-30pt}, from=1-1, to=1-3]
	\arrow[""{name=2, anchor=center, inner sep=0}, "{{U(u)f}}"', curve={height=12pt}, from=1-1, to=2-2]
	\arrow["{\eta^{A_2}_{U(u)f} \atop \simeq}"{description}, Rightarrow, draw=none, from=1-2, to=2]
	\arrow["{\nu^u_f \atop \simeq}"{description}, Rightarrow, draw=none, from=1, to=1-2]
	\arrow["{U(\sigma^u_f) \atop \simeq}"{description}, Rightarrow, draw=none, from=1-2, to=0]
\end{tikzcd} \quad = \quad  % https://q.uiver.app/?q=WzAsMyxbMCwwLCJCIl0sWzIsMCwiVShBX2YpIl0sWzEsMSwiVShBXzIpIl0sWzAsMSwiaF57QV8xfV9mIl0sWzEsMiwiVSh1TF97QV8xfShmKSkiXSxbMCwyLCJVKHUpZiIsMl0sWzMsMiwiVSh1KSpcXGV0YV57QV8xfV9mIFxcYWxwaGFeVV97dSwgTF97QV8xfShmKX0gXFxhdG9wIFxcc2ltZXEiLDEseyJsYWJlbF9wb3NpdGlvbiI6NDAsInNob3J0ZW4iOnsic291cmNlIjoyMH0sInN0eWxlIjp7ImJvZHkiOnsibmFtZSI6Im5vbmUifSwiaGVhZCI6eyJuYW1lIjoibm9uZSJ9fX1dXQ==
\begin{tikzcd}[sep=large]
	B && {U(A_f)} \\
	& {U(A_2)}
	\arrow[""{name=0, anchor=center, inner sep=0}, "{h^{A_1}_f}", from=1-1, to=1-3]
	\arrow["{U(uL_{A_1}(f))}", from=1-3, to=2-2]
	\arrow["{U(u)f}"', from=1-1, to=2-2]
	\arrow["{U(u)*\eta^{A_1}_f \alpha^U_{u, L_{A_1}(f)} \atop \simeq}"{description, pos=0.28}, Rightarrow, draw=none, from=0, to=2-2]
\end{tikzcd}\]
Those 2-cells can be pasted to produce a decomposition of the unit 2-cell of the composite $ U(u)f$:
% https://q.uiver.app/?q=WzAsNSxbMCwwLCJCIl0sWzIsMCwiVShBXzEpIl0sWzIsMiwiVShBXzIpIl0sWzEsMSwiVShBX2YpIl0sWzAsMiwiVShBX3tVKHUpZn0pIl0sWzAsMSwiZiJdLFsxLDIsIlUodSkiXSxbMCwzLCJoXntBXzF9X2YiLDFdLFszLDEsIlVMX3tBXzF9KGYpIiwxXSxbMywyXSxbMCw0LCJoXntBXzJ9X3tVKHUpZn0iLDJdLFs0LDIsIlVMX3tBXzJ9KFUodSlmKSIsMl0sWzQsMywiVShzXnVfZikiLDFdLFs1LDMsIlxcZXRhXntBXzF9X2YgXFxhdG9wIFxcc2ltZXEiLDEseyJzaG9ydGVuIjp7InNvdXJjZSI6MjB9LCJzdHlsZSI6eyJib2R5Ijp7Im5hbWUiOiJub25lIn0sImhlYWQiOnsibmFtZSI6Im5vbmUifX19XSxbNiwzLCJcXGFscGhhXlVfe3UsTF97QV8xfShmKX0gXFxhdG9wIFxcc2ltZXEiLDEseyJzaG9ydGVuIjp7InRhcmdldCI6MjB9LCJzdHlsZSI6eyJib2R5Ijp7Im5hbWUiOiJub25lIn0sImhlYWQiOnsibmFtZSI6Im5vbmUifX19XSxbMywxMSwiVShcXHNpZ21hXnVfZikgXFxhdG9wIFxcc2ltZXEiLDEseyJzaG9ydGVuIjp7InRhcmdldCI6MjB9LCJzdHlsZSI6eyJib2R5Ijp7Im5hbWUiOiJub25lIn0sImhlYWQiOnsibmFtZSI6Im5vbmUifX19XSxbMTAsMywiXFxudV51X2YgXFxhdG9wIFxcc2ltZXEiLDEseyJzaG9ydGVuIjp7InNvdXJjZSI6MjB9LCJzdHlsZSI6eyJib2R5Ijp7Im5hbWUiOiJub25lIn0sImhlYWQiOnsibmFtZSI6Im5vbmUifX19XV0=
\[\begin{tikzcd}[sep=large]
	B && {U(A_1)} \\
	& {U(A_f)} \\
	{U(A_{U(u)f})} && {U(A_2)}
	\arrow[""{name=0, anchor=center, inner sep=0}, "f", from=1-1, to=1-3]
	\arrow[""{name=1, anchor=center, inner sep=0}, "{U(u)}", from=1-3, to=3-3]
	\arrow["{h^{A_1}_f}"{description}, from=1-1, to=2-2]
	\arrow["{UL_{A_1}(f)}"{description}, from=2-2, to=1-3]
	\arrow["U(uL_{A_1}(f))"{description}, from=2-2, to=3-3]
	\arrow[""{name=2, anchor=center, inner sep=0}, "{h^{A_2}_{U(u)f}}"', from=1-1, to=3-1]
	\arrow[""{name=3, anchor=center, inner sep=0}, "{UL_{A_2}(U(u)f)}"', from=3-1, to=3-3]
	\arrow["{U(s^u_f)}"{description}, from=3-1, to=2-2]
	\arrow["{\eta^{A_1}_f \atop \simeq}"{description}, Rightarrow, draw=none, from=0, to=2-2]
	\arrow["{\alpha^U_{u,L_{A_1}(f)} \atop \simeq}"{description}, Rightarrow, draw=none, from=1, to=2-2]
	\arrow["{U(\sigma^u_f) \atop \simeq}"{description}, Rightarrow, draw=none, from=2-2, to=3]
	\arrow["{\nu^u_f \atop \simeq}"{description}, Rightarrow, draw=none, from=2, to=2-2]
\end{tikzcd} \quad = \quad % https://q.uiver.app/?q=WzAsNCxbMCwwLCJCIl0sWzIsMCwiVShBXzEpIl0sWzIsMiwiVShBXzIpIl0sWzAsMiwiVShBX3tVKHUpZn0pIl0sWzAsMSwiZiJdLFsxLDIsIlUodSkiXSxbMCwzLCJoXntBXzJ9X3tVKHUpZn0iLDJdLFszLDIsIlVMX3tBXzJ9KFUodSlmKSIsMl0sWzAsMiwiXFxldGFee0FfMn1fe1UodSlmfSBcXGF0b3AgXFxzaW1lcSIsMSx7InN0eWxlIjp7ImJvZHkiOnsibmFtZSI6Im5vbmUifSwiaGVhZCI6eyJuYW1lIjoibm9uZSJ9fX1dXQ==
\begin{tikzcd}
	B && {U(A_1)} \\
	\\
	{U(A_{U(u)f})} && {U(A_2)}
	\arrow["f", from=1-1, to=1-3]
	\arrow["{U(u)}", from=1-3, to=3-3]
	\arrow["{h^{A_2}_{U(u)f}}"', from=1-1, to=3-1]
	\arrow["{UL_{A_2}(U(u)f)}"', from=3-1, to=3-3]
	\arrow["{\eta^{A_2}_{U(u)f} \atop \simeq}"{description}, draw=none, from=1-1, to=3-3]
\end{tikzcd} \]
The maps $ \sigma^u_f$ define altogether a pseudonatural transformation as depicted below
 % https://q.uiver.app/?q=WzAsNCxbMCwwLCJcXG1hdGhjYWx7QX0vQV8xICJdLFsxLDAsIlxcbWF0aGNhbHtCfS9VKEFfMSkiXSxbMCwxLCJcXG1hdGhjYWx7QX0vQV8yICAgIl0sWzEsMSwiXFxtYXRoY2Fse0J9L1UoQV8yKSJdLFsxLDAsIkxfe0FfMX0iLDJdLFsxLDMsIlxcbWF0aGNhbHtCfS9VKHUpIl0sWzAsMiwiXFxtYXRoY2Fse0F9L3UiLDJdLFszLDIsIkxfe0FfMn0iXSxbNyw2LCJcXHNpZ21hXnUiLDIseyJjdXJ2ZSI6MSwic2hvcnRlbiI6eyJzb3VyY2UiOjIwLCJ0YXJnZXQiOjIwfX1dXQ==
\[\begin{tikzcd}
	{\mathcal{A}/A_1 } & {\mathcal{B}/U(A_1)} \\
	{\mathcal{A}/A_2   } & {\mathcal{B}/U(A_2)}
	\arrow["{L_{A_1}}"', from=1-2, to=1-1]
	\arrow["{\mathcal{B}/U(u)}", from=1-2, to=2-2]
	\arrow[""{name=0, anchor=center, inner sep=0}, "{\mathcal{A}/u}"', from=1-1, to=2-1]
	\arrow[""{name=1, anchor=center, inner sep=0}, "{L_{A_2}}", from=2-2, to=2-1]
	\arrow["{\sigma^u}"', curve={height=6pt}, shorten <=4pt, shorten >=4pt, Rightarrow, from=1, to=0]
\end{tikzcd}\]

Let us also describe how this construction interacts with 2-cells, for it involves some subtleties. Let be some 2-cell $ \phi : u \rightarrow u'$ in $\mathcal{A}$: then the respective natural transformations $ \sigma^u$ and $\sigma^{u'}$ are related as follows. For $ f : B \rightarrow U(A_1)$ we have a diagram in $\mathcal{A}$, which is a span in $ \mathcal{A}/A_2$
% https://q.uiver.app/?q=WzAsNSxbMSwxLCJVKEFfMSkiXSxbMCwxLCJVKEFfZikiXSxbMywxLCJVKEFfMikiXSxbMCwwLCJVKEFfe1UodSlmfSkiXSxbMCwyLCJVKEFfe1UodScpZn0pIl0sWzAsMiwiVSh1JykiLDEseyJsYWJlbF9wb3NpdGlvbiI6NDAsImN1cnZlIjoyfV0sWzAsMiwiVSh1KSIsMSx7ImxhYmVsX3Bvc2l0aW9uIjo0MCwiY3VydmUiOi0yfV0sWzEsMCwiVUxfe0FfMX0oZikiLDFdLFszLDEsIlUoc151X2YpIiwxXSxbNCwxLCJVKHNee3UnfV9mKSIsMV0sWzMsMiwiVUxfe0FfMn0oVSh1KWYpIiwwLHsiY3VydmUiOi0yfV0sWzQsMiwiVUxfe0FfMn0oVSh1JylmKSIsMix7ImN1cnZlIjoyfV0sWzEwLDEsIlUoXFxzaWdtYV51X2YpIFxcYXRvcCBcXHNpbWVxIiwxLHsibGV2ZWwiOjEsInN0eWxlIjp7ImJvZHkiOnsibmFtZSI6Im5vbmUifSwiaGVhZCI6eyJuYW1lIjoibm9uZSJ9fX1dLFsxMSwxLCJVKFxcc2lnbWFee3UnfV9mKSBcXGF0b3AgXFxzaW1lcSIsMSx7InNob3J0ZW4iOnsic291cmNlIjoyMH0sInN0eWxlIjp7ImJvZHkiOnsibmFtZSI6Im5vbmUifSwiaGVhZCI6eyJuYW1lIjoibm9uZSJ9fX1dLFs2LDUsIlUoXFxwaGkpIiwwLHsib2Zmc2V0IjoyLCJzaG9ydGVuIjp7InNvdXJjZSI6MjAsInRhcmdldCI6MjB9fV1d
\[\begin{tikzcd}[sep=large]
	{A_{U(u)f}} \\
	{A_f)} & {A_1} && {A_2} \\
	{A_{U(u')f}}
	\arrow[""{name=0, anchor=center, inner sep=0}, "{u'}"{description, pos=0.4}, curve={height=12pt}, from=2-2, to=2-4]
	\arrow[""{name=1, anchor=center, inner sep=0}, "{u}"{description, pos=0.4}, curve={height=-12pt}, from=2-2, to=2-4]
	\arrow["{L_{A_1}(f)}"{description}, from=2-1, to=2-2]
	\arrow["{s^u_f}", from=1-1, to=2-1]
	\arrow["{s^{u'}_f}", from=3-1, to=2-1]
	\arrow[""{name=2, anchor=center, inner sep=0}, "{L_{A_2}(U(u)f)}", curve={height=-12pt}, from=1-1, to=2-4]
	\arrow[""{name=3, anchor=center, inner sep=0}, "{L_{A_2}(U(u')f)}"', curve={height=12pt}, from=3-1, to=2-4]
	\arrow["{\sigma^u_f \atop \simeq}"{description}, draw=none, from=2, to=2-1]
	\arrow["{\sigma^{u'}_f \atop \simeq}"{description}, Rightarrow, draw=none, from=3, to=2-1]
	\arrow["{\phi}", shift right=2, shorten <=3pt, shorten >=3pt, Rightarrow, from=1, to=0]
\end{tikzcd}\]
which is part of the following diagram in $\mathcal{B}$
% https://q.uiver.app/?q=WzAsNixbMCwxLCJCIl0sWzIsMSwiVShBXzEpIl0sWzEsMSwiVShBX2YpIl0sWzQsMSwiVShBXzIpIl0sWzEsMCwiVShBX3tVKHUpZn0pIl0sWzEsMiwiVShBX3tVKHUnKWZ9KSJdLFsxLDMsIlUodScpIiwxLHsibGFiZWxfcG9zaXRpb24iOjQwLCJjdXJ2ZSI6Mn1dLFsxLDMsIlUodSkiLDEseyJsYWJlbF9wb3NpdGlvbiI6NDAsImN1cnZlIjotMn1dLFsyLDEsIlVMX3tBXzF9KGYpIiwxXSxbMCwyLCJoXntBXzF9X2YiLDFdLFswLDQsImhee0FfMn1fe1UodSlmfSJdLFs0LDIsIlUoc151X2YpIiwxXSxbMCw1LCJoXntBXzJ9X3tVKHUnKWZ9IiwyXSxbNSwyLCJVKHNee3UnfV9mKSIsMV0sWzQsMywiVUxfe0FfMn0oVSh1KWYpIiwwLHsiY3VydmUiOi0yfV0sWzUsMywiVUxfe0FfMn0oVSh1JylmKSIsMix7ImN1cnZlIjoyfV0sWzEwLDIsIlxcbnVedV9mIFxcYXRvcCBcXHNpbWVxIiwxLHsic2hvcnRlbiI6eyJzb3VyY2UiOjIwfSwic3R5bGUiOnsiYm9keSI6eyJuYW1lIjoibm9uZSJ9LCJoZWFkIjp7Im5hbWUiOiJub25lIn19fV0sWzEyLDIsIlxcbnVee3UnfV9mIFxcYXRvcCBcXHNpbWVxIiwxLHsibGFiZWxfcG9zaXRpb24iOjYwLCJzaG9ydGVuIjp7InNvdXJjZSI6MjB9LCJzdHlsZSI6eyJib2R5Ijp7Im5hbWUiOiJub25lIn0sImhlYWQiOnsibmFtZSI6Im5vbmUifX19XSxbMTQsMiwiVShcXHNpZ21hXnVfZikgXFxhdG9wIFxcc2ltZXEiLDEseyJsZXZlbCI6MSwic3R5bGUiOnsiYm9keSI6eyJuYW1lIjoibm9uZSJ9LCJoZWFkIjp7Im5hbWUiOiJub25lIn19fV0sWzE1LDIsIlUoXFxzaWdtYV57dSd9X2YpIFxcYXRvcCBcXHNpbWVxIiwxLHsic2hvcnRlbiI6eyJzb3VyY2UiOjIwfSwic3R5bGUiOnsiYm9keSI6eyJuYW1lIjoibm9uZSJ9LCJoZWFkIjp7Im5hbWUiOiJub25lIn19fV0sWzcsNiwiVShcXHBoaSkiLDAseyJvZmZzZXQiOjIsInNob3J0ZW4iOnsic291cmNlIjoyMCwidGFyZ2V0IjoyMH19XV0=
\[\begin{tikzcd}[sep=large]
	& {U(A_{U(u)f})} \\
	B & {U(A_f)} & {U(A_1)} && {U(A_2)} \\
	& {U(A_{U(u')f})}
	\arrow[""{name=0, anchor=center, inner sep=0}, "{U(u')}"{description, pos=0.4}, curve={height=12pt}, from=2-3, to=2-5]
	\arrow[""{name=1, anchor=center, inner sep=0}, "{U(u)}"{description, pos=0.4}, curve={height=-12pt}, from=2-3, to=2-5]
	\arrow["{UL_{A_1}(f)}"{description}, from=2-2, to=2-3]
	\arrow["{h^{A_1}_f}"{description}, from=2-1, to=2-2]
	\arrow[""{name=2, anchor=center, inner sep=0}, "{h^{A_2}_{U(u)f}}", from=2-1, to=1-2]
	\arrow["{U(s^u_f)}"{description}, from=1-2, to=2-2]
	\arrow[""{name=3, anchor=center, inner sep=0}, "{h^{A_2}_{U(u')f}}"', from=2-1, to=3-2]
	\arrow["{U(s^{u'}_f)}"{description}, from=3-2, to=2-2]
	\arrow[""{name=4, anchor=center, inner sep=0}, "{UL_{A_2}(U(u)f)}", curve={height=-12pt}, from=1-2, to=2-5]
	\arrow[""{name=5, anchor=center, inner sep=0}, "{UL_{A_2}(U(u')f)}"', curve={height=12pt}, from=3-2, to=2-5]
	\arrow["{\nu^u_f \atop \simeq}"{description}, Rightarrow, draw=none, from=2, to=2-2]
	\arrow["{\nu^{u'}_f \atop \simeq}"{description, pos=0.6}, Rightarrow, draw=none, from=3, to=2-2]
	\arrow["{U(\sigma^u_f) \atop \simeq}"{description}, draw=none, from=4, to=2-2]
	\arrow["{U(\sigma^{u'}_f) \atop \simeq}"{description}, Rightarrow, draw=none, from=5, to=2-2]
	\arrow["{U(\phi)}", shift right=2, shorten <=3pt, shorten >=3pt, Rightarrow, from=1, to=0]
\end{tikzcd}\]

Beware however that this does not produce any 3-dimensional data relating explicitly $ \sigma^u$ and $ \sigma_{u'}$, for the pseudoslices are not pseudofunctorial on the underlying 2-category as stated in \cref{Pseudoslice is not pseudofunctorial}. Moreover, observe that at this step we cannot infer the existence of a canonical map $ A_{U(u)f)} \rightarrow A_{U(u')f}$, though such a map shall exist for non trivial reason as seen below. Moreover, this is related to be problem of the bistable factorization of globular 2-cells as we shall discuss later. 
\begin{comment}
% https://q.uiver.app/?q=WzAsNCxbMCwwLCJcXG1hdGhjYWx7QX0vQV8xICJdLFsxLDAsIlxcbWF0aGNhbHtCfS9VKEFfMSkiXSxbMCwxLCJcXG1hdGhjYWx7QX0vQV8yICAgIl0sWzEsMSwiXFxtYXRoY2Fse0J9L1UoQV8yKSJdLFsxLDAsIkxfe0FfMX0iLDJdLFsxLDMsIlxcbWF0aGNhbHtCfS9VKHUpIl0sWzAsMiwiXFxtYXRoY2Fse0F9L3UiLDEseyJvZmZzZXQiOi0xLCJjdXJ2ZSI6LTJ9XSxbMywyLCJMX3tBXzJ9Il0sWzAsMiwiXFxtYXRoY2Fse0F9L3UnIiwyLHsiY3VydmUiOjJ9XSxbNyw2LCJcXHNpZ21hXnUiLDIseyJvZmZzZXQiOjEsImN1cnZlIjoxLCJzaG9ydGVuIjp7InNvdXJjZSI6MjAsInRhcmdldCI6MjB9fV0sWzYsOCwiXFxtYXRoY2Fse0F9L1xccGhpIiwyLHsib2Zmc2V0IjotMSwic2hvcnRlbiI6eyJzb3VyY2UiOjIwLCJ0YXJnZXQiOjIwfX1dXQ==
\[\begin{tikzcd}[sep=large]
{\mathcal{A}/A_1 } & {\mathcal{B}/U(A_1)} \\
	{\mathcal{A}/A_2   } & {\mathcal{B}/U(A_2)}
	\arrow["{L_{A_1}}"', from=1-2, to=1-1]
	\arrow["{\mathcal{B}/U(u)}", from=1-2, to=2-2]
	\arrow[""{name=0, anchor=center, inner sep=0}, "{\mathcal{A}/u}"{description}, curve={height=-20pt}, from=1-1, to=2-1]
	\arrow[""{name=1, anchor=center, inner sep=0}, "{L_{A_2}}", from=2-2, to=2-1]
	\arrow[""{name=2, anchor=center, inner sep=0}, "{\mathcal{A}/u'}"', curve={height=20pt}, from=1-1, to=2-1]
	\arrow["{\mathcal{A}/\phi}"', shift left=2, shorten <=9pt, shorten >=6pt, Rightarrow, from=0, to=2]
	\arrow["{\sigma^u}"'{pos=0.5}, shift right=1, curve={height=10pt}, shorten <=3pt, shorten >=5pt, Rightarrow, from=1, to=0]
\end{tikzcd} \quad = \quad % https://q.uiver.app/?q=WzAsNCxbMCwwLCJcXG1hdGhjYWx7QX0vQV8xICJdLFsxLDAsIlxcbWF0aGNhbHtCfS9VKEFfMSkiXSxbMCwxLCJcXG1hdGhjYWx7QX0vQV8yICAgIl0sWzEsMSwiXFxtYXRoY2Fse0J9L1UoQV8yKSJdLFsxLDAsIkxfe0FfMX0iLDJdLFsxLDMsIlxcbWF0aGNhbHtCfS9VKHUnKSIsMSx7ImN1cnZlIjoyfV0sWzMsMiwiTF97QV8yfSJdLFswLDIsIlxcbWF0aGNhbHtBfS91JyIsMl0sWzEsMywiXFxtYXRoY2Fse0J9L1UodSkiLDAseyJvZmZzZXQiOi0xLCJjdXJ2ZSI6LTJ9XSxbOCw1LCJcXG1hdGhjYWx7Qn0vVShcXHBoaSkiLDIseyJsYWJlbF9wb3NpdGlvbiI6NDAsIm9mZnNldCI6LTEsInNob3J0ZW4iOnsic291cmNlIjoyMCwidGFyZ2V0Ijo0MH19XSxbNiw3LCJcXHNpZ21hXnt1J30iLDIseyJsYWJlbF9wb3NpdGlvbiI6NzAsImN1cnZlIjoxLCJzaG9ydGVuIjp7InNvdXJjZSI6MjAsInRhcmdldCI6MjB9fV1d
\begin{tikzcd}[sep=large]
{\mathcal{A}/A_1 } & {\mathcal{B}/U(A_1)} \\
	{\mathcal{A}/A_2   } & {\mathcal{B}/U(A_2)}
	\arrow["{L_{A_1}}"', from=1-2, to=1-1]
	\arrow[""{name=0, anchor=center, inner sep=0}, "{\mathcal{B}/U(u')}"', curve={height=20pt}, from=1-2, to=2-2]
	\arrow[""{name=1, anchor=center, inner sep=0}, "{L_{A_2}}", from=2-2, to=2-1]
	\arrow[""{name=2, anchor=center, inner sep=0}, "{\mathcal{A}/u'}"', from=1-1, to=2-1]
	\arrow[""{name=3, anchor=center, inner sep=0}, "{\mathcal{B}/U(u)}", curve={height=-20pt}, from=1-2, to=2-2]
	\arrow["{\mathcal{B}/U(\phi)}"', shift left=2, shorten <=8pt, shorten >=8pt, Rightarrow, from=3, to=0]
	\arrow["{\sigma^{u'}}"'{pos=0.7}, curve={height=12pt}, shorten <=4pt, shorten >=4pt, Rightarrow, from=1, to=2]
\end{tikzcd} \]
\end{comment}

\end{remark}

But actually we automatically have the following:

\begin{proposition}
If $ U : \mathcal{A} \rightarrow \mathcal{B}$ is a local right adjoint, then for any $ u : A_1 \rightarrow A_2$ in $ \mathcal{A}$, we have the 2-dimensional Beck-Chevalley condition at $u$: that is, the pseudonatural transformation $ \sigma^u$ is a pointwise equivalence. 
\end{proposition}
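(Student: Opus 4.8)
The plan is to show that each $1$-cell component of $\sigma^u$ is an equivalence. Since a morphism of a pseudoslice is an equivalence exactly when its underlying $1$-cell is an equivalence in $\mathcal{A}$, this amounts to proving that the comparison $1$-cell $s^u_f : A_{U(u)f} \rightarrow A_f$ of \cref{expression of the BC mate} is an equivalence for every $f : B \rightarrow U(A_1)$. Rather than verifying the universal property of the candidate reflection $(A_f, uL_{A_1}(f))$ at $A_2$ from scratch --- which runs into the difficulty that the $A_1$-biadjunction only sees objects lying over $A_1$, whereas the relevant test objects $u' : A' \rightarrow A_2$ live over $A_2$ --- I would exhibit an explicit pseudo-inverse $t : A_f \rightarrow A_{U(u)f}$ by playing the two local biadjunctions at $A_1$ and $A_2$ against one another.

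The observation that unlocks this is that $\sigma^u_f$ exhibits $L_{A_2}(U(u)f)$ as $u$ postcomposed with $L_{A_1}(f) \circ s^u_f$. Setting $q := L_{A_1}(f) \circ s^u_f : A_{U(u)f} \rightarrow A_1$, the invertible $2$-cell $\sigma^u_f$ yields $uq \simeq L_{A_2}(U(u)f)$, so that the reflection object $A_{U(u)f}$ --- a priori only an object over $A_2$ --- may equally be regarded as an object of $\mathcal{A}/A_1$ via $q$. Moreover, combining the unit $\eta^{A_1}_f$, the pseudofunctoriality cell $\alpha^U_{u, L_{A_1}(f)}$ and the underlying $2$-cell of $\nu^u_f$ (which gives $U(s^u_f)\, h^{A_2}_{U(u)f} \simeq h^{A_1}_f$) --- that is, exactly the decomposition of the unit of $U(u)f$ recorded at the end of \cref{expression of the BC mate} --- shows that $U(q)\, h^{A_2}_{U(u)f} \simeq f$, so that $h^{A_2}_{U(u)f}$ underlies a genuine object of $\mathcal{B}/U(A_1)[f, U_{A_1}(q)]$. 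I then feed this through the equivalence of homcategories $\mathcal{A}/A_1[L_{A_1}(f), q] \simeq \mathcal{B}/U(A_1)[f, U_{A_1}(q)]$ provided by the biadjunction at $A_1$, and define $t : A_f \rightarrow A_{U(u)f}$ to be the transpose; by construction it comes equipped with invertible $2$-cells $q\, t \simeq L_{A_1}(f)$ and $U(t)\, h^{A_1}_f \simeq h^{A_2}_{U(u)f}$.

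It then remains to verify that $s^u_f$ and $t$ are mutually pseudo-inverse. For $s^u_f \circ t$ I would note that it is an endomorphism of $A_f$ lying over $A_1$, since $L_{A_1}(f) \circ s^u_f \circ t = q\, t \simeq L_{A_1}(f)$, and that $U(s^u_f t)\, h^{A_1}_f \simeq U(s^u_f)\, h^{A_2}_{U(u)f} \simeq h^{A_1}_f$; as $\mathrm{id}_{A_f}$ has the same transpose, the fully-faithfulness half of the $A_1$-biadjunction --- concretely, invertibility of the units $\mathfrak{i}^{A_1}$ of the homcategory adjunctions --- produces an invertible $2$-cell $s^u_f\, t \simeq \mathrm{id}_{A_f}$. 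Symmetrically, $t \circ s^u_f$ is an endomorphism of $A_{U(u)f}$ over $A_2$ (using $\sigma^u_f$ and $q$ to see $L_{A_2}(U(u)f) \circ t s^u_f \simeq L_{A_2}(U(u)f)$) satisfying $U(t s^u_f)\, h^{A_2}_{U(u)f} \simeq U(t)\, h^{A_1}_f \simeq h^{A_2}_{U(u)f}$, so the fully-faithfulness of the $A_2$-biadjunction gives $t\, s^u_f \simeq \mathrm{id}_{A_{U(u)f}}$. Hence $s^u_f$ is an equivalence, whence $(s^u_f, \sigma^u_f)$ is an equivalence in $\mathcal{A}/A_2$ and $\sigma^u$ is a pointwise equivalence.

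I expect the main obstacle to be purely the $2$-dimensional bookkeeping rather than the idea: one must paste the invertible $2$-cells $\eta^{A_1}_f$, $\alpha^U_{u, L_{A_1}(f)}$, $\nu^u_f$ and $\sigma^u_f$ into precisely the coherences needed to present $h^{A_2}_{U(u)f}$ as an arrow over $U(A_1)$ and to make the two triangle identities for the pair $(s^u_f, t)$ hold up to invertible $2$-cell. In particular one must take care that the phrase ``the two endomorphisms have the same transpose'' genuinely delivers an \emph{invertible} $2$-cell between them, which is exactly where the invertibility of the homcategory units and counits $\mathfrak{i}, \mathfrak{e}$ of the biadjunctions is indispensable, in contrast with the mere bijection of hom-sets that suffices in the $1$-dimensional case.
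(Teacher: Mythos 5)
Your proposal is correct and follows essentially the same route as the paper's proof: the pseudo-inverse $t^u_f$ is obtained by transposing $h^{A_2}_{U(u)f}$ (equipped with the pasting of $\nu^u_f$, $\alpha^U_{u,L_{A_1}(f)}$ and $(\eta^{A_1}_f)^{-1}$) through the equivalence of homcategories of the biadjunction at $A_1$, and the two identities $s^u_f t \simeq \mathrm{id}_{A_f}$ and $t s^u_f \simeq \mathrm{id}_{A_{U(u)f}}$ are then extracted from the essential uniqueness of factorizations through the local units at $A_1$ and $A_2$ respectively. The only cosmetic difference is that where you invoke ``both endomorphisms have the same transpose'' together with invertibility of the homcategory units $\mathfrak{i}$, the paper performs the corresponding pastings explicitly to exhibit $t^u_f$ first as a pseudosection and then as a pseudoretract of $s^u_f$, which is the same argument.
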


\begin{remark}\label{pseudoinverse of the BC map}
The last condition unfolds as follows: there is a natural transformation $ \tau^u : \mathcal{A}/u L_{A_1} \Rightarrow L_{A_2} \mathcal{B}/U(u)$ and two natural, pointwisely invertible modifications $ \mathfrak{a}^u$, $ \mathfrak{b}^u$ whose pasting produces the identity of $ \sigma^u$
\[ % https://tikzcd.yichuanshen.de/#N4Igdg9gJgpgziAXAbVABwnAlgFyxMJZABgBpiBdUkANwEMAbAVxiRABkB9YAQU4CYAvgB1hAWzo4AFgGNGwAEKCA9AFUAFEwCUIQaXSZc+QijIBGKrUYs2XXgJHjJs+UrWadeg9jwEiZ8kt6ZlZEEFEJaTkGXhUmAAI7PjNBXX0QDB9jf1ILamCbMIjnaNjlBKTOFN1LGCgAc3giUAAzACcIMSQyEBwIJBT09s7u6j6kfnzrUPDhbHqJAD0mNNaOrsRJ3v7EAGYvEGGNgO2kXamQtlF5pZXqBjoAIxgGAAVDXxMQNqx6qRxVod1hMxjsTgUZqIcHQmMsQPcni93lk-GEsGBsLAaoIgA
\begin{tikzcd}
L_{A_2}\mathcal{B}/U(u) \arrow[d, equal] \arrow[r, "\sigma^u"] & \mathcal{A}/u L_{A_1} \arrow[d, equal] \arrow[ld, "\tau^u" description, "\mathfrak{a}^u \atop \simeq"{inner sep= 3pt}, "\mathfrak{b}^u \atop \simeq"'{inner sep=3pt}] \\
L_{A_2}\mathcal{B}/U(u) \arrow[r, "\sigma^u"']          & \mathcal{A}/u L_{A_1}                                           
\end{tikzcd} = % https://tikzcd.yichuanshen.de/#N4Igdg9gJgpgziAXAbVABwnAlgFyxMJZABgBpiBdUkANwEMAbAVxiRABkB9YAQU4CYAvgB1hAWzo4AFgGNGwAEKCA9AFUAFEwCUIQaXSZc+QijIBGKrUYs2XXgJHjJs+UrWadeg9jwEiZ8kt6ZlZEEFEJaTkGXhUmAAI7PjNBXX0QDB9jf1ILamCbMIjnaNjlBKTOFN1LGCgAc3giUAAzACcIMSQyEBwIJBT09s7u6j6kfnzrUPDhbHqJAD0mNNaOrsRJ3v7EAGYvEGGNgO2kXamQtlF5pZXqBjoAIxgGAAVDXxMQNqx6qRxVod1qNTnsLoVZtgxDAAI4ge5YMAzSBI+EgKQwOhQNgo1hjOhYBg4gh4kAPZ5vD7ZMKI7CwGqCIA
\begin{tikzcd}
L_{A_2}\mathcal{B}/U(u) \arrow[d, equal] \arrow[r, "\sigma^u"] \arrow[rd, "\simeq", phantom] & \mathcal{A}/u L_{A_1} \arrow[d, equal] \\
L_{A_2}\mathcal{B}/U(u) \arrow[r, "\sigma^u"']                                        & \mathcal{A}/u L_{A_1}          
\end{tikzcd} \]
Moreover, as equivalences in bicategories of pseudofunctors are exactly the pointwise equivalences, this is equivalent to require, for each $ f : B \rightarrow U(A_1) $, the existence of $(t^u_n, \tau^u_n) : u L_{A_1}(f) \rightarrow L_{A_2}(U(u)f)$ in $ \mathcal{A}/A_2$ pseudoinverse to $ (\sigma^u) = (s^u_n, \sigma^u_n) : L_{A_2}(U(u)f) \rightarrow u L_{A_1}(f)$, that is, such that
\[ % https://tikzcd.yichuanshen.de/#N4Igdg9gJgpgziAXAbVABwnAlgFyxMJZARgBoAmAXVJADcBDAGwFcYkQBBAfXJAF9S6TLnyEUABlLFqdJq3bdgAVQAUzAJQAzPv0EgM2PASJlpNBizaJOXYrqGHRRclJkX51xao3b7+4UZiJKTibnJWNpr8MjBQAObwRKCaAE4QALZIZCA4EEiSspbsADJcwNzkfCreWuogNIz0AEYwjAAKAU7WKVhxABY4fqkZSC45eYgF7hHM9SCNLe2dxtZYYNiwQ2mZiADMNLn55uElZRVVNZp1AsnbWQcTACzHRdZwAHrMXIQ3IMM7z3GoxeHhApXKtiqVzmC1aHUcKxAaw2bF+-yQgMOexBERwn2+WxGiGyWLG03YAB0Kdg4ul6PiwAACKn0XJoZnUrDpGAARxhawikDAbBofRg9Cg7CFIpy9CwjClBBlsKWCLESPWWE2aLuiDGWP2hVBVJw9GYDI5rIg7Kp2G5fIaAsVwrmYolzplpvlHphzThy3VyK1qMofCAA
\begin{tikzcd}
                                                                                                          & A_f \arrow[d, "L_{A_1}(f)" description] \arrow[rd, "t^u_n"]                &                                        \\
A_{U(u)f} \arrow[rd, "L_{A_2}(U(u)f)"'] \arrow[ru, "s^u_n"] \arrow[r, "\sigma^u_n \atop \simeq", phantom] & A_1 \arrow[d, "u" description] \arrow[r, "\tau^u_n \atop \simeq", phantom] & A_{U(u)f} \arrow[ld, "L_{A_2}(U(u)f)"] \\
                                                                                                          & A_2                                                                        &                                       
\end{tikzcd} = % https://tikzcd.yichuanshen.de/#N4Igdg9gJgpgziAXAbVABwnAlgFyxMJZARgBoAmAXVJADcBDAGwFcYkQBBAfXJAF9S6TLnyEUABlLFqdJq3bdgAVQAUzAJQAzPv0EgM2PASLkpMhizaJOXZWq06BQw6KJlx5uVZubdzkcYoZNI0FvLW-DIwUADm8ESgmgBOEAC2SGQgOBBIkrKW7AAyttzkfCqqGprqIDSM9ABGMIwACsJGYiBJWDEAFjh+IMlpSKZZOYh5Yd7FwKXllVo1TkMp6YiZ2UgAzKFe7HAAesxchCvD67vjo3sF1jjHp4MXGTRbiGV6L4gALG8TU321gAOsDsKkYABHWogRhYMDeSAImG9GD0KDsJFsN70LCMTEEbGwxrNNouQIgeHYWDPNY7f5IP75cIgUGpeg4XrJegAa2ADT4jzAAAJQRyIGhRWCsBDoXV4YjCSi0RjrFiYThcfi1Uq6iTWu1XNYqVgaXxKHwgA
\begin{tikzcd}
                                                                       & A_f \arrow[rd, "t^u_n"] \arrow[d, "\mathfrak{b}^u_n \atop \simeq", phantom] &                                        \\
A_{U(u)f} \arrow[rd, "L_{A_2}(U(u)f)"'] \arrow[ru, "s^u_n"] \arrow[rr, equal] & {} \arrow[d, "\simeq", phantom]                                             & A_{U(u)f} \arrow[ld, "L_{A_2}(U(u)f)"] \\
                                                                       & A_2                                                                         &                                       
\end{tikzcd} \] 
\[ % https://tikzcd.yichuanshen.de/#N4Igdg9gJgpgziAXAbVABwnAlgFyxMJZARgBpiBdUkANwEMAbAVxiRAEEB9AJhAF9S6TLnyEUABlLiqtRizZcAZv0EgM2PASJlp1es1aIOnYAFUAFEwCUiviqEbRRblJn75RpfbXDNY5JKUenKGxsTe6iJaKC5BsgYKnOF8MjBQAObwRKCKAE4QALZILiA4EEiS8R4gADImXNx85hbWilYg1Ax0AEYwDAAKvk5GWGDYsN55hUhkpeWIACzBCUZ1wFzETW0dIF29A0PRILlY6QAWOJP5RYvUZRXL1Uw7e32Djkcn55cCOddIAGY7vMAKyPUJrDZbdq-EBTG5guYPKqhZ6w+EzYHFcFsHAAPSYnEI6P+iBK90QQJRbDgBKJV2miFmFMq7lCAB12dh0gU6HSwAACTl0MpoIVcrAFGAARxeo1CkDArGoZxgdCgbEVytKdCwDE1BG1rwOHzEIFG41YJMZ5NBOKMnJwdCY-PFIogYs52Clss68oNSp2qvVAe1Tr1oZePTehzNFqwExSfCAA
\begin{tikzcd}
A_f \arrow[d, "L_{A_1}(f)"'] \arrow[r, "t^u_n"] \arrow[rd, "\sigma^u_n \atop \simeq", phantom] & A_{U(u)f} \arrow[d, "L_{A_2}(U(u)f)" description] \arrow[r, "s^u_n"] \arrow[rd, "\tau^u_n \atop \simeq", phantom] & A_f \arrow[d, "L_{A_1}(f)"] \\
A_1 \arrow[r, "u"']                                                                            & A_2                                                                                                               & A_1 \arrow[l, "u"]         
\end{tikzcd} = % https://tikzcd.yichuanshen.de/#N4Igdg9gJgpgziAXAbVABwnAlgFyxMJZARgBoAmAXVJADcBDAGwFcYkQBBAfXJAF9S6TLnyEUABlLFqdJq3bcAZv0EgM2PASLkpMhizaJOXZQKEbRRSVRr75R7sRXmRWlDpuyDCrk7NrhTTESUnE9OUNjYABVAApmAEpFPmcAizcQ6VsI9n4ZGCgAc3giUEUAJwgAWyQyEBwIJABmbO8jABkuYEc+WMUEkBpGegAjGEYABUDLI3KsQoALHFSK6uaaBqRJL3sQZkGQYbHJ6bcQOcXl-1WaxB16xsQAFlbdzu7fXv6VytuXh62r0i+2uv1qG0eAFYgewcAA9ZhcQigtaIaEAu4woxwBFIn6ouqbTGHLBgSJQCDMEaMNg0BYwehQJBgZiMRgbehYRjsSBk-G3ABsEMBO0iAB0xdgqjAAI4HRikyK82kgemMnkEFU4TncozK+WjcZTdJiECk7CwflIdFEoWi9gSqr0HALCr0ADWwHofFxYAABBLnRA0AHJVhpXKhoqNXy6QymXrNQdtVyYyqjkbTqbzVhLXxKHwgA
\begin{tikzcd}
                                                                        & A_{U(u)f} \arrow[rd, "s^u_n"] \arrow[d, "\mathfrak{a}^u_n \atop \simeq", phantom] &                             \\
A_f \arrow[d, "L_{A_1}(f)"'] \arrow[ru, "t^u_n"] \arrow[rr, equal] & {} \arrow[d, "\simeq", phantom]                                                   & A_f \arrow[d, "L_{A_1}(f)"] \\
A_1 \arrow[r, "u"']                                                     & A_2                                                                               & A_1 \arrow[l, "u"]         
\end{tikzcd} \]
\end{remark}

\begin{proof}
If we paste the composite 2-cell 
% https://q.uiver.app/?q=WzAsNCxbMCwwLCJCIl0sWzAsMSwiVShBX2YpIl0sWzIsMSwiVShBXzEpIl0sWzIsMCwiVShBX3tVKHUpZn0pIl0sWzAsMSwiaF57QV8xfV9mIiwyXSxbMSwyLCJVTF97QV8xfShmKSIsMl0sWzAsMywiaF57QV8yfV97VSh1KWZ9Il0sWzMsMiwiVSh1c151X2YpIl0sWzMsMSwiVShzXnVfZikiLDFdLFs2LDEsIlxcZXRhXntBXzJ9X3tVKHUpZn0gXFxhdG9wIFxcc2ltZXEiLDEseyJvZmZzZXQiOjEsInNob3J0ZW4iOnsic291cmNlIjoyMH0sInN0eWxlIjp7ImJvZHkiOnsibmFtZSI6Im5vbmUifSwiaGVhZCI6eyJuYW1lIjoibm9uZSJ9fX1dLFs3LDgsIlxcYWxwaGFeVV97KHNedV9mLCBMX3tBXzF9KGYpKX0gXFxhdG9wIFxcc2ltZXEiLDEseyJsYWJlbF9wb3NpdGlvbiI6NDAsIm9mZnNldCI6LTEsInNob3J0ZW4iOnsidGFyZ2V0IjoyMH0sInN0eWxlIjp7ImJvZHkiOnsibmFtZSI6Im5vbmUifSwiaGVhZCI6eyJuYW1lIjoibm9uZSJ9fX1dXQ==
\[\begin{tikzcd}[row sep=large, column sep=huge]
	B && {U(A_{U(u)f})} \\
	{U(A_f)} && {U(A_1)}
	\arrow["{h^{A_1}_f}"', from=1-1, to=2-1]
	\arrow["{UL_{A_1}(f)}"', from=2-1, to=2-3]
	\arrow[""{name=0, anchor=center, inner sep=0}, "{h^{A_2}_{U(u)f}}", from=1-1, to=1-3]
	\arrow[""{name=1, anchor=center, inner sep=0}, "{U(L_{A_1}(f)s^u_f)}", from=1-3, to=2-3]
	\arrow[""{name=2, anchor=center, inner sep=0}, "{U(s^u_f)}"{description}, from=1-3, to=2-1]
	\arrow["{\nu^u_f \atop \simeq}"{description}, shift right=1, Rightarrow, draw=none, from=0, to=2-1]
	\arrow["{\alpha^U_{(s^u_f, L_{A_1}(f))} \atop \simeq}"{description, pos=0.4}, shift left=2, Rightarrow, draw=none, from=1, to=2]
\end{tikzcd}\]
with the inverse of the unit 2-cell at $f$ as below
% https://q.uiver.app/?q=WzAsNCxbMCwwLCJCIl0sWzEsMSwiVShBX2YpIl0sWzEsMiwiVShBXzEpIl0sWzIsMCwiVShBX3tVKHUpZn0pIl0sWzAsMSwiaF57QV8xfV9mIiwxXSxbMSwyLCJVTF97QV8xfShmKSIsMV0sWzAsMiwiZiIsMix7Im9mZnNldCI6LTEsImN1cnZlIjozfV0sWzAsMywiaF57QV8yfV97VSh1KWZ9Il0sWzMsMiwiVSh1c151X2YpIiwwLHsib2Zmc2V0IjotMSwiY3VydmUiOi0zfV0sWzMsMSwiVShzXnVfZikiLDFdLFs3LDEsIlxcZXRhXntBXzJ9X3tVKHUpZn0gXFxhdG9wIFxcc2ltZXEiLDEseyJzaG9ydGVuIjp7InNvdXJjZSI6MjB9LCJzdHlsZSI6eyJib2R5Ijp7Im5hbWUiOiJub25lIn0sImhlYWQiOnsibmFtZSI6Im5vbmUifX19XSxbMSw2LCIoXFxldGFee0FfMX1fZileey0xfSBcXGF0b3AgXFxzaW1lcSIsMSx7ImxhYmVsX3Bvc2l0aW9uIjo0MCwic2hvcnRlbiI6eyJ0YXJnZXQiOjIwfSwic3R5bGUiOnsiYm9keSI6eyJuYW1lIjoibm9uZSJ9LCJoZWFkIjp7Im5hbWUiOiJub25lIn19fV0sWzEsOCwiXFxhbHBoYV5VX3soc151X2YsIExfe0FfMX0oZikpfSBcXGF0b3AgXFxzaW1lcSIsMSx7ImxhYmVsX3Bvc2l0aW9uIjo0MCwic2hvcnRlbiI6eyJ0YXJnZXQiOjIwfSwic3R5bGUiOnsiYm9keSI6eyJuYW1lIjoibm9uZSJ9LCJoZWFkIjp7Im5hbWUiOiJub25lIn19fV1d
\[\begin{tikzcd}[sep=large]
	B && {U(A_{U(u)f})} \\
	& {U(A_f)} \\
	& {U(A_1)}
	\arrow["{h^{A_1}_f}"{description}, from=1-1, to=2-2]
	\arrow["{UL_{A_1}(f)}"{description}, from=2-2, to=3-2]
	\arrow[""{name=0, anchor=center, inner sep=0}, "f"', shift right=1, curve={height=22pt}, from=1-1, to=3-2]
	\arrow[""{name=1, anchor=center, inner sep=0}, "{h^{A_2}_{U(u)f}}", from=1-1, to=1-3]
	\arrow[""{name=2, anchor=center, inner sep=0}, "{U(L_{A_1}(f)s^u_f)}", shift left=3, curve={height=-22pt}, from=1-3, to=3-2]
	\arrow["{U(s^u_f)}"{description}, from=1-3, to=2-2]
	\arrow["{\nu^u_f \atop \simeq}"{description}, Rightarrow, draw=none, from=1, to=2-2]
	\arrow["{(\eta^{A_1}_f)^{-1} \atop \simeq}"{description, pos=0.4}, Rightarrow, draw=none, from=2-2, to=0]
	\arrow["{\alpha^U_{(s^u_f, L_{A_1}(f))} \atop \simeq}"{description, pos=0.44}, Rightarrow, draw=none, from=2-2, to=2]
\end{tikzcd}\]
we get a morphism 
% https://q.uiver.app/?q=WzAsMixbMCwwLCJmIl0sWzMsMCwiVSh1c151X2YpIl0sWzAsMSwiXFxhbHBoYV5VX3soc151X2YsIExfe0FfMX0oZikpfSAqIGhee0FfMn1fe1UodSlmfSAoXFxldGFee0FfMX1fZileey0xfSJdXQ==
\[\begin{tikzcd}[sep=huge]
	f &&& {U(L_{A_1}(f)s^u_f)}
	\arrow["{(h^{A_2}_{U(u)f},\alpha^U_{(s^u_f, L_{A_1}(f))} * h^{A_2}_{U(u)f} (\eta^{A_1}_f)^{-1})}", from=1-1, to=1-4]
\end{tikzcd}\]
which is sent by the equivalence of homcategories to a morphism $ L_{A_1}(f) \rightarrow  L_{A_1}(f)s^u_f$ consisting of a 2-cell
% https://q.uiver.app/?q=WzAsMyxbMCwwLCJBX2YiXSxbMiwwLCJBX3tVKHUpZn0iXSxbMSwxLCJBXzEiXSxbMCwxLCJ0XnVfZiJdLFsxLDIsIkxfe0FfMn0oVSh1KWYpIiwwLHsibGFiZWxfcG9zaXRpb24iOjMwfV0sWzAsMiwiTF97QV8xfShmKSIsMl0sWzMsMiwiXFx0YXVedV9mIFxcYXRvcCBcXHNpbWVxIiwxLHsic2hvcnRlbiI6eyJzb3VyY2UiOjIwfSwic3R5bGUiOnsiYm9keSI6eyJuYW1lIjoibm9uZSJ9LCJoZWFkIjp7Im5hbWUiOiJub25lIn19fV1d
\[\begin{tikzcd}
	{A_f} && {A_{U(u)f}} \\
	& {A_1}
	\arrow[""{name=0, anchor=center, inner sep=0}, "{t^u_f}", from=1-1, to=1-3]
	\arrow["{L_{A_1}(f)s^u_f}"{pos=0.3}, from=1-3, to=2-2]
	\arrow["{L_{A_1}(f)}"', from=1-1, to=2-2]
	\arrow["{\theta^u_f \atop \simeq}"{description}, Rightarrow, draw=none, from=0, to=2-2]
\end{tikzcd}\]
Whiskering this 2-cell with $u$ and then pasting it with $ \sigma^u_f$ provides us with the desired 2-cell announced in \cref{pseudoinverse of the BC map} 
% https://q.uiver.app/?q=WzAsNCxbMCwwLCJBX2YiXSxbMiwwLCJBX3tVKHUpZn0iXSxbMSwxLCJBXzEiXSxbMSwyLCJBXzIiXSxbMCwxLCJ0XnVfZiJdLFsxLDIsIkxfe0FfMn0oVSh1KWYpIiwxXSxbMCwyLCJMX3tBXzF9KGYpIiwyXSxbMiwzLCJ1IiwyXSxbMSwzLCJMX3tBXzJ9KFUodSlmKSIsMCx7ImN1cnZlIjotM31dLFs0LDIsIlxcdGhldGFedV9mIFxcYXRvcCBcXHNpbWVxIiwxLHsibGFiZWxfcG9zaXRpb24iOjQwLCJzaG9ydGVuIjp7InNvdXJjZSI6MjB9LCJzdHlsZSI6eyJib2R5Ijp7Im5hbWUiOiJub25lIn0sImhlYWQiOnsibmFtZSI6Im5vbmUifX19XSxbOCwyLCJcXHNpZ21hXnVfZiBcXGF0b3AgXFxzaW1lcSIsMSx7InNob3J0ZW4iOnsic291cmNlIjoyMH0sInN0eWxlIjp7ImJvZHkiOnsibmFtZSI6Im5vbmUifSwiaGVhZCI6eyJuYW1lIjoibm9uZSJ9fX1dXQ==
\[\begin{tikzcd}
	{A_f} && {A_{U(u)f}} \\
	& {A_1} \\
	& {A_2}
	\arrow[""{name=0, anchor=center, inner sep=0}, "{t^u_f}", from=1-1, to=1-3]
	\arrow["{L_{A_2}(U(u)f)}"{description}, from=1-3, to=2-2]
	\arrow["{L_{A_1}(f)}"', from=1-1, to=2-2]
	\arrow["u"', from=2-2, to=3-2]
	\arrow[""{name=1, anchor=center, inner sep=0}, "{L_{A_2}(U(u)f)}", curve={height=-18pt}, from=1-3, to=3-2]
	\arrow["{\theta^u_f \atop \simeq}"{description, pos=0.4}, Rightarrow, draw=none, from=0, to=2-2]
	\arrow["{\sigma^u_f \atop \simeq}"{description}, Rightarrow, draw=none, from=1, to=2-2]  
\end{tikzcd} = 
% https://q.uiver.app/?q=WzAsMyxbMCwwLCJBX2YiXSxbMiwwLCJBX3tVKHUpZn0iXSxbMSwxLCJBXzEiXSxbMCwxLCJ0XnVfZiJdLFsxLDIsIkxfe0FfMn0oVSh1KWYpIiwwLHsibGFiZWxfcG9zaXRpb24iOjMwfV0sWzAsMiwiTF97QV8xfShmKSIsMl0sWzMsMiwiXFx0YXVedV9mIFxcYXRvcCBcXHNpbWVxIiwxLHsic2hvcnRlbiI6eyJzb3VyY2UiOjIwfSwic3R5bGUiOnsiYm9keSI6eyJuYW1lIjoibm9uZSJ9LCJoZWFkIjp7Im5hbWUiOiJub25lIn19fV1d
\begin{tikzcd}
	{A_f} && {A_{U(u)f}} \\
	& {A_1}
	\arrow[""{name=0, anchor=center, inner sep=0}, "{t^u_f}", from=1-1, to=1-3]
	\arrow["{L_{A_2}(U(u)f)}"{pos=0.3}, from=1-3, to=2-2]
	\arrow["{uL_{A_1}(f)}"', from=1-1, to=2-2]
	\arrow["{\tau^u_f \atop \simeq}"{description}, Rightarrow, draw=none, from=0, to=2-2]
\end{tikzcd}
\]
Moreover, because of the factorization above of the 2-cell $\alpha^U_{(s^u_f, L_{A_1}(f))} * h^{A_2}_{U(u)f} (\eta^{A_1}_f)^{-1}$ through the 2-cell $ \nu^u_f$, this map $ t^u_f$ is exhibited as a pseudosection of $ s^u_f$
% https://q.uiver.app/?q=WzAsMyxbMiwwLCJBX2YiXSxbMSwxLCJBX3tVKHUpZn0iXSxbMCwwLCJBX2YiXSxbMiwxLCJ0XnVfZiIsMl0sWzEsMCwic151X2YiLDJdLFsyLDAsIiIsMSx7ImxldmVsIjoyLCJzdHlsZSI6eyJoZWFkIjp7Im5hbWUiOiJub25lIn19fV0sWzUsMSwiXFxmcmFre2F9XnVfZiBcXGF0b3AgXFxzaW1lcSIsMSx7InNob3J0ZW4iOnsidGFyZ2V0IjoyMH0sInN0eWxlIjp7ImJvZHkiOnsibmFtZSI6Im5vbmUifSwiaGVhZCI6eyJuYW1lIjoibm9uZSJ9fX1dXQ==
\[\begin{tikzcd}
	{A_f} && {A_f} \\
	& {A_{U(u)f}}
	\arrow["{t^u_f}"', from=1-1, to=2-2]
	\arrow["{s^u_f}"', from=2-2, to=1-3]
	\arrow[""{name=0, anchor=center, inner sep=0}, Rightarrow, no head, from=1-1, to=1-3]
	\arrow["{\frak{a}^u_f \atop \simeq}"{description}, Rightarrow, draw=none, from=0, to=2-2]
\end{tikzcd}\]
and the desired 2-cell $ \tau^u_f$ can be defined as the pasting 
% https://q.uiver.app/?q=WzAsNSxbMCwwLCJBX2YiXSxbMCwxLCJBX2YiXSxbMiwwLCJBX3tVKHUpZn0iXSxbMiwxLCJBXzIiXSxbMSwxLCJBXzEiXSxbMCwxLCIiLDAseyJsZXZlbCI6Miwic3R5bGUiOnsiaGVhZCI6eyJuYW1lIjoibm9uZSJ9fX1dLFswLDIsInRedV9mIl0sWzIsMywiTF97QV8yfShVKHUpZikiXSxbMiwxLCJzXnVfZiIsMV0sWzEsNCwiTF97QV8xfShmKSIsMl0sWzQsMywidSIsMl0sWzYsMSwiXFxmcmFre2F9XnVfZiBcXGF0b3AgXFxzaW1lcSIsMSx7InNob3J0ZW4iOnsic291cmNlIjoyMH0sInN0eWxlIjp7ImJvZHkiOnsibmFtZSI6Im5vbmUifSwiaGVhZCI6eyJuYW1lIjoibm9uZSJ9fX1dLFs4LDMsIlxcc2lnbWFedV9mIFxcYXRvcCBcXHNpbWVxIiwxLHsic2hvcnRlbiI6eyJzb3VyY2UiOjIwfSwic3R5bGUiOnsiYm9keSI6eyJuYW1lIjoibm9uZSJ9LCJoZWFkIjp7Im5hbWUiOiJub25lIn19fV1d
\[\begin{tikzcd}[row sep=large]
	{A_f} && {A_{U(u)f}} \\
	{A_f} & {A_1} & {A_2}
	\arrow[Rightarrow, no head, from=1-1, to=2-1]
	\arrow[""{name=0, anchor=center, inner sep=0}, "{t^u_f}", from=1-1, to=1-3]
	\arrow["{L_{A_2}(U(u)f)}"{description}, from=1-3, to=2-3]
	\arrow[""{name=1, anchor=center, inner sep=0}, "{s^u_f}"{description}, from=1-3, to=2-1]
	\arrow["{L_{A_1}(f)}"', from=2-1, to=2-2]
	\arrow["u"', from=2-2, to=2-3]
	\arrow["{\frak{a}^u_f \atop \simeq}"{description}, Rightarrow, draw=none, from=0, to=2-1]
	\arrow["{\sigma^u_f \atop \simeq}"{description}, Rightarrow, draw=none, from=1, to=2-3]
\end{tikzcd}  \; = \; % % https://q.uiver.app/?q=WzAsNCxbMSwwLCJBX3tVKHUpZn0iXSxbMSwxLCJBXzIiXSxbMCwwLCJBX2YiXSxbMCwxLCJBXzEiXSxbMCwxLCJMX3tBXzJ9KFUodSlmKSIsMCx7ImxhYmVsX3Bvc2l0aW9uIjozMH1dLFsyLDAsInRedV9mIl0sWzMsMSwidSIsMl0sWzAsMywiTF97QV8xfXNedV9mIiwxLHsibGFiZWxfcG9zaXRpb24iOjYwfV0sWzIsMywiTF97QV8xfShmKSIsMl0sWzcsMSwiXFxzaWdtYV51X2YgXFxhdG9wIFxcc2ltZXEiLDEseyJzaG9ydGVuIjp7InNvdXJjZSI6MjB9LCJzdHlsZSI6eyJib2R5Ijp7Im5hbWUiOiJub25lIn0sImhlYWQiOnsibmFtZSI6Im5vbmUifX19XSxbMiw3LCJcXHRoZXRhXnVfZiBcXGF0b3AgXFxzaW1lcSIsMSx7ImxhYmVsX3Bvc2l0aW9uIjoyMCwibGV2ZWwiOjEsInN0eWxlIjp7ImJvZHkiOnsibmFtZSI6Im5vbmUifSwiaGVhZCI6eyJuYW1lIjoibm9uZSJ9fX1dXQ==
\begin{tikzcd}[row sep=large]
	{A_f} & {A_{U(u)f}} \\
	{A_1} & {A_2}
	\arrow["{L_{A_2}(U(u)f)}"{pos=0.3}, from=1-2, to=2-2]
	\arrow["{t^u_f}", from=1-1, to=1-2]
	\arrow["u"', from=2-1, to=2-2]
	\arrow[""{name=0, anchor=center, inner sep=0}, "{L_{A_1}s^u_f}"{description, pos=0.6}, from=1-2, to=2-1]
	\arrow["{L_{A_1}(f)}"', from=1-1, to=2-1]
	\arrow["{\sigma^u_f \atop \simeq}"{description, pos=0.8}, Rightarrow, draw=none, from=0, to=2-2]
	\arrow["{\theta^u_f \atop \simeq}"{description, pos=0.2}, draw=none, from=1-1, to=0]
\end{tikzcd} \; = \; % https://q.uiver.app/?q=WzAsNCxbMCwxLCJBX2YiXSxbMSwwLCJBX3tVKHUpZn0iXSxbMSwxLCJBXzIiXSxbMCwwLCJBX2YiXSxbMSwyLCJMX3tBXzJ9KFUodSlmKSIsMCx7ImxhYmVsX3Bvc2l0aW9uIjozMH1dLFswLDIsInVMX3tBXzF9KGYpIiwyXSxbMywxLCJ0XnVfZiJdLFszLDAsIiIsMSx7ImxldmVsIjoyLCJzdHlsZSI6eyJoZWFkIjp7Im5hbWUiOiJub25lIn19fV0sWzMsMiwiXFx0YXVedV9mIFxcYXRvcCBcXHNpbWVxIiwxLHsic3R5bGUiOnsiYm9keSI6eyJuYW1lIjoibm9uZSJ9LCJoZWFkIjp7Im5hbWUiOiJub25lIn19fV1d
\begin{tikzcd}[row sep=large]
	{A_f} & {A_{U(u)f}} \\
	{A_f} & {A_2}
	\arrow["{L_{A_2}(U(u)f)}"{pos=0.3, description}, from=1-2, to=2-2]
	\arrow["{uL_{A_1}(f)}"', from=2-1, to=2-2]
	\arrow["{t^u_f}", from=1-1, to=1-2]
	\arrow[Rightarrow, no head, from=1-1, to=2-1]
	\arrow["{\tau^u_f \atop \simeq}"{description, pos=0.3}, draw=none, from=1-1, to=2-2]
\end{tikzcd}\]
Now we want to show that this pseudosection is also a pseudoretract. Observe that $ t^u_f$ comes equipped with an invertible 2-cell provided by the invertible component of the pointwisely invertible unit $\frak{i}^{A_2}_{f,L_{A_1}(f)s^u_f}$ of the adjunction of homcategories, we shall denote as $\mu^u_f$ for concision:
% https://q.uiver.app/?q=WzAsMyxbMCwwLCJCIl0sWzEsMSwiVShBX2YpIl0sWzIsMCwiVShBX3tVKHUpZn0pIl0sWzAsMSwiaF57QV8xfV9mIiwyXSxbMCwyLCJoXntBXzJ9X3tVKHUpZn0iXSxbMSwyLCJVKHRedV9mKSIsMl0sWzQsMSwiXFxmcmFre2l9XntBXzJ9X3tmLFUoc151X2YpfSBcXGF0b3AgXFxzaW1lcSIsMSx7InNob3J0ZW4iOnsic291cmNlIjoyMH0sInN0eWxlIjp7ImJvZHkiOnsibmFtZSI6Im5vbmUifSwiaGVhZCI6eyJuYW1lIjoibm9uZSJ9fX1dXQ==
\[\begin{tikzcd}
	B && {U(A_{U(u)f})} \\
	& {U(A_f)}
	\arrow["{h^{A_1}_f}"', from=1-1, to=2-2]
	\arrow[""{name=0, anchor=center, inner sep=0}, "{h^{A_2}_{U(u)f}}", from=1-1, to=1-3]
	\arrow["{U(t^u_f)}"', from=2-2, to=1-3]
	\arrow["{\mu^u_f \atop \simeq}"{description}, Rightarrow, draw=none, from=0, to=2-2]
\end{tikzcd}\]
Now if we can paste the inverse of this invertible unit with the unit 2-cell of $ U(u)f$ and the image along $U$ of the 2-cell $\tau^u_f$ obtained above
% https://q.uiver.app/?q=WzAsNCxbMCwwLCJCIl0sWzIsMCwiVShBX2YpIl0sWzEsMSwiVShBX3tVKHUpZn0pIl0sWzEsMiwiVShBXzIpIl0sWzAsMiwiaF57QV8yfV97VSh1KWZ9IiwxXSxbMiwzLCJVTF97QV8yfShVKHUpZikiLDFdLFswLDMsIlUodSlmIiwyLHsiY3VydmUiOjN9XSxbMCwxLCJoXntBXzF9X2YiXSxbMSwyLCJVKHRedV9mKSIsMV0sWzEsMywiVSh1TF97QV8xfShmKSkiLDAseyJjdXJ2ZSI6LTN9XSxbMiw2LCJcXGV0YV57QV8yfV97VSh1KWZ9IFxcYXRvcCBcXHNpbWVxIiwxLHsibGFiZWxfcG9zaXRpb24iOjQwLCJzaG9ydGVuIjp7InRhcmdldCI6MjB9LCJzdHlsZSI6eyJib2R5Ijp7Im5hbWUiOiJub25lIn0sImhlYWQiOnsibmFtZSI6Im5vbmUifX19XSxbNywyLCIoXFxmcmFre2l9XntBXzJ9X3tmLFUoc151X2YpfSleey0xfSBcXGF0b3AgXFxzaW1lcSIsMSx7InNob3J0ZW4iOnsic291cmNlIjoyMH0sInN0eWxlIjp7ImJvZHkiOnsibmFtZSI6Im5vbmUifSwiaGVhZCI6eyJuYW1lIjoibm9uZSJ9fX1dLFsyLDksIlxcdGF1XnVfZiBcXGF0b3AgXFxzaW1lcSIsMSx7ImxhYmVsX3Bvc2l0aW9uIjo0MCwic2hvcnRlbiI6eyJ0YXJnZXQiOjIwfSwic3R5bGUiOnsiYm9keSI6eyJuYW1lIjoibm9uZSJ9LCJoZWFkIjp7Im5hbWUiOiJub25lIn19fV1d
\[\begin{tikzcd}[sep=large]
	B && {U(A_f)} \\
	& {U(A_{U(u)f})} \\
	& {U(A_2)}
	\arrow["{h^{A_2}_{U(u)f}}"{description}, from=1-1, to=2-2]
	\arrow["{UL_{A_2}(U(u)f)}"{description}, from=2-2, to=3-2]
	\arrow[""{name=0, anchor=center, inner sep=0}, "{U(u)f}"', curve={height=25pt}, shift right=1, from=1-1, to=3-2]
	\arrow[""{name=1, anchor=center, inner sep=0}, "{h^{A_1}_f}", from=1-1, to=1-3]
	\arrow["{U(t^u_f)}"{description}, from=1-3, to=2-2]
	\arrow[""{name=2, anchor=center, inner sep=0}, "{U(uL_{A_1}(f))}", curve={height=-30pt}, from=1-3, to=3-2]
	\arrow["{\eta^{A_2}_{U(u)f} \atop \simeq}"{description, pos=0.4}, Rightarrow, draw=none, from=2-2, to=0]
	\arrow["{(\mu^u_f)^{-1} \atop \simeq}"{description}, Rightarrow, draw=none, from=1, to=2-2]
	\arrow["{U(\tau^u_f) \atop \simeq}"{description, pos=0.4}, Rightarrow, draw=none, from=2-2, to=2]
\end{tikzcd}\]
then we get back the 2-cell from which we induced $ (s^u_f, \sigma^u_f)$ in \cref{expression of the BC mate}; but this 2-cell factorizes through the unit of $ U(u)f$, so $ s^u_f$ is exhibited as a section of $t^u_f$ 
% https://q.uiver.app/?q=WzAsNCxbMCwwLCJBX3tVKHUpZn0iXSxbMCwxLCJBX3tVKHUpZn0iXSxbMSwwLCJBX2YiXSxbMSwxLCJBXzIiXSxbMCwyLCJzXnVfZiJdLFswLDEsIiIsMix7ImxldmVsIjoyLCJzdHlsZSI6eyJoZWFkIjp7Im5hbWUiOiJub25lIn19fV0sWzIsMSwidF51X2YiLDFdLFsxLDMsIkxfe0FfMn0oVSh1KWYpIiwyXSxbMiwzLCJ1TF97QV8xfShmKSJdLFswLDYsIlxcZnJha3tifV51X2YgXFxhdG9wIFxcc2ltZXEiLDEseyJzaG9ydGVuIjp7InRhcmdldCI6MjB9LCJzdHlsZSI6eyJib2R5Ijp7Im5hbWUiOiJub25lIn0sImhlYWQiOnsibmFtZSI6Im5vbmUifX19XSxbNiwzLCJcXHRhdV51X2YgXFxhdG9wIFxcc2ltZXEiLDEseyJzaG9ydGVuIjp7InNvdXJjZSI6MjB9LCJzdHlsZSI6eyJib2R5Ijp7Im5hbWUiOiJub25lIn0sImhlYWQiOnsibmFtZSI6Im5vbmUifX19XV0=
\[\begin{tikzcd}[sep=large]
	{A_{U(u)f}} & {A_f} \\
	{A_{U(u)f}} & {A_2}
	\arrow["{s^u_f}", from=1-1, to=1-2]
	\arrow[Rightarrow, no head, from=1-1, to=2-1]
	\arrow[""{name=0, anchor=center, inner sep=0}, "{t^u_f}"{description}, from=1-2, to=2-1]
	\arrow["{L_{A_2}(U(u)f)}"', from=2-1, to=2-2]
	\arrow["{uL_{A_1}(f)}", from=1-2, to=2-2]
	\arrow["{\frak{b}^u_f \atop \simeq}"{description, pos=0.3}, Rightarrow, draw=none, from=1-1, to=0]
	\arrow["{\tau^u_f \atop \simeq}"{description}, Rightarrow, draw=none, from=0, to=2-2]
\end{tikzcd}\]
Moreover, one can check that those point-wise inverses $ (t^u_f, \tau^u_f)$ can be gathered into a pseudonatural transformation $ \tau_u$ which is a pseudoinverse of $\sigma^u$, exhibiting the latter as a pointwise equivalence, so that we have the desired 2-dimensional Beck-Chevalley condition. 
\end{proof}

\subsection{Bistable pseudofunctor}

As well as the 1-categorical notion of local right adjoint was equivalent to the notion of stable functor, the notion of local right biadjoint can be reformulated through the following 2-dimensional analog of stable functors:

%Now we turn to the property of the category of $\mathcal{L}$-objects, we want to exhibit as local object for a 2-dimensional geometry. To do so, we introduce a pseudo-functorial analog of stability inspired from [] and also []. 

\begin{definition}
Let $ U : \mathcal{A} \rightarrow \mathcal{B}$ a pseudofunctor. A \emph{$U$-generic morphism} is a 1-cell $n: B \rightarrow U(A)$ in $ \mathcal{B}$ such that for any pseudosquare 
\[ % https://tikzcd.yichuanshen.de/#N4Igdg9gJgpgziAXAbVABwnAlgFyxMJZABgBpiBdUkANwEMAbAVxiRACEQBfU9TXfIRQBGclVqMWbAKoAKAIIB9YQEpuvEBmx4CRUcPH1mrRCDlKATGp59tgomQPUjU0+eviYUAObwioADMAJwgAWyQyEBwIJFEJYzYA9UCQ8MQ46KQLZ0kTM1kmaw1gsIjqTMQAZhyE00JqBjoAIxgGAAV+HSEQIKxvAAscZJAStOqomMRs+Nd8miKU0sRIiumXPIAdDcY0froAAi26aLRDjexQmABHEAasMDzIB9uQfpg6KDYn1nK6LAYvgQfiBGi12p17KZ7thYNwKFwgA
\begin{tikzcd}
B \arrow[r, "f"] \arrow[d, "n"'] \arrow[rd, "\alpha \atop \simeq", phantom] & U(A_1) \arrow[d, "U(u)"] \\
U(A) \arrow[r, "U(v)"']                                                      & U(A_2)                  
\end{tikzcd} \] there exists a 1-cell $ w_\alpha : A \rightarrow A_1$ in $\mathcal{A}$, unique up to a unique invertible 2-cell, and unique pair of invertible 2-cell $ \omega_\alpha $ and $ \nu_\alpha$ related as below
\[ % https://tikzcd.yichuanshen.de/#N4Igdg9gJgpgziAXAbVABwnAlgFyxMJZABgBpiBdUkANwEMAbAVxiRACEQBfU9TXfIRQBGclVqMWbAKoAKAIIB9YQEpuvEBmx4CRUcPH1mrRCDlKATGp59tgomQPUjU0+esatA3SgDMpJwljNnl1W28hZAAWMWdJExAlYTDNfh1ImMCXBMsUr3Sif0o44NNucRgoAHN4IlAAMwAnCABbJDIQHAgkUSDXEHqUptae6i6kCxL+uSYPBua2xA7xxH8+hMJqBjoAIxgGAAU0+1NGrCqACxwhhaQ1lcn1mVkaNS3d-aO7HxAzy+ubANbqsxt1EL1ss8AO6KAA6sMYaAudDmQJGiBinTBAFYpgkYfDEcibujcVikAA2PFsJgkxaYlZUp6mGggd57Q7HH5-K50ymgpAAdmppnhrRgVTocIRDCRdAABISumhFbDsC0YABHNkgBhYMAbAisagXGB0KBsSAGnU4OhYBiWo067Ycr4RNj67CwPmIR4rDqQtyyMUayXSoko1V0ZWq9Va536w3Wk1mi2mK3Gzp2h3pp3sz5coQgT1Yb2A4aLZZgx6BkDwsBMcOy5FRmPwuParaJx3JkCm809zO2+2D50fTnfIslssULhAA
\begin{tikzcd}[column sep=large, row sep=large]
B \arrow[r, "f"] \arrow[d, "n"'] \arrow[rd, "\nu_\alpha \atop \simeq", phantom, very near start] & U(A_1) \arrow[d, "U(u)"]                                     \\
U(A) \arrow[r, "U(v)"'] \arrow[ru, "U(w_\alpha)" description]                               & U(A_2) \arrow[lu, "U(\omega_\alpha) \atop \simeq", phantom, very near start]  
\end{tikzcd} \textrm{ with } \begin{tikzcd}
{}                                       & A_1 \arrow[d, "u"]      \\                              
 A \arrow[ru, "w_\alpha"] \arrow[r, "v"'] & A_2 \arrow[lu, "\omega_\alpha \atop \simeq", phantom, very near start]
\end{tikzcd} \]
and such that moreover for any morphism of pseudosquares $ \phi : (f_1, \alpha_1) \Rightarrow (f_2, \alpha_2)$ of the following form 
% https://q.uiver.app/?q=WzAsNCxbMCwwLCJCIl0sWzAsMSwiVShBKSJdLFsyLDAsIlUoQV8xKSJdLFsyLDEsIlUoQV8yKSJdLFsxLDMsIlUodikiLDJdLFswLDEsIm4iLDJdLFsyLDMsIlUodSkiXSxbMCwyLCJmXzEiLDAseyJjdXJ2ZSI6LTN9XSxbMCwyLCJmXzIiLDFdLFsxLDIsIlxcYWxwaGEgXFxhdG9wIFxcc2ltZXEiLDEseyJzdHlsZSI6eyJib2R5Ijp7Im5hbWUiOiJub25lIn0sImhlYWQiOnsibmFtZSI6Im5vbmUifX19XSxbNyw4LCJcXHBoaSIsMCx7InNob3J0ZW4iOnsic291cmNlIjoyMCwidGFyZ2V0IjoyMH19XV0=
\[\begin{tikzcd}
	B && {U(A_1)} \\
	{U(A)} && {U(A_2)}
	\arrow["{U(v)}"', from=2-1, to=2-3]
	\arrow["n"', from=1-1, to=2-1]
	\arrow["{U(u)}", from=1-3, to=2-3]
	\arrow[""{name=0, anchor=center, inner sep=0}, "{f_1}", bend left=25, start anchor=50, from=1-1, to=1-3]
	\arrow[""{name=1, anchor=center, inner sep=0}, "{f_2}"{description}, from=1-1, to=1-3]
	\arrow["{\alpha_2 \atop \simeq}"{description}, draw=none, from=2-1, to=1-3]
	\arrow["\phi", shorten <=2pt, shorten >=2pt, Rightarrow, from=0, to=1]
\end{tikzcd} = \begin{tikzcd}
	B && {U(A_1)} \\
	{U(A)} && {U(A_2)}
	\arrow["{U(v)}"', from=2-1, to=2-3]
	\arrow["n"', from=1-1, to=2-1]
	\arrow["{U(u)}", from=1-3, to=2-3]
	\arrow["{f_1}", from=1-1, to=1-3]
	\arrow["{\alpha_1 \atop \simeq}"{description}, draw=none, from=2-1, to=1-3]
\end{tikzcd}\]
there is a unique 2-cell $\sigma_\phi : w_{\alpha_1} \Rightarrow w_{\alpha_2}$ in $ \mathcal{A}$ between the corresponding fillers such that we have the following 2-dimensional equalities in $\mathcal{A}$ and $\mathcal{B}$ respectively
\[  \begin{tikzcd}[row sep=large, column sep=large]
 {}                                       & A_1 \arrow[d, "u"]      \\                              
 A \arrow[ru, "w_{\alpha_1}"] \arrow[r, "v"'] & A_2 \arrow[lu, "\omega_{\alpha_1} \atop \simeq", phantom, very near start]
\end{tikzcd} 
= 
\begin{tikzcd}[row sep=large, column sep=large]
 {}                                       & A_1 \arrow[d, "u"]      \\                              
 A \arrow[ru, ""{name=U, inner sep=2pt}, "w_{\alpha_2}" description], \arrow[ru, bend left=40, ""{name= D, inner sep=0.1pt, below}, "w_{\alpha_1}"] \arrow[r, "v"'] & A_2 \arrow[lu, "\omega_{\alpha_2} \atop \simeq", phantom, very near start] \arrow[from=D, to=U, Rightarrow, "\sigma_\phi"]
\end{tikzcd} \]

\[ % https://tikzcd.yichuanshen.de/#N4Igdg9gJgpgziAXAbVABwnAlgFyxMJZABgBpiBdUkANwEMAbAVxiRACEQBfU9TXfIRRkATFVqMWbAKoAKAIIBKbrxAZseAkRGkx1es1aIQc+QH0Rynnw2Dt5cQanHTZgIxXV6gVpRvSbo6SRiDc4jBQAObwRKAAZgBOEAC2SGQgOBBIAMzUAEYwYFA56U4hce4q8UmpiOmZOfmFxYgAtNmlwWwVIiDUDHQFDAAK-JpCIAlYkQAWOFUgiSlp1A2I-hKGbIT9gzAjY3bGU7Pz1os1SBtrOpvOJrI0yrtDo7a+k9NzC0u1uRlZRC3MoyWRMTzVZbrVaA-4glyyADuZmAAB1UYw0DM6BYuM8QANXocPlgwNhYD9LnUYUgACwvfZvHwTUnk1j6LrGdFgJgo9GY7G4gAE-MyaBFqOwyRgAEc+gTSSFIGB2SAZjA6C1wARVTg6FgGNsdZSofSAUhgZyHuiUjBIji0RiGFicSI8RK6GKJVLZfKGIqjSr5erNYHdfrDcZlarCYziSyyVgKVwKFwgA
% https://q.uiver.app/?q=WzAsMyxbMCwwLCJCIl0sWzIsMCwiVShBXzEpIl0sWzAsMSwiVShBKSJdLFswLDEsImZfMiIsMV0sWzAsMSwiZl8xIiwwLHsiY3VydmUiOi0zfV0sWzAsMiwibiIsMl0sWzIsMSwiVSh3X3tcXGFscGhhXzJ9KSIsMl0sWzQsMywiXFxwaGkiLDAseyJzaG9ydGVuIjp7InNvdXJjZSI6MjAsInRhcmdldCI6MjB9fV0sWzAsNiwiXFxudV97XFxhbHBoYV8yfSBcXGF0b3AgXFxzaW1lcSIsMSx7InNob3J0ZW4iOnsidGFyZ2V0IjoyMH0sInN0eWxlIjp7ImJvZHkiOnsibmFtZSI6Im5vbmUifSwiaGVhZCI6eyJuYW1lIjoibm9uZSJ9fX1dXQ==
\begin{tikzcd}[sep=large]
	B && {U(A_1)} \\
	{U(A)}
	\arrow[""{name=0, anchor=center, inner sep=0}, "{f_2}"{description}, from=1-1, to=1-3]
	\arrow[""{name=1, anchor=center, inner sep=0}, "{f_1}"{start anchor=60}, bend left=25, start anchor=50, from=1-1, to=1-3]
	\arrow["n"', from=1-1, to=2-1]
	\arrow[""{name=2, anchor=center, inner sep=0}, "{U(w_{\alpha_2})}"', from=2-1, to=1-3]
	\arrow["\phi", shorten <=3pt, shorten >=3pt, Rightarrow, from=1, to=0]
	\arrow["{\nu_{\alpha_2} \atop \simeq}"{description}, Rightarrow, draw=none, from=1-1, to=2]
\end{tikzcd}
= 
% https://tikzcd.yichuanshen.de/#N4Igdg9gJgpgziAXAbVABwnAlgFyxMJZABgBpiBdUkANwEMAbAVxiRACEQBfU9TXfIRRkATFVqMWbAKoAKAIIBKbrxAZseAkRGkx1es1aIQc+QH0Rynnw2Dt5cQanHTZgIxXV6gVpRvSbo6SRiDc4jBQAObwRKAAZgBOEAC2SGQgOBBI-hKGbITUDHQARjAMAAr8mkIgCViRABY4KvFJqYg5mUg6uc4msjTKhSVllba+tfVNLSCJKUgAzNRdiD1OIXJMnq3ziOkrACzDpRVVdsZYYNiwIPrBbAA6D2BMZsBPjGgNdBZcAAQfTJoAEPbDJGAAR1uIAYlxCkDArGoDRgdCg+QISIydCwDAxiJmc3aRwyWVWdzyLlkTxSMEiP3eD0+31+ihBdCBILBkOhsIJxgRWJRaPxWJwOLxAsxvJGp3GNUu11Y1lmbTSyzJS16ITi7kJao6GsW1FKYHRiAW6XWMlkAHc3h8GF8fm4uEMYbKxj4FVcsDcVUTskaLSaYGakABaS0Uvpye2M5k-ERumUnL3VNiKv3KihcIA
% https://q.uiver.app/?q=WzAsMyxbMCwwLCJCIl0sWzIsMCwiVShBXzEpIl0sWzAsMSwiVShBKSJdLFswLDEsImZfMSJdLFswLDIsIm4iLDJdLFsyLDEsIlUod197XFxhbHBoYV8xfSkiLDEseyJsYWJlbF9wb3NpdGlvbiI6NDB9XSxbMiwxLCJVKHdfe1xcYWxwaGFfMn0pIiwxLHsiY3VydmUiOjR9XSxbMCw1LCJcXG51X3tcXGFscGhhXzF9IFxcYXRvcCBcXHNpbWVxIiwxLHsic2hvcnRlbiI6eyJ0YXJnZXQiOjIwfSwic3R5bGUiOnsiYm9keSI6eyJuYW1lIjoibm9uZSJ9LCJoZWFkIjp7Im5hbWUiOiJub25lIn19fV0sWzUsNiwiVShcXHNpZ21hX1xccGhpKSIsMCx7ImxhYmVsX3Bvc2l0aW9uIjo0MCwic2hvcnRlbiI6eyJzb3VyY2UiOjMwLCJ0YXJnZXQiOjMwfX1dXQ==
\begin{tikzcd}[sep=large]
	B && {U(A_1)} \\
	{U(A)}
	\arrow["{f_1}", from=1-1, to=1-3]
	\arrow["n"', from=1-1, to=2-1]
	\arrow[""{name=0, anchor=center, inner sep=0}, "{U(w_{\alpha_1})}"{description, pos=0.4}, from=2-1, to=1-3]
	\arrow[""{name=1, anchor=center, inner sep=0}, "{U(w_{\alpha_2})}"{description}, curve={height=28pt}, from=2-1, to=1-3]
	\arrow["{\nu_{\alpha_1} \atop \simeq}"{description}, Rightarrow, draw=none, from=1-1, to=0]
	\arrow["{U(\sigma_\phi)}"{pos=0.4}, shorten <=7pt, shorten >=6pt, Rightarrow, from=0, to=1]
\end{tikzcd}\]
\end{definition}

\begin{remark}
This is a pseudoversion of the notion of generic morphism in \cite{weber2007familial}[5.1]. Beware that, in the 2-dimensional part of the diagonalization condition, we consider morphisms of pseudosquare with only a 2-cell on the top side in $\mathcal{B}$ but no 2-cell on the bottom side in $\mathcal{A}$. Functoriality relatively to more general shapes of morphisms of pseudosquares (and even of morphisms of lax-squares) shall be involved later in the notion of \emph{lax-generic morphism} in the next section. \\

An explanation to this restriction will appear when comparing bistable pseudofunctors and local right biadjoints: while the 2-cell on the top side in a morphism of pseudosquare $ \phi : (f_1, \alpha_1) \Rightarrow (f_2,\alpha_2)$ as above can be represented by a 2-cell in the pseudoslice over $B/U(A_2)$, further 2-dimensional data on the bottom side $ v_1 \Rightarrow v_2$ could not be represented in such a way unless moving to lax-slices. This is also related to the truncation of the Beck-Chevalley condition for local right biajoints. All those restrictions will be lifted in the lax version.  
\end{remark}

\begin{definition}
A pseudofunctor $ U : \mathcal{A} \rightarrow \mathcal{B}$ is \emph{bistable} if any arrow of the form $ f: B \rightarrow U(A)$ admits a factorization 
\[ % https://tikzcd.yichuanshen.de/#N4Igdg9gJgpgziAXAbVABwnAlgFyxMJZABgBpiBdUkANwEMAbAVxiRACEQBfU9TXfIRQAmclVqMWbAKoAKAIIBKbrxAZseAkQCMpbePrNWiEHPkB9AGbKefDYJ1jqhqSe7iYUAObwioSwBOEAC2SGQgOBBIuhJGbJYq-kGhiOGRSKKxruBWINQMdABGMAwACvyaQiABWF4AFjiJIIEhGdTpiDEuxqayTFbK+UUl5fZaJjX1jbbNyUgAzO1RiJndbAA664xodXRWAASbdJFoh+vYwTAAjnkgDFhgPZCPt3UwdFBsz6ztdFgMXwIPzuwzKFQcJge2Fg7i4QA
\begin{tikzcd}
B \arrow[rr, "f"] \arrow[rd, "n_f"'] & {} \arrow[d, "\alpha_f \atop \simeq", phantom] & U(A) \\
                                     & U(A_f) \arrow[ru, "U(u_f)"']                   &     
\end{tikzcd} \]
with $\alpha: U(u_f) n_f \simeq f$ invertible, which is moreover unique, in the sense that for any other invertible 2-cell of the form $ \alpha : U(u)n \simeq f$ through the image of an object $ A'$ in $\mathcal{A}$, there exists an equivalence $ A' \simeq A$ in $\mathcal{A}$, unique up to unique invertible 2-cell, and a unique pair of invertible 2-cells $ \nu_\alpha $ in $\mathcal{B}$ and $ \omega_\alpha$ in $\mathcal{A}$ such that 
\[ % https://tikzcd.yichuanshen.de/#N4Igdg9gJgpgziAXAbVABwnAlgFyxMJZABgBoBGAXVJADcBDAGwFcYkQAhEAX1PU1z5CKAEwVqdJq3YBVABQBBAJQ8+IDNjwEi5UiIkMWbRCHkKA5Ct79NQneJqHpJswH0AZlbUbB2lLuIDKWMQVRtfYRI9IKN2MPUBLUixfUdguOsE2z9kAGYHSViTAF54nySiABYCpxCuTPK7FAA2GvSXRS9wipQAVmi0otNFSzLEpuR+qkHnUO4JGCgAc3giUHcAJwgAWyQyEBwIJDEQRnoAIxhGAAVxvxANrCWACxwQGZDCTM2d45pDpC6U4XK63bLCB5PV7vQqzeTMLogH67RD7AGIfKwz4eGFnS43O4QrBgbCweLIpCY9FA2qyOTMDwqGh40GE9jE0lsb5bFEndGY2kdGCuAA6IqYaGe9CUAAIxfRDmg5eK0GgtgAPXEggng9kkrBk7m-VH-I6IIGXMBQSn7QVI8k8pDVA5mgXtEDyxiS+geZUKiBKsXYbYwACOuOJnwIbBozxg9GtJkgYBjB3oWEY7GTqZZOoies5DuNVLN-Sx7DFYAZwE93u4fsVyuDYYjKaz0ZhcYT7bb-3TmaTHeZ2rB+ZMHINXLUFIxpqQrXLHTFOxgSx9talsvljaDWBD4eZkZ7qa7ifAQ7TGePWvxo56IAnhunjsQAA45+bbe7CMPb2yTI8LxvEaKLvi6SAAJwfOw7hFii5BomaUGLsMCI3qyuoAVCwHPsa5BAtSX5DBu9ANgGTZ7i2h5toOvYgKe159letE5iO-4PvqhqUNwQA
\begin{tikzcd}[column sep=large, row sep=large]
   B  \arrow[rd, "n"'] \arrow[r, "n_f" description] \arrow[rr, "f", bend left, "\alpha_f \atop \simeq"{name=U, below, inner sep=5pt}] & U(A_f) \arrow[r, "U(u_f)" description] \arrow[ld, "\nu_{\alpha} \atop \simeq", phantom, very near start] \arrow[rd, "U(\omega_\alpha) \atop \simeq", phantom, very near start] & U(A)  \\
{}                                                                         & U(A') \arrow[ru, "U(u)"'] \arrow[u, "U(e_\alpha) \atop \simeq" description]                                                                 & {}    
\end{tikzcd} = \begin{tikzcd}
 B \arrow[rd, "n"'] \arrow[rr, "f"] & {} \arrow[d, "\alpha \atop \simeq", phantom, near start] & U(A)\\
            & U(A') \arrow[ru, "U(u)"']                    &     
\end{tikzcd} \] 
\end{definition}

\begin{theorem}
A functor $U : \mathcal{A} \rightarrow \mathcal{B}$ is bistable if and only if it is a local right biadjoint.
\end{theorem}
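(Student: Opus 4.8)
The plan is to prove, exactly as in the $1$-dimensional case recalled in the introduction, that \emph{the local units are the $U$-generic $1$-cells} and that the biadjunction unit triangle is the bistable factorization. Concretely, I will set up the dictionary $n_f := h^A_f$, with right part $u_f := L_A(f)$ regarded as the object $A_f \to A$ of $\mathcal{A}/A$ and $\alpha_f := \eta^A_f$, and conversely recover the pseudoslice-wise left biadjoint from the generic factorization by $L_A(f) := u_f$. Each direction then splits into an object/$1$-cell part, governed by the equivalence of homcategories $F^A_{f,u} \dashv G^A_{f,u}$ (resp.\ by the diagonalization property of generics), and a $2$-cell part, governed by the pseudonaturality of that equivalence (resp.\ by the $2$-dimensional clause $\sigma_\phi$ of the generic definition). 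The two passages are formally inverse once these dictionaries are checked to respect all the coherence $2$-cells.

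For \emph{local right biadjoint $\Rightarrow$ bistable} I would take the factorization of $f : B \to U(A)$ to be the unit triangle, so that $U(L_A(f))\,h^A_f \simeq f$ through $\eta^A_f$, and then show that $h^A_f : B \to U(A_f)$ is $U$-generic with codomain object $A_f$. Given a pseudosquare with left leg $h^A_f$, bottom $U(v)$ for $v : A_f \to A_2$, right $U(u)$ for $u : A_1 \to A_2$ and invertible $\alpha : U(u)g \simeq U(v)h^A_f$, the composite $U(v)h^A_f$ is an object of $\mathcal{B}/U(A_2)$ and $(g,\alpha)$ is a morphism from it to $U_{A_2}(u)$; applying $G^{A_2}_{U(v)h^A_f,\,u}$ produces a morphism out of $L_{A_2}(U(v)h^A_f)$ in $\mathcal{A}/A_2$. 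The decisive step is to identify this source: the Beck--Chevalley equivalence $\sigma^v$ of the preceding proposition (see \cref{expression of the BC mate} for its construction and \cref{pseudoinverse of the BC map} for its pseudoinverse) gives $L_{A_2}(U(v)h^A_f) \simeq v\,L_{A_f}(h^A_f)$, and the idempotency of local units $L_{A_f}(h^A_f) \simeq 1_{A_f}$ then yields $L_{A_2}(U(v)h^A_f) \simeq v$, whence the filler $w_\alpha : A_f \to A_1$ with $u\,w_\alpha \simeq v$ (providing $\omega_\alpha$) and $U(w_\alpha)h^A_f \simeq g$ (providing $\nu_\alpha$). Essential uniqueness of $w_\alpha$ up to a unique invertible $2$-cell is the essential fullness and faithfulness of the equivalence of homcategories, and the clause $\sigma_\phi$ is read off from the pseudonaturality of that equivalence in the top variable $g$; the essential uniqueness of the factorization itself comes from the same equivalence applied over $A$.

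For the converse \emph{bistable $\Rightarrow$ local right biadjoint} I would build $L_A : \mathcal{B}/U(A) \to \mathcal{A}/A$ from the generic factorization. On objects, $f \mapsto u_f$. On a $1$-cell $(g,\gamma) : f_1 \to f_2$ of $\mathcal{B}/U(A)$, the diagonalization property of the generic $n_{f_1}$, applied to the pseudosquare with top $n_{f_2}g$, left $n_{f_1}$, right $U(u_{f_2})$, bottom $U(u_{f_1})$ and structural $2$-cell built from $\alpha_{f_1},\gamma,\alpha_{f_2}$, yields $L_A(g) : A_{f_1} \to A_{f_2}$ together with the slice structure $2$-cell $u_{f_2}L_A(g) \simeq u_{f_1}$; on $2$-cells one uses the clause $\sigma_\phi$. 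Pseudofunctoriality of $L_A$ and preservation of identities and composites up to coherent invertible $2$-cells follow from the uniqueness parts of the generic definition. The factorization data $(n_f,\alpha_f)$ then supply the unit $\eta^A : \mathrm{Id} \Rightarrow U_A L_A$, while the diagonalization property is precisely the equivalence of homcategories $\mathcal{A}/A[L_A(f),u] \simeq \mathcal{B}/U(A)[f,U_A(u)]$, pseudonatural in both variables; this is exactly a biadjunction $L_A \dashv U_A$.

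The main obstacle is not a single conceptual leap but the $2$-categorical coherence bookkeeping, and it is concentrated in \emph{the idempotency, equivalently the genericity, of the local units}. Since a morphism is generic if and only if the right part of its own factorization is an equivalence, idempotency and ``units are generic'' are the same statement, so it cannot be obtained from formal adjunction theory at a single base; it is exactly here that the equivalence of homcategories and the Beck--Chevalley condition of the preceding proposition must be combined, transporting the diagonalization data between the base $A_2$ and the base $A_f$ and checking that $\omega_\alpha$ and $\nu_\alpha$ reassemble $\alpha$ on the nose. In the converse direction the hard part is upgrading this pointwise, $1$-cell-level genericity into a genuine pseudofunctor $L_A$ and a \emph{pseudonatural} biadjunction, i.e.\ verifying that the invertible $2$-cells produced by diagonalization are coherent and make the triangle identities hold up to invertible modifications; the restricted form of the $2$-dimensional clause — morphisms of pseudosquares carrying a $2$-cell only on the top side — is precisely what is available, matching the truncated Beck--Chevalley condition discussed in \cref{pseudoinverse of the BC map}.
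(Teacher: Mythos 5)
Your proposal follows the same route as the paper's proof: the identical dictionary $n_f = h^A_f$, $u_f = L_A(f)$, $\alpha_f = \eta^A_f$; the same construction of $L_A$ on $1$-cells and $2$-cells by generic diagonalization in the converse direction; and the same two tools, the equivalences of homcategories and the Beck--Chevalley equivalences, in the forward direction. (A cosmetic slip: in the paper's conventions the functor sending $(g,\alpha) : U(v)h^A_f \to U_{A_2}(u)$ to a morphism out of $L_{A_2}(U(v)h^A_f)$ is $F^{A_2}$, not $G^{A_2}$.)

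There is, however, one genuine gap, and it sits at what you yourself call the decisive step. Your proof that $h^A_f$ is generic takes the idempotency $L_{A_f}(h^A_f) \simeq 1_{A_f}$ as an input, and your only justification for idempotency is the remark that it ``is the same statement'' as genericity of the units: as written, the lemma your argument rests on is justified by being equivalent to the conclusion you want, so the forward direction does not close. The circle must be broken by an independent proof of idempotency, and the transport you need is not between the bases $A_2$ and $A_f$ of the given pseudosquare, but between $A_f$ and the original base $A$, along $L_A(f)$. Concretely: given $w : A' \to A_f$ and a morphism $(k,\kappa) : h^A_f \to U_{A_f}(w)$ in $\mathcal{B}/U(A_f)$, whisker with $U(L_A(f))$ and paste with $\eta^A_f$ (and the compositor of $U$) to obtain a morphism $f \to U_A(L_A(f)w)$ in $\mathcal{B}/U(A)$; the equivalence of homcategories over $A$ turns this into $(m,\mu) : L_A(f) \to L_A(f)w$ with $U(m)h^A_f \simeq k$; then $(wm,\mu)$ is an endomorphism of $L_A(f)$ in $\mathcal{A}/A$ whose image under that same equivalence is isomorphic to the image of the identity, whence $wm \simeq 1_{A_f}$, so $m$ is a pseudosection of $w$; checking that this assignment is pseudoinverse to $s \mapsto U(s)h^A_f$ exhibits $1_{A_f}$ with the universal property of $L_{A_f}(h^A_f)$. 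For comparison, the paper's genericity argument avoids needing idempotency inside the proof by quantifying over all $n$ such that $L_A(n)$ is an equivalence (it consumes the supplied data $e_n$, $t^u_n$ rather than idempotency); but it, too, invokes the units-are-generic fact only tacitly at the very end, when it declares $(h^A_f, L_A(f), \eta^A_f)$ to be the generic factorization. So your diagnosis of where the content is concentrated is accurate; what is missing is the actual argument, and without it the central claim of the forward direction is unproven.
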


\begin{proof}
Suppose $ U$ is bistable, and $A$ is a an object of $ \mathcal{A}$. For any arrow $ u : A' \rightarrow A$ in $\mathcal{A}$ and any $ f : B \rightarrow U(A)$ in $\mathcal{B}$, define the pseudofunctor $L_A : \mathcal{B}/U(A) \rightarrow \mathcal{A}/A $ as follows:\begin{itemize}
    \item on a 0-cell $ f : B \rightarrow U(A)$, $L_A $ returns the canonical arrow in $ \mathcal{A}$ provided by right part of the generic factorization of $f$, that is, $ L_A(f) =u_f$ with $ \alpha_f : U(u_f)n_f \simeq f$;
    \item on a 1-cell 
    \[ % https://tikzcd.yichuanshen.de/#N4Igdg9gJgpgziAXAbVABwnAlgFyxMJZARgBpiBdUkANwEMAbAVxiRAFUAKAQQEoQAvqXSZc+QigAMpSVVqMWbAEIB9YoOEgM2PASIAmGXPrNWiEKv0aRO8UTKzqJxecFyYUAObwioAGYAThAAtkhkIDgQSNLypmx+aiDUDHQARjAMAAqiuhIgAVieABY41iCBIWHUkUiGsS4gnmUVoYh1NYgxKelZOXbmDDB+pU4KZuUqVkL+Qa0AzNVRnaNx5gA6a4xoRXQABBt0kWj7a9jBMACOSSAMWGDjkPfXRTB0UGyPrNV0WAwfBF8bmkMtlbHpzHdsLA3AIgA
\begin{tikzcd}
B_1 \arrow[rd, "f_1"'] \arrow[rr, "g"] & {} \arrow[d, "\alpha \atop \simeq", phantom] & B_2 \arrow[ld, "f_2"] \\
                                       & U(A)                                         &                      
\end{tikzcd} \]
it returns the corresponding right 2-cell in $\mathcal{A}$
\[ % https://tikzcd.yichuanshen.de/#N4Igdg9gJgpgziAXAbVABwnAlgFyxMJZARgBpiBdUkANwEMAbAVxiRAEEQBfU9TXfIRQAGUsKq1GLNuwD6wAGaziXbrxAZseAkQBMYifWatEHeUt2qefLYKJlx1I9NPcJMKAHN4RUAoBOEAC2SGQgOBBIopLGbAAysuwAFErEAJQg1Ax0AEYwDAAK-NpCIP5YngAWOGp+gSGIYRFI+jEuIADu8gA63YxolXQABL39g7IKVuoBwS3UzYjR2XmFxXamDDAKNU5SJiAJyRYZ1iAzDQDM85GLu7GmvcEwnnQ9fQwDw6Mf45MjfRE0P9sEEYABHTIgBhYMD7SCwyGVGB0KBseGseZ0LAMNEEDFQ3L5Iq2HSmGHYWBuLhAA
\begin{tikzcd}
A_{f_1} \arrow[rd, "L_A(f_1)"'] \arrow[rr, "w_{\alpha \alpha_f}"] & {} \arrow[d, "\omega_{\alpha \alpha_f} \atop \simeq", phantom] & A_{f_2} \arrow[ld, "L_A(f_2)"] \\
                                                                  & A                                                              &                               
\end{tikzcd} \]
provided by the generic diagonalization of the 2-cell 
\[ % https://tikzcd.yichuanshen.de/#N4Igdg9gJgpgziAXAbVABwnAlgFyxMJZABgBpiBdUkANwEMAbAVxiRACEB9ARhAF9S6TLnyEUZblVqMWbAKoAKAIKdgAMx58AlP0EgM2PASIAmclPrNWiEIpXrOJ7bqGHRp0pOqXZNuzoFXEWMUbnNvGWsOR34pGCgAc3giUDUAJwgAWyQyEBwIJDDpKzYwVQ1uPhBqBjoAIxgGAAVhIzEQNKwEgAscFxB0rKQzPILEAGYIkr8AeiUFABlOeY0TLQC9QezEXPykABYp3xAE-q2D6j3EEZ8osocnM4ztoqvJ4uO5OcXlhVX16ogWoNZqtdw2To9PqBAbPHKXMbvW5sAA6KMYaG6dHKjj4AD1gABaSpojFY0kMTHYhyVAAEpPyaHpKOwmRgAEdAQwsGA7gRWNRujA6FBSvzATg6FgGGLeVz6o0Wm4QiAedhYLE+EA
\begin{tikzcd}
B_1 \arrow[d, "n_{f_1}"'] \arrow[r, "g"] \arrow[rrd, "\alpha_{f_2}^{-1}\alpha\alpha_{f_1} \atop \simeq", phantom] & B_2 \arrow[r, "n_{f_2}"] & U(A_{f_2}) \arrow[d, "U_AL_A(f_2)"] \\
U(A_{f_1}) \arrow[rr, "U_AL_A(f_2)"']                                                                           &                          & U(A)                                 
\end{tikzcd} \] 
where $ \alpha_{f_1}$, $\alpha_{f_2}$ are the canonical invertible 2-cell involved in the generic factorization of $f_1$, $f_2$ respectively;
\item for a 2-cell 
\[ % https://tikzcd.yichuanshen.de/#N4Igdg9gJgpgziAXAbVABwnAlgFyxMJZARgBpiBdUkANwEMAbAVxiRAFUAKAQQEoQAvqXSZc+QigAMpSVVqMWbAEIB9YoOEgM2PASIAmGXPrNWiEKv0aRO8UTKzqJxecFyYUAObwioAGYAThAAtkhkIDgQSNLypmx+aiDUDHQARjAMAAqiuhIgAVieABY41iCBIWHUkUiGsS4gnkkg6WBQSADMkkL+QaGIdTWIMSnpWTl25gwwfqVOCmblKlY95X2d1VHD83HmADp7jGhFdAAEB3SRaOd72MEwAI7NDFhgi5BvzUUwdO3mH6xqnQsAw2ADnmkMtlbHpzK9sLAyhV+uEhnVnItPAByCFjaFiWEgeFYRECCgCIA
\begin{tikzcd}
B_1 \arrow[rd, "f_1"'] \arrow[rr, "g", bend left, ""{name=U, inner sep=0.1pt, below}] \arrow[rr, "g'" description, ""{name=D, inner sep=1pt}] & {} \arrow[d, "\alpha' \atop \simeq", phantom] & B_2 \arrow[ld, "f_2"] \\
                                                                               & U(A)    \arrow[from=U, to=D, Rightarrow, "\phi"]                                    &                      
\end{tikzcd} = \begin{tikzcd}
B_1 \arrow[rd, "f_1"'] \arrow[rr, "g"] & {} \arrow[d, "\alpha \atop \simeq", phantom] & B_2 \arrow[ld, "f_2"] \\
                                       & U(A)                                         &                      
\end{tikzcd} \]
it returns the composite 2-cell 
 \[ % https://tikzcd.yichuanshen.de/#N4Igdg9gJgpgziAXAbVABwnAlgFyxMJZARgBpiBdUkANwEMAbAVxiRAEEQBfU9TXfIRQAGUsKq1GLNuwD6wAGaziXbrxAZseAkQBMYifWatEHeUt2qefLYKJlx1I9NPcJMKAHN4RUAoBOEAC2SGQgOBBIopLGbAAysuwAFErEAJQg1Ax0AEYwDAAK-NpCIP5YngAWOGp+gSGIYRFI+jEuIADu8gA63YxolXTmspYAesAAtCq9-YMA5DMMA0OKylyqWbn5RbY6plhg2LC1IAHBLdTNiNHZeYXFdqYMMAo1TlImIAnJFhnWp-UkABmS6Ra7vWKmXrBGCeFaLZbDMaTaZ9JaDBGDYYqLgAAhmETQ+O62CCMAAjpkQAwDp9IGBWNRKjA6FA2PTGeE6FgGOyCJzbtsHnsQAcjqx-mcGk0wa08mA2YggdFnJ8usBMSsLFxxlMuJrNdj1m4uEA
\begin{tikzcd}[row sep=large, column sep=large]
A_{f_1} \arrow[rd, "L_A(f_1)"'] \arrow[rr, "w_{\alpha_{f_2}^{-1}\alpha'\alpha_{f_1}}" description, ""{name=D, inner sep=2pt}] \arrow[rr, "w_{\alpha_{f_2}^{-1}\alpha\alpha_{f_1}}", bend left, ""{name=U, inner sep=0.1pt, below}] & {} \arrow[d, "\omega_{\alpha_{f_2}^{-1}\alpha'\alpha_{f_1}} \atop \simeq", phantom] & A_{f_2} \arrow[ld, "L_A(f_2)"] \\
                                                                                 \arrow[from=U, to=D,"\sigma_\phi", Rightarrow]                                                                                   & A                                                                                  &                               
\end{tikzcd} = % https://tikzcd.yichuanshen.de/#N4Igdg9gJgpgziAXAbVABwnAlgFyxMJZARgBpiBdUkANwEMAbAVxiRAEEQBfU9TXfIRQAGUsKq1GLNuwD6wAGaziXbrxAZseAkQBMYifWatEHeUt2qefLYKJlx1I9NPcJMKAHN4RUAoBOEAC2SGQgOBBIopLGbAAysuwAFErEAJQg1Ax0AEYwDAAK-NpCIP5YngAWOGp+gSGIYRFI+jEuIADu8gA63YxolXTmspYAesAAtCq9-YMzDANDispcqlm5+UW2OqYMMAo11iABwS3UzYjR2XmFxXa7+zVOUiYgCckWGUcnDQDM55FLs9YqZesEYJ4lvNFsMxpNpn0FnNETDliouAACGYRNBY7rYIIwACOmRADCwYFekEppMqMDoUDY1NY5zoWAYTIILLJG1u21KFOwsDcXCAA
\begin{tikzcd}[row sep=large, column sep=large]
A_{f_1} \arrow[rd, "L_A(f_1)"'] \arrow[rr, "w_{\alpha_{f_2}^{-1}\alpha\alpha_{f_1}}"] & {} \arrow[d, "\omega_{\alpha_{f_2}^{-1}\alpha\alpha_{f_1}} \atop \simeq", phantom] & A_{f_2} \arrow[ld, "L_A(f_2)"] \\
                                                                                      & A                                                                                  &                               
\end{tikzcd} \]
provided by the functoriality of the generic diagonalization for the morphisms of pseudosquares
\[ % https://tikzcd.yichuanshen.de/#N4Igdg9gJgpgziAXAbVABwnAlgFyxMJZABgBpiBdUkANwEMAbAVxiRACEB9ARhAF9S6TLnyEUZblVqMWbAKoAKAIKdgAMx58AlP0EgM2PASIAmclPrNWiEIpXrOJ7bqGHRp0pOqXZNuzr4pGCgAc3giUDUAJwgAWyQyEBwIJG5vGWtwVQ1uPhBqBjoAIxgGAAVhIzEQKKwQgAscFxBouKQzJJTEAGZ0q3kAeiUFABlOYY0TLQC9VvjENM6kXul+vyHR8YVJ6fyQQpLyyvcbWoamgUiY+cTk5b7fEAAdJ8Y0erpsxz4APWAAWlyLzeHwA5MCGO9Pg5cgACYHJNDwp7YWIwACOewYWDAmUguL29RgdCgbHxrGoODoWAYZIIFP2xVKFTcxhsOOwsGacwSlK6HRKYFJPUSPjxXycIW5115S0QHTFZIlfBCoKxTKOrOqHKwXMCfCAA
\begin{tikzcd}[row sep=large, column sep=large]
B_1 \arrow[d, "n_{f_1}"'] \arrow[rrd, "\alpha_{f_2}^{-1}\alpha'\alpha_{f_1} \atop \simeq", phantom] \arrow[rr, "n_{f_2}g", bend left=30, end anchor=155, ""{name=U, inner sep=0.1pt, below}] \arrow[rr, "n_{f_2}g'" description, ""{name=D, inner sep=2pt}] &  & U(A_{f_2}) \arrow[d, "U_AL_A(f_2)"] \\
U(A_{f_1}) \arrow[rr, "U_AL_A(f_2)"']                                                                                                                 &  & U(A) \arrow[from=U, to=D, "n_{f_2}^*\phi", Rightarrow]                  
\end{tikzcd} = % https://tikzcd.yichuanshen.de/#N4Igdg9gJgpgziAXAbVABwnAlgFyxMJZABgBpiBdUkANwEMAbAVxiRACEB9ARhAF9S6TLnyEUZblVqMWbAKoAKAIKdgAMx58AlP0EgM2PASIAmclPrNWiEIpXrOJ7bqGHRp0pOqXZNuzr4pGCgAc3giUDUAJwgAWyQyEBwIJG5vGWtwVQ1uPhBqBjoAIxgGAAVhIzEQKKwQgAscFxBouKQzJJTEAGZ0q3kAeiUFABlOYY0TLQC9VvjENM6kXul+vyHR8YVJ6fyQQpLyyvcbWoamgUiY+cTk5b7fEAAdJ8Y0erpsxz4APWAAWlyLzeH2BDHenwcuQABMDkmhYU9sLEYABHPYMLBgTKQbF7eowOhQNi41jUHB0LAMEkEMn7YqlCpuYw2LHYWDNOYJcldDo+HFfJwhDEMo7M6psrAcwJ8IA
\begin{tikzcd}[row sep=large, column sep=large]
B_1 \arrow[d, "n_{f_1}"'] \arrow[rrd, "\alpha_{f_2}^{-1}\alpha\alpha_{f_1} \atop \simeq", phantom] \arrow[rr, "n_{f_2}g" description] &  & U(A_{f_2}) \arrow[d, "U_AL_A(f_2)"] \\
U(A_{f_1}) \arrow[rr, "U_AL_A(f_2)"']                                                                                               &  & U(A)
\end{tikzcd} \]
\end{itemize}

Now we want to exhibit an equivalence of categories
\[ \mathcal{A}/A [L_A(f),u] \simeq \mathcal{B}/U(A)[f,U_A(u)] \]
\begin{itemize}
    \item For the left to right direction, take any 2-cell 
\[ % https://tikzcd.yichuanshen.de/#N4Igdg9gJgpgziAXAbVABwnAlgFyxMJZABgBpiBdUkANwEMAbAVxiRAEEB9AMxAF9S6TLnyEUAJnJVajFm3YByfoJAZseAkQCMpLdPrNWiDsqHrR2qdQNzj-aTCgBzeEVDcAThAC2SMiBwIJB0ZQzYAd1MQTx8-akCkSVDbEAAZTnYACm4AShBqBjoAIxgGAAVhDTEQDywnAAscKJjfRBCExCSbIxAmZq9WgGZ4oM7rWR6AHUmfGCc6AAJpukC0JcnsbxgAR3yQBiwwHsgjvfqYOig2E9Z4uiwGa4Jb-eLSivNNY0PsWHs+IA
\begin{tikzcd}
A_f \arrow[rr, "w"] \arrow[rd, "L_A(f)"'] & {} \arrow[d, "\omega \atop \simeq", phantom] & A' \arrow[ld, "u"] \\
                                          & A                                            &                   
\end{tikzcd} \]
to the the pasting of its image along $U$ with the canonical 2-cell in the generic factorization of $f$
\[ % https://tikzcd.yichuanshen.de/#N4Igdg9gJgpgziAXAbVABwnAlgFyxMJZABgBpiBdUkANwEMAbAVxiRACEQBfU9TXfIRQBGclVqMWbAKoAKAIIB9AGYBKbrxAZseAkVHDx9Zq0Qg589Tz47BRAExjqxqWYsByK5u0C9KR4bOkqYgGja+QiSkgRImbNziMFAA5vBEoMoAThAAtkiiIDgQSI6xruYA9PKyADKK1Wrq1Ax0AEYwDAAK-LpCIFhg2LBhIFm5SGSFxYgFLiFgKiNjeTPURUgAzEFxbrIA7l4Z2StbUyXb5XI0h6PHE2vTpXNsyiDNbR3dtn4gmVjJAAscEs7qszogACwXEJyAA6sMYaABdFUAAJ4XQimh0bDsDkYABHN4gBgDeYEVjUAEwOhQNiQMCUwp0LAMekU4ktdpdHp2MwDIasay3cZg9aIACs0LYGIYSLoKhxmIg2PheMJnLJ7MZxOptO1TJwLLZZgZTK5n15PwFWGGXAoXCAA
\begin{tikzcd}[row sep=large, column sep=huge]
B \arrow[r, "n_f"] \arrow[rd, "f"'] & U(A_f) \arrow[d, "U_AL_A(f)" description] \arrow[r, "U(w)"] \arrow[rd, "U(\omega) \atop \simeq", phantom, very near start] \arrow[ld, "\alpha_f \atop \simeq", phantom, very near start] & U(A') \arrow[ld, "U(u)"] \\
{}                                  & U(A)                                                                                                                                                     & {}                      
\end{tikzcd} \]
which provides the object $ (U(w)n_f, U(\omega)^*n_f\alpha_f)$ in $\mathcal{B}/U(A)[f,U_A(u)]$, and similarily for a morphisms of triangle;
\item for the right to left direction, take a triangle 
\[ % https://tikzcd.yichuanshen.de/#N4Igdg9gJgpgziAXAbVABwnAlgFyxMJZABgBpiBdUkANwEMAbAVxiRACEQBfU9TXfIRQAmclVqMWbAKoAKAIIByAJTdeIDNjwEiARlK7x9Zq0Qg581Tz5bBesdWNSz3cTCgBzeEVAAzAE4QALZIZCA4EEj6EiZsHmp+gSGI0RFIojHO5rJMVuoBwaHUaYgZTqYgviDUDHQARjAMAAr82kIg-lgeABY4CZVJSADMxZGljpIVADpTjGjddAAEM3QRaMtT2EEwAI7VIAxYYBWQx-vdMHRQbKesxXRYDDcEdwf1jS22OmZH2LCuXCAA
\begin{tikzcd}
B \arrow[rr, "g"] \arrow[rd, "f"'] & {} \arrow[d, "\alpha \atop \simeq", phantom] & U(A') \arrow[ld, "U(u)"] \\
                                   & U(A)                                         &                         
\end{tikzcd} \]
to the 2-cell 
\[ % https://tikzcd.yichuanshen.de/#N4Igdg9gJgpgziAXAbVABwnAlgFyxMJZARgBpiBdUkANwEMAbAVxiRAEEQBfU9TXfIRQAGUsKq1GLNuwD6wAGayu3XiAzY8BIgCYxE+s1aIOAclV9NgomXHVD0k9wkwoAc3hFQCgE4QAtkhkIDgQSKKSRmwAMrLsABQKAJQg1Ax0AEYwDAAK-FpCID5YbgAWOBYgvgFB1KFIepGOIADu8gA67YxopXQABJ3dvbIKKjzefoGIjfWIEelZufnWJgwwChX2UsYgTJXVUwDMdWFzW1EmnQEwbnQdXQw9-YOPw6MDXaFoH9j+MACOqRADCwYB2kDBQNKMDoUDYENYdToWAY8IIiOBmWyeSs2hMoOwsGcXCAA
\begin{tikzcd}
A_{f} \arrow[rd, "L_A(f)"'] \arrow[rr, "w_{\alpha \alpha_f}"] & {} \arrow[d, "\omega_{\alpha \alpha_f} \atop \simeq", phantom] & A' \arrow[ld, "u"] \\
                                                               & A                                                              &                   
\end{tikzcd} \]
provided by the generic diagonalization of the composite 2-cell
\[ % https://tikzcd.yichuanshen.de/#N4Igdg9gJgpgziAXAbVABwnAlgFyxMJZABgBpiBdUkANwEMAbAVxiRACEQBfU9TXfIRQBGclVqMWbAKoAKAIIByAJTdeIDNjwEio4ePrNWiEHPmqefLYKJl91Q1JNmA+gDML4mFADm8IqBuAE4QALZIZCA4EEjCliDBYbHU0UgATA6SxqayTBbqieGIkamIAMyZRmxg7iDUDHQARjAMAAr82kIgQVg+ABY4aoEhRRVRMYgZElXOLvIAMnOyHnUgDc1tHTYmPf2D8YURKRNTjtkAOueMaH10AASX17fuD1fRaK-YoTAAjqsMWDA2UgQNWfRgdCg1QIrBSdCwDGhoPqTRa7WsOhMgOwsG4FC4QA
\begin{tikzcd}
B \arrow[r, "g"] \arrow[d, "n_f"'] \arrow[rd, "\alpha \alpha_f \atop \simeq", phantom] & U(A') \arrow[d, "U(u)"] \\
U(A_f) \arrow[r, "U_AL_A(f)"']                                                    & U(A)                   
\end{tikzcd} \]
\end{itemize}
It is easy to check that those processes define two functors that are adjoints, with the units and counit being provided as follows:\begin{itemize}
    \item for $(g,\alpha)$ in $ \mathcal{B}/U(A)[f,U_A(u)]$, the unique 2-cell $ \nu_{\alpha\alpha_f}  $ provided by the generic diagonalization can be seen as a 2-cell
    \[ % https://tikzcd.yichuanshen.de/#N4Igdg9gJgpgziAXAbVABwnAlgFyxMJZABgBpiBdUkANwEMAbAVxiRACEQBfU9TXfIRQBGclVqMWbAKoAKAIIB9AGYBKbrxAZseAkVHDx9Zq0Qg589Tz47BRAExjqxqWYsByK5u0C9KR4bOkqYgGja+QiSkgRImbNziMFAA5vBEoMoAThAAtkiiIDgQSI6xruaK8gAylbJqINQMdABGMAwACvy6QiBYYNiwYSBZuUhkhcWIBS4hYCpDI3lT1EVIAMxBcW6yAO6KwAA6B4xoABZ0RyfnKlxeGdlLGxMlm+VyNHfDD2Mrk6UzbGUDRATVaHS6djMmSwyVOOAW32Wz0QABZGn1ZgRWNRTjA6FA2JAwNjCnQsAxCVjgaC2p1bH5ev0sINXiE5EdcjBknR9pcGGcLsd+ddlLcAARiy5FNCSg7YHIwACOCNGSNWiAArKy2HyBSpZXRpbL5UqxdSMZTicDcfjLSScGSKWYiSSaeD6T0+gNWNYvqrxuqnq0wATEGtxgCzMkElwgA
\begin{tikzcd}[row sep=large, column sep=huge]
B \arrow[r, "n_f" description] \arrow[rd, "f"'] \arrow[rr, "g", bend left, "\nu_{\alpha\alpha_f} \atop \simeq"'{inner sep=5pt}] & U(A_f) \arrow[d, "U_AL_A(f)" description] \arrow[r, "U(w_{\alpha\alpha_f})" description] \arrow[rd, "U(\omega_{\alpha\alpha_f})  \atop \simeq", phantom, very near start] \arrow[ld, "\alpha_f \atop \simeq ", phantom, very near start] & U(A') \arrow[ld, "U(v)"] \\
{}                                                             & U(A)                                                                                                                                                                                       & {}                      
\end{tikzcd} = \begin{tikzcd}
B \arrow[rr, "g"] \arrow[rd, "f"'] & {} \arrow[d, "\alpha \atop \simeq", phantom] & U(A') \arrow[ld, "U(u)"] \\
                                   & U(A)                                         &                         
\end{tikzcd}  \] so we can define the counit as the inverse $ \nu_{\alpha\alpha_f}^{-1} : (U(w_{\alpha\alpha_f})n_f, U(\omega_{\alpha\alpha_f})^*n_f\alpha_f) \Rightarrow (g,\alpha)$;
\item for a triangle $ (w,\omega)$ in $\mathcal{A}/A[L_A(f),u]$, the generic factorization of the pasting 
\[ % https://tikzcd.yichuanshen.de/#N4Igdg9gJgpgziAXAbVABwnAlgFyxMJZABgBpiBdUkANwEMAbAVxiRACEQBfU9TXfIRQBGclVqMWbAKoAKAIIByAJTdeIDNjwEio4ePrNWiEHPmqefLYKJl91Q1JNmA+gDML4mFADm8IqBuAE4QALZIZCA4EEgAzA6SxuDuINQMdABGMAwACvzaQiBBWD4AFjhqgSHhiPFRMYgATAlGMi7yADLtsh6pIOlZufk2JsVlFZYgwWER1NFIov2Z2XnWOiYMMG4VLU6msgDuymApk9M1i-NNu0lyTBbq53FzDYuOt4eqactDa4VYYGwsG4FC4QA
\begin{tikzcd}[row sep=large, column sep=large]
B \arrow[d, "n_f"'] \arrow[r, "U(w)n_f"]                      & U(A') \arrow[d, "U(u)"] \\
U(A_f) \arrow[r, "U_AL_A(f)"'] \arrow[ru, "U(w)" description, "="{inner sep=6pt}, "U(\omega) \atop \simeq"'{inner sep=4pt}] & U(A)                   
\end{tikzcd} \]
produces from the uniqueness of the generic factorization a unique invertible 2-cell $ \sigma : w \Rightarrow w_{U(\omega)1_{U(w)n_f}} $ such that   
\[ % https://tikzcd.yichuanshen.de/#N4Igdg9gJgpgziAXAbVABwnAlgFyxMJZABgBpiBdUkANwEMAbAVxiRAEEB9AMxAF9S6TLnyEUAJnJVajFm3YByfoJAZseAkQCMpLdPrNWiDsqHrR2qdQNzj-aTCgBzeEVDcAThAC2SMiBwIJB0ZQzYAd05gAFUACgAdeJ8YJzoASi0ouPC0sB4+PlMQTx8-akCkSVDbEAAZTnZY7jSQagY6ACMYBgAFYQ0xEA8sJwALHCKS30QQisQqmyMQJkmvaYBmcqD561klxOTUrISk7xT0zJjYnLzuAoACRLpAtEf47DOAR1aQdq7e-oWYxYMDYWA-BggpaQMCsaijGB0KBsGFwgJ0LAMFEEVgCdxrMoBbYhLpgZGIdb+RYRex8IA
\begin{tikzcd}[column sep=huge, row sep=large]
A_f \arrow[rr, "w_{U(\omega)1_{U(w)n_f}}" description] \arrow[rd, "L_A(f)"'] \arrow[rr, "w", bend left, " \sigma \atop \simeq "'{inner sep=5pt}] & {} \arrow[d, "\omega_{U(\omega)1_{U(w)n_f}} \atop \simeq", phantom] & A' \arrow[ld, "u"] \\
                                                                                            & A                                                                   &                   
\end{tikzcd} = \begin{tikzcd}
A_f \arrow[rr, "w"] \arrow[rd, "L_A(f)"'] & {} \arrow[d, "\omega \atop \simeq", phantom] & A' \arrow[ld, "u"] \\
                                          & A                                            &                   
\end{tikzcd} \]
\end{itemize}
In this context, the unit of the adjunction $ L_A \dashv U_A$ coincides with the 2-cell produced by the generic factorization of a given $ f : B \rightarrow U(A)$, that is, $ \eta^A_f$ is the 2-cell $(n_f, \alpha_f)$. \\

Observe that in this context the 2-dimensional Beck-Chevalley condition can be retrieved more directly as a consequence of the uniqueness of the candidate factorization. Indeed, for $ u:  A_1 \rightarrow A_2$ in $\mathcal{A}$ and any arrow $ f : B \rightarrow U(A_1)$, the composite with the universal factorization
\[ % https://tikzcd.yichuanshen.de/#N4Igdg9gJgpgziAXAbVABwnAlgFyxMJZAJgBoAGAXVJADcBDAGwFcYkQBVACgEEB9AIwBKEAF9S6TLnyEUAZgrU6TVu279iI8ZOx4CRcopoMWbRCABCYiSAy6ZRAaQFKTq8+r4AzLTbvT9FCcqYxUzEDElGCgAc3giUC8AJwgAWyQyEBwIJENlU3Yva0SU9MQ87KQnfPdOLmZfErSMmkrEBRrwgAsAPWB+AVFvEBpGegAjGEYABSk9WRAkrBiunGKQZOb21pzy0IKPPn7BUQAZI4HRLh8RkDHJmbmHcyWVte0N0qQAFh2kDrc4QAOkCYDh6H1Lt4AAQg+jZNCwoHYVIwACOt0YWDA4UgONuXRg9Cg7DxbFa9CwjFJBHJdwmU1m9kCIGx2FgkVEQA
\begin{tikzcd}
B \arrow[rr, "f"] \arrow[rd, "n_f"'] & {} \arrow[d, "\nu_f \atop \simeq", phantom] & U(A_1) \arrow[r, "U(u)"] & U(A_2) \\
                                           & U(A_f) \arrow[ru, "U_{A_1}L_{A_1}(f)"']            &                          &       
\end{tikzcd} \]
then we have a factorization through a candidate
\[ % https://tikzcd.yichuanshen.de/#N4Igdg9gJgpgziAXAbVABwnAlgFyxMJZAJgBoAGAXVJADcBDAGwFcYkQBVACgEEB9YgEoQAX1LpMufIRTkK1Ok1bsAQqPEgM2PASIBGUnoUMWbRJ158AZsLETt0-fJonl57vz22FMKAHN4IlArACcIAFskAxAcCCQyRVN2AAsAPWBPEWsQGkZ6ACMYRgAFSR0ZEBCsP2ScdWCwyMRo2KQAZhclMxArep7G9ppWxDlEtwtmWw1QiPihuJHOpPcuSY4+DL49EQAZDcyuGxyQPMKSssdzKpq6u37ZxA6YhYTXboAdd7BmawACT-osTQ-3e2HCMAAjsdGFgwN1IHDjskYPQoOwEWwhvQsIx0QRMScCkVSg5dOZYdhYKJKCIgA
\begin{tikzcd}
B \arrow[rd, "h^{A_1}_f"'] \arrow[r, "f"] & U(A_1) \arrow[r, "U(u)"] \arrow[d, "\nu_f \atop \simeq", phantom] & U(A_2) \\
                                          & U(A_f) \arrow[ru, "U(u L_{A_1}(f))"']                       &       
\end{tikzcd}\]
(where we omit the associator of $ U$ at $ uL_{A_1}(f)$), so by uniqueness of the candidate factorization there exists an equivalence $ e : A_f \simeq A_{U(u)f}$, unique up to unique invertible 2-cell, and a unique pair of invertible 2-cells 
\[ % https://tikzcd.yichuanshen.de/#N4Igdg9gJgpgziAXAbVABwnAlgFyxMJZAJgBoAGAXVJADcBDAGwFcYkQBVACgEEB9YgEoQAX1LpMufIRTkK1Ok1bsAQqPEgM2PASIBGUnoUMWbRJ158AZsLETt0-fJonl57v2DdmgqyNsKMFAA5vBEoFYAThAAtkgGIDgQSGSKpuxg1iA0jPQARjCMAAqSOjIgkVjBABY46hHRcYgJSUgAzC5KZuB8Xlw+ftkguQXFpY7mWGDYsPUgUbHtNK2IcmlunAAyvfzEIlzevsI5+YUlDrqT01izdvONKcvJq53p7v2CHDt8eiLbwPxflwbEMRmdxpcKlVanMFk1UisOututwYIIAAQAHUx9CSaCxOLQaGiAA9QacxhdylMZmw7nD4k8kGsCmAoO01q4UR8rKJKCIgA
\begin{tikzcd}[row sep=large, column sep=huge]
B \arrow[rd, "n_f"', "\lambda \atop \simeq"{inner sep=5pt}] \arrow[r, "n_{U(u)f}" description] \arrow[rrr, "U(u)f", bend left=20, " \nu_f \atop \simeq"'{inner sep=5pt}] & U(A_{U(u)f}) \arrow[rr, "UL_{A_2}(U(u)f)" description]                                   && U(A_2) \\
                                                                                       & U(A_f) \arrow[rru, "U(uL_{A_1}(f))"', "U(\rho) \atop \simeq"{inner sep=5pt}] \arrow[u, "U(e) \atop \simeq" description] &&       
\end{tikzcd} \]
We left as an exercise to check that the pseudo-inverse $ s$ of each $e$ can be gathered into a natural point-wise equivalence $L_{A_2} \mathcal{B}/U(u) \stackrel{\simeq}{\rightarrow} \mathcal{A}/u L_{A_1}  $.
\\

Conversely, suppose we are given a local right biadjoint $U$. In each $A$, we denote abusively the local unit in some $ f : B \rightarrow U(B)$ as a 2-cell $(h^A_f, \eta^A_f) : f \rightarrow U_AL_A(f)$. Then $U$-generic morphisms are all morphisms $ n : B \rightarrow U(A)$ that are sent by $L_A$ to equivalence $L_A(n) : A_n \simeq A $ in $\mathcal{A}$, with a choice of inverse $e_n : A \simeq A_n$ and the corresponding invertible 2-cell as on the right: 
\[ % https://tikzcd.yichuanshen.de/#N4Igdg9gJgpgziAXAbVABwnAlgFyxMJZABgBpiBdUkANwEMAbAVxiRACEQBfU9TXfIRQAmclVqMWbAKoAKAIIBKbrxAZseAkQCMpbePrNWiEHPkB9MMq7iYUAObwioAGYAnCAFskZEDghIuhJGbIQ8rh7eiL7+SKLBUiYAFgB6FoTUDHQARjAMAAr8mkIgblj2STgqEV5x1LGIQYaJpubyADJtslYgmTl5hRqCbGUVVTZcQA
\begin{tikzcd}
B \arrow[rr, "n", " \eta_n^A \atop \simeq"'{inner sep=8pt}] \arrow[rd, "h^A_n"'] &                                 & U(A) \\
                                       & U(A_n) \arrow[ru, "U_AL_A(n) \atop \simeq"'] &     
\end{tikzcd} \textrm{ with } % https://tikzcd.yichuanshen.de/#N4Igdg9gJgpgziAXAbVABwnAlgFyxMJZABgBpiBdUkANwEMAbAVxiRAEEB9QgX1PUy58hFAEZyVWoxZsuvfiAzY8BImVGT6zVog4g+A5cKLiN1LTN3t9kmFADm8IqABmAJwgBbJGRA4ISABM5tI6IAAynOwAFGAAlCDUDHQARjAMAAqCKiIgblj2ABY4+gruXkjifgGIAMwh2myRMfGlrh7eiL7+lQYg5Z3B1Ui1fQOV1D2IQxZhMNyJIMlpmdnGulhg2LA2PEA
\begin{tikzcd}
A_n \arrow[d, "L_A(n)"'] \arrow[r, equal] & A_n \arrow[d, "L_A(n)"]  \\
A \arrow[r, equal]            \arrow[ru, "e_n" description, "\mathfrak{u}_n \atop \simeq"'{inner sep=4pt}, " \mathfrak{v}_n \atop \simeq"{inner sep=4pt}]            & A                                                    
\end{tikzcd} \]
Indeed, if one considers a pseudo-square 
\[ % https://tikzcd.yichuanshen.de/#N4Igdg9gJgpgziAXAbVABwnAlgFyxMJZABgBpiBdUkANwEMAbAVxiRACEQBfU9TXfIRRkAjFVqMWbAKoAKAIIBKbrxAZseAkRHlx9Zq0Qg58gPojlPPhsHbSY6vqlGTpgEyXxMKAHN4RUAAzACcIAFskMhAcCCQ3R0lDEB8VINCIxCiYpB0QBjoAIxgGAAV+TSEQYKwfAAscEASDNkIrEBDwuOpsxABmJudjWRpLVQ6M3J7+iWaXWSZlanyi0vLbI2q6hrbxyO7YvoGkgB1jxjRaugACU7oYtBvj7DCYAEdGvKwwJMhvj9qYHQoC0CKxunQsAwQX8loVimUbFojF9sLBuBQuEA
\begin{tikzcd}
B \arrow[r, "g"] \arrow[d, "n"'] \arrow[rd, "\alpha \atop \simeq", phantom] & U(A_1) \arrow[d, "U(v)"] \\
U(A) \arrow[r, "U(u)"']                                                     & U(A_2)                  
\end{tikzcd} \]
then by the Beck-Chevalley property of the adjunction, we are given with a point-wise natural equivalence 
\[\sigma^u : L_{A_2} \mathcal{B}/U(u) \stackrel{\simeq}{\longrightarrow} \mathcal{A}/u L_{A} \]
which returns at $n $ an equivalence in $\mathcal{A}/A$
\[ (s^u_n,\sigma^u_n) : L_{A_2}(U(u) n) \stackrel{\simeq}{\longrightarrow} u L_A(n) \]
consisting of a 2-cell
\[ % https://tikzcd.yichuanshen.de/#N4Igdg9gJgpgziAXAbVABwnAlgFyxMJZARgBpiBdUkANwEMAbAVxiRAEEQBfU9TXfIRQAGUsKq1GLNuwD6hHn2x4CRAExiJ9Zq0QdZwAKoAKJgEowXbrxAZlgomXHVt0vdwkwoAc3hFQAGYAThAAtkhkIDgQSBqSOmxw8gB6TAAEADoZdNFomdloaCEAHiDUDFhguiBwEBVQZSAAFjB0DYhgTAwM1Dh0WAxskFWNDHQARjAMAAr8KkIgDDABONaBIeGIkdFIovFuIAAysuzGYGajE1Oz9qp6QVjeTauKIMFhsb0xiHuu1cenEzmc5rN4bJAAZi+uxcUmqWWw3lCdFS8nyOQgeQRWFCMAAjqNKtVhqxqC02kMCKSov1BnoSZdJjM5g49JVsLAPFwgA
\begin{tikzcd}
A_{U(u)n} \arrow[rr, "s_n^u \atop \simeq"] \arrow[rd, "L_A(U(u)n)"'] & {} \arrow[d, "\sigma^u_n \atop \simeq", phantom] & A_n \arrow[ld, "u L_A(n)"] \\
                                                            & A_2                                                &                                   
\end{tikzcd} \]
in $ \mathcal{A}$ with $ \tau^u$ its pseudoinverse and $ \mathfrak{a}^u$ and $ \mathfrak{b}^u$ the corresponding invertible modifications, so we get an equivalence $L_A(n) s^u_n : A_{U(u)n} \simeq A_n \simeq A $ in $\mathcal{A}$. Now $(g,\alpha ) : U(u)n \rightarrow U_{A_2}(v)$ produces through the biadjunction $ L_{A_2}\dashv U_{A_2}$ a morphism $ (w,\omega) : L_{A_2}(U(u)n) \rightarrow v$ in $ \mathcal{A}/A_2$ and a composite 2-cell in $\mathcal{A}$
\[ % https://tikzcd.yichuanshen.de/#N4Igdg9gJgpgziAXAbVABwnAlgFyxMJZARgBpiBdUkANwEMAbAVxiRAEEB9AJhAF9S6TLnyEUABlLiqtRizZdCAodjwEi3KTPrNWiDp2L9BIDKtFEy06jvn6uwAKoAKJgEowfYypHqJ5bTk9AyUZGCgAc3giUAAzACcIAFskTRAcCCRJWV02ABlOdmcXdzA3bxAE5KQydMzEAGYbILYcAD0mTjAAAgAdXroMtD6BtDREgA8KqpTG6gzU5tz9AHdpxNmmuqzqBiwwYKgIJgAjBlZqAAsYOig2SAOQebosBnuCVmVKjZr5+oAWXb7Q7HM4XEDXW7vR7PV7Qz4mGZILYLRCAnJ2EBwDpdEaDCDDfp0MaTJ4gBh0E4wBgABWEajEIH22Fg62qaL+OwxwSYZIpVNp9Is+niWAilxw-AofCAA
\begin{tikzcd}[row sep=large, column sep=large]
A_n \arrow[r, "t^u_n \atop \simeq"] \arrow[d, no head, equal] & A_{U(u)n} \arrow[r, "w"] \arrow[d, " L_{A_2}(U(u)n)" description] \arrow[ld, "s^u_n \atop \simeq" description, "\mathfrak{a}^u_n \atop \simeq"'{inner sep=5pt}, "\sigma^u_n \atop \simeq"{inner sep=5pt}] & A_1 \arrow[ld, "L_A(U(u)n)", "\omega \atop \simeq"'{inner sep=5pt}] \\
A_n \arrow[r, "u L_A(n)"']                                                 & A_2                                                                                                   &                             
\end{tikzcd} \]
On the other hand the unit of $ (g, \alpha)$ returns the following factorization
\[ % https://tikzcd.yichuanshen.de/#N4Igdg9gJgpgziAXAbVABwnAlgFyxMJZARgBoAGAXVJADcBDAGwFcYkQAhEAX1PU1z5CKAMwVqdJq3YBVABQBBAPrEAlDz4gM2PASJiATBIYs2iEPOUH1vfjqFEyRmiennLNzdsF6UB0sTGUmYWikrA8syqYNyedj7CyGKBLsHsGvG6ieQBQabptloCWY65qfnuYWBxRfa+yP5U5W4gPBIwUADm8ESgAGYAThAAtkhiIDgQSP6SFaFRIDSM9ABGMIwACsUO5gNYnQAWOBkggyNIACw0k9PNITLhVtwAMo9KBtxykdHqS6vrWzqwhAWDA2FgJzOo0QOQmU0QZFmLU6kKG0MRN0QM1c9zktBqUMu13hiJxsjkAHdfiBlmtNttfCCwVgIYVCYgrnCkABWO7kgA6-JGME69FUAAJBfRJmhJfzsMMYABHRY00EhSBgNg0A4wehQdia7UTehYRiGgjG2kAhnA0HgthstFIADsxKQADY+eZBTAcPQAHrKMBy6UQWWChXK1WMdUWrWq3X6+PG-1mlMx-70oHse0sx2adlurmIcZk9xKBSvBRyaqZumAhK55mswvOmHuxDF8sgA5KMBB+s2nPmPOt-rt2GY8ZrMAGxAAWguAE5veAh9mm7t9kdUecu53OT35HAA8x+wT25zp2v5JH9sNA+fqqGZXKoyqlnHzEbE3r5+Alqqmm5o-kBfwNrazYOnu0IAByHmuciCo+OAHIM9AANbAFg3BBuEYCkMwsThHInSkFKjBoAcYrcK+4bvlgiqfmqCZgQmOr-hm1ymqBgEcTSWaNiUo4to6lDcEAA
\begin{tikzcd}[row sep=large, column sep=large]
    B \arrow[rr, "g"] \arrow[d, "h_n^A" description] \arrow[dd, "n"', bend right=70,"\eta^A_n \atop \simeq"{inner sep=3pt}]                            & {} \arrow[d, "{(\mathfrak{i}^A_{n,u})_{(g,\alpha)} \atop \simeq}", phantom]                                                                                                                & U(A_1) \arrow[dd, "U(v)"] \\
 U(A_n) \arrow[d, "U_AL_A(n)" description] \arrow[r, "U(s^u_n)"] & U(A_{U(u)n}) \arrow[rd, "U_{A_2}L_{A_2}(U(u)n)" description] \arrow[ru, "U(w)" description] \arrow[r, "U(\omega) \atop \simeq", phantom] \arrow[ld, "U(\sigma^u_n) \atop \simeq", phantom] & {}                        \\
    U(A) \arrow[rr, "U(u)"']                                                                                    &                                                                                                                                                                                            & U(A_2)                   
\end{tikzcd} \]
And combining those two data produces a path in $\mathcal{A}$
\[ \begin{tikzcd}
 A \arrow[r, "e_n"] & A_n \arrow[r, "t^u_n"] & A_{U(u)n} \arrow[r, "w"] & A_1
\end{tikzcd}  \]
and a decomposition of the pseudo-square 
\[ % https://tikzcd.yichuanshen.de/#N4Igdg9gJgpgziAXAbVABwnAlgFyxMJZAZgBoAGAXVJADcBDAGwFcYkQBVACgEEB9AIwBKEAF9S6TLnyEUA0gCZqdJq3bd+YEeMnY8BIgtIDlDFm0SdefYN2ZCwo7RJAY9MovJM0zayxr4tMRc3aQMUIyofVQsQYN0w2WRyY1MY9V5nBP0klKVo8wyeLNcpHM9SYjTC-2sgnVL3cOTSABZqvyti+MbEohSolRqQACEe0PK5Cg7Y8bKPCMUZops7Byc5pqSydoLOgIUSiYWSVL3Z0WUYKABzeCJQADMAJwgAWyQjEBwIJBSh-ZcADuIhojHoACMYIwAArzcIgLBgbCwHovd5IMjfX6IeQgcFQ2Hw2SI5FYVFgpGxKAQZgQxhsGgACxg9CgSDAzEYjBoOHoWEY7EgYDYDXRH0QWJ+n3OGRwAD1mIESuKkK1eTivowqUKCIyQCy2bqRSBefzBZZhfqCdC4Vt2EiUaKXKrEAA2DVIADssssMECpvxkNtxIdZNRYteErx0vdvs4fB4ABlE1wgmDg0T7ZZHeTnU8o0gAKye3Hx-2EDOEu19HPh-MgV1SnElgGxDiJlM8NOgoPV0N1p1owuIACcpaxvliTMC8p4gZtWdrpKHkYxY9L-yn7Buw-XAn+sa+28sXAAOme3vQcEyXvQANbALCiOc2MCkZhOGxcG6kC9MNAmXoIRRAAAn-H40HAs9sDeGAAEcFx1S09UDQ12RQk0zQFY1rUzGtJhXPM9wlcdsWLeNKz7ENsxAZ4sBuJkcBIijyLjfFkJAGk6QZNDWQwzluWwi1wFQtdo1LH0OJNSxuPpfV0I5LkeW+c1cJYxAvljAQ8W1GSuNpeS+KNRBBJUvkcMwhtXQ9NiAA5KX0uTeOZfilKE1TLNEk1xO9UsHLbdRO1TdNqKXQjcwjF0R0PHEBGPdJaloFURx00t4vjDtgH4BRRBTbK+Fyrg1lCxcCIWIiooLfd0rxE8rDgRVlQXfCB0q6yRy0uLMq4OBZ3sFr+1oyKOv3GMcSk+ruEapVSta4b6w0gLtIS4Y1kGmjl3oxjmN8xApJWnqL2wG4ryarRoOvCAoOOrA4MQxzYitYyMO8-ULJE56q02iLFr2tK2IESdEqsC93hgG5gMuyDoNghCkP0r6DTcqzAw+9TvvCiqRrEShRCAA
\begin{tikzcd}[row sep=large, column sep=huge]
B \arrow[rd, "h_n^A" description, " \eta_f^A \atop \simeq"'{inner sep=4pt, near start}] \arrow[rrr, "g"] \arrow[d, "n"'] & {} \arrow[rd, "{(\mathfrak{i}^A_{n,u})_{(g,\alpha)} \atop \simeq}", phantom]                      & {} \arrow[d, phantom]                                                                                                                          & U(A_1) \arrow[dddd, "U(v)"] \\
U(A) \arrow[rd, "U(e_n)" description, "U(\mathfrak{v}_n) \atop \simeq"{inner sep=3pt}, "U(\mathfrak{u}_n) \atop \simeq"'{inner sep=3pt}] \arrow[d, equal]           & U(A_n) \arrow[d, equal] \arrow[r, "U(t^u_n)"] \arrow[l, "U_AL_A(n)" description]             & U(A_{U(u)n}) \arrow[ru, "U(w)" description] \arrow[d, equal] \arrow[ld, "U(s_n^u)" description, "U(\mathfrak{a}^u_n) \atop \simeq"'{inner sep=5pt}, "U(\mathfrak{b}^u_n) \atop \simeq"{inner sep=5pt}]                                           & {}                          \\
U(A) \arrow[rd, "U(e_n)" description, "U(\mathfrak{v}_n) \atop \simeq"{inner sep=3pt}, "U(\mathfrak{u}_n) \atop \simeq"'{inner sep=6pt}] \arrow[dd, equal]          & U(A_n) \arrow[l, "U_AL_A(n)" description] \arrow[d, equal] \arrow[r, "U(t^u_n)" description] & U(A_{U(u)n}) \arrow[rdd, "U_{A_2}L_{A_2}(U(u)n)" description] \arrow[ld, "U(s^u_n)" description, "U(\mathfrak{a}^u_n) \atop \simeq"'{inner sep=5pt}] \arrow[ruu, "U(w)" description] \arrow[r, "U(\omega) \atop \simeq", phantom] &  {}                           \\
                                                                   & U(A_n) \arrow[ld, "U_AL_A(n)" description] \arrow[rrd, "U(\sigma^u_n) \atop \simeq", phantom]     &                                                                                                                                                &                             \\
U(A) \arrow[rrr, "U(u)"']                                          &                                                                                                   &                                                                                                                                                & U(A_2)                     
\end{tikzcd}  \]
The uniqueness of this solution up to unique invertible 2-cell follows from the uniqueness of the solutions in each step, as left as an exercise for the reader.\\

Now we prove $ n$ also satisfies the 2-dimensional condition of generic morphisms. Let be a morphism of 2-cells as below
% https://q.uiver.app/?q=WzAsNCxbMCwwLCJCIl0sWzAsMSwiVShBKSJdLFsxLDAsIlUoQV8xKSJdLFsxLDEsIlUoQV8yKSJdLFswLDIsImdfMiIsMV0sWzIsMywiVSh2KSJdLFswLDEsIm4iLDJdLFsxLDMsIlUodSkiLDJdLFswLDIsImdfMSIsMCx7ImN1cnZlIjotMn1dLFswLDMsIlxcYWxwaGFfMiBcXGF0b3AgXFxzaW1lcSIsMSx7InN0eWxlIjp7ImJvZHkiOnsibmFtZSI6Im5vbmUifSwiaGVhZCI6eyJuYW1lIjoibm9uZSJ9fX1dLFs4LDQsIlxccGhpIiwwLHsic2hvcnRlbiI6eyJzb3VyY2UiOjIwLCJ0YXJnZXQiOjIwfX1dXQ==
\[\begin{tikzcd}[column sep=large]
	B & {U(A_1)} \\
	{U(A)} & {U(A_2)}
	\arrow[""{name=0, anchor=center, inner sep=0}, "{g_2}"{description}, from=1-1, to=1-2]
	\arrow["{U(v)}", from=1-2, to=2-2]
	\arrow["n"', from=1-1, to=2-1]
	\arrow["{U(u)}"', from=2-1, to=2-2]
	\arrow[""{name=1, anchor=center, inner sep=0}, "{g_1}", bend left=25, start anchor=45, end anchor=160, from=1-1, to=1-2]
	\arrow["{\alpha_2 \atop \simeq}"{description}, draw=none, from=1-1, to=2-2]
	\arrow["\phi"{pos=0.4}, shorten <=2pt, shorten >=2pt, Rightarrow, from=1, to=0]
\end{tikzcd} \; = \; \begin{tikzcd}[column sep=large]
	B & {U(A_1)} \\
	{U(A)} & {U(A_2)}
	\arrow["{U(v)}", from=1-2, to=2-2]
	\arrow["n"', from=1-1, to=2-1]
	\arrow["{U(u)}"', from=2-1, to=2-2]
	\arrow["{g_1}", from=1-1, to=1-2]
	\arrow["{\alpha_1 \atop \simeq}"{description}, draw=none, from=1-1, to=2-2]
\end{tikzcd} \]
Then from the biadjuncion $ L_{A_2} \dashv U_{A_2}$, the equivalence of categories
\[ \mathcal{B}/U(A_2) [ U(u)n, U(v)] \simeq \mathcal{A}/A[L_{A_2}(U(u)n), v] \]
sends the 2-cell $ \phi : (g_1, \alpha_1) \Rightarrow (g_2, \alpha_2)$ on a 2-cell $ \sigma_\phi :(w_{\alpha_1}, \omega_{\alpha_1}) \Rightarrow (w_{\alpha_2}, \omega_{\alpha_2})$. Then, the whiskering 
% https://q.uiver.app/?q=WzAsNCxbMCwwLCJBIl0sWzEsMCwiQV9uIl0sWzIsMCwiQV97VSh1KW59Il0sWzQsMCwiQV8xIl0sWzAsMSwiZV9uIl0sWzEsMiwidF51X24iXSxbMiwzLCJ3X3tcXGFscGhhXzF9IiwwLHsiY3VydmUiOi0xfV0sWzIsMywid197XFxhbHBoYV8xfSIsMix7ImN1cnZlIjoxfV0sWzYsNywiXFxzaWdtYV9cXHBoaSIsMCx7Im9mZnNldCI6MSwic2hvcnRlbiI6eyJzb3VyY2UiOjIwLCJ0YXJnZXQiOjIwfX1dXQ==
\[\begin{tikzcd}
	A & {A_n} & {A_{U(u)n}} && {A_1}
	\arrow["{e_n}", from=1-1, to=1-2]
	\arrow["{t^u_n}", from=1-2, to=1-3]
	\arrow[""{name=0, anchor=center, inner sep=0}, "{w_{\alpha_1}}", start anchor=20, bend left=20, from=1-3, to=1-5]
	\arrow[""{name=1, anchor=center, inner sep=0}, "{w_{\alpha_1}}"', start anchor=-20, bend right=20, from=1-3, to=1-5]
	\arrow["{\sigma_\phi}", shift right=1, shorten <=2pt, shorten >=2pt, Rightarrow, from=0, to=1]
\end{tikzcd}\]
with the equivalences defined in the previous step provides a morphism of lifts associated to the morphism of pseudosquares. This achieves to prove that $ n$ is $U$-generic.\\

Conversely, one can check that any $U$-generic morphism has to be of this form, by uniqueness of the factorization applied to the generic diagonal. 
Then the generic factorization of an arrow $ f : B \rightarrow U(A)$ can be defined as its unit 2-cell for the local adjunction over $A$:
\[ % https://tikzcd.yichuanshen.de/#N4Igdg9gJgpgziAXAbVABwnAlgFyxMJZABgBpiBdUkANwEMAbAVxiRACEQBfU9TXfIRQAmclVqMWbAKoAKAIIBKbrxAZseAkQCMpbePrNWiEHPkB9MMq7iYUAObwioAGYAnCAFskZEDghIuhJGbIQ8rh7eiL7+SKLBUiYAFgB6FoTUDHQARjAMAAr8mkIgblj2STgqEV5x1LGIQYaJpubyADJtslYgmTl5hRqCbGUVVTZcQA
\begin{tikzcd}
B \arrow[rr, "f", "\eta_f^A \atop \simeq "'{inner sep=8pt}] \arrow[rd, "h^A_f"'] &                                 & U(A) \\
                                       & U(A_f) \arrow[ru, "U_AL_A(f)"'] &     
\end{tikzcd} \]
that is, define $ n_f = h^A_f$, $ u_f= L_A(f)$ and $\alpha_f = \eta^A_f$. Now the universal property of the unit provides us with the desired uniqueness condition associated to the generic factorization. 
\end{proof}

\begin{remark}
For a bistable pseudofunctor $U$, any two morphisms $ f_1, f_2 : B \rightarrow U(A)$ with an invertible 2-cell
%https://q.uiver.app/?q=WzAsMixbMCwwLCJCIl0sWzIsMCwiVShBKSJdLFswLDEsImZfMSIsMCx7ImN1cnZlIjotM31dLFswLDEsImZfMiIsMix7ImN1cnZlIjozfV0sWzIsMywiXFxhbHBoYSBcXGF0b3AgXFxzaW1lcSIsMSx7InNob3J0ZW4iOnsic291cmNlIjoyMCwidGFyZ2V0IjoyMH0sInN0eWxlIjp7ImJvZHkiOnsibmFtZSI6Im5vbmUifSwiaGVhZCI6eyJuYW1lIjoibm9uZSJ9fX1dXQ==
\[\begin{tikzcd}
	B && {U(A)}
	\arrow[""{name=0, anchor=center, inner sep=0}, "{f_1}", bend left=25, start anchor=45, from=1-1, to=1-3]
	\arrow[""{name=1, anchor=center, inner sep=0}, "{f_2}"', bend right=25, start anchor=-45, from=1-1, to=1-3]
	\arrow["{\alpha \atop \simeq}"{description}, Rightarrow, draw=none, from=0, to=1]
\end{tikzcd}\] have equivalent generic factorization with $ A_{f_1} \simeq A_{f_2}$ as below: 
% https://q.uiver.app/?q=WzAsNCxbMCwxLCJCIl0sWzIsMSwiVShBKSJdLFsxLDAsIlUoQV97Zl8xfSkiXSxbMSwyLCJVKEFfe2ZfMn0pIl0sWzAsMiwibl97Zl8xfSJdLFsyLDEsIlUodV97Zl8xfSkiXSxbMCwzLCJuX3tmXzJ9IiwyXSxbMywxLCJVKHVfe2ZfMn0pIiwyXSxbMiwzLCJVKHVfXFxhbHBoYSkgXFxhdG9wIFxcc2ltZXEiLDFdLFswLDgsIlxcbnVfXFxhbHBoYSBcXGF0b3AgXFxzaW1lcSIsMSx7InNob3J0ZW4iOnsidGFyZ2V0IjoyMH0sInN0eWxlIjp7ImJvZHkiOnsibmFtZSI6Im5vbmUifSwiaGVhZCI6eyJuYW1lIjoibm9uZSJ9fX1dLFsxLDgsIlUoXFxvbWVnYV9cXGFscGhhKSBcXGF0b3AgXFxzaW1lcSIsMSx7InNob3J0ZW4iOnsidGFyZ2V0IjoyMH0sInN0eWxlIjp7ImJvZHkiOnsibmFtZSI6Im5vbmUifSwiaGVhZCI6eyJuYW1lIjoibm9uZSJ9fX1dXQ==
\[\begin{tikzcd}
	& {U(A_{f_1})} \\
	B && {U(A)} \\
	& {U(A_{f_2})}
	\arrow["{n_{f_1}}", from=2-1, to=1-2]
	\arrow["{U(u_{f_1})}", from=1-2, to=2-3]
	\arrow["{n_{f_2}}"', from=2-1, to=3-2]
	\arrow["{U(u_{f_2})}"', from=3-2, to=2-3]
	\arrow[""{name=0, anchor=center, inner sep=0}, "{U(u_\alpha) \atop \simeq}"{description}, from=1-2, to=3-2]
	\arrow["{\nu_\alpha \atop \simeq}"{description}, Rightarrow, draw=none, from=2-1, to=0]
	\arrow["{U(\omega_\alpha) \atop \simeq}"{description}, Rightarrow, draw=none, from=2-3, to=0]
\end{tikzcd}\]
In particular, before using the canonical structure of local right biadjoint and the associated Beck-Chevalley condition, one can see directly that postcomposing with an arrow in the range of $U$ does not modify the left part of the generic factorization.
\end{remark}

\section{Laxness conditions}

The notion of bistable pseudofunctor produces a convenient treatment of invertible 2-cells; however, one may ask how non-invertible 2-cells are to be factorized - those as below:
\[\begin{tikzcd}
	B && {U(A)}
	\arrow[""{name=0, anchor=center, inner sep=0}, "{f_1}", bend left=25, start anchor=45, from=1-1, to=1-3]
	\arrow[""{name=1, anchor=center, inner sep=0}, "{f_2}"', bend right=25, start anchor=-45, from=1-1, to=1-3]
	\arrow["{ \Downarrow \sigma }"{description}, draw=none, from=0, to=1]
\end{tikzcd}\] 
or also if one can diagonalize lax square and not just pseudosquares. The answer is that it might depend on further, different possible laxness conditions one might stipulate.\\

A possible laxness condition is described in \cite{walker2020lax} under the notion of \emph{lax-familial} pseudofunctors. They are defined also relatively to a class of generic morphisms, but this time with a more general lax-orthogonality condition relatively to lax-squares. For this notion is important in itself and is related, see \cite{walker2020lax}, to an elegant decomposition of the conerve into a lax bicolimit of representables, we choose here to give a rather detailed account of it and a few properties that are not yet found elsewhere. However this notion will not apply to our examples, where factorization of lax cell will exist, yet processing in a totally different way.

\subsection{Lax generic cells}

%As often in the process of categorification, the notion of stable functor admits different level of 2-dimensional analog of different strictness. One can for instance consider a notion of bistable pseudofunctor with a notion of bicanditate having the expected lifting property at invertible 2-cells. However for our purpose, we need a more flexible notion to take in account the factorization of non invertible 2-cells: this is the point of the notion of \emph{lax familial functors} as introduced in \cite{walker2020lax}. In this section we develop mostly technical properties of those objects and of the corresponding analogs of candidates. 

%Now we turn to the property of the category of $\mathcal{L}$-objects, we want to exhibit as local object for a 2-dimensional geometry. To do so, we introduce a pseudo-functorial analog of stability inspired from [] and also []. 

\begin{definition}
Let $ U : \mathcal{A} \rightarrow \mathcal{B}$ a pseudofunctor. A \emph{lax $U$-generic morphism}\index{lax-generic morphism} is a 1-cell $n: B \rightarrow U(A)$ in $ \mathcal{B}$ such that for any pseudosquare 
% https://q.uiver.app/?q=WzAsNCxbMCwwLCJCIl0sWzAsMSwiVShBKSJdLFsxLDAsIlUoQV8xKSJdLFsxLDEsIlUoQV8yKSJdLFswLDEsIm4iLDJdLFswLDIsImYiXSxbMiwzLCJVKHUpIl0sWzEsMywiVSh2KSIsMl0sWzUsNywiXFxzaWdtYSBcXERvd25hcnJvdyIsMSx7InNob3J0ZW4iOnsic291cmNlIjoyMCwidGFyZ2V0IjoyMH0sInN0eWxlIjp7ImJvZHkiOnsibmFtZSI6Im5vbmUifSwiaGVhZCI6eyJuYW1lIjoibm9uZSJ9fX1dXQ==
\[\begin{tikzcd}
	B & {U(A_1)} \\
	{U(A)} & {U(A_2)}
	\arrow["n"', from=1-1, to=2-1]
	\arrow[""{name=0, anchor=center, inner sep=0}, "f", from=1-1, to=1-2]
	\arrow["{U(u)}", from=1-2, to=2-2]
	\arrow[""{name=1, anchor=center, inner sep=0}, "{U(v)}"', from=2-1, to=2-2]
	\arrow["{\sigma \Uparrow}"{description}, Rightarrow, draw=none, from=0, to=1]
\end{tikzcd}\]
there exists a 1-cell $ w_\sigma : A \rightarrow A_1$ in $\mathcal{A}$, unique up to a unique invertible 2-cell, and unique pair of invertible 2-cells $ \nu_\sigma$ in $\mathcal{B}$ and $ \omega_\sigma $ in $\mathcal{A}$ such that $ \sigma $ decomposes as the pasting
% https://q.uiver.app/?q=WzAsNCxbMCwwLCJCIl0sWzAsMSwiVShBKSJdLFsxLDAsIlUoQV8xKSJdLFsxLDEsIlUoQV8yKSJdLFswLDEsIm4iLDJdLFswLDIsImYiXSxbMiwzLCJVKHUpIl0sWzEsMywiVSh2KSIsMl0sWzEsMiwiVSh3X1xcc2lnbWEpIiwxXSxbNSw0LCJcXG51X1xcc2lnbWEgXFxEb3duYXJyb3ciLDEseyJzaG9ydGVuIjp7InNvdXJjZSI6MjAsInRhcmdldCI6MjB9LCJzdHlsZSI6eyJib2R5Ijp7Im5hbWUiOiJub25lIn0sImhlYWQiOnsibmFtZSI6Im5vbmUifX19XSxbNiw3LCIgXFxEb3duYXJyb3cgXFxvbWVnYV9cXHNpZ21hIiwxLHsic2hvcnRlbiI6eyJzb3VyY2UiOjIwfSwic3R5bGUiOnsiYm9keSI6eyJuYW1lIjoibm9uZSJ9LCJoZWFkIjp7Im5hbWUiOiJub25lIn19fV1d
\[\begin{tikzcd}[sep=large]
	B & {U(A_1)} \\
	{U(A)} & {U(A_2)}
	\arrow[""{name=0, anchor=center, inner sep=0}, "n"', from=1-1, to=2-1]
	\arrow[""{name=1, anchor=center, inner sep=0}, "f", from=1-1, to=1-2]
	\arrow[""{name=2, anchor=center, inner sep=0}, "{U(u)}", from=1-2, to=2-2]
	\arrow[""{name=3, anchor=center, inner sep=0}, "{U(v)}"', from=2-1, to=2-2]
	\arrow["{U(w_\sigma)}"{description}, from=2-1, to=1-2]
	\arrow["{\nu_\sigma \Uparrow}"{description}, Rightarrow, draw=none, from=1, to=0]
	\arrow["{ \Uparrow U(\omega_\sigma)}"{description}, Rightarrow, draw=none, from=2, to=3]
\end{tikzcd}\]
and moreover those data are universal in the sense that for any other factorization of this square 
\[\begin{tikzcd}[sep=large]
	B & {U(A_1)} \\
	{U(A)} & {U(A_2)}
	\arrow[""{name=0, anchor=center, inner sep=0}, "n"', from=1-1, to=2-1]
	\arrow[""{name=1, anchor=center, inner sep=0}, "f", from=1-1, to=1-2]
	\arrow[""{name=2, anchor=center, inner sep=0}, "{U(u)}", from=1-2, to=2-2]
	\arrow[""{name=3, anchor=center, inner sep=0}, "{U(v)}"', from=2-1, to=2-2]
	\arrow["{U(w)}"{description}, from=2-1, to=1-2]
	\arrow["{\nu \Uparrow}"{description}, Rightarrow, draw=none, from=1, to=0]
	\arrow["{ \Uparrow U(\omega)}"{description}, Rightarrow, draw=none, from=2, to=3]
\end{tikzcd}\]
there exists a unique 2-cell $ \xi : w \Rightarrow w_\sigma  $ such that we have the factorizations
% https://q.uiver.app/?q=WzAsMyxbMCwwLCJCIl0sWzAsMSwiVShBKSJdLFsxLDAsIlUoQV8xKSJdLFswLDEsIm4iLDJdLFswLDIsImYiXSxbMSwyLCJtX1xcc2lnbWEiLDFdLFsxLDIsIm0iLDIseyJjdXJ2ZSI6M31dLFs1LDYsIlxceGkiLDAseyJzaG9ydGVuIjp7InNvdXJjZSI6MjAsInRhcmdldCI6MjB9fV0sWzAsNSwiXFxEb3duYXJyb3cgXFxudV9cXHNpZ21hIiwxLHsic2hvcnRlbiI6eyJ0YXJnZXQiOjIwfSwic3R5bGUiOnsiYm9keSI6eyJuYW1lIjoibm9uZSJ9LCJoZWFkIjp7Im5hbWUiOiJub25lIn19fV1d
\[\begin{tikzcd}[sep=large]
	B & {U(A_1)} \\
	{U(A)}
	\arrow["n"', from=1-1, to=2-1]
	\arrow["f", from=1-1, to=1-2]
	\arrow[""{name=0, anchor=center, inner sep=0}, "{U(w_\sigma)}"{description}, from=2-1, to=1-2]
	\arrow[""{name=1, anchor=center, inner sep=0}, "U(w)"', curve={height=24pt}, from=2-1, to=1-2]
	\arrow["U(\xi)", shorten <=3pt, shorten >=3pt, Rightarrow, from=1, to=0]
	\arrow["{\Uparrow \nu_\sigma}"{description}, Rightarrow, draw=none, from=1-1, to=0]
\end{tikzcd} = \begin{tikzcd}[sep=large]
	B & {U(A_1)} \\
	{U(A)}
	\arrow["n"', from=1-1, to=2-1]
	\arrow["f", from=1-1, to=1-2]
	\arrow[""{name=0, anchor=center, inner sep=0}, "U(w)"', from=2-1, to=1-2]
	\arrow["{\Uparrow\nu}"{description}, Rightarrow, draw=none, from=1-1, to=0]
\end{tikzcd}\]
% https://q.uiver.app/?q=WzAsMyxbMCwxLCJBIl0sWzEsMCwiQV8xIl0sWzEsMSwiQV8yIl0sWzAsMSwidyJdLFsxLDIsInUiXSxbMCwyLCJ2IiwyXSxbMywyLCJcXG9tZWdhIFxcRG93bmFycm93IiwxLHsic2hvcnRlbiI6eyJzb3VyY2UiOjIwfSwic3R5bGUiOnsiYm9keSI6eyJuYW1lIjoibm9uZSJ9LCJoZWFkIjp7Im5hbWUiOiJub25lIn19fV1d
\[\begin{tikzcd}[sep=large]
	& {A_1} \\
	A & {A_2}
	\arrow[""{name=0, anchor=center, inner sep=0}, "w_\sigma", from=2-1, to=1-2]
	\arrow["u", from=1-2, to=2-2]
	\arrow["v"', from=2-1, to=2-2]
	\arrow["{\omega_\sigma \Uparrow}"{description}, Rightarrow, draw=none, from=0, to=2-2]
\end{tikzcd} = 
% https://q.uiver.app/?q=WzAsMyxbMCwxLCJBIl0sWzEsMCwiQV8xIl0sWzEsMSwiQV8yIl0sWzAsMSwidyIsMCx7ImN1cnZlIjotM31dLFsxLDIsInUiXSxbMCwyLCJ2IiwyXSxbMCwxXSxbMyw2LCJcXHhpIiwwLHsic2hvcnRlbiI6eyJzb3VyY2UiOjIwLCJ0YXJnZXQiOjIwfX1dLFs2LDIsIlxcb21lZ2FfXFxzaWdtYSBcXERvd25hcnJvdyIsMSx7InNob3J0ZW4iOnsic291cmNlIjoyMH0sInN0eWxlIjp7ImJvZHkiOnsibmFtZSI6Im5vbmUifSwiaGVhZCI6eyJuYW1lIjoibm9uZSJ9fX1dXQ==
\begin{tikzcd}[sep=large]
	& {A_1} \\
	A & {A_2}
	\arrow[""{name=0, anchor=center, inner sep=0}, "w_\sigma", curve={height=-18pt}, from=2-1, to=1-2]
	\arrow["u", from=1-2, to=2-2]
	\arrow["v"', from=2-1, to=2-2]
	\arrow["w"{name=1, anchor=center, inner sep=0, description}, from=2-1, to=1-2]
	\arrow["\xi", shorten <=3pt, shorten >=3pt, Rightarrow, from=1, to=0]
	\arrow["{\omega \Uparrow}"{description}, Rightarrow, draw=none, from=1, to=2-2]
\end{tikzcd}\]
Moreover the 2-cells $ \nu_\sigma$ and $ \omega_\sigma$ must be invertible as soon as $\sigma$ is.
\end{definition}

\begin{definition}
A lax generic morphism is \emph{functorially generic} if for any morphism of pseudosquares in $\mathcal{B}//U$ as below
% https://q.uiver.app/?q=WzAsNCxbMCwwLCJCIl0sWzAsMSwiVShBKSJdLFsyLDAsIlUoQV8xKSJdLFsyLDEsIlUoQV8yKSJdLFswLDEsIm4iLDJdLFswLDIsImZfMiIsMV0sWzIsMywiVSh1KSJdLFsxLDMsIlUodl8yKSIsMl0sWzAsMiwiZl8xIiwwLHsiY3VydmUiOi0zfV0sWzgsNSwiXFxwaGkgXFxEb3duYXJyb3ciLDEseyJzaG9ydGVuIjp7InNvdXJjZSI6MjAsInRhcmdldCI6MjB9fV0sWzUsNywiXFxzaWdtYV8yIFxcRG93bmFycm93IiwxLHsic2hvcnRlbiI6eyJzb3VyY2UiOjIwLCJ0YXJnZXQiOjIwfSwic3R5bGUiOnsiYm9keSI6eyJuYW1lIjoibm9uZSJ9LCJoZWFkIjp7Im5hbWUiOiJub25lIn19fV1d
\[\begin{tikzcd}
	B && {U(A_1)} \\
	{U(A)} && {U(A_2)}
	\arrow["n"', from=1-1, to=2-1]
	\arrow[""{name=0, anchor=center, inner sep=0}, "{f_1}"{description}, from=1-1, to=1-3]
	\arrow["{U(u)}", from=1-3, to=2-3]
	\arrow[""{name=1, anchor=center, inner sep=0}, "{U(v_1)}"', from=2-1, to=2-3]
	\arrow[""{name=2, anchor=center, inner sep=0}, "{f_2}", start anchor=40, bend left=30, from=1-1, to=1-3]
	\arrow["{\phi \Uparrow}"{description}, shorten <=2pt, shorten >=2pt, draw=none, Rightarrow, from=2, to=0]
	\arrow["{\sigma_1 \Uparrow}"{description}, Rightarrow, draw=none, from=0, to=1]
\end{tikzcd}=
\begin{tikzcd}
	B && {U(A_1)} \\
	{U(A)} && {U(A_2)}
	\arrow["n"', from=1-1, to=2-1]
	\arrow["{U(u)}", from=1-3, to=2-3]
	\arrow[""{name=0, anchor=center, inner sep=0}, "{U(v_1)}"', curve={height=18pt}, from=2-1, to=2-3]
	\arrow[""{name=1, anchor=center, inner sep=0}, "{f_2}", from=1-1, to=1-3]
	\arrow[""{name=2, anchor=center, inner sep=0}, "{U(v_2)}"{description}, from=2-1, to=2-3]
	\arrow["{\sigma_2 \Uparrow}"{description}, Rightarrow, draw=none, from=1, to=2]
	\arrow["{U(\gamma) \Uparrow}"{description}, Rightarrow, draw=none, from=2, to=0]
\end{tikzcd}\]
there is a unique 2-cell $\sigma_\phi : w_{\alpha_1} \Rightarrow w_{\alpha_2}$ in $ \mathcal{A}$ between the corresponding fillers such that
% https://q.uiver.app/?q=WzAsMyxbMCwwLCJCIl0sWzAsMSwiVShBKSJdLFsxLDAsIlUoQV8xKSJdLFswLDEsIm4iLDJdLFswLDIsImZfMSJdLFsxLDIsIlUod197XFxzaWdtYV8xfSkiLDFdLFsxLDIsIlUod197XFxzaWdtYV8yfSkiLDIseyJjdXJ2ZSI6M31dLFswLDUsIlxcbnVfe1xcc2lnbWFfMX0gXFxEb3duYXJyb3ciLDEseyJzaG9ydGVuIjp7InRhcmdldCI6MjB9LCJzdHlsZSI6eyJib2R5Ijp7Im5hbWUiOiJub25lIn0sImhlYWQiOnsibmFtZSI6Im5vbmUifX19XSxbNSw2LCJcXHhpIiwyLHsic2hvcnRlbiI6eyJzb3VyY2UiOjIwLCJ0YXJnZXQiOjIwfX1dXQ==
\[\begin{tikzcd}[sep=large]
	B & {U(A_1)} \\
	{U(A)}
	\arrow["n"', from=1-1, to=2-1]
	\arrow["{f_2}", from=1-1, to=1-2]
	\arrow[""{name=0, anchor=center, inner sep=0}, "{U(w_{\sigma_2})}"{description}, from=2-1, to=1-2]
	\arrow[""{name=1, anchor=center, inner sep=0}, "{U(w_{\sigma_1})}"', curve={height=24pt}, from=2-1, to=1-2]
	\arrow["{\nu_{\sigma_2} \Uparrow}"{description}, Rightarrow, draw=none, from=1-1, to=0]
	\arrow["U(\xi)", shorten <=3pt, shorten >=3pt, Rightarrow, from=1, to=0]
\end{tikzcd} =
% https://q.uiver.app/?q=WzAsMyxbMCwwLCJCIl0sWzAsMSwiVShBKSJdLFsxLDAsIlUoQV8xKSJdLFswLDEsIm4iLDJdLFswLDIsImZfMSIsMCx7ImN1cnZlIjotMn1dLFsxLDIsIlUod197XFxzaWdtYV8yfSkiLDJdLFswLDIsImZfMiIsMV0sWzQsNiwiXFxwaGkgXFxEb3duYXJyb3ciLDEseyJzaG9ydGVuIjp7InNvdXJjZSI6MjAsInRhcmdldCI6MjB9LCJzdHlsZSI6eyJib2R5Ijp7Im5hbWUiOiJub25lIn0sImhlYWQiOnsibmFtZSI6Im5vbmUifX19XSxbMCw1LCJcXG51X3tcXHNpZ21hXzJ9IFxcRG93bmFycm93IiwxLHsic2hvcnRlbiI6eyJ0YXJnZXQiOjIwfSwic3R5bGUiOnsiYm9keSI6eyJuYW1lIjoibm9uZSJ9LCJoZWFkIjp7Im5hbWUiOiJub25lIn19fV1d
\begin{tikzcd}[sep=large]
	B & {U(A_1)} \\
	{U(A)}
	\arrow["n"', from=1-1, to=2-1]
	\arrow[""{name=0, anchor=center, inner sep=0}, "{f_2}", curve={height=-18pt}, from=1-1, to=1-2]
	\arrow[""{name=1, anchor=center, inner sep=0}, "{U(w_{\sigma_1})}"', from=2-1, to=1-2]
	\arrow[""{name=2, anchor=center, inner sep=0}, "{f_1}"{description}, from=1-1, to=1-2]
	\arrow["{\phi \Uparrow}"{description}, Rightarrow, draw=none, from=0, to=2]
	\arrow["{\nu_{\sigma_1} \Uparrow}"{description}, Rightarrow, draw=none, from=1-1, to=1]
\end{tikzcd}\]

% https://q.uiver.app/?q=WzAsMyxbMCwxLCJBIl0sWzEsMCwiQV8xIl0sWzEsMSwiQV8yIl0sWzAsMSwid197XFxzaWdtYV8yfSIsMV0sWzAsMSwid197XFxzaWdtYV8xfSIsMCx7ImN1cnZlIjotM31dLFsxLDIsInUiXSxbMCwyLCJ2XzIiLDJdLFs0LDMsIlxceGkiLDAseyJzaG9ydGVuIjp7InNvdXJjZSI6MjAsInRhcmdldCI6MjB9fV0sWzMsMiwiXFxvbWVnYV97XFxzaWdtYV8yfSBcXERvd25hcnJvdyIsMSx7InNob3J0ZW4iOnsic291cmNlIjoyMH0sInN0eWxlIjp7ImJvZHkiOnsibmFtZSI6Im5vbmUifSwiaGVhZCI6eyJuYW1lIjoibm9uZSJ9fX1dXQ==
\[\begin{tikzcd}[sep=large]
	& {A_1} \\
	A & {A_2}
	\arrow[""{name=0, anchor=center, inner sep=0}, "{w_{\sigma_1}}"{description}, from=2-1, to=1-2]
	\arrow[""{name=1, anchor=center, inner sep=0}, "{w_{\sigma_2}}", curve={height=-18pt}, from=2-1, to=1-2]
	\arrow["u", from=1-2, to=2-2]
	\arrow["{v_1}"', from=2-1, to=2-2]
	\arrow["\xi"', shorten <=3pt, shorten >=3pt, Rightarrow, from=0, to=1]
	\arrow["{\omega_{\sigma_1} \Uparrow}"{description, pos=0.7}, Rightarrow, draw=none, from=0, to=2-2] 
\end{tikzcd} =
% https://q.uiver.app/?q=WzAsMyxbMCwxLCJBIl0sWzEsMCwiQV8xIl0sWzEsMSwiQV8yIl0sWzAsMSwid197XFxzaWdtYV8xfSJdLFsxLDIsInUiXSxbMCwyLCJ2XzIiLDIseyJjdXJ2ZSI6Mn1dLFswLDIsInZfMSIsMV0sWzYsNSwiXFxnYW1tYSBcXERvd25hcnJvdyIsMSx7InNob3J0ZW4iOnsic291cmNlIjoyMCwidGFyZ2V0IjoyMH0sInN0eWxlIjp7ImJvZHkiOnsibmFtZSI6Im5vbmUifSwiaGVhZCI6eyJuYW1lIjoibm9uZSJ9fX1dLFszLDYsIlxcb21lZ2Ffe1xcc2lnbWFfMn0gXFxEb3duYXJyb3ciLDEseyJvZmZzZXQiOi0xLCJzaG9ydGVuIjp7InNvdXJjZSI6MjAsInRhcmdldCI6MjB9LCJzdHlsZSI6eyJib2R5Ijp7Im5hbWUiOiJub25lIn0sImhlYWQiOnsibmFtZSI6Im5vbmUifX19XV0=
\begin{tikzcd}[sep=large]
	& {A_1} \\
	A & {A_2}
	\arrow[""{name=0, anchor=center, inner sep=0}, "{w_{\sigma_1}}", from=2-1, to=1-2]
	\arrow["u", from=1-2, to=2-2]
	\arrow[""{name=1, anchor=center, inner sep=0}, "{v_1}"', curve={height=18pt}, from=2-1, to=2-2]
	\arrow[""{name=2, anchor=center, inner sep=0}, "{v_2}"{description}, from=2-1, to=2-2]
	\arrow["{\gamma \Uparrow}"{description}, Rightarrow, draw=none, from=2, to=1]
	\arrow["{\omega_{\sigma_1} \Uparrow}"{description, pos=0.6}, shift left=1, Rightarrow, draw=none, from=0, to=2]
\end{tikzcd}\]
\end{definition}

\begin{remark}
In the following we shall require - and for now, suppose as so - any lax generic morphism to be functorially lax generic. We conjecture that actually any lax generic is automatically functorially lax generic for free, though we choose not to go into such discussion and incorporate this condition in the definition. 
\end{remark}

\begin{definition}
A \emph{$U$-lax generic 2-cell} is a 2-cell as below with $ \nu$ an lax generic 1-cell
% https://q.uiver.app/?q=WzAsMyxbMCwwLCJCIl0sWzAsMSwiVShBKSJdLFsxLDAsIlUoQV8xKSJdLFswLDEsIm4iLDJdLFswLDIsImYiXSxbMSwyLCJVKHUpIiwyXSxbNCwzLCJcXG51IFxcRG93bmFycm93IiwxLHsic2hvcnRlbiI6eyJzb3VyY2UiOjIwLCJ0YXJnZXQiOjIwfSwic3R5bGUiOnsiYm9keSI6eyJuYW1lIjoibm9uZSJ9LCJoZWFkIjp7Im5hbWUiOiJub25lIn19fV1d
\[\begin{tikzcd}
	B & {U(A_1)} \\
	{U(A)}
	\arrow[""{name=0, anchor=center, inner sep=0}, "n"', from=1-1, to=2-1]
	\arrow[""{name=1, anchor=center, inner sep=0}, "f", from=1-1, to=1-2]
	\arrow["{U(u)}"', from=2-1, to=1-2]
	\arrow["{\nu \Uparrow}"{description}, Rightarrow, draw=none, from=1, to=0]
\end{tikzcd}\]
such that we have the following two conditions: \begin{itemize}
    \item For any factorizations of $ \nu$ as a pasting of the following form
    % https://q.uiver.app/?q=WzAsMyxbMCwwLCJCIl0sWzAsMSwiVShBKSJdLFsxLDAsIlUoQV8xKSJdLFswLDEsIm4iLDJdLFswLDIsImYiXSxbMSwyLCJVKHYpIiwxXSxbMSwyLCJVKHUpIiwyLHsiY3VydmUiOjN9XSxbNCwzLCJcXGxhbWJkYSBcXERvd25hcnJvdyIsMSx7InNob3J0ZW4iOnsic291cmNlIjoyMCwidGFyZ2V0IjoyMH0sInN0eWxlIjp7ImJvZHkiOnsibmFtZSI6Im5vbmUifSwiaGVhZCI6eyJuYW1lIjoibm9uZSJ9fX1dLFs1LDYsIlUoXFx6ZXRhKSIsMix7InNob3J0ZW4iOnsic291cmNlIjoyMCwidGFyZ2V0IjoyMH19XV0=
\[\begin{tikzcd}
	B & {U(A_1)} \\
	{U(A)}
	\arrow[""{name=0, anchor=center, inner sep=0}, "n"', from=1-1, to=2-1]
	\arrow[""{name=1, anchor=center, inner sep=0}, "f", from=1-1, to=1-2]
	\arrow[""{name=2, anchor=center, inner sep=0}, "{U(v)}"{description}, from=2-1, to=1-2]
	\arrow[""{name=3, anchor=center, inner sep=0}, "{U(u)}"', curve={height=18pt}, from=2-1, to=1-2]
	\arrow["{\lambda \Uparrow}"{description}, Rightarrow, draw=none, from=1, to=0]
	\arrow["{U(\zeta)}", shorten <=3pt, shorten >=3pt, Rightarrow, from=3, to=2]
\end{tikzcd}\]
there exists a unique 2-cell $ \xi : v \Rightarrow u$ which is a section of $ \zeta$ in $\mathcal{A}[A,A_1]$, that is $ \zeta\xi=1_{v}$, and such that we have a factorization of $ \lambda$ as    % https://q.uiver.app/?q=WzAsMyxbMCwwLCJCIl0sWzAsMSwiVShBKSJdLFsxLDAsIlUoQV8xKSJdLFswLDEsIm4iLDJdLFswLDIsImYiXSxbMSwyLCJVKHUpIiwxXSxbMSwyLCJVKHYpIiwyLHsiY3VydmUiOjN9XSxbNCwzLCJcXG51IFxcRG93bmFycm93IiwxLHsic2hvcnRlbiI6eyJzb3VyY2UiOjIwLCJ0YXJnZXQiOjIwfSwic3R5bGUiOnsiYm9keSI6eyJuYW1lIjoibm9uZSJ9LCJoZWFkIjp7Im5hbWUiOiJub25lIn19fV0sWzUsNiwiVShcXHpldGEpIiwyLHsic2hvcnRlbiI6eyJzb3VyY2UiOjIwLCJ0YXJnZXQiOjIwfX1dXQ==
\[\begin{tikzcd}
	B & {U(A_1)} \\
	{U(A)}
	\arrow[""{name=0, anchor=center, inner sep=0}, "n"', from=1-1, to=2-1]
	\arrow[""{name=1, anchor=center, inner sep=0}, "f", from=1-1, to=1-2]
	\arrow[""{name=2, anchor=center, inner sep=0}, "{U(u)}"{description}, from=2-1, to=1-2]
	\arrow[""{name=3, anchor=center, inner sep=0}, "{U(v)}"', curve={height=18pt}, from=2-1, to=1-2]
	\arrow["{\nu \Uparrow}"{description}, Rightarrow, draw=none, from=1, to=0]
	\arrow["{U(\zeta)}", shorten <=3pt, shorten >=3pt, Rightarrow, from=3, to=2]
\end{tikzcd}\]
\item Any parallel pair of 2-cells in $\mathcal{A}$ whose image along $U$ are equalized by $\nu$ as depicted below 
% https://q.uiver.app/?q=WzAsMyxbMCwwLCJCIl0sWzAsMSwiVShBKSJdLFsxLDAsIlUoQV8xKSJdLFswLDEsIm4iLDJdLFswLDIsImYiXSxbMSwyLCJVKHUpIiwxXSxbMSwyLCJVKHYpIiwyLHsiY3VydmUiOjN9XSxbNCwzLCJcXG51IFxcRG93bmFycm93IiwxLHsic2hvcnRlbiI6eyJzb3VyY2UiOjIwLCJ0YXJnZXQiOjIwfSwic3R5bGUiOnsiYm9keSI6eyJuYW1lIjoibm9uZSJ9LCJoZWFkIjp7Im5hbWUiOiJub25lIn19fV0sWzUsNiwiVShcXHhpKSIsMix7InNob3J0ZW4iOnsic291cmNlIjoyMCwidGFyZ2V0IjoyMH19XV0=
\[\begin{tikzcd}
	B & {U(A_1)} \\
	{U(A)}
	\arrow[""{name=0, anchor=center, inner sep=0}, "n"', from=1-1, to=2-1]
	\arrow[""{name=1, anchor=center, inner sep=0}, "f", from=1-1, to=1-2]
	\arrow[""{name=2, anchor=center, inner sep=0}, "{U(u)}"{description}, from=2-1, to=1-2]
	\arrow[""{name=3, anchor=center, inner sep=0}, "{U(v)}"', curve={height=18pt}, from=2-1, to=1-2]
	\arrow["{\nu \Uparrow}"{description}, Rightarrow, draw=none, from=1, to=0]
	\arrow["{U(\xi)}", shorten <=3pt, shorten >=3pt, Rightarrow, from=3, to=2]
\end{tikzcd}
=
% https://q.uiver.app/?q=WzAsMyxbMCwwLCJCIl0sWzAsMSwiVShBKSJdLFsxLDAsIlUoQV8xKSJdLFswLDEsIm4iLDJdLFswLDIsImYiXSxbMSwyLCJVKHUpIiwxXSxbMSwyLCJVKHYpIiwyLHsiY3VydmUiOjN9XSxbNCwzLCJcXG51IFxcRG93bmFycm93IiwxLHsic2hvcnRlbiI6eyJzb3VyY2UiOjIwLCJ0YXJnZXQiOjIwfSwic3R5bGUiOnsiYm9keSI6eyJuYW1lIjoibm9uZSJ9LCJoZWFkIjp7Im5hbWUiOiJub25lIn19fV0sWzUsNiwiVShcXHpldGEpIiwyLHsic2hvcnRlbiI6eyJzb3VyY2UiOjIwLCJ0YXJnZXQiOjIwfX1dXQ==
\begin{tikzcd}
	B & {U(A_1)} \\
	{U(A)}
	\arrow[""{name=0, anchor=center, inner sep=0}, "n"', from=1-1, to=2-1]
	\arrow[""{name=1, anchor=center, inner sep=0}, "f", from=1-1, to=1-2]
	\arrow[""{name=2, anchor=center, inner sep=0}, "{U(u)}"{description}, from=2-1, to=1-2]
	\arrow[""{name=3, anchor=center, inner sep=0}, "{U(v)}"', curve={height=18pt}, from=2-1, to=1-2]
	\arrow["{\nu \Uparrow}"{description}, Rightarrow, draw=none, from=1, to=0]
	\arrow["{U(\zeta)}", shorten <=3pt, shorten >=3pt, Rightarrow, from=3, to=2]
\end{tikzcd}
\]
must actually already be equal in $\mathcal{A}$.
\end{itemize}
\end{definition}

\subsection{Lax familial pseudofunctor}

\begin{definition}
A pseudofunctor $ U : \mathcal{A} \rightarrow \mathcal{B}$ is \emph{lax familial}\index{lax familia} (or also \emph{lax stable} to remain coherent with our terminology) if we have the following conditions:\begin{itemize}
    \item any arrow of the form $ f: B \rightarrow U(A)$ admits a bifactorization 
\[ % https://tikzcd.yichuanshen.de/#N4Igdg9gJgpgziAXAbVABwnAlgFyxMJZABgBpiBdUkANwEMAbAVxiRACEQBfU9TXfIRQAmclVqMWbAKoAKAIIBKbrxAZseAkQCMpbePrNWiEHPkB9AGbKefDYJ1jqhqSe7iYUAObwioSwBOEAC2SGQgOBBIuhJGbJYq-kGhiOGRSKKxruBWINQMdABGMAwACvyaQiABWF4AFjiJIIEhGdTpiDEuxqayTFbK+UUl5fZaJjX1jbbNyUgAzO1RiJndbAA664xodXRWAASbdJFoh+vYwTAAjnkgDFhgPZCPt3UwdFBsz6ztdFgMXwIPzuwzKFQcJge2Fg7i4QA
\begin{tikzcd}
B \arrow[rr, "f"] \arrow[rd, "n_f"'] & {} \arrow[d, "\nu_f \atop \simeq", phantom] & U(A) \\
                                     & U(A_f) \arrow[ru, "U(u_f)"']                   &     
\end{tikzcd} \]
with $\nu_f: f \simeq U(u_f) n_f $ invertible and $ n_f$ a $U$-lax generic 1-cell. 
\item Generic 2-cells compose: that is, for a composite 2-cell as below
% https://q.uiver.app/?q=WzAsNixbMCwwLCJCXzEiXSxbMSwwLCJCXzIiXSxbMiwwLCJCXzMiXSxbMCwxLCJVKEFfMSkiXSxbMSwxLCJVKEFfMikiXSxbMiwxLCJVKEFfMykiXSxbMCwzLCJuXzEiLDJdLFswLDEsImZfMSJdLFsxLDQsIm5fMiIsMV0sWzMsNCwiVSh1XzEpIiwyXSxbNCw1LCJVKHVfMikiLDJdLFsxLDIsImZfMiJdLFsyLDUsIm5fMyJdLFs3LDksIlxcbnVfMSBcXERvd25hcnJvdyIsMSx7InNob3J0ZW4iOnsic291cmNlIjoyMCwidGFyZ2V0IjoyMH0sInN0eWxlIjp7ImJvZHkiOnsibmFtZSI6Im5vbmUifSwiaGVhZCI6eyJuYW1lIjoibm9uZSJ9fX1dLFsxMSwxMCwiXFxudV8yIFxcRG93bmFycm93IiwxLHsic2hvcnRlbiI6eyJzb3VyY2UiOjIwLCJ0YXJnZXQiOjIwfSwic3R5bGUiOnsiYm9keSI6eyJuYW1lIjoibm9uZSJ9LCJoZWFkIjp7Im5hbWUiOiJub25lIn19fV1d
\[\begin{tikzcd}
	{B_1} & {B_2} & {B_3} \\
	{U(A_1)} & {U(A_2)} & {U(A_3)}
	\arrow["{n_1}"', from=1-1, to=2-1]
	\arrow[""{name=0, anchor=center, inner sep=0}, "{f_1}", from=1-1, to=1-2]
	\arrow["{n_2}"{description}, from=1-2, to=2-2]
	\arrow[""{name=1, anchor=center, inner sep=0}, "{U(u_1)}"', from=2-1, to=2-2]
	\arrow[""{name=2, anchor=center, inner sep=0}, "{U(u_2)}"', from=2-2, to=2-3]
	\arrow[""{name=3, anchor=center, inner sep=0}, "{f_2}", from=1-2, to=1-3]
	\arrow["{n_3}", from=1-3, to=2-3]
	\arrow["{\nu_1 \Uparrow}"{description}, Rightarrow, draw=none, from=0, to=1]
	\arrow["{\nu_2 \Uparrow}"{description}, Rightarrow, draw=none, from=3, to=2]
\end{tikzcd}\]
with $ \nu_1$ and $ \nu_2$ generic 2-cells, then the following composite also is generic:
% https://q.uiver.app/?q=WzAsMyxbMCwwLCJCXzEiXSxbMCwxLCJVKEFfMSkiXSxbMywwLCJVKEFfMykiXSxbMCwxLCJuXzEiLDJdLFswLDIsIm5fM2ZfMmZfMSJdLFsxLDIsIlUodV8ydV8xKSIsMix7ImN1cnZlIjoyfV0sWzAsNSwiXFxEb3duYXJyb3cgdSh1XzIpKlxcbnVfMSBcXCwgXFxudV8yKmZfMSAiLDEseyJvZmZzZXQiOi0xLCJzaG9ydGVuIjp7InNvdXJjZSI6MjAsInRhcmdldCI6MjB9LCJzdHlsZSI6eyJib2R5Ijp7Im5hbWUiOiJub25lIn0sImhlYWQiOnsibmFtZSI6Im5vbmUifX19XV0=
\[\begin{tikzcd}
	{B_1} &&& {U(A_3)} \\
	{U(A_1)}
	\arrow["{n_1}"', from=1-1, to=2-1]
	\arrow["{n_3f_2f_1}", from=1-1, to=1-4]
	\arrow[""{name=0, anchor=center, inner sep=0}, "{U(u_2u_1)}"', curve={height=12pt}, from=2-1, to=1-4]
	\arrow["{\Uparrow \nu_2*f_1 u(u_2)*\nu_1 \,  }"{description}, shift left=1, Rightarrow, draw=none, from=1-1, to=0]
\end{tikzcd}\]
\item the equality 2-cells as below are generic:
% https://q.uiver.app/?q=WzAsMyxbMCwwLCJCIl0sWzAsMSwiVShBKSJdLFsxLDAsIlUoQSkiXSxbMCwxLCJuIiwyXSxbMCwyLCJuIl0sWzEsMiwiIiwwLHsibGV2ZWwiOjIsInN0eWxlIjp7ImhlYWQiOnsibmFtZSI6Im5vbmUifX19XV0=
\[\begin{tikzcd}
	B & {U(A)} \\
	{U(A)}
	\arrow["n"', from=1-1, to=2-1]
	\arrow["n", from=1-1, to=1-2]
	\arrow[Rightarrow, no head, from=2-1, to=1-2]
\end{tikzcd}\]
\end{itemize} 
\end{definition}

\begin{remark}
The first two conditions correspond to \cite{walker2020lax} definition; however, the last item appears to be required for our purpose; it is not clear whether it can be deduced from the others.
\end{remark}

First of all, it is important to precise that such a factorization, which shall be call \emph{generic factorization}, is unique up to a unique equivalence and invertible 2-cells:

\begin{lemma}\label{uniqueness of generic factorization}
Let be $ (\nu_1, n_1, u_1)$ and $ (\nu_2, n_2, u_2)$ two factorizations of a same $f : B \rightarrow U(A)$. Then we have an equivalence unique up to a unique invertible 2-cell $ e : A_1 \simeq A_2$ in $\mathcal{A}$ together with invertible 2-cells $  n_1 \simeq U(e)n_2$, $ u_2 = u_1e $.  
\end{lemma}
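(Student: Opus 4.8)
The plan is to apply the diagonalisation universal property of lax $U$-generic $1$-cells twice, producing comparison maps between $A_1$ and $A_2$ in both directions, and then to use the uniqueness clauses of that same universal property to see that the two comparisons are mutually pseudoinverse.

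First I would form, from the two structure cells $\nu_1 : f \simeq U(u_1)n_1$ and $\nu_2 : f \simeq U(u_2)n_2$, the invertible $2$-cell $\sigma = \nu_2\nu_1^{-1} : U(u_1)n_1 \Rightarrow U(u_2)n_2$, and regard it as the comparison cell of the pseudosquare whose left leg is $n_2$, top edge $n_1$, right edge $U(u_1)$ and bottom edge $U(u_2)$. Since $n_2$ is lax generic and $\sigma$ is invertible, the definition supplies a filler $e : A_2 \to A_1$ in $\mathcal{A}$ together with invertible $2$-cells $\epsilon : n_1 \Rightarrow U(e)n_2$ in $\mathcal{B}$ and $\omega : u_1 e \Rightarrow u_2$ in $\mathcal{A}$ whose pasting recovers $\sigma$ (invertibility of $\epsilon,\omega$ being exactly the clause ``$\nu_\sigma,\omega_\sigma$ must be invertible as soon as $\sigma$ is''). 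These are precisely the data $n_1 \simeq U(e)n_2$ and $u_2 \simeq u_1 e$ demanded in the statement. Performing the symmetric construction, now using genericity of $n_1$ on the pseudosquare carrying $\sigma^{-1}$, yields a map $e' : A_1 \to A_2$ with invertible $2$-cells $\epsilon' : n_2 \Rightarrow U(e')n_1$ and $\omega' : u_2 e' \Rightarrow u_1$, pasting to $\sigma^{-1}$.

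To see that $e$ is an equivalence with pseudoinverse $e'$, I would consider the two composites $e'e : A_2 \to A_2$ and $ee' : A_1 \to A_1$. Whiskering and pasting $\epsilon,\epsilon'$ through the pseudofunctoriality constraint $U(e')U(e)\simeq U(e'e)$ gives an invertible $2$-cell $n_2 \Rightarrow U(e'e)n_2$, while pasting $\omega'$ and $\omega$ (through $u_2(e'e)=(u_2e')e$) gives an invertible $2$-cell $u_2(e'e)\Rightarrow u_2$; the claim is that these two cells exhibit $e'e$ as a filler of the trivial pseudosquare on $A_2$ (left and top edge $n_2$, right and bottom edge $U(u_2)$, comparison cell the identity of $U(u_2)n_2$). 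Granting this, $\mathrm{id}_{A_2}$ is manifestly another such filler, so the $1$-dimensional uniqueness clause for the generic $n_2$ yields a unique invertible $2$-cell $e'e \simeq \mathrm{id}_{A_2}$; the symmetric argument gives $ee' \simeq \mathrm{id}_{A_1}$. Finally, the asserted uniqueness of $e$ up to a unique invertible $2$-cell is nothing but the $1$-dimensional uniqueness clause in the lax genericity of $n_2$ applied to the original pseudosquare carrying $\sigma$.

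The one genuinely laborious point — and where I expect the main obstacle — is verifying that the pasted data above really does decompose the \emph{identity} $2$-cell of $U(u_2)n_2$, so that $e'e$ qualifies as a filler of the trivial square. This is a coherence computation: one combines the two defining pasting equations for $e$ (decomposing $\sigma$) and for $e'$ (decomposing $\sigma^{-1}$) with the associativity and unit constraints of $U$, carefully tracking the constraint cells $U(e')U(e)\simeq U(e'e)$ through the whiskerings. No creativity is required, but the bookkeeping of invertible constraint $2$-cells must be done with care; everything else follows formally from the universal property already used to construct $e$ and $e'$.
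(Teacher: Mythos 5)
Your first move --- applying the lax generic diagonalisation of $n_2$ to the invertible square built from $\nu_2\nu_1^{-1}$, and of $n_1$ to the one built from $\nu_1^{-1}\nu_2$, obtaining $e$, $e'$ and the invertible cells $n_1 \simeq U(e)n_2$, $u_1e \simeq u_2$ --- is exactly the paper's first step. The gap is where you conclude $e'e \simeq \mathrm{id}_{A_2}$. The definition of a lax $U$-generic $1$-cell does \emph{not} assert that any two fillers of a given square are related by a unique invertible $2$-cell: it asserts that there is a distinguished filler $(w_\sigma,\nu_\sigma,\omega_\sigma)$, unique up to unique invertible $2$-cell \emph{as the universal one}, and that every other factorization $(w,\nu,\omega)$ of the same square admits a unique comparison $2$-cell $\xi : w \Rightarrow w_\sigma$ compatible with the pastings --- a comparison which is in general \emph{not} invertible (this is precisely why the definition states the universality clause separately). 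So from the fact that $e'e$ and $\mathrm{id}_{A_2}$ are both fillers of the trivial square on $n_2$, you only obtain possibly non-invertible $2$-cells from each of them into the universal filler; you may not identify them with each other, nor take $\mathrm{id}_{A_2}$ to be the universal filler without argument. Worse, the principle you invoke --- that two fillers with invertible cells of an invertible square are uniquely equivalent --- is, once unwound, essentially an instance of the very lemma being proved, applied to the arrow $n_2$ and its two generic factorizations $(\mathrm{id}, n_2, 1_{A_2})$ and $(\epsilon'', n_2, e'e)$; so the argument is circular at exactly the critical point.

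This is where the paper works harder, and it is why the definition of lax familial pseudofunctor carries the two extra axioms (generic $2$-cells compose; identity $2$-cells on generic $1$-cells are generic) --- the paper remarks explicitly that the last one ``appears to be required for our purpose''. The paper's proof obtains, via the universality/functoriality clause, comparison $2$-cells in \emph{both} directions between the identity filler and the composite filler, each satisfying a factorization equation, and then uses genericity of the identity $2$-cell (third axiom) and of the composite $2$-cell (second axiom), together with the retraction and equification properties in the definition of lax generic $2$-cells, to conclude that these two comparison cells are mutually inverse; this mechanism is the one isolated in \cref{mutually factorizing generics}. If you replace your appeal to the uniqueness clause by this two-sided comparison argument, your proof becomes the paper's. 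Incidentally, the step you flag as the main obstacle --- the coherence computation showing that $e'e$ is a filler of the trivial square --- is indeed needed but routine; the genuine difficulty of the lemma is the invertibility step you passed over.
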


\begin{proof}
This is obtained by using respectively the generic property of $n_1 $ and $n_2$ at the invertible $\nu_2\nu_1^{-1} $ and $ \nu_1^{-1}\nu_2 $ as below
% https://q.uiver.app/?q=WzAsNCxbMCwwLCJCIl0sWzAsMSwiVShBXzEpIl0sWzEsMCwiVShBXzIpIl0sWzEsMSwiVShBKSJdLFswLDEsIm5fMSIsMl0sWzAsMiwibl8yIl0sWzEsMywiVSh1XzEpIiwyXSxbMiwzLCJVKHVfMikiXSxbMCwzXSxbNyw1LCIgXFxEb3duYXJyb3cge1xcbnVfMiBcXGF0b3BcXHNpbWVxfSIsMSx7Im9mZnNldCI6LTEsInNob3J0ZW4iOnsidGFyZ2V0IjoyMH0sInN0eWxlIjp7ImJvZHkiOnsibmFtZSI6Im5vbmUifSwiaGVhZCI6eyJuYW1lIjoibm9uZSJ9fX1dLFs2LDQsIntcXG51XzFeey0xfSBcXGF0b3AgXFxzaW1lcX0gXFxEb3duYXJyb3ciLDEseyJvZmZzZXQiOjEsInNob3J0ZW4iOnsic291cmNlIjoyMH0sInN0eWxlIjp7ImJvZHkiOnsibmFtZSI6Im5vbmUifSwiaGVhZCI6eyJuYW1lIjoibm9uZSJ9fX1dXQ==
\[\begin{tikzcd}
	B & {U(A_2)} \\
	{U(A_1)} & {U(A)}
	\arrow[""{name=0, anchor=center, inner sep=0}, "{n_1}"', from=1-1, to=2-1]
	\arrow[""{name=1, anchor=center, inner sep=0}, "{n_2}", from=1-1, to=1-2]
	\arrow[""{name=2, anchor=center, inner sep=0}, "{U(u_1)}"', from=2-1, to=2-2]
	\arrow[""{name=3, anchor=center, inner sep=0}, "{U(u_2)}", from=1-2, to=2-2]
	\arrow[from=1-1, to=2-2]
	\arrow["{ \Uparrow {\nu_2 \atop\simeq}}"{description}, shift left=1, Rightarrow, draw=none, from=3, to=1]
	\arrow["{{\nu_1^{-1} \atop \simeq} \Uparrow}"{description}, shift right=1, Rightarrow, draw=none, from=2, to=0]
\end{tikzcd}  \hskip1cm \begin{tikzcd}
	B & {U(A_1)} \\
	{U(A_2)} & {U(A)}
	\arrow[""{name=0, anchor=center, inner sep=0}, "{n_2}"', from=1-1, to=2-1]
	\arrow[""{name=1, anchor=center, inner sep=0}, "{n_1}", from=1-1, to=1-2]
	\arrow[""{name=2, anchor=center, inner sep=0}, "{U(u_2)}"', from=2-1, to=2-2]
	\arrow[""{name=3, anchor=center, inner sep=0}, "{U(u_1)}", from=1-2, to=2-2]
	\arrow[from=1-1, to=2-2]
	\arrow["{{\nu_2^{-1} \atop \simeq} \Uparrow}"{description}, shift right=1, Rightarrow, draw=none, from=2, to=0]
	\arrow["{ \Uparrow {\nu_1 \atop\simeq}}"{description}, shift left=1, Rightarrow, draw=none, from=3, to=1]
\end{tikzcd} \]
Then one gets two factorizations of $ n_1$ through generic morphisms
% https://q.uiver.app/?q=WzAsMyxbMCwwLCJCIl0sWzAsMSwiVShBXzEpIl0sWzEsMCwiVShBXzEpIl0sWzAsMSwibl8xIiwyXSxbMCwyLCJuXzEiXSxbMSwyLCIiLDAseyJsZXZlbCI6Miwic3R5bGUiOnsiaGVhZCI6eyJuYW1lIjoibm9uZSJ9fX1dXQ==
\[\begin{tikzcd}
	B & {U(A_1)} \\
	{U(A_1)}
	\arrow["{n_1}"', from=1-1, to=2-1]
	\arrow["{n_1}", from=1-1, to=1-2]
	\arrow[Rightarrow, no head, from=2-1, to=1-2]
\end{tikzcd} \hskip1cm
% https://q.uiver.app/?q=WzAsNCxbMCwwLCJCIl0sWzAsMiwiVShBXzEpIl0sWzIsMCwiVShBXzEpIl0sWzEsMSwiVShBXzIpIl0sWzAsMSwibl8xIiwyXSxbMCwyLCJuXzEiXSxbMCwzLCJuXzIiLDFdLFsxLDMsIlUod197XFxudV8yXnstMX1cXG51XzEgfSkiLDJdLFszLDIsIlUod197XFxudV8xXnstMX1cXG51XzIgfSkiLDJdLFs0LDMsIlxcbnVfe1xcbnVfMl57LTF9XFxudV8xIH0gXFxhdG9wIFxcc2ltZXEiLDEseyJzaG9ydGVuIjp7InNvdXJjZSI6MjB9LCJzdHlsZSI6eyJib2R5Ijp7Im5hbWUiOiJub25lIn0sImhlYWQiOnsibmFtZSI6Im5vbmUifX19XSxbNSwzLCJcXG51X3tcXG51XzFeey0xfVxcbnVfMiB9IFxcYXRvcCBcXHNpbWVxIiwxLHsic2hvcnRlbiI6eyJzb3VyY2UiOjIwfSwic3R5bGUiOnsiYm9keSI6eyJuYW1lIjoibm9uZSJ9LCJoZWFkIjp7Im5hbWUiOiJub25lIn19fV1d
\begin{tikzcd}
	B && {U(A_1)} \\
	& {U(A_2)} \\
	{U(A_1)}
	\arrow[""{name=0, anchor=center, inner sep=0}, "{n_1}"', from=1-1, to=3-1]
	\arrow[""{name=1, anchor=center, inner sep=0}, "{n_1}", from=1-1, to=1-3]
	\arrow["{n_2}"{description}, from=1-1, to=2-2]
	\arrow["{U(w_{\nu_1\nu_2^{-1} })}"', from=3-1, to=2-2]
	\arrow["{U(w_{\nu_2\nu_1^{-1} })}"', from=2-2, to=1-3]
	\arrow["{\nu_{\nu_1\nu_2^{-1} } \atop \simeq}"{description}, Rightarrow, draw=none, from=0, to=2-2]
	\arrow["{\nu_{\nu_2\nu_1^{-1} } \atop \simeq}"{description}, Rightarrow, draw=none, from=1, to=2-2]
\end{tikzcd}\]
which are related (by functoriality of the diagonalizations relatively to the morphisms of lax squares) by 2-cellular factorizations
% https://q.uiver.app/?q=WzAsMyxbMCwwLCJCIl0sWzAsMiwiVShBXzEpIl0sWzIsMCwiVShBXzEpIl0sWzAsMSwibl8xIiwyXSxbMCwyLCJuXzEiXSxbMSwyLCJVKHdfe1xcbnVfMV57LTF9XFxudV8yIH13X3tcXG51XzJeey0xfVxcbnVfMSB9KSIsMix7ImN1cnZlIjozfV0sWzEsMiwiIiwxLHsibGV2ZWwiOjIsInN0eWxlIjp7ImhlYWQiOnsibmFtZSI6Im5vbmUifX19XSxbNSw2LCJcXHhpIiwyLHsic2hvcnRlbiI6eyJzb3VyY2UiOjIwLCJ0YXJnZXQiOjIwfX1dXQ==
\[\begin{tikzcd}
	B && {U(A_1)} \\
	\\
	{U(A_1)}
	\arrow["{n_1}"', from=1-1, to=3-1]
	\arrow["{n_1}", from=1-1, to=1-3]
	\arrow[""{name=0, anchor=center, inner sep=0}, "{U(w_{\nu_1^{-1}\nu_2 }w_{\nu_2^{-1}\nu_1 })}"', curve={height=18pt}, from=3-1, to=1-3]
	\arrow[""{name=1, anchor=center, inner sep=0}, Rightarrow, no head, from=3-1, to=1-3]
	\arrow["\xi"', shorten <=3pt, shorten >=3pt, Rightarrow, from=1, to=0]
\end{tikzcd} = \begin{tikzcd}[sep=large]
	B && {U(A_1)} \\
	\\
	{U(A_1)}
	\arrow["{n_1}"', from=1-1, to=3-1]
	\arrow["{n_1}", from=1-1, to=1-3]
	\arrow[""{name=0, anchor=center, inner sep=0}, "{U(w_{\nu_2\nu_1^{-1} }w_{\nu_1\nu_2^{-1} })}"', from=3-1, to=1-3]
	\arrow["{\nu_{\nu_2\nu_1^{-1}}\nu_{\nu_1\nu_2^{-1} }  \atop\simeq}"{description}, Rightarrow, draw=none, from=1-1, to=0]
\end{tikzcd}\]
% https://q.uiver.app/?q=WzAsMyxbMCwwLCJCIl0sWzAsMiwiVShBXzEpIl0sWzIsMCwiVShBXzEpIl0sWzAsMSwibl8xIiwyXSxbMCwyLCJuXzEiXSxbMSwyLCJVKHdfe1xcbnVfMV57LTF9XFxudV8yIH13X3tcXG51XzJeey0xfVxcbnVfMSB9KSIsMV0sWzEsMiwiIiwxLHsiY3VydmUiOjQsImxldmVsIjoyLCJzdHlsZSI6eyJoZWFkIjp7Im5hbWUiOiJub25lIn19fV0sWzAsNSwiXFxudV97XFxudV8xXnstMX1cXG51XzJ9XFxudV97XFxudV8yXnstMX1cXG51XzEgfSAgXFxhdG9wXFxzaW1lcSIsMSx7InNob3J0ZW4iOnsic291cmNlIjoyMCwidGFyZ2V0IjoyMH0sInN0eWxlIjp7ImJvZHkiOnsibmFtZSI6Im5vbmUifSwiaGVhZCI6eyJuYW1lIjoibm9uZSJ9fX1dLFs2LDUsIlxcemV0YSIsMCx7InNob3J0ZW4iOnsic291cmNlIjoyMCwidGFyZ2V0IjoyMH19XV0=
\[\begin{tikzcd}[sep=large]
	B && {U(A_1)} \\
	\\
	{U(A_1)}
	\arrow["{n_1}"', from=1-1, to=3-1]
	\arrow["{n_1}", from=1-1, to=1-3]
	\arrow[""{name=0, anchor=center, inner sep=0}, "{U(w_{\nu_2\nu_1^{-1} }w_{\nu_1\nu_2^{-1} })}"{description}, from=3-1, to=1-3]
	\arrow[""{name=1, anchor=center, inner sep=0}, curve={height=30pt}, Rightarrow, no head, from=3-1, to=1-3]
	\arrow["\scriptsize{\nu_{\nu_2\nu_1^{-1}}\nu_{\nu_1\nu_2^{-1} }  \atop\simeq}"{description}, Rightarrow, draw=none, from=1-1, to=0]
	\arrow["\zeta"', shorten <=4pt, shorten >=4pt, Rightarrow, from=0, to=1]
\end{tikzcd} = \begin{tikzcd}
	B & {U(A_1)} \\
	{U(A_1)}
	\arrow["{n_1}"', from=1-1, to=2-1]
	\arrow["{n_1}", from=1-1, to=1-2]
	\arrow[Rightarrow, no head, from=2-1, to=1-2]
\end{tikzcd} \]
But then by genericity, those comparison morphisms both have retracts, which entails their invertibility: hence the composite $ w_{\nu_2\nu_1^{-1} }w_{\nu_1\nu_2^{-1} }$ is an equivalence. A same argument proves that the composite $ w_{\nu_1\nu_2^{-1} }w_{\nu_2\nu_1^{-1} }$ is also an equivalence. 
\end{proof}

Though the following result is immediate by the retraction properties in the definition of lax generic 2-cells, it is worth visualizing it once for all as we are going to use it later:

\begin{lemma}\label{mutually factorizing generics}
Two lax generic 2-cells that factorize themselves mutually are equivalent: if one has 
% https://q.uiver.app/?q=WzAsMyxbMCwwLCJCIl0sWzAsMSwiVShBKSJdLFsxLDAsIlUoQScpIl0sWzAsMSwibiIsMl0sWzAsMiwiZiJdLFsxLDIsIlUodV8xKSIsMix7ImN1cnZlIjoyfV0sWzEsMiwiVSh1XzEpIiwxXSxbMCw2LCJcXERvd25hcnJvdyBcXG51XzIiLDEseyJzaG9ydGVuIjp7InRhcmdldCI6MjB9LCJzdHlsZSI6eyJib2R5Ijp7Im5hbWUiOiJub25lIn0sImhlYWQiOnsibmFtZSI6Im5vbmUifX19XSxbNiw1LCJVKFxcemV0YSkiLDAseyJzaG9ydGVuIjp7InNvdXJjZSI6MjAsInRhcmdldCI6MjB9fV1d
\[\begin{tikzcd}[sep=large]
	B & {U(A')} \\
	{U(A)}
	\arrow["n"', from=1-1, to=2-1]
	\arrow["f", from=1-1, to=1-2]
	\arrow[""{name=0, anchor=center, inner sep=0}, "{U(u_1)}"', curve={height=24pt}, from=2-1, to=1-2]
	\arrow[""{name=1, anchor=center, inner sep=0}, "{U(u_2)}"{description}, from=2-1, to=1-2]
	\arrow["{\Uparrow \nu_2}"{description}, Rightarrow, draw=none, from=1-1, to=1]
	\arrow["{U(\zeta)}", shorten <=2pt, shorten >=2pt, Rightarrow, from=0, to=1]
\end{tikzcd} = \begin{tikzcd}
	B & {U(A')} \\
	{U(A)}
	\arrow["n"', from=1-1, to=2-1]
	\arrow["f", from=1-1, to=1-2]
	\arrow[""{name=0, anchor=center, inner sep=0}, "{U(u_1)}"', from=2-1, to=1-2]
	\arrow["{\Uparrow \nu_1}"{description}, Rightarrow, draw=none, from=1-1, to=0]
\end{tikzcd}\]
% https://q.uiver.app/?q=WzAsMyxbMCwwLCJCIl0sWzAsMSwiVShBKSJdLFsxLDAsIlUoQScpIl0sWzEsMiwiVSh1XzEpIiwxXSxbMCwxLCJuIiwyXSxbMCwyLCJmIl0sWzEsMiwiVSh1XzIpIiwyLHsiY3VydmUiOjJ9XSxbMCwzLCJcXERvd25hcnJvdyBcXG51XzEiLDEseyJzaG9ydGVuIjp7InRhcmdldCI6MjB9LCJzdHlsZSI6eyJib2R5Ijp7Im5hbWUiOiJub25lIn0sImhlYWQiOnsibmFtZSI6Im5vbmUifX19XSxbMyw2LCJVKFxceGkpIiwwLHsic2hvcnRlbiI6eyJzb3VyY2UiOjIwLCJ0YXJnZXQiOjIwfX1dXQ==
\[\begin{tikzcd}[sep=large]
	B & {U(A')} \\
	{U(A)}
	\arrow[""{name=0, anchor=center, inner sep=0}, "{U(u_1)}"{description}, from=2-1, to=1-2]
	\arrow["n"', from=1-1, to=2-1]
	\arrow["f", from=1-1, to=1-2]
	\arrow[""{name=1, anchor=center, inner sep=0}, "{U(u_2)}"', curve={height=24pt}, from=2-1, to=1-2]
	\arrow["{\Uparrow \nu_1}"{description}, Rightarrow, draw=none, from=1-1, to=0]
	\arrow["{U(\xi)}", shorten <=2pt, shorten >=2pt, Rightarrow, from=1, to=0]
\end{tikzcd} = \begin{tikzcd}
	B & {U(A')} \\
	{U(A)}
	\arrow["n"', from=1-1, to=2-1]
	\arrow["f", from=1-1, to=1-2]
	\arrow[""{name=0, anchor=center, inner sep=0}, "{U(u_2)}"', from=2-1, to=1-2]
	\arrow["{\Uparrow \nu_2}"{description}, Rightarrow, draw=none, from=1-1, to=0]
\end{tikzcd}\]
then in fact $ \zeta$ and $\xi$ are mutual inverses so that $ u_1 \simeq u_2$. 
\end{lemma}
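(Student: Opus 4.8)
The plan is to read the two hypotheses as vertical factorizations of the generic $2$-cells $\nu_1$ and $\nu_2$ through one another, and then to cancel using the clauses built into the definition of a lax generic $2$-cell. Writing $\ast$ for whiskering and juxtaposition for vertical composition, and taking the orientation $\nu_i : U(u_i)n \Rightarrow f$ (with $u_1,u_2 : A\rightarrow A'$ in $\mathcal{A}$), the two displayed equalities say precisely that
\[ \nu_1 = \nu_2\cdot\bigl(U(\zeta)\ast n\bigr), \qquad \nu_2 = \nu_1\cdot\bigl(U(\xi)\ast n\bigr), \]
with $\zeta : u_1\Rightarrow u_2$ and $\xi : u_2\Rightarrow u_1$, where both $\nu_1$ and $\nu_2$ are lax generic $2$-cells through the common generic $1$-cell $n$.

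First I would substitute the second equality into the first. Since $U$ preserves vertical composition of $2$-cells and whiskering distributes, $\bigl(U(\xi)\ast n\bigr)\cdot\bigl(U(\zeta)\ast n\bigr) = U(\xi\zeta)\ast n$, so one obtains
\[ \nu_1 = \nu_1\cdot\bigl(U(\xi\zeta)\ast n\bigr), \qquad\text{where } \xi\zeta : u_1\Rightarrow u_1, \]
and of course also $\nu_1 = \nu_1\cdot\bigl(U(1_{u_1})\ast n\bigr)$. Thus the parallel pair $\xi\zeta,\ 1_{u_1} : u_1\Rightarrow u_1$ is equalized by $\nu_1$ in the sense of the second (equalizing) clause in the definition of a lax generic $2$-cell; that clause therefore forces $\xi\zeta = 1_{u_1}$ in $\mathcal{A}$. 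Substituting symmetrically the first equality into the second gives $\nu_2 = \nu_2\cdot\bigl(U(\zeta\xi)\ast n\bigr)$, and the same clause applied to the generic $2$-cell $\nu_2$ yields $\zeta\xi = 1_{u_2}$. Hence $\zeta$ and $\xi$ are mutually inverse invertible $2$-cells and $u_1\simeq u_2$, as claimed.

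Equivalently — and this is the reading suggested by the surrounding remark, which speaks of \emph{retraction properties} — one may instead invoke the first (section) clause: applied to $\nu_1$ factored through $\nu_2$ via $\zeta$, it produces a section of $\zeta$ together with the reverse factorization of $\nu_2$ through $\nu_1$, and the faithfulness just used identifies that section with the given $\xi$; symmetrically for $\zeta$. Either way the comparison cells are exhibited as mutual retracts, whence invertible.

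The only real care needed — and the step I expect to be fiddly rather than deep — is the bookkeeping of $2$-cell orientations and whiskerings: one must confirm that the two mutual factorizations compose to exhibit $\nu_1$ and $\nu_2$ as self-whiskerings by $U(\xi\zeta)\ast n$ and $U(\zeta\xi)\ast n$ respectively, and that the hypotheses of the equalizing clause (a single fixed lax generic $2$-cell, together with a parallel pair of $2$-cells in $\mathcal{A}$) are literally met, here in the degenerate case where both $2$-cells are endomorphisms of $u_1$ (resp.\ $u_2$). Once the orientations are pinned down this is immediate, matching the assertion that the lemma is essentially a formal consequence of the retraction and faithfulness properties encoded in lax generic $2$-cells.
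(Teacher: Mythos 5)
Your proof is correct. Note that the paper itself gives no written argument for this lemma: it simply declares the result ``immediate by the retraction properties'' of lax generic 2-cells, so your write-up is a genuine fleshing-out rather than a parallel to an existing proof. Your primary route — substituting the two mutual factorizations into each other to get $\nu_1 = \nu_1\cdot\bigl(U(\xi\zeta)\ast n\bigr)$ and $\nu_2 = \nu_2\cdot\bigl(U(\zeta\xi)\ast n\bigr)$, then invoking the equalizing (faithfulness) clause against the identity 2-cells — is the cleanest formalization, and it is legitimate: pseudofunctors preserve vertical composition and identities of 2-cells strictly, and whiskering by the fixed 1-cell $n$ is functorial, so the bookkeeping you flag goes through. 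It is worth emphasizing that your route is actually \emph{better} than the paper's one-line attribution suggests: the retraction (section) clause alone produces \emph{some} section $\xi'$ of $\zeta$ with $\nu_2 = \nu_1\cdot\bigl(U(\xi')\ast n\bigr)$, and a further application makes $\zeta$ invertible, but identifying that inverse with the \emph{given} $\xi$ of the hypothesis still requires the faithfulness clause ($\nu_1\cdot\bigl(U(\xi)\ast n\bigr) = \nu_1\cdot\bigl(U(\xi')\ast n\bigr)$ forces $\xi = \xi'$). So faithfulness is not an optional alternative but the essential ingredient; your second paragraph has this interplay exactly right.
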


\begin{proposition}\label{universal property of the lax generic factorization}
If $ U$ is lax familial, then for a 1-cell $ f : B \rightarrow U(A)$ and any 2-cell of the form 
% https://q.uiver.app/?q=WzAsMyxbMCwwLCJCIl0sWzIsMCwiVShBKSJdLFsxLDEsIlUoQScpIl0sWzAsMSwiZiJdLFswLDIsImciLDJdLFsyLDEsIlUodSkiLDJdLFsyLDMsIlxcc2lnbWEgXFxVcGFycm93IiwxLHsic2hvcnRlbiI6eyJ0YXJnZXQiOjIwfSwic3R5bGUiOnsiYm9keSI6eyJuYW1lIjoibm9uZSJ9LCJoZWFkIjp7Im5hbWUiOiJub25lIn19fV1d
\[\begin{tikzcd}
	B && {U(A)} \\
	& {U(A')}
	\arrow[""{name=0, anchor=center, inner sep=0}, "f", from=1-1, to=1-3]
	\arrow["g"', from=1-1, to=2-2]
	\arrow["{U(u)}"', from=2-2, to=1-3]
	\arrow["{\sigma \Downarrow}"{description}, Rightarrow, draw=none, from=2-2, to=0]
\end{tikzcd}\]
there exists a triple $(m_\sigma, \lambda_\sigma, \rho_\sigma)$, unique up to unique invertible 2-cell, such that $ \sigma$ decomposes as the following pasting
% https://q.uiver.app/?q=WzAsNCxbMCwwLCJCIl0sWzIsMCwiVShBKSJdLFsxLDEsIlUoQScpIl0sWzEsMCwiVShBX2YpIl0sWzAsMiwiZyIsMl0sWzIsMSwiVSh1KSIsMl0sWzAsMywibl9mIiwxXSxbMywxLCJVKHVfZikiLDFdLFswLDEsImYiLDAseyJjdXJ2ZSI6LTR9XSxbMywyLCJtX1xcc2lnbWEiLDFdLFs4LDMsIlxcbnVfZiBcXGF0b3AgXFxzaW1lcSIsMSx7InNob3J0ZW4iOnsic291cmNlIjoyMH0sInN0eWxlIjp7ImJvZHkiOnsibmFtZSI6Im5vbmUifSwiaGVhZCI6eyJuYW1lIjoibm9uZSJ9fX1dLFs0LDMsIlxcbGFtYmRhX1xcc2lnbWEgXFxVcGFycm93IiwxLHsic2hvcnRlbiI6eyJzb3VyY2UiOjIwfSwic3R5bGUiOnsiYm9keSI6eyJuYW1lIjoibm9uZSJ9LCJoZWFkIjp7Im5hbWUiOiJub25lIn19fV0sWzUsMywiXFxVcGFycm93XFxyaG9fXFxzaWdtYSAiLDEseyJzaG9ydGVuIjp7InNvdXJjZSI6MjB9LCJzdHlsZSI6eyJib2R5Ijp7Im5hbWUiOiJub25lIn0sImhlYWQiOnsibmFtZSI6Im5vbmUifX19XV0=
\[\begin{tikzcd}[sep=large]
	B & {U(A_f)} & {U(A)} \\
	& {U(A')}
	\arrow[""{name=0, anchor=center, inner sep=0}, "g"', from=1-1, to=2-2]
	\arrow[""{name=1, anchor=center, inner sep=0}, "{U(u)}"', from=2-2, to=1-3]
	\arrow["{n_f}"{description}, from=1-1, to=1-2]
	\arrow["{U(u_f)}"{description}, from=1-2, to=1-3]
	\arrow[""{name=2, anchor=center, inner sep=0}, "f", bend left=30, from=1-1, to=1-3]
	\arrow["{m_\sigma}"{description}, from=1-2, to=2-2]
	\arrow["{\nu_f \atop \simeq}"{description}, Rightarrow, draw=none, from=2, to=1-2]
	\arrow["{\nu_\sigma \Downarrow}"{description}, Rightarrow, draw=none, from=0, to=1-2]
	\arrow["{\Downarrow U(\omega_\sigma) }"{description}, Rightarrow, draw=none, from=1, to=1-2]
\end{tikzcd}\]
\end{proposition}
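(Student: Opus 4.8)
The plan is to realise the claimed decomposition as a single instance of the lax generic diagonalization enjoyed by $n_f$, and then to extract the uniqueness statement from that property together with \cref{uniqueness of generic factorization}.

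First I would use the lax familiality of $U$ to produce the generic factorization $\nu_f : f \simeq U(u_f)n_f$ of $f$, whose left factor $n_f : B \to U(A_f)$ is by definition a lax $U$-generic $1$-cell. Since $u : A' \to A$, and hence $U(u)$, lies in the range of $U$, pasting $\sigma$ with the inverse of the invertible $\nu_f$ organises the hypotheses into a genuine pseudosquare
\[
\begin{tikzcd}[sep=large]
	B & {U(A')} \\
	{U(A_f)} & {U(A)}
	\arrow["{n_f}"', from=1-1, to=2-1]
	\arrow["g", from=1-1, to=1-2]
	\arrow["{U(u)}", from=1-2, to=2-2]
	\arrow["{U(u_f)}"', from=2-1, to=2-2]
	\arrow["{\sigma\,\nu_f^{-1}}"{description}, draw=none, from=1-2, to=2-1]
\end{tikzcd}
\]
whose left edge is the lax generic $n_f$, whose right and bottom edges $U(u)$ and $U(u_f)$ are in the range of $U$, and whose top edge is the arbitrary $1$-cell $g$. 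This is exactly the shape to which the diagonalization property of a lax generic morphism applies.

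Next I would feed this square to the lax generic property of $n_f$. It returns a $1$-cell $m_\sigma : A_f \to A'$ in $\mathcal{A}$, unique up to a unique invertible $2$-cell, together with $2$-cells $\lambda_\sigma$ in $\mathcal{B}$ and $\rho_\sigma$ in $\mathcal{A}$ decomposing $\sigma\,\nu_f^{-1} : U(u_f)n_f \Rightarrow U(u)g$ through the intermediate composite $U(u)U(m_\sigma)n_f$, where $\rho_\sigma : u_f \Rightarrow u\,m_\sigma$ and $\lambda_\sigma : U(m_\sigma)n_f \Rightarrow g$ (the coherence isomorphism of $U$ at $u\,m_\sigma$ being inserted silently). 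Repasting the invertible $\nu_f$ then converts this into the announced decomposition of $\sigma$ itself, with $U(m_\sigma)$ as middle arrow, $\lambda_\sigma$ filling the left triangle in $\mathcal{B}$ and $\rho_\sigma$ filling the right triangle in $\mathcal{A}$; this produces the required triple.

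Finally, for uniqueness I would observe that any competing triple decomposing $\sigma$ gives, after cancelling the invertible $\nu_f$, another factorization of the same square through $n_f$, so the universal clause in the definition of lax generic morphism supplies a unique comparison $2$-cell between the two middle arrows; running it in both directions and invoking uniqueness forces it to be invertible, exactly as in the argument of \cref{mutually factorizing generics}. The one genuinely delicate point is that the generic factorization $(n_f, u_f, \nu_f)$ is itself only pinned down up to a unique equivalence and compatible invertible $2$-cells by \cref{uniqueness of generic factorization}; I would therefore transport the triple along such an equivalence and check that the two sources of indeterminacy — the choice of generic factorization of $f$ and the choice of diagonal filler — combine into the single ``unique up to unique invertible $2$-cell'' statement, keeping track of the $U$-coherence isomorphisms throughout. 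The construction being an immediate specialisation of lax genericity, it is precisely this bookkeeping that constitutes the main, though essentially routine, obstacle.
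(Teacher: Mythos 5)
Your proposal is correct and follows essentially the same route as the paper: the paper's proof consists precisely of pasting $\sigma$ with the invertible $\nu_f$ from the generic factorization of $f$ and applying the lax diagonalization property of the lax generic $n_f$ to the resulting lax square, with uniqueness coming from the universal clause of lax genericity (your extra bookkeeping about the indeterminacy of the generic factorization itself is harmless but not needed, since the statement works with a fixed choice of $(n_f, u_f, \nu_f)$). The only slip is terminological: the pasted square is a lax square, not a pseudosquare, since $\sigma$ is not invertible.
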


\begin{proof}
Apply the property of the lax generic part of $f$ to get an lax diagonalization of the lax square
% https://q.uiver.app/?q=WzAsNCxbMCwwLCJCIl0sWzAsMSwiVShBX2YpIl0sWzEsMSwiVShBKSJdLFsxLDAsIlUoQScpIl0sWzAsMSwibl9mIiwyXSxbMCwzLCJnIl0sWzMsMiwiVSh1KSJdLFsxLDIsIlUodV9mKSIsMl0sWzAsMiwiZiIsMV0sWzYsNSwiXFxzaWdtYSBcXERvd25hcnJvdyIsMSx7Im9mZnNldCI6LTEsInNob3J0ZW4iOnsidGFyZ2V0IjoyMH0sInN0eWxlIjp7ImJvZHkiOnsibmFtZSI6Im5vbmUifSwiaGVhZCI6eyJuYW1lIjoibm9uZSJ9fX1dLFs0LDcsIlxcbnVfZiBcXGF0b3AgXFxzaW1lcSIsMSx7Im9mZnNldCI6LTEsInNob3J0ZW4iOnsidGFyZ2V0IjoyMH0sInN0eWxlIjp7ImJvZHkiOnsibmFtZSI6Im5vbmUifSwiaGVhZCI6eyJuYW1lIjoibm9uZSJ9fX1dXQ==
\[\begin{tikzcd}
	B & {U(A')} \\
	{U(A_f)} & {U(A)}
	\arrow[""{name=0, anchor=center, inner sep=0}, "{n_f}"', from=1-1, to=2-1]
	\arrow[""{name=1, anchor=center, inner sep=0}, "g", from=1-1, to=1-2]
	\arrow[""{name=2, anchor=center, inner sep=0}, "{U(u)}", from=1-2, to=2-2]
	\arrow[""{name=3, anchor=center, inner sep=0}, "{U(u_f)}"', from=2-1, to=2-2]
	\arrow["f"{description}, from=1-1, to=2-2]
	\arrow["{\sigma \Uparrow}"{description}, shift left=1, Rightarrow, draw=none, from=2, to=1]
	\arrow["{\nu_f \atop \simeq}"{description}, shift left=1, Rightarrow, draw=none, from=0, to=3]
\end{tikzcd}\]
\end{proof}

\begin{lemma}\label{post composing in the range of U preserves the generic part of laxfact}
Let be $ f : B \rightarrow U(A)$ and $ u : A \rightarrow A'$ in $\mathcal{A}$. Then $f$ and $ U(u)f$ have the same lax generic part up to a unique equivalence.
\end{lemma}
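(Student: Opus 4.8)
The plan is to reduce the statement to the uniqueness of the generic factorization established in \cref{uniqueness of generic factorization}: postcomposing the generic factorization of $f$ with $U(u)$ yields a generic factorization of $U(u)f$ sharing the very same generic part, whence uniqueness does the rest.

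First I would take the generic factorization of $f$ afforded by lax familialness, namely an invertible $\nu_f : f \simeq U(u_f)n_f$ with $n_f$ lax generic and $u_f : A_f \rightarrow A$. Whiskering $\nu_f$ on the left by $U(u)$ and pasting with the pseudofunctoriality isomorphism $\alpha^U_{u, u_f} : U(u)U(u_f) \simeq U(uu_f)$ produces an invertible 2-cell exhibiting a factorization
\[
\begin{tikzcd}
B \arrow[rr, "U(u)f"] \arrow[rd, "n_f"'] & {} \arrow[d, "\simeq", phantom] & U(A') \\
& U(A_f) \arrow[ru, "U(uu_f)"'] &
\end{tikzcd}
\]
of $U(u)f$ through the \emph{same} lax generic morphism $n_f$, now with right leg $U(uu_f)$ lying in the range of $U$.

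Then I would observe that this is a legitimate generic factorization of $U(u)f$: its left leg $n_f$ is lax generic and its comparison 2-cell is invertible, which is precisely the data demanded by the definition. Comparing it with the canonical generic factorization $(\nu_{U(u)f}, n_{U(u)f}, u_{U(u)f})$ of $U(u)f$ via \cref{uniqueness of generic factorization} then produces an equivalence $e : A_f \simeq A_{U(u)f}$, unique up to unique invertible 2-cell, together with invertible comparison 2-cells $n_f \simeq U(e)\, n_{U(u)f}$ and $u_{U(u)f}\, e \simeq u u_f$. This is exactly the assertion that $f$ and $U(u)f$ share the same lax generic part up to a unique equivalence.

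The argument carries no real difficulty, being essentially a formal consequence of uniqueness; the only point requiring care is the bookkeeping of the pseudofunctoriality coherence isomorphism $\alpha^U_{u, u_f}$ when assembling the composite invertible comparison 2-cell, so as to be sure the factorization of $U(u)f$ has exactly the shape required to invoke \cref{uniqueness of generic factorization}. Once that is in place the conclusion is immediate.
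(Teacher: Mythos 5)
Your proof is correct, but it runs in the opposite direction from the paper's. The paper starts from the canonical generic factorization of the \emph{composite} $U(u)f$ and then invokes the lax-generic diagonalization property of $n_{U(u)f}$ against the pseudosquare whose sides are $f$, $U(u)$, $n_{U(u)f}$ and $U(u_{U(u)f})$, filled by $\nu_{U(u)f}$; this produces an invertible 2-cell $f \simeq U(w_{\nu_{U(u)f}})\,n_{U(u)f}$, i.e.\ a \emph{second} generic factorization of $f$ itself, and then \cref{uniqueness of generic factorization} is applied to the two factorizations of $f$. You instead whisker the canonical factorization of $f$ with $U(u)$ and absorb the pseudofunctoriality isomorphism $\alpha^U_{u,u_f}$, producing a second generic factorization of $U(u)f$ through $n_f$, and apply \cref{uniqueness of generic factorization} to the two factorizations of $U(u)f$. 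Both reductions are legitimate: the uniqueness lemma only requires invertible comparison 2-cells and lax-generic left legs, and your whiskered cell $\alpha^U_{u,u_f}\,(U(u)*\nu_f)$ is invertible with left leg the lax generic $n_f$. Your route is slightly more economical, as it never uses the diagonalization property of lax-generic 1-cells beyond what is already packaged inside the uniqueness lemma; the paper's route, by exercising genericity directly, additionally produces the diagonal $w_{\nu_{U(u)f}} : A_{U(u)f} \rightarrow A$ and the 2-cell $\omega_{U(u)f} : u\,w_{\nu_{U(u)f}} \simeq u_{U(u)f}$ in $\mathcal{A}$, which records explicitly how the right legs interact with $u$ rather than only up to the equivalence supplied by uniqueness.
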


\begin{proof}
The lax generic factorization of the composite produces a factorization of the following invertible 2-cell by genericity of $n_{U(u)f}$
% https://q.uiver.app/?q=WzAsNCxbMCwwLCJCIl0sWzEsMCwiVShBKSJdLFsxLDEsIlUoQScpIl0sWzAsMSwiVShBX3tVKHUpZn0pIl0sWzAsMSwiZiJdLFsxLDIsIlUodSkiXSxbMCwzLCJuX3tVKHUpZn0iLDJdLFszLDIsIlUodV97VSh1KWZ9KSIsMl0sWzMsMSwiVSh3X3tcXG51X3tVKHUpZn19KSIsMV0sWzQsNiwiXFxudV97XFxudV97VSh1KWZ9fSBcXGF0b3AgXFxzaW1lcSIsMSx7InNob3J0ZW4iOnsic291cmNlIjoyMCwidGFyZ2V0IjoyMH0sInN0eWxlIjp7ImJvZHkiOnsibmFtZSI6Im5vbmUifSwiaGVhZCI6eyJuYW1lIjoibm9uZSJ9fX1dLFs1LDcsIlUoXFxvbWVnYV97VSh1KWZ9KSBcXGF0b3AgXFxzaW1lcSIsMSx7InNob3J0ZW4iOnsic291cmNlIjoyMCwidGFyZ2V0IjoyMH0sInN0eWxlIjp7ImJvZHkiOnsibmFtZSI6Im5vbmUifSwiaGVhZCI6eyJuYW1lIjoibm9uZSJ9fX1dXQ==
\[\begin{tikzcd}[sep=large]
	B & {U(A)} \\
	{U(A_{U(u)f})} & {U(A')}
	\arrow[""{name=0, anchor=center, inner sep=0}, "f", from=1-1, to=1-2]
	\arrow[""{name=1, anchor=center, inner sep=0}, "{U(u)}", from=1-2, to=2-2]
	\arrow[""{name=2, anchor=center, inner sep=0}, "{n_{U(u)f}}"', from=1-1, to=2-1]
	\arrow[""{name=3, anchor=center, inner sep=0}, "{U(u_{U(u)f})}"', from=2-1, to=2-2]
	\arrow["{U(w_{\nu_{U(u)f}})}"{description}, from=2-1, to=1-2]
	\arrow["{\nu_{\nu_{U(u)f}} \atop \simeq}"{description}, Rightarrow, draw=none, from=0, to=2]
	\arrow["{U(\omega_{U(u)f}) \atop \simeq}"{description}, Rightarrow, draw=none, from=1, to=3]
\end{tikzcd}\]
This provides an invertible, generic lax 2-cell factorizing $ f$ through the generic $ n_{U(u)f}$. Hence we have two generic factorizations of $f$ related by an invertible 2-cell
% https://q.uiver.app/?q=WzAsNCxbMCwwLCJCIl0sWzAsMSwiVShBX2YpIl0sWzEsMCwiVShBX3tVKHUpZn0pIl0sWzEsMSwiVShBKSJdLFswLDIsIm5fe1UodSlmfSJdLFsyLDMsIlUod197XFxudV97VSh1KWZ9fSkiXSxbMSwzLCJVKHVfZikiLDJdLFswLDEsIm5fZiIsMl0sWzAsMywiXFxudV97XFxudV97VSh1KWZ9fSBcXG51X2Zeey0xfSBcXGF0b3AgXFxzaW1lcSIsMSx7InN0eWxlIjp7ImJvZHkiOnsibmFtZSI6Im5vbmUifSwiaGVhZCI6eyJuYW1lIjoibm9uZSJ9fX1dXQ==
\[\begin{tikzcd}
	B & {U(A_{U(u)f})} \\
	{U(A_f)} & {U(A)}
	\arrow["{n_{U(u)f}}", from=1-1, to=1-2]
	\arrow["{U(w_{\nu_{U(u)f}})}", from=1-2, to=2-2]
	\arrow["{U(u_f)}"', from=2-1, to=2-2]
	\arrow["{n_f}"', from=1-1, to=2-1]
	\arrow["{\nu_{\nu_{U(u)f}} \nu_f^{-1} \atop \simeq}"{description}, draw=none, from=1-1, to=2-2]
\end{tikzcd}\]
which entails equivalence by \cref{uniqueness of generic factorization}.
\end{proof}

The following observation is immediate from the property of the lax generic part of the codomain arrow:

\begin{proposition}
For any 2-cell as below
% https://q.uiver.app/?q=WzAsMixbMCwwLCJCIl0sWzIsMCwiVShBKSJdLFswLDEsImZfMSIsMCx7ImN1cnZlIjotMn1dLFswLDEsImZfMiIsMix7ImN1cnZlIjoyfV0sWzIsMywiXFxEb3duYXJyb3cgXFxzaWdtYSIsMSx7InNob3J0ZW4iOnsic291cmNlIjoyMCwidGFyZ2V0IjoyMH0sInN0eWxlIjp7ImJvZHkiOnsibmFtZSI6Im5vbmUifSwiaGVhZCI6eyJuYW1lIjoibm9uZSJ9fX1dXQ==
\[\begin{tikzcd}
	B && {U(A)}
	\arrow[""{name=0, anchor=center, inner sep=0}, "{f_1}", start anchor=40, bend left=30, from=1-1, to=1-3]
	\arrow[""{name=1, anchor=center, inner sep=0}, "{f_2}"', start anchor=-40, bend right=30, from=1-1, to=1-3]
	\arrow["{\Downarrow \sigma}"{description}, Rightarrow, draw=none, from=0, to=1]
\end{tikzcd}\]
the generic factorizations of $f_1 $ and $ f_2$ are related by the following decomposition of $ \sigma$
% https://q.uiver.app/?q=WzAsNCxbMCwxLCJCIl0sWzIsMSwiVShBKSJdLFsxLDAsIlUoQV8xKSJdLFsxLDIsIlUoQV8yKSJdLFswLDIsIm5fe0FfMX0iXSxbMCwzLCJuX3tBXzJ9IiwyXSxbMiwxLCJVKHVfe2ZfMX0pIl0sWzMsMSwiVSh1X3tmXzJ9KSIsMl0sWzMsMiwiVShtX1xcc2lnbWEpIiwxXSxbNCw1LCJcXERvd25hcnJvd1xcbnVfe1xcc2lnbWF9ICIsMSx7InNob3J0ZW4iOnsidGFyZ2V0IjoyMH0sInN0eWxlIjp7ImJvZHkiOnsibmFtZSI6Im5vbmUifSwiaGVhZCI6eyJuYW1lIjoibm9uZSJ9fX1dLFs2LDcsIlUoXFxvbWVnYV9cXHNpZ21hKSBcXERvd25hcnJvdyIsMSx7InNob3J0ZW4iOnsic291cmNlIjoyMCwidGFyZ2V0IjoyMH0sInN0eWxlIjp7ImJvZHkiOnsibmFtZSI6Im5vbmUifSwiaGVhZCI6eyJuYW1lIjoibm9uZSJ9fX1dXQ==
\[\begin{tikzcd}
	& {U(A_1)} \\
	B && {U(A)} \\
	& {U(A_2)}
	\arrow[""{name=0, anchor=center, inner sep=0}, "{n_{A_1}}", from=2-1, to=1-2]
	\arrow[""{name=1, anchor=center, inner sep=0}, "{n_{A_2}}"', from=2-1, to=3-2]
	\arrow[""{name=2, anchor=center, inner sep=0}, "{U(u_{f_1})}", from=1-2, to=2-3]
	\arrow[""{name=3, anchor=center, inner sep=0}, "{U(u_{f_2})}"', from=3-2, to=2-3]
	\arrow["{U(m_\sigma)}"{description}, from=1-2, to=3-2]
	\arrow["{\Downarrow\nu_{\sigma} }"{description}, Rightarrow, draw=none, from=0, to=1]
	\arrow["{\Downarrow U(\omega_\sigma) }"{description}, Rightarrow, draw=none, from=2, to=3]
\end{tikzcd}\]

Moreover, for any composable 2-cells as below
% https://q.uiver.app/?q=WzAsMixbMCwwLCJCIl0sWzIsMCwiVShBKSJdLFswLDEsImZfMiIsMV0sWzAsMSwiZl8xIiwwLHsiY3VydmUiOi0zfV0sWzAsMSwiZl8zIiwyLHsiY3VydmUiOjN9XSxbMywyLCJcXERvd25hcnJvdyBcXHNpZ21hIiwxLHsic2hvcnRlbiI6eyJzb3VyY2UiOjIwLCJ0YXJnZXQiOjIwfSwic3R5bGUiOnsiYm9keSI6eyJuYW1lIjoibm9uZSJ9LCJoZWFkIjp7Im5hbWUiOiJub25lIn19fV0sWzIsNCwiXFxEb3duYXJyb3cgXFxzaWdtYSciLDEseyJzaG9ydGVuIjp7InNvdXJjZSI6MjAsInRhcmdldCI6MjB9LCJzdHlsZSI6eyJib2R5Ijp7Im5hbWUiOiJub25lIn0sImhlYWQiOnsibmFtZSI6Im5vbmUifX19XV0=
\[\begin{tikzcd}
	B && {U(A)}
	\arrow[""{name=0, anchor=center, inner sep=0}, "{f_2}"{description}, from=1-1, to=1-3]
	\arrow[""{name=1, anchor=center, inner sep=0}, "{f_1}", start anchor=40, bend left = 35, from=1-1, to=1-3]
	\arrow[""{name=2, anchor=center, inner sep=0}, "{f_3}"', start anchor=-40, bend right = 35, from=1-1, to=1-3]
	\arrow["{\Downarrow \sigma}"{description}, Rightarrow, draw=none, from=1, to=0]
	\arrow["{\Downarrow \sigma'}"{description}, Rightarrow, draw=none, from=0, to=2]
\end{tikzcd}\]
we have the following relations between the generic data
\[  w_{\sigma'\sigma} \simeq w_{\sigma'}w_{\sigma} \hskip1cm
\nu_{\sigma'\sigma} =  \nu_{\sigma'}w_{\sigma'} * \nu_{\sigma} \hskip1cm
\omega_{\sigma'\sigma} =  \omega_{\sigma'}w_{\sigma'} * \omega_{\sigma}\]
\end{proposition}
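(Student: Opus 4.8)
The plan is to obtain the first assertion as an immediate application of \cref{universal property of the lax generic factorization}, and then to read off the composition relations from the uniqueness clause of that same result.

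First I would reduce the decomposition of a globular $\sigma : f_1 \Rightarrow f_2$ to the situation already treated. Postcomposing with the invertible factorization cell $\nu_{f_2} : f_2 \simeq U(u_{f_2})\, n_{f_2}$ turns $\sigma$ into a 2-cell $\nu_{f_2}*\sigma : f_1 \Rightarrow U(u_{f_2})\, n_{f_2}$, which is exactly the shape of the input of \cref{universal property of the lax generic factorization} with $f = f_1$, $g = n_{f_2}$ and $u = u_{f_2}$. Applying that proposition produces a 1-cell $m_\sigma : A_{f_1} \to A_{f_2}$, unique up to a unique invertible 2-cell, together with a generic cell through $n_{f_1}$ and an étale cell $\omega_\sigma$ whose pasting with $\nu_{f_1}$ recovers $\nu_{f_2}*\sigma$. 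Reabsorbing the two invertible factorization constraints $\nu_{f_1}$ and $\nu_{f_2}$ then gives precisely the displayed pasting of $\sigma$, with $n_{A_1} = n_{f_1}$, $n_{A_2} = n_{f_2}$, étale legs $U(u_{f_1}), U(u_{f_2})$, and globular filler $m_\sigma$ (the same datum denoted $w_\sigma$ in the composition formulae).

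For the composition relations, given $\sigma : f_1 \Rightarrow f_2$ and $\sigma' : f_2 \Rightarrow f_3$ with the decompositions just obtained, I would assemble candidate data for $\sigma'\sigma$ by pasting. Taking the composite $m_{\sigma'} m_\sigma : A_{f_1} \to A_{f_3}$, one whiskers $\nu_\sigma$ by $U(m_{\sigma'})$ and composes with $\nu_{\sigma'}$ to form a generic cell $\nu_{\sigma'}\cdot(m_{\sigma'}*\nu_\sigma)$, and analogously builds $\omega_{\sigma'}\cdot(m_{\sigma'}*\omega_\sigma)$ on the étale side, inserting the pseudofunctoriality isomorphism $\alpha^U : U(m_{\sigma'})U(m_\sigma) \simeq U(m_{\sigma'} m_\sigma)$. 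A direct interchange computation shows that the pasting of these two cells equals $\sigma'\sigma$, so they form a valid decomposition of $\sigma'\sigma$ in the sense of the first part. The uniqueness clause of \cref{universal property of the lax generic factorization}, applied to $\sigma'\sigma$, then forces this candidate to coincide, up to a unique invertible 2-cell, with $(m_{\sigma'\sigma}, \nu_{\sigma'\sigma}, \omega_{\sigma'\sigma})$; this yields at once the equivalence $w_{\sigma'\sigma}\simeq w_{\sigma'}w_\sigma$ and, modulo that identification, the two stated equalities for $\nu_{\sigma'\sigma}$ and $\omega_{\sigma'\sigma}$.

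The hard part will be the coherent bookkeeping of the pseudofunctoriality constraints of $U$ during the pasting: one must check that inserting $\alpha^U$ for $U(m_{\sigma'} m_\sigma)\simeq U(m_{\sigma'})U(m_\sigma)$ is compatible with the coherence cells already present in the two generic factorizations, so that the pasted cells genuinely have the globular shape demanded by the universal property before uniqueness can be invoked. This is also where the standing assumption that lax generic morphisms are functorially lax generic is used, since it is this functoriality that makes $\nu_\sigma$ and $\omega_\sigma$ transform coherently under whiskering. Finally, I would stress that the first relation can only be an equivalence, not an equality, because the filler is characterised solely up to unique invertible 2-cell; once that invertible comparison is fixed, the remaining two relations hold on the nose.
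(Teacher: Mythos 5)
Your treatment of the first assertion is fine and is essentially the paper's own (very terse) argument: pasting $\sigma$ with the invertible factorization cell $\nu_{f_2}$ and applying \cref{universal property of the lax generic factorization} — equivalently, the lax generic property of $n_{f_1}$ — is exactly how the decomposition of $\sigma$ through $m_\sigma$ is obtained.

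The composition relations are where your argument has a genuine gap. You paste the two decompositions to produce a candidate factorization $(m_{\sigma'}m_{\sigma},\; \nu_{\sigma'}\,U(w_{\sigma'})*\nu_{\sigma},\; \omega_{\sigma'}\,w_{\sigma'}*\omega_{\sigma})$ of $\sigma'\sigma$, and then claim that the uniqueness clause of \cref{universal property of the lax generic factorization} forces it to coincide with $(m_{\sigma'\sigma},\nu_{\sigma'\sigma},\omega_{\sigma'\sigma})$ up to a unique invertible $2$-cell. That clause does not say this. The universal property coming from lax genericity of $n_{f_1}$ only provides, for an \emph{arbitrary} factorization of the relevant lax square, a unique comparison $2$-cell $\xi$ towards the universal one, and nothing forces $\xi$ to be invertible; the phrase ``unique up to unique invertible $2$-cell'' pertains to the universal triple, not to every decomposition of $\sigma'\sigma$. (If every decomposition were automatically equivalent to the universal one, the universality clause in the definition of lax generic morphism, which explicitly allows non-invertible comparison cells, would be pointless.) So after constructing your candidate you only have comparison cells relating it to the universal factorization, and no equivalence yet. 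This is precisely the gap the paper's proof fills: it observes that $\nu_{\sigma'\sigma}$ is generic by construction, that the composite cell $\nu_{\sigma'}\,U(w_{\sigma'})*\nu_{\sigma}$ is generic \emph{by the axiom that generic $2$-cells compose} (the second clause in the definition of lax familial pseudofunctor), and that the two cells factorize each other; \cref{mutually factorizing generics} then upgrades these mutual factorizations to mutually inverse comparison cells, which is what yields $w_{\sigma'\sigma}\simeq w_{\sigma'}w_{\sigma}$ together with the two on-the-nose equalities. Note finally that the hypothesis you lean on — functorial lax genericity — is not the relevant one here: the indispensable ingredient is the composition axiom for generic $2$-cells, which your argument never uses.
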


\begin{proof}
The first item is immediate by the property of the lax generic part of the lax generic factorization. For the second item, observe that we can find two alternative factorizations of the following square 
% https://q.uiver.app/?q=WzAsNCxbMCwwLCJCIl0sWzAsMSwiVShBX3tmXzN9KSJdLFsxLDAsIlUoQV97Zl8xfSkiXSxbMSwxLCJVKEEpIl0sWzAsMSwibl97Zl8zfSIsMl0sWzAsMiwibl97Zl8xfSJdLFsyLDMsIlUodV97Zl8xfSkiXSxbMSwzLCJVKHVfe2ZfM30pIiwyXSxbMCwzLCJcXHNpZ21hX3tmXzN9XFxzaWdtYSdcXHNpZ21hXFxzaWdtYV97Zl8xfV57LTF9IFxcRG93bmFycm93IiwxLHsic3R5bGUiOnsiYm9keSI6eyJuYW1lIjoibm9uZSJ9LCJoZWFkIjp7Im5hbWUiOiJub25lIn19fV1d
\[\begin{tikzcd}
	B & {U(A_{f_3})} \\
	{U(A_{f_1})} & {U(A)}
	\arrow["{n_{f_1}}"', from=1-1, to=2-1]
	\arrow["{n_{f_3}}", from=1-1, to=1-2]
	\arrow["{U(u_{f_3})}", from=1-2, to=2-2]
	\arrow["{U(u_{f_1})}"', from=2-1, to=2-2]
	\arrow["{\nu_{f_3}^{-1}\sigma'\sigma\nu_{f_1} \Uparrow}"{description}, draw=none, from=1-1, to=2-2]
\end{tikzcd}\]
(which we shall denote abusively as $\sigma'\sigma$ for concision) provided by 
% https://q.uiver.app/?q=WzAsNCxbMCwwLCJCIl0sWzAsMiwiVShBX3tmXzF9KSJdLFsyLDAsIlUoQV97Zl8zfSkiXSxbMiwyLCJVKEEpIl0sWzAsMSwibl97Zl8xfSIsMl0sWzAsMiwibl97Zl8zfSJdLFsyLDMsIlUodV97Zl8zfSkiXSxbMSwzLCJVKHVfe2ZfMX0pIiwyXSxbMSwyLCJVKHdfe1xcc2lnbWEnXFxzaWdtYX0pIiwxXSxbMCw4LCJcXFVwYXJyb3cgXFxudV97XFxzaWdtYSdcXHNpZ21hfSIsMSx7InNob3J0ZW4iOnsidGFyZ2V0IjoyMH0sInN0eWxlIjp7ImJvZHkiOnsibmFtZSI6Im5vbmUifSwiaGVhZCI6eyJuYW1lIjoibm9uZSJ9fX1dLFs4LDMsIlUoXFxvbWVnYV97XFxzaWdtYSdcXHNpZ21hfSkgXFxVcGFycm93IiwxLHsic2hvcnRlbiI6eyJzb3VyY2UiOjIwLCJ0YXJnZXQiOjIwfSwic3R5bGUiOnsiYm9keSI6eyJuYW1lIjoibm9uZSJ9LCJoZWFkIjp7Im5hbWUiOiJub25lIn19fV1d
\[\begin{tikzcd}
	B && {U(A_{f_3})} \\
	\\
	{U(A_{f_1})} && {U(A)}
	\arrow["{n_{f_1}}"', from=1-1, to=3-1]
	\arrow["{n_{f_3}}", from=1-1, to=1-3]
	\arrow["{U(u_{f_3})}", from=1-3, to=3-3]
	\arrow["{U(u_{f_1})}"', from=3-1, to=3-3]
	\arrow[""{name=0, anchor=center, inner sep=0}, "{U(w_{\sigma'\sigma})}"{description}, from=3-1, to=1-3]
	\arrow["{\Uparrow \nu_{\sigma'\sigma}}"{description}, Rightarrow, draw=none, from=1-1, to=0]
	\arrow["{U(\omega_{\sigma'\sigma}) \Uparrow}"{description}, Rightarrow, draw=none, from=0, to=3-3]
\end{tikzcd} = \begin{tikzcd}
	B && {U(A_{f_3})} \\
	& {U(A_{f_2})} \\
	{U(A_{f_1})} && {U(A)}
	\arrow[""{name=0, anchor=center, inner sep=0}, "{n_{f_1}}"', from=1-1, to=3-1]
	\arrow[""{name=1, anchor=center, inner sep=0}, "{n_{f_3}}", from=1-1, to=1-3]
	\arrow[""{name=2, anchor=center, inner sep=0}, "{U(u_{f_3})}", from=1-3, to=3-3]
	\arrow[""{name=3, anchor=center, inner sep=0}, "{U(u_{f_1})}"', from=3-1, to=3-3]
	\arrow["{n_{f_2}}"{description}, from=1-1, to=2-2]
	\arrow["{U(w_{\sigma})}"{description}, from=3-1, to=2-2]
	\arrow["{U(w_{\sigma'})}"{description}, from=2-2, to=1-3]
	\arrow["{U(u_{f_2})}"{description}, from=2-2, to=3-3]
	\arrow["{\nu_{\sigma} \Uparrow}"{description}, Rightarrow, draw=none, from=0, to=2-2]
	\arrow["{\nu_{\sigma'} \Uparrow}"{description}, Rightarrow, draw=none, from=1, to=2-2]
	\arrow["{U(\omega_{\sigma'}) \Uparrow}"{description}, Rightarrow, draw=none, from=2, to=2-2]
	\arrow["{U(\omega_{\sigma}) \Uparrow}"{description}, Rightarrow, draw=none, from=3, to=2-2]
\end{tikzcd}\]
By the universal condition in the lax generic property of $n_{f_3}$, we know there exist two 2-cells $ \xi$ and $\zeta$ in $\mathcal{A}$ related by the mutual factorizations below
% https://q.uiver.app/?q=WzAsNCxbMCwxLCJCIl0sWzEsMCwiVShBX3tmXzF9KSJdLFsxLDEsIlUoQV97Zl8yfSkiXSxbMSwyLCJVKEFfe2ZfM30pIl0sWzAsMSwibl97Zl8xfSJdLFswLDIsIm5fe2ZfMn0iLDFdLFswLDMsIm5fe2ZfM30iLDJdLFsyLDEsIlUod197XFxzaWdtYX0pIiwxXSxbMywyLCJVKHdfe1xcc2lnbWEnfSkiLDFdLFszLDEsIlUod197XFxzaWdtYSdcXHNpZ21hfSkiLDIseyJjdXJ2ZSI6NX1dLFs0LDIsIlxcbnVfXFxzaWdtYSBcXERvd25hcnJvdyIsMSx7InNob3J0ZW4iOnsic291cmNlIjoyMH0sInN0eWxlIjp7ImJvZHkiOnsibmFtZSI6Im5vbmUifSwiaGVhZCI6eyJuYW1lIjoibm9uZSJ9fX1dLFs2LDIsIlxcbnVfe1xcc2lnbWEnfSBcXERvd25hcnJvdyIsMSx7InNob3J0ZW4iOnsic291cmNlIjoyMH0sInN0eWxlIjp7ImJvZHkiOnsibmFtZSI6Im5vbmUifSwiaGVhZCI6eyJuYW1lIjoibm9uZSJ9fX1dLFsyLDksIlUoXFx4aSkiLDAseyJzaG9ydGVuIjp7InNvdXJjZSI6MjB9fV1d
\[\begin{tikzcd}[sep=large]
	& {U(A_{f_3})} \\
	B & {U(A_{f_2})} \\
	& {U(A_{f_1})}
	\arrow[""{name=0, anchor=center, inner sep=0}, "{n_{f_3}}", from=2-1, to=1-2]
	\arrow["{n_{f_2}}"{description}, from=2-1, to=2-2]
	\arrow[""{name=1, anchor=center, inner sep=0}, "{n_{f_1}}"', from=2-1, to=3-2]
	\arrow["{U(w_{\sigma'})}"{description}, from=2-2, to=1-2]
	\arrow["{U(w_{\sigma})}"{description}, from=3-2, to=2-2]
	\arrow[""{name=2, anchor=center, inner sep=0}, "{U(w_{\sigma'\sigma})}"', curve={height=40pt}, from=3-2, to=1-2]
	\arrow["{\nu_{\sigma'} \Uparrow}"{description}, Rightarrow, draw=none, from=0, to=2-2]
	\arrow["{\nu_\sigma \Uparrow}"{description}, Rightarrow, draw=none, from=1, to=2-2]
	\arrow["{U(\xi)}"'{pos=0.7}, shorten <=3pt, Rightarrow, from=2, to=2-2]
\end{tikzcd} = \begin{tikzcd}
	& {U(A_{f_3})} \\
	B & {U(A_{f_2})} \\
	& {U(A_{f_1})}
	\arrow[""{name=0, anchor=center, inner sep=0}, "{n_{f_3}}", from=2-1, to=1-2]
	\arrow[""{name=1, anchor=center, inner sep=0}, "{n_{f_1}}"', from=2-1, to=3-2]
	\arrow["{U(w_{\sigma'})}"', from=2-2, to=1-2]
	\arrow["{U(w_{\sigma})}"', from=3-2, to=2-2]
	\arrow[from=2-1, to=2-2]
	\arrow["{\nu_{\sigma'} \Uparrow}"{description}, Rightarrow, draw=none, from=2-2, to=0]
	\arrow["{\nu_{\sigma} \Uparrow}"{description}, Rightarrow, draw=none, from=1, to=2-2]
\end{tikzcd} \]
% https://q.uiver.app/?q=WzAsNCxbMCwxLCJCIl0sWzEsMCwiVShBX3tmXzF9KSJdLFsyLDEsIlUoQV97Zl8yfSkiXSxbMSwyLCJVKEFfe2ZfM30pIl0sWzAsMSwibl97Zl8xfSJdLFswLDMsIm5fe2ZfM30iLDJdLFsyLDEsIlUod197XFxzaWdtYX0pIiwyXSxbMywyLCJVKHdfe1xcc2lnbWEnfSkiLDJdLFszLDEsIlUod197XFxzaWdtYSdcXHNpZ21hfSkiLDEseyJsYWJlbF9wb3NpdGlvbiI6MzB9XSxbOCwyLCJVKFxcemV0YSkiLDAseyJzaG9ydGVuIjp7InNvdXJjZSI6MjB9fV0sWzAsOCwiXFxudV97XFxzaWdtYSdcXHNpZ21hfSBcXERvd25hcnJvdyIsMSx7InNob3J0ZW4iOnsic291cmNlIjoyMH0sInN0eWxlIjp7ImJvZHkiOnsibmFtZSI6Im5vbmUifSwiaGVhZCI6eyJuYW1lIjoibm9uZSJ9fX1dXQ==
\[\begin{tikzcd}
	& {U(A_{f_3})} \\
	B && {U(A_{f_2})} \\
	& {U(A_{f_1})}
	\arrow["{n_{f_3}}", from=2-1, to=1-2]
	\arrow["{n_{f_1}}"', from=2-1, to=3-2]
	\arrow["{U(w_{\sigma'})}"', from=2-3, to=1-2]
	\arrow["{U(w_{\sigma})}"', from=3-2, to=2-3]
	\arrow[""{name=0, anchor=center, inner sep=0}, "{U(w_{\sigma'\sigma})}"{description, pos=0.3}, from=3-2, to=1-2]
	\arrow["{U(\zeta)} \atop \Leftarrow", shorten <=5pt, draw=none, from=0, to=2-3]
	\arrow["{\nu_{\sigma'\sigma} \Uparrow}"{description}, Rightarrow, draw=none, from=2-1, to=0] 
\end{tikzcd}= \begin{tikzcd}
	& {U(A_{f_3})} \\
	B \\
	& {U(A_{f_1})}
	\arrow["{n_{f_3}}", from=2-1, to=1-2]
	\arrow["{n_{f_1}}"', from=2-1, to=3-2]
	\arrow[""{name=0, anchor=center, inner sep=0}, "{U(w_{\sigma'\sigma})}"', from=3-2, to=1-2]
	\arrow["{\nu_{\sigma'\sigma} \Uparrow}"{description}, Rightarrow, draw=none, from=2-1, to=0]
\end{tikzcd}\]
But $ \nu_{\sigma'\sigma}$ is generic, while $  \nu_{\sigma'} U(w_\sigma) *\nu_{\sigma} $ also is generic by the composition condition in the definition of lax generic pseudofunctor. Hence by \cref{mutually factorizing generics} we have the desired equivalences.  
\end{proof}

\subsection{Lax local right biadjoint}

Here we describe the lax version of the notion of local right biadjoint; however one should beware that it \emph{does not} correspond exactly to lax-familial pseudofunctors: we are going to see that factorization of 2-cells is more rigid than for general lax-generic 1-cells, so that globular 2-cells will have to factorize through the same local unit. 

\begin{definition}
A pseudofunctor $ U : \mathcal{A} \rightarrow \mathcal{B}$ is said to be \emph{lax local right biadjoint} if for each object $A$ in $\mathcal{A}$, the restriction at the lax slice at $A$ has a left biadjoint % https://q.uiver.app/?q=WzAsMixbMCwwLCJcXG1hdGhjYWx7QX1cXERvd25hcnJvdyBBIl0sWzIsMCwiXFxtYXRoY2Fse0J9XFxEb3duYXJyb3cgVShBKSJdLFswLDEsIlVfQSIsMix7ImN1cnZlIjoyfV0sWzEsMCwiTF9BIiwyLHsiY3VydmUiOjJ9XSxbMywyLCIiLDIseyJsZXZlbCI6MSwic3R5bGUiOnsibmFtZSI6ImFkanVuY3Rpb24ifX1dXQ==
\[\begin{tikzcd}
	{\mathcal{A}\Downarrow A} && {\mathcal{B}\Downarrow U(A)}
	\arrow[""{name=0, anchor=center, inner sep=0}, "{U_A}"', bend right=30, from=1-1, to=1-3]
	\arrow[""{name=1, anchor=center, inner sep=0}, "{L_A}"', bend right=30, from=1-3, to=1-1]
	\arrow["\dashv"{anchor=center, rotate=-90}, draw=none, from=1, to=0]
\end{tikzcd}\]
\end{definition}

\begin{remark}
Let us unravel this property: we still have a biadjunction (beware that we do not relax it into a lax-adjunction) at each lax slice, which manifests as the data of natural units and counits as above, except they are now lax 2-cell
% https://q.uiver.app/?q=WzAsMyxbMCwwLCJCIl0sWzIsMCwiVShBX2YpIl0sWzEsMSwiVShBKSJdLFswLDEsImheQV9mIl0sWzAsMiwiZiIsMl0sWzEsMiwiVUxfQShmKSJdLFs0LDUsIlxcZXRhXkFfZiIsMCx7InNob3J0ZW4iOnsic291cmNlIjoyMCwidGFyZ2V0IjoyMH19XV0=
\[\begin{tikzcd}
	B & {} & {U(A_f)} \\
	& {U(A)}
	\arrow["{h^A_f}", from=1-1, to=1-3]
	\arrow[""{name=0, anchor=center, inner sep=0}, "f"', from=1-1, to=2-2]
	\arrow[""{name=1, anchor=center, inner sep=0}, "{UL_A(f)}", from=1-3, to=2-2]
	\arrow["{\huge{\eta^A_f \atop \Rightarrow}}"{description}, shorten <=7pt, shorten >=7pt, Rightarrow, draw=none, from=2-2, to=1-2]
\end{tikzcd}
 \hskip1cm % https://q.uiver.app/?q=WzAsMyxbMCwwLCJBX3tVKHUpfSJdLFsyLDAsIkEnIl0sWzEsMSwiQSJdLFswLDEsImVeQV91Il0sWzAsMiwiTF9BKFUodSkpIiwyXSxbMSwyLCJ1Il0sWzQsNSwiXFxlcHNpbG9uXkFfdSIsMCx7InNob3J0ZW4iOnsic291cmNlIjoyMCwidGFyZ2V0IjoyMH19XV0=
 \begin{tikzcd}
	{A_{U(u)}} & {} & {A'} \\
	& A
	\arrow["{e^A_u}", from=1-1, to=1-3]
	\arrow[""{name=0, anchor=center, inner sep=0}, "{L_A(U(u))}"', from=1-1, to=2-2]
	\arrow[""{name=1, anchor=center, inner sep=0}, "u", from=1-3, to=2-2]
	\arrow["{\huge{\epsilon^A_u \atop \Rightarrow}}"{description}, Rightarrow, draw=none, from=2-2, to=1-2]
\end{tikzcd}\]

where the unit has the universal property that any 1-cell $ (g,\alpha) : f \rightarrow U(u)$ in $ \mathcal{B}/U(A)$ decomposes as the following pasting 
% https://q.uiver.app/?q=WzAsNSxbMCwwLCJCIl0sWzEsMCwiVShBX2YpIl0sWzMsMCwiVShBX3tVKHUpfSkiXSxbNCwwLCJVKEEnKSJdLFsyLDEsIlUoQSkiXSxbMiw0LCJVTF9BKFUodSkpIiwxXSxbMSw0LCJVTF9BKGYpIiwxXSxbMCwxLCJoXkFfZiIsMV0sWzIsMywiVShlXkFfdSkiLDFdLFsxLDIsIlUoTF9BKGcpKSIsMV0sWzAsNCwiZiIsMix7ImN1cnZlIjoyfV0sWzMsNCwiVSh1KSIsMSx7ImN1cnZlIjotMn1dLFswLDMsImciLDAseyJjdXJ2ZSI6LTR9XSxbMTAsMSwiXFxldGFfZl5BIFxcYXRvcCBcXFJpZ2h0YXJyb3ciLDEseyJzaG9ydGVuIjp7InNvdXJjZSI6MjB9LCJzdHlsZSI6eyJib2R5Ijp7Im5hbWUiOiJub25lIn0sImhlYWQiOnsibmFtZSI6Im5vbmUifX19XSxbNCw5LCJVTF9BKFxcYWxwaGEpIFxcYXRvcCBcXFJpZ2h0YXJyb3ciLDEseyJzaG9ydGVuIjp7InRhcmdldCI6MjB9LCJzdHlsZSI6eyJib2R5Ijp7Im5hbWUiOiJub25lIn0sImhlYWQiOnsibmFtZSI6Im5vbmUifX19XSxbMiwxMSwiVShcXGVwc2lsb25eQV97dX0pIFxcYXRvcCBcXHNpbWVxIiwxLHsic2hvcnRlbiI6eyJzb3VyY2UiOjIwLCJ0YXJnZXQiOjIwfSwic3R5bGUiOnsiYm9keSI6eyJuYW1lIjoibm9uZSJ9LCJoZWFkIjp7Im5hbWUiOiJub25lIn19fV0sWzEyLDksIihcXG1hdGhmcmFre2l9XkFfe2YsdX0pX3soZyxcXGFscGhhKX0gXFxhdG9wIFxcc2ltZXEiLDEseyJzaG9ydGVuIjp7InNvdXJjZSI6MjAsInRhcmdldCI6MjB9LCJzdHlsZSI6eyJib2R5Ijp7Im5hbWUiOiJub25lIn0sImhlYWQiOnsibmFtZSI6Im5vbmUifX19XV0=
\[\begin{tikzcd}[sep=large]
	B & {U(A_f)} && {U(A_{U(u)})} & {U(A')} \\
	&& {U(A)}
	\arrow["{UL_A(U(u))}"{description}, from=1-4, to=2-3]
	\arrow["{UL_A(f)}"{description}, from=1-2, to=2-3]
	\arrow["{h^A_f}"{description}, from=1-1, to=1-2]
	\arrow["{U(e^A_u)}"{description}, from=1-4, to=1-5]
	\arrow[""{name=0, anchor=center, inner sep=0}, "{U(L_A(g))}"{description}, from=1-2, to=1-4]
	\arrow[""{name=1, anchor=center, inner sep=0}, "f"', curve={height=12pt}, from=1-1, to=2-3]
	\arrow[""{name=2, anchor=center, inner sep=0}, "{U(u)}"{description}, curve={height=-12pt}, from=1-5, to=2-3]
	\arrow[""{name=3, anchor=center, inner sep=0}, "g", curve={height=-30pt}, from=1-1, to=1-5]
	\arrow["{\eta_f^A \atop \Rightarrow}"{description}, Rightarrow, draw=none, from=1, to=1-2]
	\arrow["{UL_A(\alpha) \atop \Rightarrow}"{description}, Rightarrow, draw=none, from=2-3, to=0]
	\arrow["{U(\epsilon^A_{u}) \atop \Rightarrow}"{description}, Rightarrow, draw=none, from=1-4, to=2]
	\arrow["{(\mathfrak{i}^A_{f,u})_{(g,\alpha)} \atop \simeq}"{description}, Rightarrow, draw=none, from=3, to=0]
\end{tikzcd} = \begin{tikzcd}[sep=small]
	B && {U(A')} \\
	& {U(A)}
	\arrow["f"', from=1-1, to=2-2]
	\arrow["{U(u)}", from=1-3, to=2-2]
	\arrow[""{name=0, anchor=center, inner sep=0}, "g", from=1-1, to=1-3]
	\arrow["{\alpha \atop \Rightarrow}"{description}, Rightarrow, draw=none, from=0, to=2-2]
\end{tikzcd}\]
\end{remark}

\begin{remark}
In the lax context, we can now complete \cref{expression of the BC mate} with a naturality condition relatively to 2-cell. Let be a globular 2-cell $ \omega : u \Rightarrow v$ in $ \mathcal{A}$. We can now define a natural transformation $ \mathcal{A}\Downarrow u \Rightarrow \mathcal{A} \Downarrow v $ whose component at $ w : A' \rightarrow A $ is the triangular 2-cell encoding the whiskering along $\omega$
% https://q.uiver.app/?q=WzAsMyxbMCwwLCJBJyJdLFsyLDAsIkEnIl0sWzEsMSwiQV8yIl0sWzAsMiwidXciLDJdLFsxLDIsInZ3Il0sWzAsMSwiIiwyLHsibGV2ZWwiOjIsInN0eWxlIjp7ImhlYWQiOnsibmFtZSI6Im5vbmUifX19XSxbNSwyLCJcXG9tZWdhKncgXFxhdG9wIFxcUmlnaHRhcnJvdyIsMSx7InNob3J0ZW4iOnsic291cmNlIjoyMH0sInN0eWxlIjp7ImJvZHkiOnsibmFtZSI6Im5vbmUifSwiaGVhZCI6eyJuYW1lIjoibm9uZSJ9fX1dXQ==
\[\begin{tikzcd}
	{A'} && {A'} \\
	& {A_2}
	\arrow["uw"', from=1-1, to=2-2]
	\arrow["vw", from=1-3, to=2-2]
	\arrow[""{name=0, anchor=center, inner sep=0}, Rightarrow, no head, from=1-1, to=1-3]
	\arrow["{\omega*w \atop \Rightarrow}"{description}, Rightarrow, draw=none, from=0, to=2-2]
\end{tikzcd}\]
and similarly for the component of $ \mathcal{B}/U(\omega)$. On the other hand pseudofunctoriality of $ U$ produces a composite 2-cell 
% https://q.uiver.app/?q=WzAsNCxbMCwwLCJVKEEnKSJdLFsyLDAsIlUoQScpIl0sWzEsMSwiVShBXzEpIl0sWzEsMiwiVShBXzIpIl0sWzAsMiwiVSh3KSIsMV0sWzEsMiwiVSh3KSIsMV0sWzAsMSwiIiwwLHsiY3VydmUiOi0zLCJsZXZlbCI6Miwic3R5bGUiOnsiaGVhZCI6eyJuYW1lIjoibm9uZSJ9fX1dLFsyLDMsIlUodikiLDEseyJjdXJ2ZSI6LTJ9XSxbMiwzLCJVKHUpIiwxLHsiY3VydmUiOjJ9XSxbMCwzLCJVKHV3KSIsMix7ImN1cnZlIjozfV0sWzAsMSwiVSgxX0EpIiwxXSxbMSwzLCJVKHZ3KSIsMCx7ImN1cnZlIjotM31dLFs5LDIsIlxcYWxwaGFfe3Usd30gXFxhdG9wIFxcc2ltZXEiLDEseyJzaG9ydGVuIjp7InNvdXJjZSI6MjB9LCJzdHlsZSI6eyJib2R5Ijp7Im5hbWUiOiJub25lIn0sImhlYWQiOnsibmFtZSI6Im5vbmUifX19XSxbMTAsMiwiVSgxX3t3fSkgXFxhdG9wIFxcc2ltZXEiLDEseyJzaG9ydGVuIjp7InNvdXJjZSI6MjB9LCJzdHlsZSI6eyJib2R5Ijp7Im5hbWUiOiJub25lIn0sImhlYWQiOnsibmFtZSI6Im5vbmUifX19XSxbNiwxMCwiXFxhbHBoYV9BIFxcYXRvcCBcXHNpbWVxIiwxLHsic2hvcnRlbiI6eyJzb3VyY2UiOjIwLCJ0YXJnZXQiOjIwfX1dLFs4LDcsIlUoXFxvbWVnYSkgXFxhdG9wIFxcUmlnaHRhcnJvdyIsMSx7InNob3J0ZW4iOnsic291cmNlIjoyMCwidGFyZ2V0IjoyMH0sInN0eWxlIjp7ImJvZHkiOnsibmFtZSI6Im5vbmUifSwiaGVhZCI6eyJuYW1lIjoibm9uZSJ9fX1dLFsyLDExLCJcXGFscGhhX3t2LHd9IFxcYXRvcCBcXHNpbWVxIiwxLHsic2hvcnRlbiI6eyJ0YXJnZXQiOjIwfSwic3R5bGUiOnsiYm9keSI6eyJuYW1lIjoibm9uZSJ9LCJoZWFkIjp7Im5hbWUiOiJub25lIn19fV1d
\[\begin{tikzcd}[sep=huge]
	{U(A')} && {U(A')} \\
	& {U(A_1)} \\
	& {U(A_2)}
	\arrow["{U(w)}"{description}, from=1-1, to=2-2]
	\arrow["{U(w)}"{description}, from=1-3, to=2-2]
	\arrow[""{name=0, anchor=center, inner sep=0}, curve={height=-18pt}, Rightarrow, no head, from=1-1, to=1-3]
	\arrow[""{name=1, anchor=center, inner sep=0}, "{U(v)}"{description}, curve={height=-12pt}, from=2-2, to=3-2]
	\arrow[""{name=2, anchor=center, inner sep=0}, "{U(u)}"{description}, curve={height=12pt}, from=2-2, to=3-2]
	\arrow[""{name=3, anchor=center, inner sep=0}, "{U(uw)}"', curve={height=18pt}, from=1-1, to=3-2]
	\arrow[""{name=4, anchor=center, inner sep=0}, "{U(1_A)}"{description}, from=1-1, to=1-3]
	\arrow[""{name=5, anchor=center, inner sep=0}, "{U(vw)}", curve={height=-18pt}, from=1-3, to=3-2]
	\arrow["{\alpha_{u,w} \atop \simeq}"{description}, Rightarrow, draw=none, from=3, to=2-2]
	\arrow["{U(1_{w}) \atop \simeq}"{description}, Rightarrow, draw=none, from=4, to=2-2]
	\arrow["{\alpha_A \atop \simeq}"{description}, shorten <=2pt, shorten >=2pt, Rightarrow, from=0, to=4]
	\arrow["{U(\omega) \atop \Rightarrow}"{description}, Rightarrow, draw=none, from=2, to=1]
	\arrow["{\alpha_{v,w} \atop \simeq}"{description}, Rightarrow, draw=none, from=2-2, to=5]
\end{tikzcd}\]
which is the effect of $U_{A_2}$ on the 1-cell $( \mathcal{A}\Downarrow \omega)_w$.\\

Now let be $ f : B \rightarrow U(A_1)$; let us denote as
% https://q.uiver.app/?q=WzAsMyxbMCwwLCJBX3tVKHUpZn0iXSxbMSwxLCJBXzIiXSxbMiwwLCJBX3tVKHYpZn0iXSxbMCwxLCJMX3tBXzJ9KFUodSlmKSIsMl0sWzAsMiwid197TF97QV8yfShcXG1hdGhjYWx7Qn1cXERvd25hcnJvdyBVKHcpKX0iXSxbMiwxLCJMX3tBXzJ9KFUodilmKSIsMCx7ImxhYmVsX3Bvc2l0aW9uIjo3MH1dLFszLDIsIlxcb21lZ2Ffe0xfe0FfMn0oXFxtYXRoY2Fse0J9XFxEb3duYXJyb3cgVSh3KSl9IFxcYXRvcCBcXFJpZ2h0YXJyb3ciLDEseyJzaG9ydGVuIjp7InNvdXJjZSI6MjAsInRhcmdldCI6MjB9LCJzdHlsZSI6eyJib2R5Ijp7Im5hbWUiOiJub25lIn0sImhlYWQiOnsibmFtZSI6Im5vbmUifX19XV0=
\[\begin{tikzcd}
	{A_{U(u)f}} && {A_{U(v)f}} \\
	& {A_2}
	\arrow[""{name=0, anchor=center, inner sep=0}, "{L_{A_2}(U(u)f)}"', from=1-1, to=2-2]
	\arrow["{w_{L_{A_2}(U(w))}}", from=1-1, to=1-3]
	\arrow["{L_{A_2}(U(v)f)}"{pos=0.7}, from=1-3, to=2-2]
	\arrow["{\omega_{L_{A_2}(U(w))} \atop \Rightarrow}"{description}, Rightarrow, draw=none, from=0, to=1-3]
\end{tikzcd}\]
the 2-cell in $ \mathcal{A}$ the whiskering along $ U(\omega)$ is sent to by the local left adjoint $ L_{A_2}$. Observe we have the following 2-dimensional equality provided by pseudofunctoriality of $U$
% https://q.uiver.app/?q=WzAsNSxbMCwwLCJCIl0sWzIsMCwiVShBX2YpIl0sWzEsMSwiVShBXzEpIl0sWzIsMiwiVShBXzIpIl0sWzQsMCwiVShBX2YpIl0sWzAsMiwiZiIsMl0sWzEsMiwiVUxfe0FfMX0oZikiLDFdLFswLDEsImhee0FfMX1fZiJdLFsyLDMsIlUodSkiLDJdLFsxLDMsIlUodUxfe0FfMX0oZikpIiwxLHsibGFiZWxfcG9zaXRpb24iOjYwfV0sWzEsNCwiIiwxLHsibGV2ZWwiOjIsInN0eWxlIjp7ImhlYWQiOnsibmFtZSI6Im5vbmUifX19XSxbNCwzLCJVKHZMX3tBXzF9KGYpKSJdLFs1LDYsIlxcZXRhXntBXzF9X2YgXFxhdG9wIFxcUmlnaHRhcnJvdyIsMCx7InNob3J0ZW4iOnsic291cmNlIjoyMCwidGFyZ2V0IjoyMH0sInN0eWxlIjp7ImJvZHkiOnsibmFtZSI6Im5vbmUifSwiaGVhZCI6eyJuYW1lIjoibm9uZSJ9fX1dLFsyLDksIlxcYWxwaGFfe3UsIExfe0FfMX0oZil9IFxcYXRvcCBcXHNpbWVxIiwwLHsic2hvcnRlbiI6eyJ0YXJnZXQiOjIwfSwic3R5bGUiOnsiYm9keSI6eyJuYW1lIjoibm9uZSJ9LCJoZWFkIjp7Im5hbWUiOiJub25lIn19fV0sWzksMTEsIlUoXFxvbWVnYSAqTF97QV8xfShmKSkgXFxhdG9wIFxcUmlnaHRhcnJvdyIsMSx7Im9mZnNldCI6LTQsInNob3J0ZW4iOnsic291cmNlIjoyMCwidGFyZ2V0IjoyMH0sInN0eWxlIjp7ImJvZHkiOnsibmFtZSI6Im5vbmUifSwiaGVhZCI6eyJuYW1lIjoibm9uZSJ9fX1dXQ==
\[\begin{tikzcd}[row sep=large]
	B && {U(A_f)} && {U(A_f)} \\
	& {U(A_1)} \\
	&& {U(A_2)}
	\arrow[""{name=0, anchor=center, inner sep=0}, "f"', from=1-1, to=2-2]
	\arrow[""{name=1, anchor=center, inner sep=0}, "{UL_{A_1}(f)}"{description}, from=1-3, to=2-2]
	\arrow["{h^{A_1}_f}", from=1-1, to=1-3]
	\arrow["{U(u)}"', from=2-2, to=3-3]
	\arrow[""{name=2, anchor=center, inner sep=0}, "{U(uL_{A_1}(f))}"{description, pos=0.6}, from=1-3, to=3-3]
	\arrow[Rightarrow, no head, from=1-3, to=1-5]
	\arrow[""{name=3, anchor=center, inner sep=0}, "{U(vL_{A_1}(f))}", from=1-5, to=3-3]
	\arrow["{\eta^{A_1}_f \atop \Rightarrow}", Rightarrow, draw=none, from=0, to=1]
	\arrow["{\alpha_{u, L_{A_1}(f)} \atop \simeq}", Rightarrow, draw=none, from=2-2, to=2]
	\arrow["{U(\omega *L_{A_1}(f)) \atop \Rightarrow}"{description}, shift left=4, Rightarrow, draw=none, from=2, to=3]
\end{tikzcd}=% https://q.uiver.app/?q=WzAsNixbMCwwLCJCIl0sWzAsMSwiVShBXzEpIl0sWzEsMiwiVShBXzIpIl0sWzIsMCwiVShBX2YpIl0sWzEsMCwiQiJdLFsxLDEsIlUoQV8xKSJdLFswLDEsImYiLDJdLFsxLDIsIlUodSkiLDJdLFs0LDMsImhee0FfMX1fZiJdLFswLDQsIiIsMCx7ImxldmVsIjoyLCJzdHlsZSI6eyJoZWFkIjp7Im5hbWUiOiJub25lIn19fV0sWzEsNSwiIiwxLHsibGV2ZWwiOjIsInN0eWxlIjp7ImhlYWQiOnsibmFtZSI6Im5vbmUifX19XSxbNSwyXSxbNCw1LCJmIiwxXSxbMyw1LCJVTF97QV8xfShmKSIsMV0sWzMsMiwiVSh2TF97QV8xfShmKSkiLDAseyJjdXJ2ZSI6LTJ9XSxbMiwxMCwiVShcXG9tZWdhKSBcXGF0b3AgXFxSaWdodGFycm93IiwxLHsic2hvcnRlbiI6eyJ0YXJnZXQiOjIwfSwic3R5bGUiOnsiYm9keSI6eyJuYW1lIjoibm9uZSJ9LCJoZWFkIjp7Im5hbWUiOiJub25lIn19fV0sWzUsMTQsIlxcYWxwaGFfe3YsIExfe0FfMX0oZil9IFxcYXRvcCBcXHNpbWVxIiwxLHsic2hvcnRlbiI6eyJ0YXJnZXQiOjIwfSwic3R5bGUiOnsiYm9keSI6eyJuYW1lIjoibm9uZSJ9LCJoZWFkIjp7Im5hbWUiOiJub25lIn19fV0sWzMsMTIsIlxcZXRhXntBXzF9X2YgXFxhdG9wIFxcUmlnaHRhcnJvdyIsMSx7InNob3J0ZW4iOnsic291cmNlIjoyMCwidGFyZ2V0IjoyMH0sInN0eWxlIjp7ImJvZHkiOnsibmFtZSI6Im5vbmUifSwiaGVhZCI6eyJuYW1lIjoibm9uZSJ9fX1dXQ==
\begin{tikzcd}[row sep=large]
	B & B && {U(A_f)} \\
	{U(A_1)} & {U(A_1)} \\
	& {U(A_2)}
	\arrow["f"', from=1-1, to=2-1]
	\arrow["{U(u)}"', from=2-1, to=3-2]
	\arrow["{h^{A_1}_f}", from=1-2, to=1-4]
	\arrow[Rightarrow, no head, from=1-1, to=1-2]
	\arrow[""{name=0, anchor=center, inner sep=0}, Rightarrow, no head, from=2-1, to=2-2]
	\arrow["U(v)"{description}, from=2-2, to=3-2]
	\arrow["f"{description}, from=1-2, to=2-2]
	\arrow[""{name=1, anchor=center, inner sep=0}, "{UL_{A_1}(f)}"{description}, from=1-4, to=2-2]
	\arrow[""{name=2, anchor=center, inner sep=0}, "{U(vL_{A_1}(f))}", curve={height=-12pt}, from=1-4, to=3-2]
	\arrow["{U(\omega) \atop \Rightarrow}"{description}, Rightarrow, draw=none, from=3-2, to=0]
	\arrow["{\alpha_{v, L_{A_1}(f)} \atop \simeq}"{description}, Rightarrow, draw=none, from=2-2, to=2]
	\arrow["{\eta^{A_1}_f \atop \Rightarrow}"{description, pos=0.6}, Rightarrow, draw=none, from=1, to=1-2]
\end{tikzcd}\]
This equality is sent by the pseudofunctor $L_{A_2}$ to an invertible 2-cell in the lax slice over $A_2$
% https://q.uiver.app/?q=WzAsNCxbMCwwLCJBX3tVKHUpZn0iXSxbMSwxLCJBXzIiXSxbMSwwLCJBX2YiXSxbMiwwLCJBX2YiXSxbMCwxLCJMX3tBXzJ9KFUodSlmKSIsMl0sWzAsMiwic151X2YiXSxbMiwxLCJ1TF97QV8xfShmKSIsMSx7ImxhYmVsX3Bvc2l0aW9uIjozMH1dLFsyLDMsIiIsMix7ImxldmVsIjoyLCJzdHlsZSI6eyJoZWFkIjp7Im5hbWUiOiJub25lIn19fV0sWzMsMSwidkxfe0FfMX0oZikiXSxbNCw2LCJcXHNpZ21hXnVfZiBcXGF0b3AgXFxSaWdodGFycm93IiwxLHsic2hvcnRlbiI6eyJzb3VyY2UiOjIwLCJ0YXJnZXQiOjIwfSwic3R5bGUiOnsiYm9keSI6eyJuYW1lIjoibm9uZSJ9LCJoZWFkIjp7Im5hbWUiOiJub25lIn19fV0sWzgsNiwiXFxvbWVnYSpMX3tBXzF9KGYpIFxcYXRvcCBcXFJpZ2h0YXJyb3ciLDEseyJzaG9ydGVuIjp7InNvdXJjZSI6MjAsInRhcmdldCI6MjB9LCJzdHlsZSI6eyJib2R5Ijp7Im5hbWUiOiJub25lIn0sImhlYWQiOnsibmFtZSI6Im5vbmUifX19XV0=
\[\begin{tikzcd}[sep=huge]
	{A_{U(u)f}} & {A_f} & {A_f} \\
	& {A_2}
	\arrow[""{name=0, anchor=center, inner sep=0}, "{L_{A_2}(U(u)f)}"', from=1-1, to=2-2]
	\arrow["{s^u_f}", from=1-1, to=1-2]
	\arrow["{uL_{A_1}(f)}"{description, pos=0.6}, from=1-2, to=2-2]
	\arrow[Rightarrow, no head, from=1-2, to=1-3]
	\arrow[""{name=1, anchor=center, inner sep=0}, "{vL_{A_1}(f)}", from=1-3, to=2-2]
	\arrow["{\sigma^u_f \atop \Rightarrow}"{description}, Rightarrow, draw=none, from=0, to=1-2]
	\arrow["{\omega*L_{A_1}(f) \atop \Rightarrow}"{description}, Rightarrow, draw=none, from=1, to=1-2]
\end{tikzcd}% https://q.uiver.app/?q=WzAsNCxbMCwwLCJBX3tVKHUpZn0iXSxbMSwxLCJBXzIiXSxbMSwwLCJBX3tVKHYpZn0iXSxbMiwwLCJBX2YiXSxbMCwxLCJMX3tBXzJ9KFUodSlmKSIsMl0sWzAsMiwid197TF97QV8yfShcXG1hdGhjYWx7Qn1cXERvd25hcnJvdyBVKHcpKX0iXSxbMiwxLCJMX3tBXzJ9KFUodilmKSIsMSx7ImxhYmVsX3Bvc2l0aW9uIjozMH1dLFsyLDMsInNedl9mIl0sWzMsMSwidkxfe0FfMX0oZikiXSxbOCw2LCJcXHNpZ21hXnZfZiBcXGF0b3AgXFxSaWdodGFycm93IiwxLHsic2hvcnRlbiI6eyJzb3VyY2UiOjIwLCJ0YXJnZXQiOjIwfSwic3R5bGUiOnsiYm9keSI6eyJuYW1lIjoibm9uZSJ9LCJoZWFkIjp7Im5hbWUiOiJub25lIn19fV0sWzQsNiwiXFxvbWVnYV97TF97QV8yfShcXG1hdGhjYWx7Qn1cXERvd25hcnJvdyBVKHcpKX0gXFxhdG9wIFxcUmlnaHRhcnJvdyIsMSx7InNob3J0ZW4iOnsic291cmNlIjoyMCwidGFyZ2V0IjoyMH0sInN0eWxlIjp7ImJvZHkiOnsibmFtZSI6Im5vbmUifSwiaGVhZCI6eyJuYW1lIjoibm9uZSJ9fX1dXQ==
= \begin{tikzcd}[sep=huge]
	{A_{U(u)f}} & {A_{U(v)f}} & {A_f} \\
	& {A_2}
	\arrow[""{name=0, anchor=center, inner sep=0}, "{L_{A_2}(U(u)f)}"', from=1-1, to=2-2]
	\arrow["{w_{L_{A_2}( U(\omega))}}", from=1-1, to=1-2]
	\arrow["{L_{A_2}(U(v)f)}"{description, pos=0.6}, from=1-2, to=2-2]
	\arrow["{s^v_f}", from=1-2, to=1-3]
	\arrow[""{name=1, anchor=center, inner sep=0}, "{vL_{A_1}(f)}", from=1-3, to=2-2]
	\arrow["{\omega_{L_{A_2}( U(\omega))} \atop \Rightarrow}"{description}, Rightarrow, draw=none, from=0, to=1-2]
	\arrow["{\sigma^v_f \atop \Rightarrow}"{description}, Rightarrow, draw=none, from=1, to=1-2]
\end{tikzcd}\]
One can show this equality to be natural in $f$, producing the 2-dimensional data of the Beck-Chevalley mate
% https://q.uiver.app/?q=WzAsNCxbMCwwLCJcXG1hdGhjYWx7QX1cXERvd25hcnJvdyBBXzEiXSxbMCwxLCJcXG1hdGhjYWx7QX1cXERvd25hcnJvdyBBXzIiXSxbMSwwLCJcXG1hdGhjYWx7Qn0vVShBXzEpIl0sWzEsMSwiXFxtYXRoY2Fse0J9L1UoQV8yKSJdLFswLDEsIlxcbWF0aGNhbHtBfVxcRG93bmFycm93IHYiLDIseyJjdXJ2ZSI6M31dLFsyLDAsIkxfe0FfMX0iLDJdLFszLDEsIkxfe0FfMn0iXSxbMiwzLCJcXG1hdGhjYWx7Qn0vVSh1KSJdLFswLDEsIlxcbWF0aGNhbHtBfVxcRG93bmFycm93IHUiLDFdLFs3LDgsIlxcc2lnbWFedSBcXGF0b3AgXFxMZWZ0YXJyb3ciLDEseyJzaG9ydGVuIjp7InNvdXJjZSI6MjAsInRhcmdldCI6MjB9LCJzdHlsZSI6eyJib2R5Ijp7Im5hbWUiOiJub25lIn0sImhlYWQiOnsibmFtZSI6Im5vbmUifX19XSxbOCw0LCJcXG1hdGhjYWx7QX1cXERvd25hcnJvdyBcXG9tZWdhIFxcYXRvcCBcXExlZnRhcnJvdyIsMSx7InNob3J0ZW4iOnsic291cmNlIjoyMCwidGFyZ2V0IjoyMH19XV0=
\[\begin{tikzcd}[sep=huge]
	{\mathcal{A}\Downarrow A_1} & {\mathcal{B}/U(A_1)} \\
	{\mathcal{A}\Downarrow A_2} & {\mathcal{B}/U(A_2)}
	\arrow[""{name=0, anchor=center, inner sep=0}, "{\mathcal{A}\Downarrow v}"', curve={height=30pt}, from=1-1, to=2-1]
	\arrow["{L_{A_1}}"', from=1-2, to=1-1]
	\arrow["{L_{A_2}}", from=2-2, to=2-1]
	\arrow[""{name=1, anchor=center, inner sep=0}, "{\mathcal{B}/U(u)}", from=1-2, to=2-2]
	\arrow[""{name=2, anchor=center, inner sep=0}, "{\mathcal{A}\Downarrow u}"{description}, curve={height=-30pt}, from=1-1, to=2-1]
	\arrow["{\sigma^u \atop \Leftarrow}"{description}, Rightarrow, draw=none, from=1, to=2]
	\arrow["{\mathcal{A}\Downarrow \omega \atop \Leftarrow}"{description}, shorten <=4pt, shorten >=4pt, Rightarrow, draw=none, from=2, to=0]
\end{tikzcd}% https://q.uiver.app/?q=WzAsNCxbMCwwLCJcXG1hdGhjYWx7QX1cXERvd25hcnJvdyBBXzEiXSxbMCwxLCJcXG1hdGhjYWx7QX1cXERvd25hcnJvdyBBXzIiXSxbMSwwLCJcXG1hdGhjYWx7Qn0vVShBXzEpIl0sWzEsMSwiXFxtYXRoY2Fse0J9L1UoQV8yKSJdLFswLDEsIlxcbWF0aGNhbHtBfVxcRG93bmFycm93IHYiLDJdLFsyLDAsIkxfe0FfMX0iLDJdLFszLDEsIkxfe0FfMn0iXSxbMiwzLCJcXG1hdGhjYWx7Qn0vVSh2KSIsMV0sWzIsMywiXFxtYXRoY2Fse0J9L1UodSkiLDEseyJjdXJ2ZSI6LTR9XSxbOCw3LCJcXG1hdGhjYWx7Qn0vVShcXG9tZWdhIFxcYXRvcCBcXExlZnRhcnJvdyIsMSx7InNob3J0ZW4iOnsic291cmNlIjoyMCwidGFyZ2V0IjoyMH0sInN0eWxlIjp7ImJvZHkiOnsibmFtZSI6Im5vbmUifSwiaGVhZCI6eyJuYW1lIjoibm9uZSJ9fX1dLFs3LDQsIlxcc2lnbWFediBcXGF0b3AgXFxMZWZ0YXJyb3ciLDEseyJzaG9ydGVuIjp7InNvdXJjZSI6MjAsInRhcmdldCI6MjB9LCJzdHlsZSI6eyJib2R5Ijp7Im5hbWUiOiJub25lIn0sImhlYWQiOnsibmFtZSI6Im5vbmUifX19XV0=
= \begin{tikzcd}[sep=huge]
	{\mathcal{A}\Downarrow A_1} & {\mathcal{B}/U(A_1)} \\
	{\mathcal{A}\Downarrow A_2} & {\mathcal{B}/U(A_2)}
	\arrow[""{name=0, anchor=center, inner sep=0}, "{\mathcal{A}\Downarrow v}"', from=1-1, to=2-1]
	\arrow["{L_{A_1}}"', from=1-2, to=1-1]
	\arrow["{L_{A_2}}", from=2-2, to=2-1]
	\arrow[""{name=1, anchor=center, inner sep=0}, "{\mathcal{B}/U(v)}"{description}, curve={height=30pt}, from=1-2, to=2-2]
	\arrow[""{name=2, anchor=center, inner sep=0}, "{\mathcal{B}/U(u)}", curve={height=-30pt}, from=1-2, to=2-2]
	\arrow["{\mathcal{B}/U(\omega) \atop \Leftarrow}"{description}, Rightarrow, draw=none, from=2, to=1]
	\arrow["{\sigma^v \atop \Leftarrow}"{description}, Rightarrow, draw=none, from=1, to=0]
\end{tikzcd}\]
\end{remark}

This ``repairs" the incomplete Beck-Chevalley condition of local right biadjoints. Beware however that in general lax-local right biadjoint are not local right bi-adjoints for the local unit 2-cell are not necessarily invertible: such a condition must be stipulated.

\begin{definition}
A lax-local right biadjoint shall be said \emph{coherent} if it restricts to a local right biadjoint on pseudoslices, that is, if for any $A$ the left adjoint $ L_A$ of $U\Downarrow A$ restricts to a left adjoint of $ U/A$ along the inclusion of the pseudoslice, that is, if we have a pseudonatural equivalence
% https://q.uiver.app/?q=WzAsNCxbMCwwLCJcXG1hdGhjYWx7QX0vQSJdLFswLDEsIlxcbWF0aGNhbHtBfVxcRG93bmFycm93IEEiXSxbMSwwLCJcXG1hdGhjYWx7Qn0vVShBKSJdLFsxLDEsIlxcbWF0aGNhbHtCfVxcRG93bmFycm93IFUoQSkiXSxbMCwxLCJcXGlvdGFfQSIsMix7InN0eWxlIjp7InRhaWwiOnsibmFtZSI6Imhvb2siLCJzaWRlIjoidG9wIn19fV0sWzIsMCwiTF9BIiwyLHsic3R5bGUiOnsiYm9keSI6eyJuYW1lIjoiZGFzaGVkIn19fV0sWzIsMywiXFxpb3RhX3tVKEEpfSIsMCx7InN0eWxlIjp7InRhaWwiOnsibmFtZSI6Imhvb2siLCJzaWRlIjoidG9wIn19fV0sWzMsMSwiTF9BIl0sWzMsMCwiXFxzaW1lcSIsMSx7InN0eWxlIjp7ImJvZHkiOnsibmFtZSI6Im5vbmUifSwiaGVhZCI6eyJuYW1lIjoibm9uZSJ9fX1dXQ==
\[\begin{tikzcd}
	{\mathcal{A}/A} & {\mathcal{B}/U(A)} \\
	{\mathcal{A}\Downarrow A} & {\mathcal{B}\Downarrow U(A)}
	\arrow["{\iota_A}"', hook, from=1-1, to=2-1]
	\arrow["{L_A}"', dashed, from=1-2, to=1-1]
	\arrow["{\iota_{U(A)}}", hook, from=1-2, to=2-2]
	\arrow["{L_A}", from=2-2, to=2-1]
	\arrow["\simeq"{description}, draw=none, from=2-2, to=1-1]
\end{tikzcd}\]
\end{definition}

As the pseudoslice contains all objects of the lax-slice, this ensures that all local units $ (h^A_f, \eta^A_f)$ actually lie in the pseudoslices, that is, that the 2-cell $ \eta^A_f$ are invertible for each $ f: B \rightarrow U(A)$ at each $A$.  

\begin{proposition}
If $U$ is a coherent lax-local right biadjoint, and $ n : B \rightarrow U(A)$ is such that $ n \simeq h^A_n$. Then $ n$ is lax-generic, but moreover, in a lax square as below
% https://q.uiver.app/?q=WzAsNCxbMCwwLCJCIl0sWzAsMSwiVShBKSJdLFsxLDAsIlUoQV8xKSJdLFsxLDEsIlUoQV8yKSJdLFswLDEsIm4iLDJdLFsxLDMsIlUodikiLDJdLFswLDIsImciXSxbMiwzLCJVKHUpIl0sWzEsMiwiXFxVcGFycm93IFxcc2lnbWEiLDEseyJzdHlsZSI6eyJib2R5Ijp7Im5hbWUiOiJub25lIn0sImhlYWQiOnsibmFtZSI6Im5vbmUifX19XV0=
\[\begin{tikzcd}
	B & {U(A_1)} \\
	{U(A)} & {U(A_2)}
	\arrow["n"', from=1-1, to=2-1]
	\arrow["{U(v)}"', from=2-1, to=2-2]
	\arrow["g", from=1-1, to=1-2]
	\arrow["{U(u)}", from=1-2, to=2-2]
	\arrow["{\Uparrow \sigma}"{description}, draw=none, from=2-1, to=1-2]
\end{tikzcd}\]
the left part lax diagonalization is invertible.
\end{proposition}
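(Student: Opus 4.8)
The plan is to reproduce, now at the \emph{lax} slices, the construction used in the converse direction of the proof that bistable pseudofunctors are the same as local right biadjoints, and then to observe that coherence forces all the $\mathcal{B}$-side data to be invertible. First I note that the hypothesis $n\simeq h^A_n$ means exactly that $L_A(n):A_n\to A$ is an equivalence, with a chosen inverse $e_n$, and that by coherence the unit $2$-cell $\eta^A_n: n\Rightarrow U(L_A(n))\,h^A_n$ is invertible. The crucial preliminary observation is that, since a coherent $U$ restricts on pseudoslices to a genuine local right biadjoint, the two-dimensional Beck--Chevalley condition of \cref{pseudoinverse of the BC map} applies to objects of the pseudoslice; in particular, for the bottom map $v:A\to A_2$ of the given square, the comparison $(s^v_n,\sigma^v_n):L_{A_2}(U(v)n)\to v\,L_A(n)$ is an \emph{equivalence} in $\mathcal{A}\Downarrow A_2$, with pseudoinverse $(t^v_n,\tau^v_n)$ and invertible modifications $\mathfrak{a}^v_n,\mathfrak{b}^v_n$. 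Here I would spell out that the lax mate $\sigma^v$ of \cref{expression of the BC mate}, evaluated at the pseudoslice object $n$, coincides with the one produced in the non-lax setting, so this equivalence is legitimate.

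Next I construct the lax diagonalization. Reading $(f,\sigma)$ as a $1$-cell $U(v)n\to U_{A_2}(u)$ in $\mathcal{B}\Downarrow U(A_2)$, I transport it across the biadjunction $L_{A_2}\dashv U_{A_2}$ to a $1$-cell $(w,\omega):L_{A_2}(U(v)n)\to u$ in $\mathcal{A}\Downarrow A_2$, and set the filler to be the composite $w_\sigma:=w\,t^v_n\,e_n:A\to A_n\to A_{U(v)n}\to A_1$. The right-hand $\mathcal{A}$-cell $\omega_\sigma:u\,w_\sigma\Rightarrow v$ is assembled by whiskering $\omega$ with $t^v_n e_n$ and pasting with $(\sigma^v_n)^{-1}$ and the triangulators of the two equivalences; the left-hand $\mathcal{B}$-cell $\nu_\sigma: U(w_\sigma)n\Rightarrow f$ is assembled from $\eta^A_n$, the triangulator of $e_n$, the identification $U(t^v_n)\,h^A_n\simeq h^{A_2}_{U(v)n}$ coming from the cell $\mu^v_n$ of \cref{expression of the BC mate}, and the unit $\mathfrak{i}^{A_2}$ of the biadjunction at $A_2$. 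This is exactly the pasting displayed in the converse direction of the equivalence theorem, transcribed to the lax slices.

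The heart of the argument is then the bookkeeping of invertibility. The cell $\sigma$ enters the construction only through the transported pair $(w,\omega)$, and only through its $\mathcal{A}$-component $\omega$; every other ingredient --- $\eta^A_n$ and the data of the equivalence $L_A(n)$, the Beck--Chevalley equivalence $\sigma^v_n$ together with $\mathfrak{a}^v_n,\mathfrak{b}^v_n$, the cell $\mu^v_n$, and the unit $\mathfrak{i}^{A_2}$ (invertible because a biadjunction, as opposed to a lax adjunction, induces \emph{equivalences} of hom-categories) --- is invertible. Since the left part $\nu_\sigma$ is built entirely from this invertible data and never from $\omega$, it is invertible for an arbitrary $\sigma$, which is the last assertion of the statement. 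The right part $\omega_\sigma$ contains the whiskering of $\omega$ and is therefore invertible precisely when $\omega$, hence $\sigma$, is; this yields the clause of the definition of lax generic morphism requiring both cells to be invertible as soon as $\sigma$ is.

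Finally I establish the universal property: uniqueness of $w_\sigma$ up to a unique invertible $2$-cell and, for a competing factorization, the unique comparison $\xi$ satisfying the two displayed equalities. As in the pseudo-case this is inherited stepwise from the uniqueness clauses of the hom-equivalence at $A_2$, from the fact that $\sigma^v_n$ and $L_A(n)$ are equivalences, and from the universal property of the unit $h^{A_2}_{U(v)n}$: a competing filler transports back to a competing $1$-cell into $u$, where the hom-equivalence furnishes the unique mediating $2$-cell, which one then whiskers with $t^v_n e_n$. I expect the main obstacle to be exactly this transcription --- checking that the lax mate $\sigma^v$ restricts on the pseudoslice to the Beck--Chevalley equivalence of the non-lax theory, so that $\sigma^v_n$ is genuinely invertible, and then tracking the invertibility of $\nu_\sigma$ through the entire pasting while keeping the single lax ingredient $\omega$ confined to $\omega_\sigma$.
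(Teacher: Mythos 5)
Your proof is correct and follows essentially the same route as the paper's: both read the lax square as a 1-cell $(g,\sigma) : U(v)n \rightarrow U_{A_2}(u)$ in the lax slice over $A_2$, transpose it across the biadjunction $L_{A_2} \dashv U_{A_2}$, and observe that the left-hand cell of the resulting factorization is a pasting of the unit $\eta^{A_2}_{U(v)n}$ (invertible by coherence) with the hom-adjunction unit $\mathfrak{i}$ (invertible because the lax-slice adjunction is a biadjunction), so that the non-invertibility of $\sigma$ is entirely pushed into the $\mathcal{A}$-side cell $U(L_{A_2}(\omega))$. The one point where you diverge is how the generic part of $U(v)n$ gets identified with $n$: the paper appeals abstractly to the essential uniqueness of factorization through local units, whereas you rebuild the identification concretely from $e_n$ and the Beck--Chevalley pseudoinverse $t^v_n$ --- literally the path $A \rightarrow A_n \rightarrow A_{U(v)n} \rightarrow A_1$ of the pseudo-case converse proof --- which is legitimate precisely because coherence makes the pseudoslice restriction a genuine local right biadjoint, so the pointwise equivalence $\sigma^v_n$ is available. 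Your version is longer, but it has the merit of making the diagonal $w\,t^v_n\,e_n$, the invertibility bookkeeping, and the universal property explicit, all of which the paper's proof leaves implicit.
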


\begin{proof}
A lax square as above is like a lax-cell
% https://q.uiver.app/?q=WzAsMyxbMCwwLCJCIl0sWzIsMCwiVShBXzEpIl0sWzEsMSwiVShBXzIpIl0sWzAsMSwiZyJdLFswLDIsIlUodiluIiwyXSxbMSwyLCJVKHUpIl0sWzQsNSwiXFxzaWdtYSBcXGF0b3AgXFxSaWdodGFycm93IiwwLHsic2hvcnRlbiI6eyJzb3VyY2UiOjIwLCJ0YXJnZXQiOjIwfSwic3R5bGUiOnsiYm9keSI6eyJuYW1lIjoibm9uZSJ9LCJoZWFkIjp7Im5hbWUiOiJub25lIn19fV1d
\[\begin{tikzcd}
	B && {U(A_1)} \\
	& {U(A_2)}
	\arrow["g", from=1-1, to=1-3]
	\arrow[""{name=0, anchor=center, inner sep=0}, "{U(v)n}"', from=1-1, to=2-2]
	\arrow[""{name=1, anchor=center, inner sep=0}, "{U(u)}", from=1-3, to=2-2]
	\arrow["{\sigma \atop \Rightarrow}", Rightarrow, draw=none, from=0, to=1]
\end{tikzcd}\]
which factorizes by bi-adjointness as a pasting 
% https://q.uiver.app/?q=WzAsNCxbMCwwLCJCIl0sWzIsMCwiVShBXzEpIl0sWzEsMSwiVShBXzIpIl0sWzEsMCwiVShBX3tVKHYpbn0pIl0sWzAsMSwiZyIsMCx7ImN1cnZlIjotM31dLFswLDIsIlUodiluIiwyXSxbMSwyLCJVKHUpIl0sWzAsMywiaF57QV8yfV97VSh2KW59IiwxXSxbMywxLCJVKHdfeyhnLFxcc2lnbWEpfSkiXSxbMywyXSxbOSw2LCJVKFxcb21lZ2FfeyhnLFxcc2lnbWEpfSkgXFxhdG9wIFxcUmlnaHRhcnJvdyIsMCx7InNob3J0ZW4iOnsic291cmNlIjoyMCwidGFyZ2V0IjoyMH19XSxbNSw5LCJcXGV0YV57QV8yfV97VSh2KW59IFxcYXRvcCBcXHNpbWVxIiwwLHsic2hvcnRlbiI6eyJzb3VyY2UiOjIwLCJ0YXJnZXQiOjIwfSwic3R5bGUiOnsiYm9keSI6eyJuYW1lIjoibm9uZSJ9LCJoZWFkIjp7Im5hbWUiOiJub25lIn19fV0sWzQsMywiXFxtYXRoZnJha3tpfV97KGcsXFxzaWdtYSl9IFxcYXRvcCBcXHNpbWVxIiwxLHsic2hvcnRlbiI6eyJzb3VyY2UiOjIwfSwic3R5bGUiOnsiYm9keSI6eyJuYW1lIjoibm9uZSJ9LCJoZWFkIjp7Im5hbWUiOiJub25lIn19fV1d
% https://q.uiver.app/?q=WzAsNCxbMCwwLCJCIl0sWzIsMCwiVShBXzEpIl0sWzEsMSwiVShBXzIpIl0sWzEsMCwiVShBX3tVKHYpbn0pIl0sWzAsMSwiZyIsMCx7ImN1cnZlIjotNH1dLFswLDIsIlUodiluIiwyXSxbMSwyLCJVKHUpIl0sWzAsMywiaF57QV8yfV97VSh2KW59IiwxXSxbMywxLCJVKHdfeyhnLFxcc2lnbWEpfSkiXSxbMywyXSxbNCwzLCJcXG1hdGhmcmFre2l9X3soZyxcXHNpZ21hKX0gXFxhdG9wIFxcc2ltZXEiLDEseyJzaG9ydGVuIjp7InNvdXJjZSI6MjB9LCJzdHlsZSI6eyJib2R5Ijp7Im5hbWUiOiJub25lIn0sImhlYWQiOnsibmFtZSI6Im5vbmUifX19XSxbNSwzLCJcXGV0YV57QV8yfV97VSh2KW59IFxcYXRvcCBcXHNpbWVxIiwxLHsic2hvcnRlbiI6eyJzb3VyY2UiOjIwLCJ0YXJnZXQiOjIwfSwic3R5bGUiOnsiYm9keSI6eyJuYW1lIjoibm9uZSJ9LCJoZWFkIjp7Im5hbWUiOiJub25lIn19fV0sWzMsNiwiVShcXG9tZWdhX3soZyxcXHNpZ21hKX0pIFxcYXRvcCBcXFJpZ2h0YXJyb3ciLDEseyJzaG9ydGVuIjp7InNvdXJjZSI6MjAsInRhcmdldCI6MjB9LCJzdHlsZSI6eyJib2R5Ijp7Im5hbWUiOiJub25lIn0sImhlYWQiOnsibmFtZSI6Im5vbmUifX19XV0=
\[\begin{tikzcd}[sep=huge]
	B & {U(A_{U(v)n})} & {U(A_1)} \\
	& {U(A_2)}
	\arrow[""{name=0, anchor=center, inner sep=0}, "g", bend left=30, start anchor=45, from=1-1, to=1-3]
	\arrow[""{name=1, anchor=center, inner sep=0}, "{U(v)n}"', from=1-1, to=2-2]
	\arrow[""{name=2, anchor=center, inner sep=0}, "{U(u)}", from=1-3, to=2-2]
	\arrow["{h^{A_2}_{U(v)n}}"{description}, from=1-1, to=1-2]
	\arrow["{U(w_{(g,\sigma)})}"{description}, from=1-2, to=1-3]
	\arrow[from=1-2, to=2-2]
	\arrow["{\mathfrak{i}_{(g,\sigma)} \atop \simeq}"{description}, Rightarrow, draw=none, from=0, to=1-2]
	\arrow["{\eta^{A_2}_{U(v)n} \atop \simeq}"{description}, Rightarrow, draw=none, from=1, to=1-2]
	\arrow["{U(L_{A_2}(\omega_{(g,\sigma)})) \atop \Rightarrow}"{description}, Rightarrow, draw=none, from=1-2, to=2]
\end{tikzcd}\]
But now, for the factorization through local unit is essentially unique, this provides an invertible 2-cell 
% https://q.uiver.app/?q=WzAsNCxbMCwwLCJCIl0sWzAsMSwiVShBKSJdLFsxLDAsIlUoQV8xKSJdLFsxLDEsIlUoQV8yKSJdLFswLDEsIm4iLDJdLFswLDIsImciXSxbMSwyXSxbMiwzLCJVKHUpIl0sWzEsMywiVSh2KSIsMl0sWzYsMCwiKFxcZXRhXntBXzJ9X3tVKHYpbn0pXnstMX1cXG1hdGhmcmFre2l9X3soZyxcXHNpZ21hKX0gXFxhdG9wIFxcc2ltZXEiLDEseyJzaG9ydGVuIjp7InNvdXJjZSI6MjB9LCJzdHlsZSI6eyJib2R5Ijp7Im5hbWUiOiJub25lIn0sImhlYWQiOnsibmFtZSI6Im5vbmUifX19XSxbNiwzLCIgXFxzY3JpcHRzaXple1xcVXBhcnJvdyBVKExfe0FfMn0oXFxvbWVnYV97KGcsXFxzaWdtYSl9KSl9IiwxLHsic2hvcnRlbiI6eyJzb3VyY2UiOjIwfSwic3R5bGUiOnsiYm9keSI6eyJuYW1lIjoibm9uZSJ9LCJoZWFkIjp7Im5hbWUiOiJub25lIn19fV1d
\[\begin{tikzcd}[sep=huge]
	B && {U(A_1)} \\
	{U(A)} && {U(A_2)}
	\arrow["n"', from=1-1, to=2-1]
	\arrow["g", from=1-1, to=1-3]
	\arrow["U(w_{(g,\sigma)})"{description}, ""{name=0, anchor=center, inner sep=0}, from=2-1, to=1-3]
	\arrow["{U(u)}", from=1-3, to=2-3]
	\arrow["{U(v)}"', from=2-1, to=2-3]
	\arrow["{\eta^{A_2}_{U(v)n})\mathfrak{i}_{(g,\sigma)} \atop \simeq}"{description, pos=0.6}, Rightarrow, draw=none, from=0, to=1-1]
	\arrow["{ \scriptsize{\Uparrow U(L_{A_2}(\omega_{(g,\sigma)}))}}"{description}, Rightarrow, draw=none, from=0, to=2-3]
\end{tikzcd}\]
\end{proof}

\begin{remark}
Hence any 2-cell $\sigma$ factorizes as below
% https://q.uiver.app/?q=WzAsMyxbMCwwLCJCIl0sWzEsMCwiVShBX1xcc2lnbWEpIl0sWzMsMCwiVShBKSJdLFswLDEsIlxcZXRhXkFfXFxzaWdtYSIsMV0sWzEsMiwiIiwwLHsiY3VydmUiOi0zfV0sWzEsMiwiIiwwLHsiY3VydmUiOjN9XSxbMCwyLCJmXzEiLDAseyJvZmZzZXQiOi0yLCJjdXJ2ZSI6LTR9XSxbMCwyLCJmXzIiLDIseyJvZmZzZXQiOjIsImN1cnZlIjo0fV0sWzQsNSwiXFxEb3duYXJyb3cgVShcXG9tZWdhX1xcc2lnbWEpIiwxLHsic2hvcnRlbiI6eyJzb3VyY2UiOjIwLCJ0YXJnZXQiOjIwfSwic3R5bGUiOnsiYm9keSI6eyJuYW1lIjoibm9uZSJ9LCJoZWFkIjp7Im5hbWUiOiJub25lIn19fV0sWzYsMSwiXFxldGFeQV97Zl8xfSBcXGF0b3AgXFxzaW1lcSIsMix7ImxhYmVsX3Bvc2l0aW9uIjo4MCwic2hvcnRlbiI6eyJzb3VyY2UiOjIwfSwic3R5bGUiOnsiYm9keSI6eyJuYW1lIjoibm9uZSJ9LCJoZWFkIjp7Im5hbWUiOiJub25lIn19fV0sWzcsMSwiXFxldGFeQV97Zl8yfSBcXGF0b3AgXFxzaW1lcSIsMCx7ImxhYmVsX3Bvc2l0aW9uIjo4MCwic2hvcnRlbiI6eyJzb3VyY2UiOjIwfSwic3R5bGUiOnsiYm9keSI6eyJuYW1lIjoibm9uZSJ9LCJoZWFkIjp7Im5hbWUiOiJub25lIn19fV1d
\[\begin{tikzcd}[sep=large]
	B & {U(A_\sigma)} && {U(A)}
	\arrow["{\eta^A_\sigma}"{description}, from=1-1, to=1-2]
	\arrow["U(L_A(f_1))"{description, name=0, anchor=center, inner sep=0}, curve={height=-18pt}, from=1-2, to=1-4]
	\arrow["U(L_A(f_2))"{description, name=1, anchor=center, inner sep=0}, curve={height=18pt}, from=1-2, to=1-4]
	\arrow[""{name=2, anchor=center, inner sep=0}, "{f_1}", shift left=2, curve={height=-28pt}, from=1-1, to=1-4]
	\arrow[""{name=3, anchor=center, inner sep=0}, "{f_2}"', shift right=2, curve={height=28pt}, from=1-1, to=1-4]
	\arrow["{\Downarrow U(\omega_\sigma)}"{description}, Rightarrow, draw=none, from=0, to=1]
	\arrow["{\eta^A_{f_1} \atop \simeq}"'{pos=0.9}, Rightarrow, draw=none, from=2, to=1-2]
	\arrow["{\eta^A_{f_2} \atop \simeq}"{pos=0.9}, Rightarrow, draw=none, from=3, to=1-2]
\end{tikzcd}\]
This is a more rigid factorization than in lax-familial pseudofunctors. It is not known which lax-version of local adjointness they correspond to. This is neither the kind of factorization we get as examples.
\end{remark}

\section{Bifactorization systems}

Bistable pseudofunctors involve both orthogonality and factorizations conditions. In fact, in the 1-dimensional setting, one can extract factorization data from stable functors; however in this work, for our purpose, we are more interested in a converse process: constructing bistable inclusion from the 2-dimensional analogs of bifactorization systems.

\subsection{Bi-orthogonality}

Let first recall some 2-dimensional notions. For $ f : A \rightarrow B$ and $g : C \rightarrow D$ 1-cells in $\mathcal{C}$, a pseudocommutative square in $f,g$ is the data of a triple $ (u,v,\alpha) : f \Rightarrow g$ whith $ \alpha : gu \stackrel{\simeq }{\Rightarrow} vf$ an invertible 2-cell as below, %one can see as a morphism in the category of pseudoarrows $ ps[2,\mathcal{C}]$ 
\[ % https://tikzcd.yichuanshen.de/#N4Igdg9gJgpgziAXAbVABwnAlgFyxMJZABgBpiBdUkANwEMAbAVxiRAEEQBfU9TXfIRRkAjFVqMWbAELdeIDNjwEiI8uPrNWiEAGE5fJYNWkx1TVJ0ARbuJhQA5vCKgAZgCcIAWyRkQOCCQ1CS02VxBqBjoAIxgGAAV+ZSEQdywHAAscAxAPb19qAKQAJnNJbRAmHLyfRFL-QMQAZjLQnQdqz1rgoubWy1oIkCjYhKTjHTTM7J43LoKGpBbhrDAKyDWhjJg6KDYN1kK6LAZ9gkPhmLjEoxUdVexYIYsKgB1XxjQMugACd7oAmg-q9sF4YABHWxcIA
\begin{tikzcd}
A \arrow[d, "f"'] \arrow[r, "u"] \arrow[rd, "\alpha \atop \simeq", phantom] & C \arrow[d, "g"] \\
B \arrow[r, "v"']                                                           & D         
\end{tikzcd} \]
A morphism of pseudosquares $ (\phi, \psi) : (u, v, \alpha) \Rightarrow (u', v', \alpha')  $ is the data of $ \phi : u \Rightarrow u' $ and $\psi : v \Rightarrow v'$ such that $ \alpha' g^*\phi = \psi^*f \alpha  $ as below
\[ % https://tikzcd.yichuanshen.de/#N4Igdg9gJgpgziAXAbVABwnAlgFyxMJZABgBpiBdUkANwEMAbAVxiRAEEQBfU9TXfIRQBGUsKq1GLNgBFuvEBmx4CRMuOr1mrRCABC8vssFFRlTVJ0gAwtwkwoAc3hFQAMwBOEALZIyIHAgkAGZqACMYMCgkAFpg-y1pXSYAchBqBjoIhgAFfhUhEA8sRwALHEMQTx8-akCkACYLbTY3dJBM7LzjVV1isoqedy9fRFCAoMRRSRbdR0rq0aaJpGmIqJCEyzYaBZHGusm1yOjEOK3Z2jSMrJhc-JM+kvK9msR-erHwk83mpJAmHYuEA
\begin{tikzcd}[column sep= large, row sep= large]
A \arrow[r, "u'"', bend right, ""{name=U1, inner sep=0.1pt}] \arrow[d, "f"'] \arrow[r, "u", bend left, ""{name=D1, inner sep=0.1pt, below}] & C \arrow[d, "g"] \\
B      \arrow[r, "v", bend left, ""{name=U2, inner sep=0.1pt, below}] \arrow[r, "v'"', bend right, ""{name=D2, inner sep=0.1pt}]              & D  \arrow[Rightarrow, from=D1, to=U1]{}{\phi}   \arrow[Rightarrow, from=U2, to=D2]{}{\psi}              
\end{tikzcd} \quad \textrm{ with } 
\begin{tikzcd}[column sep= large, row sep= large]
A \arrow[r, "u'"', bend right, ""{name=U1, inner sep=0.1pt}] \arrow[d, "f"'] \arrow[r, "u", bend left, ""{name=D1, inner sep=0.1pt, below}] & C \arrow[d, "g"] \\
B  \arrow[r, "v'"', ""{name=D2, inner sep=0.1pt}]                 & D  \arrow[Rightarrow, from=D1, to=U1]{}{\phi}   \arrow[phantom, from=U1, to=U2]{}[near end]{\alpha' \atop \simeq} 
\end{tikzcd}  = \begin{tikzcd}[column sep= large, row sep= large]
A \arrow[d, "f"'] \arrow[r, "u", ""{name=D1, inner sep=0.1pt}] & C \arrow[d, "g"] \\
B \arrow[r, "v", bend left, ""{name=U2, inner sep=0.1pt, below}] \arrow[r, "v'"', bend right, ""{name=D2, inner sep=0.1pt}]   & D  \arrow[phantom, from=D1, to=U2]{}[]{\alpha \atop \simeq}   \arrow[Rightarrow, from=U2, to=D2]{}{\psi}
\end{tikzcd} 
\]

\begin{definition}
A \emph{filler}\index{filler} for a pseudosquare $ (u,v,\alpha)$ between $f,g$ is the data of a diagonal map $ s: B \rightarrow C $ and a pair of invertible natural transformations $ \lambda: sf \stackrel{\simeq}{\rightarrow} u$ and $ \rho: gs \stackrel{\simeq}{\rightarrow} v$
% https://q.uiver.app/?q=WzAsNCxbMCwwLCJBIl0sWzAsMSwiQiJdLFsxLDAsIkMiXSxbMSwxLCJEIl0sWzAsMiwidSJdLFswLDEsImYiLDJdLFsxLDMsInYiLDJdLFszLDIsImciLDJdLFsxLDIsInMiLDEseyJzdHlsZSI6eyJib2R5Ijp7Im5hbWUiOiJkYXNoZWQifX19XSxbMCw4LCJcXGxhbWJkYVxcYXRvcCBcXHNpbWVxIiwxLHsic2hvcnRlbiI6eyJ0YXJnZXQiOjIwfSwic3R5bGUiOnsiYm9keSI6eyJuYW1lIjoibm9uZSJ9LCJoZWFkIjp7Im5hbWUiOiJub25lIn19fV0sWzMsOCwiXFxyaG8gXFxhdG9wIFxcc2ltZXEiLDEseyJzaG9ydGVuIjp7InRhcmdldCI6MjB9LCJzdHlsZSI6eyJib2R5Ijp7Im5hbWUiOiJub25lIn0sImhlYWQiOnsibmFtZSI6Im5vbmUifX19XV0=
\[\begin{tikzcd}[sep=large]
	A & C \\
	B & D
	\arrow["u", from=1-1, to=1-2]
	\arrow["f"', from=1-1, to=2-1]
	\arrow["v"', from=2-1, to=2-2]
	\arrow["g"', from=1-2, to=2-2]
	\arrow[""{name=0, anchor=center, inner sep=0}, "s"{description}, dashed, from=2-1, to=1-2]
	\arrow["{\lambda\atop \simeq}"{description}, Rightarrow, draw=none, from=1-1, to=0]
	\arrow["{\rho \atop \simeq}"{description}, Rightarrow, draw=none, from=2-2, to=0]
\end{tikzcd}\]
while a morphism of filler $ (s, \lambda,\rho) \rightarrow (s', \lambda', \rho')$ is the data of a 2-cell $ \sigma: s \Rightarrow s'$ such that $ \lambda' \sigma^*f = \lambda  $ and $ \rho' g^*\sigma = \rho  $ as visualized below:

% https://q.uiver.app/?q=WzAsMyxbMCwwLCJBIl0sWzAsMSwiQiJdLFsxLDAsIkMiXSxbMCwyLCJ1Il0sWzAsMSwiZiIsMl0sWzEsMiwicyIsMV0sWzEsMiwicyciLDIseyJjdXJ2ZSI6Mn1dLFswLDUsIlxcbGFtYmRhXFxhdG9wIFxcc2ltZXEiLDEseyJzaG9ydGVuIjp7InRhcmdldCI6MjB9LCJzdHlsZSI6eyJib2R5Ijp7Im5hbWUiOiJub25lIn0sImhlYWQiOnsibmFtZSI6Im5vbmUifX19XSxbNSw2LCJcXG9tZWdhIiwwLHsic2hvcnRlbiI6eyJzb3VyY2UiOjIwLCJ0YXJnZXQiOjIwfX1dXQ==
\[\begin{tikzcd}[sep=large]
	A & C \\
	B
	\arrow["u", from=1-1, to=1-2]
	\arrow["f"', from=1-1, to=2-1]
	\arrow[""{name=0, anchor=center, inner sep=0}, "s"{description}, from=2-1, to=1-2]
	\arrow[""{name=1, anchor=center, inner sep=0}, "{s'}"', curve={height=20pt}, from=2-1, to=1-2]
	\arrow["{\lambda\atop \simeq}"{description}, Rightarrow, draw=none, from=1-1, to=0]
	\arrow["\sigma", shorten <=2pt, shorten >=2pt, Rightarrow, from=0, to=1]
\end{tikzcd} \,= \, \begin{tikzcd}[sep=large]
	A & C \\
	B
	\arrow["u", from=1-1, to=1-2]
	\arrow["f"', from=1-1, to=2-1]
	\arrow[""{name=0, anchor=center, inner sep=0}, "{s'}"', from=2-1, to=1-2]
	\arrow["{\lambda'\atop \simeq}"{description}, Rightarrow, draw=none, from=1-1, to=0]
\end{tikzcd} \quad \quad \begin{tikzcd}[sep=large]
	& C \\
	B & D
	\arrow["v"', from=2-1, to=2-2]
	\arrow["g"', from=1-2, to=2-2]
	\arrow[""{name=0, anchor=center, inner sep=0}, "{s'}"{description}, from=2-1, to=1-2]
	\arrow[""{name=1, anchor=center, inner sep=0}, "s", curve={height=-20pt}, from=2-1, to=1-2]
	\arrow["{\rho' \atop \simeq}"{description}, Rightarrow, draw=none, from=2-2, to=0]
	\arrow["\sigma", shorten <=2pt, shorten >=2pt, Rightarrow, from=1, to=0]
\end{tikzcd} \, = \, \begin{tikzcd}[sep=large]
	& C \\
	B & D
	\arrow["v"', from=2-1, to=2-2]
	\arrow["g"', from=1-2, to=2-2]
	\arrow[""{name=0, anchor=center, inner sep=0}, "s", from=2-1, to=1-2]
	\arrow["{\rho \atop \simeq}"{description}, Rightarrow, draw=none, from=2-2, to=0]
\end{tikzcd} \]

In particular, for any such $ \sigma$ the whiskerings $ \sigma^*f$ and $ g^*\sigma$ necessarily are invertible by cancellation of invertible cells. 
\end{definition}

 \begin{definition}
 We say that $ f\perp g$, or that $f$ and $g$ are respectively \emph{left and right bi-orthogonal}\index{bi-orthogonal} if the following square is a bipullback in $Cat$
\[ % https://tikzcd.yichuanshen.de/#N4Igdg9gJgpgziAXAbVABwnAlgFyxMJZABgBpiBdUkANwEMAbAVxiRAB12BbOnACwDGjYAGEAvsgBCpERRBjS6TLnyEUZAIxVajFm049+QhqInSAInIVLseAkQ3lt9Zq0QduvQcPHIAgjJWiiAYtqoOpFrULnruBl7Gpv6klvLaMFAA5vBEoABmAE4QXEhkIDgQSABM0bpuHobeJr55gfLBhcXV1BVIAMy1rvqeRj4SAZlB+UUliGW9iI46Q3EjTUnSkyDUDHQARjAMAArKdmogBViZfDjt012LPZWIA8uxDQljyK2pO-uHJzC9ncl2ut2sIE6s3mz1eDCwYHqkER2xAfBgdCgbGRrB6dCwDGxBFxIF2B2Op3C7gR2FgqJi9U42C4MAAjmkxEA
\begin{tikzcd}
{\mathcal{C}[B,C]} \arrow[r, "{\mathcal{C}[f,C]}"] \arrow[d, "{\mathcal{C}[B,g]}"'] \arrow[rd, "\simeq", phantom] & {\mathcal{C}[A,C]} \arrow[d, "{\mathcal{C}[A,g]}"] \\
{\mathcal{C}[B,D]} \arrow[r, "{\mathcal{C}[f,D]}"']                                                               & {\mathcal{C}[A,D]}                                
\end{tikzcd} \]
 \end{definition}
\begin{remark}
This just means that \begin{itemize}
    \item for any pseudocommutative square $ (u, v, \alpha)$ there is an universal filler $(s_\alpha, \lambda_\alpha, \rho_{\alpha}) $ 
\[\begin{tikzcd}[sep=large]
	A & C \\
	B & D
	\arrow["u", from=1-1, to=1-2]
	\arrow["f"', from=1-1, to=2-1]
	\arrow["v"', from=2-1, to=2-2]
	\arrow["g"', from=2-2, to=1-2]
	\arrow[""{name=0, anchor=center, inner sep=0}, "s_\alpha"{description}, dashed, from=2-1, to=1-2]
	\arrow["{\lambda_\alpha\atop \simeq}"{description, pos=0.3}, Rightarrow, draw=none, from=1-1, to=0]
	\arrow["{\rho_\alpha \atop \simeq}"{description, pos=0.3}, Rightarrow, draw=none, from=2-2, to=0]
\end{tikzcd}\]
whith the property that any other filler $(s, \lambda, \rho)$ comes equipped with a unique, invertible morphism of fillers $ \omega : s \stackrel{\simeq}{\Rightarrow} s_\alpha$
\item and for a morphism of pseudosquares $ (\phi, \psi) : (u, v, \alpha) \Rightarrow (u', v', \alpha')  $ there exists a unique 2-cell $ \sigma : s_{\alpha} \Rightarrow s_{\alpha'} $ such that 
\[ \alpha' \phi = \sigma^*f \alpha \quad \textrm{ and } \quad \alpha' g^*\sigma = \psi \alpha \]
\[ \begin{tikzcd}[column sep= large, row sep= large]
A \arrow[r, "u'"', bend right, ""{name=U1, inner sep=0.1pt}] \arrow[d, "f"'] \arrow[r, "u", bend left, ""{name=D1, inner sep=0.1pt, below}] & C \arrow[d, equal] \\
B  \arrow[r, "s'"', ""{name=D2, inner sep=0.1pt}]                 & C  \arrow[Rightarrow, from=D1, to=U1]{}{\phi}   \arrow[phantom, from=U1, to=U2]{}[near end]{\alpha' \atop \simeq} 
\end{tikzcd}  
=
\begin{tikzcd}[column sep= large, row sep= large]
A \arrow[d, "f"'] \arrow[r, "u", ""{name=D1, inner sep=0.1pt}] & C \arrow[d, equal] \\
B \arrow[r, "s", bend left, ""{name=U2, inner sep=0.1pt, below}] \arrow[r, "s'"', bend right, ""{name=D2, inner sep=0.1pt}]   & C  \arrow[phantom, from=D1, to=U2]{}[]{\alpha \atop \simeq}   \arrow[Rightarrow, from=U2, to=D2]{}{\sigma}
\end{tikzcd} 
\textrm{ and }
 \begin{tikzcd}[column sep= large, row sep= large]
B \arrow[r, "s'"', bend right, ""{name=U1, inner sep=0.1pt}] \arrow[d, equal] \arrow[r, "s", bend left, ""{name=D1, inner sep=0.1pt, below}] & C \arrow[d, "g"] \\
B  \arrow[r, "s'"', ""{name=D2, inner sep=0.1pt}]                 & D  \arrow[Rightarrow, from=D1, to=U1]{}{\sigma}   \arrow[phantom, from=U1, to=U2]{}[near end]{\alpha' \atop \simeq} 
\end{tikzcd}  
=
\begin{tikzcd}[column sep= large, row sep= large]
B \arrow[d, equal] \arrow[r, "s", ""{name=D1, inner sep=0.1pt}] & C \arrow[d, "g"] \\
B \arrow[r, "v", bend left, ""{name=U2, inner sep=0.1pt, below}] \arrow[r, "v'"', bend right, ""{name=D2, inner sep=0.1pt}]   & C  \arrow[phantom, from=D1, to=U2]{}[]{\alpha \atop \simeq}   \arrow[Rightarrow, from=U2, to=D2]{}{\psi}
\end{tikzcd} 
\]
\end{itemize}
Actually those conditions are synthetised by the existence of an equivalence of categories \[ \mathcal{C}[B,C] \simeq \textbf{ps}[2,\mathcal{C}](f,g) \]
sending any arrow $ s : B \rightarrow C$ to the canonical isomorphisms induced by composition with $ f$ and $g$ $ (sf, gs, \alpha_{sf}\alpha_{gs})$, whith the choice of universal filler as pseudoinverse. This says that a universal filler exists up to unique invertible 2-cell, and that any such filler is universal. 
\end{remark}

\subsection{Bifactorization systems}

\begin{definition}
A \emph{bifactorization system}\index{bifactorization system} on $ \mathcal{C}$ is the data of a bi-orthogonality structure $ (\mathcal{L}, \mathcal{R})$ such that for any $ f : A \rightarrow B$ in $\mathcal{C}$, there exists a pair $(l_f, r_f)$ with $ l_f \in \mathcal{L}$ and $ r_f \in \mathcal{R}$ equipped with an invertible 2-cell 
        \[ % https://tikzcd.yichuanshen.de/#N4Igdg9gJgpgziAXAbVABwnAlgFyxMJZABgBpiBdUkANwEMAbAVxiRAEEQBfU9TXfIRQAmclVqMWbAELdeIDNjwEiARlKrx9Zq0QgAwgH0AZnL5LBasdW1S93cTCgBzeEVDGAThAC2SMiA4EEjqEjpspjwe3n6IAUFIomF2IAwmINQMdABGMAwACvzKQiCeWM4AFjhmIF6+idQJiKG2uqXpmTl5hRYqemWV1VG1MUgAzI3BiEkMWGBtkPMZIBUwdFBsi6yNdFgMmwTbqV0FRZZ6c9iwy61sADp3jGgVdCYABA90QWgfd9g+MAAjg4uEA
\begin{tikzcd}
A \arrow[rr, "f"] \arrow[rd, "l_f"'] & {} \arrow[d, "\alpha_f \atop \simeq", phantom, near start] & B \\
                                     & C_f \arrow[ru, "r_f"']                         &  
\end{tikzcd} \] 
such that, for any other factorization $ \alpha : rl \simeq f$ with $ l_f \in \mathcal{L}$ and $ r_f \in \mathcal{R}$, there exist a unique equivalence $i$ and a pair of invertible 2-cells $ \lambda : il \simeq l_f $ and $ \rho : r \simeq r_f i$ such that $ \alpha = \alpha_f \rho^*l r^*\lambda$ as below 
\[ % https://tikzcd.yichuanshen.de/#N4Igdg9gJgpgziAXAbVABwnAlgFyxMJZABgBpiBdUkANwEMAbAVxiRAEEQBfU9TXfIRQAmclVqMWbAELdeIDNjwEiARlKrx9Zq0QgAwgH0AZnL5LBasdW1S9Zhf2VDk64Vsm6D3cTCgBzeCJQYwAnCABbJDIQHAgkdQkdNlMeEPCoxBi4pFEkuxAGExBqBjoAIxgGAAUnSz0sMGxYBzDI3OocxETbL1Di0oqq2osVBqasFrSQNsyAZk74xDyGRq9IMFZqAAsYOig2Da3YuiwGQ4JjssqaurGQRubj3rYAHVfGNG26EwACd7ocTQ-1e2AiMAAjq0MtFFkgACw2Txsc6DG4jAT3UJYfzbHDQ9qIRGxJY9ZF6UIlQpDW6jIQgbG4-HTWYIuHLUprNhQCA4HB+KkvBpU67DO70x6TVhcChcIA
\begin{tikzcd}[row sep= small, column sep=large]
A \arrow[rr, "f"] \arrow[rd, "l_f" description] \arrow[rdd, "l"', bend right=20, "\lambda \atop \simeq"] & {} \arrow[d, "\alpha_f \atop \simeq", phantom]         & B \\
                                                                  & C_f \arrow[ru, "r_f" description]                      &   \\
                                                                  & C \arrow[ruu, "r"', bend right=20, "\rho \atop \simeq"] \arrow[u, "i"] &  
\end{tikzcd} \]
    \end{definition}
    
\pagebreak
We recall the following properties of 2-factorization systems:

\begin{proposition}
 If $\mathcal{C}$ is a 2-category endowed with a bifactorization system $ (\mathcal{L}, \mathcal{R})$. Then the left and right classes enjoy the following properties :
 \begin{multicols}{2}
 \begin{itemize}
    \item $\mathcal{L} $ is closed under composition
    \item $\mathcal{L}$ is closed under invertible 2-cell
    \item $\mathcal{L}$ contains all equivalence
    \item $\mathcal{L}$ is right-pseudocancellative\index{right-pseudocancellative}: for any invertible 2-cell
    % https://q.uiver.app/?q=WzAsMyxbMCwwLCJDXzEiXSxbMSwxLCJDXzMiXSxbMSwwLCJDXzIiXSxbMCwxLCJsXzIiLDJdLFswLDIsImxfMSJdLFsyLDEsImYiXSxbNSwwLCJcXHNpbWVxIiwxLHsic2hvcnRlbiI6eyJzb3VyY2UiOjIwfSwic3R5bGUiOnsiYm9keSI6eyJuYW1lIjoibm9uZSJ9LCJoZWFkIjp7Im5hbWUiOiJub25lIn19fV1d
\[\begin{tikzcd}
	{C_1} & {C_2} \\
	& {C_3}
	\arrow["{l_2}"', from=1-1, to=2-2]
	\arrow["{l_1}", from=1-1, to=1-2]
	\arrow[""{name=0, anchor=center, inner sep=0}, "f", from=1-2, to=2-2]
	\arrow["\simeq"{description}, Rightarrow, draw=none, from=0, to=1-1]
\end{tikzcd}\]
    with $l_1, \; l_2 $ in $\mathcal{L}$, then $f$ also is in $\mathcal{L}$
    \item  $\mathcal{L}$ is closed under bicolimits in $ \ps[{2, \mathcal{C}}]$
    \item $\mathcal{L}$ is closed under bipushout along arbitrary maps
 \end{itemize} 
      
 \columnbreak
  \begin{itemize}
      \item $\mathcal{R} $ is closed under composition
      \item $\mathcal{R}$ is closed under invertible 2-cell
      \item $\mathcal{R}$ contains all equivalence
      \item $\mathcal{R} $ is left-pseudocancellative\index{left-pseudocancellative}: for any invertible 2-cell
  % https://q.uiver.app/?q=WzAsMyxbMCwwLCJDXzEiXSxbMSwwLCJDXzMiXSxbMCwxLCJDXzIiXSxbMCwxLCJyXzEiXSxbMCwyLCJmIiwyXSxbMiwxLCJyXzIiLDJdLFs1LDAsIlxcc2ltZXEiLDEseyJzaG9ydGVuIjp7InNvdXJjZSI6MjB9LCJzdHlsZSI6eyJib2R5Ijp7Im5hbWUiOiJub25lIn0sImhlYWQiOnsibmFtZSI6Im5vbmUifX19XV0=
\[\begin{tikzcd}
	{C_1} & {C_3} \\
	{C_2}
	\arrow["{r_1}", from=1-1, to=1-2]
	\arrow["f"', from=1-1, to=2-1]
	\arrow[""{name=0, anchor=center, inner sep=0}, "{r_2}"', from=2-1, to=1-2]
	\arrow["\simeq"{description}, Rightarrow, draw=none, from=0, to=1-1]
\end{tikzcd}\]
      with $r_1, \; r_2 $ in $\mathcal{R}$, then $f$ also is in $\mathcal{R}$
      \item $\mathcal{R} $ is closed under bilimits in $ \ps[{2, \mathcal{C}}]$
      \item $ \mathcal{R}$ is closed under bipullback along arbitrary maps
  \end{itemize}       
 \end{multicols}
\end{proposition}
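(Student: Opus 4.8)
The plan is to reduce all twelve closure clauses to the single structural fact that the two classes are mutual bi-orthogonal complements,
\[ \mathcal{L} = {}^\perp\mathcal{R} = \{\, f \mid f \perp r \ \text{for all}\ r \in \mathcal{R} \,\}, \qquad \mathcal{R} = \mathcal{L}^\perp = \{\, g \mid l \perp g \ \text{for all}\ l \in \mathcal{L} \,\}. \]
The inclusions $\mathcal{L} \subseteq {}^\perp\mathcal{R}$ and $\mathcal{R} \subseteq \mathcal{L}^\perp$ are exactly the bi-orthogonality structure. For the reverse inclusions I would take $f \in {}^\perp\mathcal{R}$, form the factorization $f \simeq r_f l_f$ with $l_f \in \mathcal{L}$, $r_f \in \mathcal{R}$, and feed the pseudosquare with top $l_f$, left $f$, bottom the identity and right $r_f$ (coherence cell $\alpha_f$) into the orthogonality equivalence for $f \perp r_f$. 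Its universal filler $s$ is a pseudosection of $r_f$; a second application of orthogonality, now for $l_f \perp r_f$, identifies $s r_f$ with $1_{C_f}$ up to a unique invertible $2$-cell, so $r_f$ is an equivalence and $f$ is equivalent to its left factor $l_f \in \mathcal{L}$. Once this characterization is in place, every listed property of $\mathcal{L}$ is a formal consequence of its being a left bi-orthogonal complement, and the corresponding properties of $\mathcal{R}$ follow by passing to $\mathcal{C}\op$, where $f \perp g$ becomes $g\op \perp f\op$, exchanging left with right, bicolimits with bilimits and bipushouts with bipullbacks.

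The elementary clauses I would read off directly from the defining equivalence $\mathcal{C}[\cod f, \dom g] \simeq \ps[{2,\mathcal{C}}](f,g)$, these being manifest for any complement class. Containment of equivalences is immediate, since precomposition by an equivalence is itself an equivalence of hom-categories, making the orthogonality square a bipullback for trivial reasons; closure under invertible $2$-cells holds because replacing $f$ by an isomorphic map induces an equivalence on $\ps[{2,\mathcal{C}}](f,g)$ and leaves the condition unchanged. Closure under composition is the usual pasting argument lifted to the bicategorical setting: given $l_1, l_2 \in \mathcal{L}$ and $r \in \mathcal{R}$, I fill a pseudosquare from $l_2 l_1$ to $r$ by first diagonalizing against $l_1$ and then diagonalizing the residual square against $l_2$, the two universal fillers composing to the required diagonal, with uniqueness up to unique invertible $2$-cell inherited from the two orthogonality equivalences. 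Right-pseudocancellativity is a variant of the same idea: from $l_1, l_2 \in \mathcal{L}$ and an invertible $2$-cell $f l_1 \simeq l_2$, a filler of an $f$-versus-$r$ square is produced by filling the precomposed $l_2$-square, and the essential uniqueness coming from $l_1 \perp r$ forces this diagonal to genuinely restrict $f$, whence $f \perp r$.

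The two closure-under-weighted-(co)limit clauses I would treat uniformly. Writing $l \simeq \bicolim_i l_i$ in $\ps[{2,\mathcal{C}}]$ with all $l_i \in \mathcal{L}$ and fixing $g \in \mathcal{R}$, the representable $\ps[{2,\mathcal{C}}](-,g)$ sends this bicolimit to a bilimit, so $\ps[{2,\mathcal{C}}](l,g) \simeq \bilim_i \ps[{2,\mathcal{C}}](l_i,g)$; each factor is, by $l_i \perp g$, the bipullback square from the definition of orthogonality, and a bilimit of bipullback squares is again a bipullback square, so $l \perp g$ and $l \in \mathcal{L}$. Closure under bipushout along an arbitrary map is then the special case in which $l \in \mathcal{L}$, viewed as an object of $\ps[{2,\mathcal{C}}]$, is glued along a morphism whose remaining legs are identities, so its bipushout is computed as such a bicolimit and lands in $\mathcal{L}$ by the preceding clause together with closure under composition and equivalences.

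The main obstacle, as throughout the paper, is purely $2$-dimensional coherence: because bi-orthogonality is an \emph{equivalence} of hom-categories rather than a bijection, every filler exists only up to a unique invertible $2$-cell, so each pasting, cancellation and limit argument must carry along its mediating coherence cells and check that they agree where the universal property demands. The one genuinely non-formal ingredient is the interchange used in the (co)limit clause, which I would justify by the standard facts that $\ps[{2,\mathcal{C}}](-,g)$ preserves bicolimits, turning them into bilimits, and that the formation of bipullback squares, being itself a weighted bilimit, commutes with bilimits, so that a pointwise-bipullback cone remains a bipullback. All remaining verifications are the routine bicategorical analogues of the classical orthogonal-factorization-system calculus of \cite{DUPONT200365}.
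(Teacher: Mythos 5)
The paper itself gives no proof of this proposition: it is introduced with ``We recall the following properties of 2-factorization systems'' and no argument follows, so there is no in-paper proof to compare yours against; it stands or falls on its own. Its main body is sound, and it is the standard argument: once one has $\mathcal{L} = {}^\perp\mathcal{R}$ and $\mathcal{R} = \mathcal{L}^\perp$, every clause is a formal property of mutual bi-orthogonal complement classes --- pasting of universal fillers for closure under composition, invariance of the bipullback condition for invertible 2-cells and for equivalences, the precompose-then-cancel argument for right pseudocancellation, representability of $\ps[{2,\mathcal{C}}](-,g)$ together with commutation of bilimits for closure under bicolimits, and the presentation of a bipushout of $l$ along an arbitrary map as a bicolimit in $\ps[{2,\mathcal{C}}]$ of a span whose remaining legs are identities for the last clause --- with all the $\mathcal{R}$-clauses obtained by running the same argument in $\mathcal{C}\op$. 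A pleasant by-product of your organization is that the existence of factorizations is never actually used: mutual complementarity alone suffices for this proposition.

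The flaw is in your preliminary ``reverse inclusion'' step, and it is twofold. First, it is redundant: in this paper ``bi-orthogonality structure'' already means $\mathcal{L} = {}^\perp\mathcal{R}$ and $\mathcal{R} = \mathcal{L}^\perp$, which is exactly how the author unfolds the term later, in the proof that pseudoslices inherit the bifactorization system (``We must prove first that $(\mathcal{L}_C, \mathcal{R}_C)$ is an orthogonality structure, that is $\mathcal{L}_C = {}^\perp\mathcal{R}_C$ and $\mathcal{R}_C = \mathcal{L}_C^\perp$''). Second, and more seriously, as written the step is circular: after correctly showing that $r_f$ is an equivalence (the pseudosection--pseudoretraction argument via $f \perp r_f$ and $l_f \perp r_f$ is fine), you conclude $f \in \mathcal{L}$ from the invertible 2-cell $f \simeq r_f l_f$, but that inference appeals to closure of $\mathcal{L}$ under invertible 2-cells and under composition with equivalences --- precisely two of the clauses being proven. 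Under the genuinely weak reading of the definition that would make your step necessary (mere pointwise orthogonality of $\mathcal{L}$ against $\mathcal{R}$ plus existence of factorizations), the circularity cannot be repaired because the statement itself fails: one can always shrink the left class of a genuine bifactorization system to a non-replete subclass (say, keeping only one chosen left part per factorization) without disturbing pointwise orthogonality or the existence of factorizations, and such a class is not closed under invertible 2-cells. So the correct move is simply to delete that step and invoke the definition; with that excision, and granting the routine 2-dimensional coherence verifications you explicitly defer, the rest of your proof is correct.
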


From now on, we fix a factorization system on $ \mathcal{C}$.

\begin{proposition}
Let be $ f,g$ two arrows in $ \mathcal{C}$ and $ (u,v,\alpha)$ a pseudocommutative square between them; then there exist an arrow $ w_\alpha : C_f \rightarrow C_g$ and two  pseudocommutative squares $(l_f, l_g, \lambda_\alpha) $, $(r_f, r_g, \rho_\alpha)$ which are unique up to unique invertible 2-cell, such that $ \alpha_g \alpha = \rho_\alpha^*l_\alpha r_g^*\lambda_\alpha \alpha_f $ as below
\[ % https://tikzcd.yichuanshen.de/#N4Igdg9gJgpgziAXAbVABwnAlgFyxMJZABgBoBGAXVJADcBDAGwFcYkQBBEAX1PU1z5CKchWp0mrdgGEA+gDMefEBmx4CRAExiaDFm0QgAQkv5qhRMpvF6ph6aZUD1w5KOu7JBkHIDmj1UENFG0PCX12ABEA5wsRUmIbL3YY82C3UgBmJIjDHnEYKF94IlB5ACcIAFskMhAcCCRRcLsQRgVHCuqmmgakbRbvco7eMsqaxEzexsQAFk9cttl-GkZ6ACMYRgAFWOCQcqxfAAscTvGkefqZgFYF1uGVto2t3bThA6PT8+7EOr7JvdvMwQKsXjs9h9DiczqMQF0JgMAXdBuxaD8Js0AVdGFgwN4oPQ4MdCqDUYYAO6yAA61KYaGO9DJa02EPe7Dx2FgGNq00uQPYtLWVXWhJpdMYDPoAAJafQGmhZdTsFUYABHZl47yQfFkkn0KDsHVsXr0LCMI0EE3PVlvIIfTlYblwhE9a5IFG43WGY16mAGy2603mwPWlmvSEcsBc622by08rHCDi+mMpXyiCK2kq9U8v58xADTZgQ1zACcAsMihdF0B7rmNd+U3rKOLpYAtLMK+SQE9w2z7exod9GxMAGwF5pxwUSqUKdMKpU5jWrLWhv0Bn1Wsk4M0WrdBm0R9mGR3O5SuxAAdgLV2nhjlksZywXmaXWFVK6PA5cUZjmu9cBtxofVSyAw9dxDA82G4ShuCAA
\begin{tikzcd}
                                                                                                                    & {} \arrow[d, "\alpha_f \atop \simeq",  near end, phantom]                                                                 &                  \\
A \arrow[r, "l_f"] \arrow[d, "u"'] \arrow[rd, "\lambda_\alpha \atop \simeq", phantom] \arrow[rr, "f", bend left=49] & C_f \arrow[r, "r_f"] \arrow[d, "w_\alpha" description, dashed] \arrow[rd, "\rho_\alpha \atop \simeq", phantom] & B \arrow[d, "v"] \\
C \arrow[r, "l_g"'] \arrow[r] \arrow[rr, "g"', bend right=49]                                                       & C_g \arrow[r, "r_g"']                                                                                          & D                \\
                                                                                                                    & {} \arrow[u, "\alpha_g \atop \simeq",  near end, phantom]                                                                 &                 
\end{tikzcd} 
\textrm{ with } % https://tikzcd.yichuanshen.de/#N4Igdg9gJgpgziAXAbVABwnAlgFyxMJZABgBoBGAXVJADcBDAGwFcYkQBBEAX1PU1z5CKchWp0mrdjz4gM2PASIAmMTQYs2iEACEZ-BUKJll4jVO0BhfXIGLhyVafWStIACI35gpSNLEzV3ZLAH0AMy87Iz9nCU1pbnEYKABzeCJQMIAnCABbJDIQHAgkAGYXeO1mEBpGegAjGEYABSjfECysFIALHBtsvKRVIpLEABYKizp+nPzEQuKhmkawKALJtwjeTNmkAFYaRcRROKmAHTOmNG76cIACC-pitAez7FyYAEcakEYsMDckABP26MHoa20QLYh3oWEY7ChPzqjRabWEIH+2FgM0GiHKIyQE1ObhSSIaTVahnanR6fW2IAGcwWowOxPYjHCOLmrKOw3MbiynPpjKQJyOADZav9AQRoSBQeCEbKfjhYfDIcrauTUVT0ZisNiNuxHoxrvQeJRuEA
\begin{tikzcd}
                                                    & C_f \arrow[d, "\alpha_f \atop \simeq", near start, phantom] \arrow[rd, "r_f", bend left=20] &                  \\
A \arrow[d, "u"'] \arrow[rr, "f"] \arrow[ru, "l_f", bend left=20] & {} \arrow[d, "\alpha \atop \simeq", phantom]                                   & B \arrow[d, "v"] \\
C \arrow[rr, "g"']                                  & {}                                                                & D               
\end{tikzcd}
= 
% https://tikzcd.yichuanshen.de/#N4Igdg9gJgpgziAXAbVABwnAlgFyxMJZABgBpiBdUkANwEMAbAVxiRAEEQBfU9TXfIRQBGclVqMWbAMIB9AGbdeIDNjwEiAJjHV6zVohAAhJXzWCiZYeL1TD00yv7qhyUdd2SDIOQHNHqgIaKNoeEvpsACIBzhYipJo2Xmzc4jBQvvBEoPIAThAAtkhkIDgQSKLhdiAMCo55hRXUZUjaVd65dTw5+UWIAMzN5YgALJ4RhrX+1Ax0AEYwDAAKscEguVi+ABY49b1IY6XDAKzj1Z3TNfOLK+ZrG9u73SANfSUtA2feTCAz18urITrTY7PaNRBtD6ndpsGhgvqVD6HBhYMDeKB0OBbdK-GGGADusgAOkTGGgtnRcbMFgC7kDUdhYPDikMDl82CTZgU5hjiaSGOS6AACEl0MpoEVE7AFGAARypqO8kDRuOxdCgbGVrGadCwDE1BG1VxptyC9LAjNYz1eTSOSGhKJVhi1qpg6oNKp1eo9RupN0BbAZWCZ7MMJNyWwgfLJFMlYogEpJ0rlzM+dtG1v2aah1AWYA1iAAtCMAJyhkCXP20s1sB6gzPggBsrNG5dFAopsl8cfFkuT8r+JoDhiDIZqip9rvdzsNuJwuv1M5VXAoXCAA
\begin{tikzcd}
A \arrow[r, "l_f"] \arrow[d, "u"'] \arrow[rd, "\lambda_\alpha \atop \simeq", phantom] & C_f \arrow[r, "r_f"] \arrow[d, "w_\alpha" description, dashed] \arrow[rd, "\rho_\alpha \atop \simeq", phantom] & B \arrow[d, "v"] \\
C \arrow[r, "l_g"'] \arrow[r] \arrow[rr, "g"', bend right=55]                         & C_g \arrow[r, "r_g"']                                                                                          & D                \\
                                                                                      & {} \arrow[u, "\alpha_g \atop \simeq", near end, phantom]                                                                 &                 
\end{tikzcd}
\]

Moreover, this is functorial in 2-cells in the following sense: for any morphism of commutative square $ (\phi, \psi) : (u, v \alpha) \Rightarrow (u', v', \alpha')$, there exists a unique 2-cell  $ \sigma : w_\alpha \Rightarrow w_{\alpha'}$ in $ \mathcal{C}$ such that 
% https://q.uiver.app/?q=WzAsNixbMCwwLCJBIl0sWzEsMCwiQ19mIl0sWzAsMSwiQyJdLFsyLDEsIkQiXSxbMiwwLCJCIl0sWzEsMSwiQ19nIl0sWzIsNSwibF9nIiwyXSxbMCwyLCJ1IiwyLHsiY3VydmUiOjJ9XSxbMCwxLCJsX2YiXSxbMSw1XSxbMSw0LCJyX2YiXSxbNCwzLCJ2JyJdLFs1LDMsInJfZyIsMl0sWzAsMiwidSciLDFdLFswLDUsIlxcbGFtYmRhX3tcXGFscGhhJ30gXFxhdG9wIFxcc2ltZXEiLDEseyJzdHlsZSI6eyJib2R5Ijp7Im5hbWUiOiJub25lIn0sImhlYWQiOnsibmFtZSI6Im5vbmUifX19XSxbMSwzLCJcXHJob197XFxhbHBoYSd9IFxcYXRvcCBcXHNpbWVxIiwxLHsic3R5bGUiOnsiYm9keSI6eyJuYW1lIjoibm9uZSJ9LCJoZWFkIjp7Im5hbWUiOiJub25lIn19fV0sWzcsMTMsIlxccGhpIiwwLHsic2hvcnRlbiI6eyJzb3VyY2UiOjIwLCJ0YXJnZXQiOjIwfX1dXQ==

\[ % https://tikzcd.yichuanshen.de/#N4Igdg9gJgpgziAXAbVABwnAlgFyxMJZABgBpiBdUkANwEMAbAVxiRAEEQBfU9TXfIRQBGclVqMWbAMIB9AGbdeIDNjwEiAJjHV6zVohAAhJXzWCiZYeL1TD00yv7qhyUdd2SDIOQHNHqgIaKNoeEvpsACLc4jBQvvBEoPIAThAAtkhkIDgQSKLhdiAMCo6pGfnUuUjahd4ppTzJaZmIAMxVeYgALJ4RhiX+1Ax0AEYwDAAKzhaGKVi+ABY4ZS1IvTldAKx9RQ1DxWMT0+bBIPNLK00g5a211Yg7dWw0AOQg1ONgUFnXt1mdda7bwAHRBI3SoygdFkwDBjDQizory4AAJ4bk0OiQdh0jAAI4fYpYMDeSCkomLGB0H6GcmsKp0LAMNj0okjcZTGZnEnYWCrCqIAoPJ4MElkggMkBUmmsyVEnBMll0+XDI5c05CEC8rD84FsMEpRYQWHwhiI5FojEQLFg3EEgWtDqbdZ-NaIbIPZ1fWkAWja2Vs3iY7PVJyCWouy0dlRdPU+MG+AMOnPDLjYOr1z0MAHdTSCEUiUTGPYD2mrU9ytQwYPIVvrDEx3gmk+1iFwKFwgA
\begin{tikzcd}
A \arrow[r, "l_f"] \arrow[rd, "\lambda_{\alpha'} \atop \simeq", near end, phantom] \arrow[d, "u" description, bend right=50, ""{name=U, inner sep=2pt}] \arrow[d, "u'" description, bend left=50, ""{name=D, inner sep=6pt, below, near start}] & C_f \arrow[r, "r_f"] \arrow[rd, "\rho_{\alpha'} \atop \simeq", phantom] \arrow[d, "w_{\alpha'}" description] & B \arrow[d, "v'"] \\
C \arrow[r, "l_g"'] \arrow[r]                                                                                                   & C_g \arrow[r, "r_g"']                                                                                        & D    \arrow[Rightarrow, from=U, to=D]{}{\phi}           
\end{tikzcd}
=
% https://tikzcd.yichuanshen.de/#N4Igdg9gJgpgziAXAbVABwnAlgFyxMJZABgBpiBdUkANwEMAbAVxiRAEEQBfU9TXfIRQBGclVqMWbAMIB9AGbdeIDNjwEiAJjHV6zVohAAhJXzWCiZYeL1TD00yv7qhyUdd2SDIOQHNHqgIaKNoeEvpsACLc4jBQvvBEoPIAThAAtkhkIDgQSKLhdiAMCo6pGfnUuUjahd4ppTzJaZmIAMxVeYgALJ4RhiX+1Ax0AEYwDAAKzhaGKVi+ABY4ZS1IvTldAKx9RQ1DxWMT0+bBIPNLK00g5a211Yg7dWw0AOQg1ONgUFnXt5WbdbDLBgbxwCAMLA-XbeADusgAOgjGGhFnQPodxlMZmcQdhYBivj9EABaNrEP5rRDZB4bWzeJEjdKjKB0RHIhiougAAiRdFyaF5COw6RgAEcMZDQWxINLqIsYHRieACKwqnQsAwZarJUdsachCA8VDWJSKogCg8nlLvLK1SAFUrtXKchqtYY7bqsScgobjQSYWwkSlFhBZMA+Zy0a8uEL+RBBUiReLVuaOoCembWjSuumiVlA4YmF7jjjDRdlqnWpauht8+1siNvWW2P77fS2PCIxyuTGYlwgA
\begin{tikzcd}
A \arrow[r, "l_f"] \arrow[rd, "\lambda_\alpha \atop \simeq", near start, phantom] \arrow[d, "u"'] & C_f \arrow[r, "r_f"] \arrow[d, "w_\alpha" description, bend right=50, ""{name=U, inner sep=5pt, near start, below}] \arrow[rd, "\rho_{\alpha'} \atop \simeq", near end, phantom] \arrow[d, "w_{\alpha'}" description, bend left=50, ""{name=D, inner sep=5pt, below, near start}] & B \arrow[d, "v'"] \\
C \arrow[r, "l_g"'] \arrow[r]                                                         & C_g \arrow[r, "r_g"']                                                                                                            \arrow[Rightarrow, from=U, to=D]{}{\sigma}                                     & D                
\end{tikzcd}
= 
% https://tikzcd.yichuanshen.de/#N4Igdg9gJgpgziAXAbVABwnAlgFyxMJZABgBpiBdUkANwEMAbAVxiRAEEQBfU9TXfIRQBGclVqMWbAMIB9AGbdeIDNjwEiAJjHV6zVohAAhJXzWCiZYeL1TD00yv7qhyUdd2SDIOQHNHqgIaKNoeEvpsACLc4jBQvvBEoPIAThAAtkhkIDgQSKLhdiAMCo6pGfnUuUjahd4ppTzJaZmIAMxVeYgALJ4RhiX+1Ax0AEYwDAAKzhaGKVi+ABY4ZS1IvTldAKx9RQ1DxWMT0+bBIPNLK00g5a211Yg7dWw0AOQg1ONgUEhtxNe3SqbdbDLBgbxQOhwRZxD7PQwAd1kAB1kYw0Is6HCRuMpjMzmDsLBVhVENkHhtbN5USN0qNISi0QwMXQAASoui5NDs5HYdIwACO2LB3kg4LhMLoP0MYtYVToWAYbFl2KOeNOQhAhKwxIBa0QBQeTwYIuVBDlIEl0vA5rhOAVSpltuGapOQU12uJu2pyJSiwgjPRmJ5nIg3NRfMFJNaHWBPT1pPJXVjX2l2SpbCYqtxbpcbAuy2jNU6SCeqaQAFo-t6XtnjvjNQWrhQuEA
\begin{tikzcd}
A \arrow[r, "l_f"] \arrow[rd, "\lambda_\alpha \atop \simeq", phantom] \arrow[d, "u"'] & C_f \arrow[r, "r_f"] \arrow[d, "w_\alpha" description] \arrow[rd, "\rho_\alpha \atop \simeq", near start, phantom] & B \arrow[d, "v'" description, bend left=50, ""{name=D, inner sep=8pt, pos=0.5, below, near start}] \arrow[d, "v" description, bend right=50, ""{name=U, inner sep=2pt}] \\
C \arrow[r, "l_g"'] \arrow[r]                                                         & C_g \arrow[r, "r_g"']                                                                                          & D             \arrow[Rightarrow, from=U, to=D]{}[near end]{\psi}                                          
\end{tikzcd}
\]

\end{proposition}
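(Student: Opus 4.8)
The plan is to reduce the whole statement to a single bi-orthogonal lifting problem between the left part of the factorization of $f$ and the right part of the factorization of $g$, and then to read off every piece of the required data from the universal filler furnished by the bi-orthogonality structure.

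First I would factor both arrows, writing $f \simeq r_f l_f$ and $g \simeq r_g l_g$ with $l_f, l_g \in \mathcal{L}$, $r_f, r_g \in \mathcal{R}$, and canonical invertible 2-cells $\alpha_f : r_f l_f \simeq f$ and $\alpha_g : r_g l_g \simeq g$. Then I would assemble the auxiliary pseudosquare
\[
\begin{tikzcd}
A \arrow[r, "l_g u"] \arrow[d, "l_f"'] & C_g \arrow[d, "r_g"] \\
C_f \arrow[r, "v r_f"'] & D
\end{tikzcd}
\]
whose filling invertible 2-cell $\beta : r_g(l_g u) \simeq (v r_f) l_f$ is obtained by pasting $\alpha_g^{-1}$ whiskered by $u$, the given $\alpha$, and $\alpha_f^{-1}$ whiskered by $v$. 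Since $l_f \in \mathcal{L}$ and $r_g \in \mathcal{R}$ we have $l_f \perp r_g$, so the bi-orthogonality remark supplies a universal filler: a 1-cell $w_\alpha : C_f \rightarrow C_g$ together with invertible 2-cells $\lambda_\alpha : w_\alpha l_f \simeq l_g u$ and $\rho_\alpha : r_g w_\alpha \simeq v r_f$, unique up to a unique invertible morphism of fillers. This is exactly the 1-cell $w_\alpha$ and the two pseudosquares $(l_f, l_g, \lambda_\alpha)$ and $(r_f, r_g, \rho_\alpha)$ demanded by the statement, the uniqueness-up-to-unique-invertible-2-cell clause being precisely the universality of the filler. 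The pasting identity $\alpha_g \alpha = \rho_\alpha^* l_\alpha\, r_g^* \lambda_\alpha\, \alpha_f$ then records that recomposing $\lambda_\alpha$ and $\rho_\alpha$ along $w_\alpha$ reproduces $\beta$, which holds by construction and is a routine coherence check.

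For the functoriality clause, given a morphism of pseudosquares $(\phi, \psi) : (u, v, \alpha) \Rightarrow (u', v', \alpha')$, I would transport it to a morphism of the associated auxiliary lifting squares: whiskering $\phi : u \Rightarrow u'$ by $l_g$ produces a 2-cell $l_g * \phi$ between the top edges $l_g u$ and $l_g u'$, while whiskering $\psi : v \Rightarrow v'$ by $r_f$ produces $\psi * r_f$ between the bottom edges $v r_f$ and $v' r_f$. The compatibility $\alpha'\, g^*\phi = \psi^* f\, \alpha$ satisfied by $(\phi,\psi)$, whiskered by the factorization cells $\alpha_f, \alpha_g$, shows that $(l_g * \phi, \psi * r_f)$ is genuinely a morphism of pseudosquares between the two auxiliary squares. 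The second clause of the bi-orthogonality remark then yields a unique 2-cell $\sigma : w_\alpha \Rightarrow w_{\alpha'}$ compatible with the universal fillers, and unwinding this compatibility through $\lambda$ and $\rho$ gives exactly the three displayed equalities relating $\sigma$ to $\phi$ and $\psi$.

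I expect the substantive content to lie entirely in the hypothesis $l_f \perp r_g$ and its functoriality, so the main obstacle is organisational rather than conceptual: correctly orienting all the invertible 2-cells so that $\beta$ is the right filling of the auxiliary square, and verifying that the whiskered pair $(l_g * \phi, \psi * r_f)$ satisfies the morphism-of-pseudosquares condition. Once these coherence checks are in place, every uniqueness assertion — of $w_\alpha$, of $\lambda_\alpha, \rho_\alpha$, and of $\sigma$ — is inherited verbatim from the universal property of the filler.
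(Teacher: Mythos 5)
Your proposal is correct and is essentially the paper's own proof: the paper likewise applies the universal filler for $l_f \perp r_g$ to the pseudosquare with top $l_g u$, left $l_f$, right $r_g$ and bottom $v r_f$ (with the pasted filling 2-cell), and obtains $\sigma$ from the morphism of pseudosquares $(l_g^*\phi, \psi^*r_f)$ via the functoriality clause of the filler. No meaningful difference in route or in the coherence bookkeeping.
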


\begin{proof}
Using the universal property of the filler applied to the pseudocommutative square
\[ % https://tikzcd.yichuanshen.de/#N4Igdg9gJgpgziAXAbVABwnAlgFyxMJZABgBpiBdUkANwEMAbAVxiRAEEQBfU9TXfIRRkAjFVqMWbAMIB9AGbdeIDNjwEiI8uPrNWiEHIDmSvmsGbSY6rqkGAIt3EwoR+EVDyAThAC2SMhAcCCQAJhtJfRAGWSMAAiZTEG8-MOpgpABmCL02L1iklP9EQIzELQlcgxjFagY6ACMYBgAFfnUhEC8sIwALHEKfYoqy7Mq7Wnza6MbmtvMNA26+gZ5PIYD0kMQxhiwwKMgDkGpemDooNiPWdLosBiuCG5mm1vaLA33sWBPxqIAdf6MNC9OhOLhAA
\begin{tikzcd}
A \arrow[r, "l_g u"] \arrow[d, "l_f"'] \arrow[rd, "\alpha \atop \simeq", phantom] & C_g \arrow[d, "r_g"] \\
C_f \arrow[r, "vr_f"']                                               & D                   
\end{tikzcd} \]
(where we omit the canonical coherent iso of the 2-category structure). The 2-cell $\sigma$ is then obtained as the morphism between the universal fillers applied to the morphism of squares \[ (l_g^*\phi, \psi^*r_f ) : (l_g u, v r_f, \rho_{\alpha}^*l_f r_g^*\lambda_{\alpha}) \Rightarrow (l_g u', v' r_f, \rho_{\alpha'}^*l_f r_g^*\lambda_{\alpha'}) : l_f \rightarrow r_g \]
\end{proof}

\begin{proposition}
The bifactorization $ ((l_f, r_f), \alpha_f)$ is biterminal amongst bifactorizations with a left map on the left, and bi-initial amongst bifactorizations with a right map on the right. 
\end{proposition}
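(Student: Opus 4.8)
The plan is to recognize that a 1-cell of factorizations of $f$ is exactly the same data as a filler of a suitable bi-orthogonality square, so that both universal properties become direct instances of the bipullback condition defining $\perp$, with no genuine computation left to do.

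First I would make the two ambient bicategories precise. Fix $f : A \rightarrow B$. Let $\mathbf{Fact}_{\mathcal{L}}(f)$ be the bicategory whose objects are factorizations $(l,r,\alpha)$ of $f$ with $l \in \mathcal{L}$, $r$ arbitrary and $\alpha : rl \simeq f$ invertible; a 1-cell $(l,r,\alpha) \rightarrow (l',r',\alpha')$ is a 1-cell $i : C \rightarrow C'$ between the intermediate objects together with invertible 2-cells $\lambda : il \simeq l'$ and $\rho : r'i \simeq r$ for which $\alpha = \alpha'\, (r'{*}\lambda)\,(\rho^{-1}{*}l)$; and 2-cells are the evident modifications $\sigma : i \Rightarrow i'$ compatible with $\lambda,\rho$. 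Define $\mathbf{Fact}^{\mathcal{R}}(f)$ dually, taking $r \in \mathcal{R}$ and $l$ arbitrary. The canonical factorization $(l_f,r_f,\alpha_f)$ lies in both, and the two clauses of the statement assert that it is biterminal in the former and bi-initial in the latter.

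The key observation is that to give a 1-cell $(l,r,\alpha) \rightarrow (l_f,r_f,\alpha_f)$ in $\mathbf{Fact}_{\mathcal{L}}(f)$ is precisely to give a filler of the pseudosquare
\[
\begin{tikzcd}
A \arrow[r, "l_f"] \arrow[d, "l"'] & C_f \arrow[d, "r_f"] \\
C \arrow[r, "r"'] & B
\end{tikzcd}
\]
between $l$ and $r_f$, whose mediating 2-cell $r_f l_f \simeq rl$ is $\alpha^{-1}\alpha_f$: a filler is a diagonal $i : C \rightarrow C_f$ equipped with invertible $\lambda : il \simeq l_f$ and $\rho : r_f i \simeq r$, which is exactly a morphism of factorizations, and the filler coherence coincides verbatim with the coherence condition above. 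Since $l \in \mathcal{L}$ and $r_f \in \mathcal{R}$ we have $l \perp r_f$, so the comparison functor $\mathcal{C}[C,C_f] \xrightarrow{\simeq} \ps[2,\mathcal{C}](l,r_f)$ is an equivalence. The hom-category $\mathbf{Fact}_{\mathcal{L}}(f)\big[(l,r,\alpha),(l_f,r_f,\alpha_f)\big]$ is then the homotopy fibre of this equivalence over the fixed square, hence a contractible groupoid, i.e.\ equivalent to the terminal category. As this holds for every object of $\mathbf{Fact}_{\mathcal{L}}(f)$, the canonical factorization is biterminal there. The bi-initial claim is strictly dual: a 1-cell $(l_f,r_f,\alpha_f) \rightarrow (l,r,\alpha)$ in $\mathbf{Fact}^{\mathcal{R}}(f)$ is a filler of the square between $l_f$ and $r$, and since $l_f \in \mathcal{L}$, $r \in \mathcal{R}$ give $l_f \perp r$, bi-orthogonality again makes the relevant hom-category contractible; I would simply transpose the previous argument.

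The main obstacle I anticipate is bookkeeping rather than conceptual: one must fix once and for all the precise coherence condition defining a 1-cell of $\mathbf{Fact}_{\mathcal{L}}(f)$ and check that, under the identification diagonal $=$ filler, it matches the filler coherence of the definition of $\perp$ — in particular getting right the orientations of $\lambda$, $\rho$ and of the square's mediating 2-cell $\alpha^{-1}\alpha_f$ — and likewise that a 2-cell of factorizations is exactly a morphism of fillers. Once these identifications are pinned down, both universal properties (existence up to unique invertible 2-cell for 1-cells, and the induced 2-cell condition) are already packaged in the single equivalence $\mathcal{C}[C,C_f] \simeq \ps[2,\mathcal{C}](l,r_f)$, so no further argument is required; the functoriality proposition proved just above could be invoked for the 2-cellular part, but routing everything through the equivalence of hom-categories is cleaner.
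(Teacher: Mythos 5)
Your proof is correct and takes essentially the same route as the paper: both reduce the statement to bi-orthogonality $l \perp r_f$ (resp.\ $l_f \perp r$), constructing the comparison morphism as the universal filler of the pseudosquare with mediating 2-cell $\alpha^{-1}\alpha_f$. Your additional packaging — defining the bicategory of factorizations and identifying its hom-categories with homotopy fibres of the equivalence $\mathcal{C}[C,C_f] \simeq \textbf{ps}[2,\mathcal{C}](l,r_f)$ — simply makes explicit the biterminality that the paper leaves implicit in the universal property of the filler.
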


\begin{proof}
For any % https://q.uiver.app/?q=WzAsMyxbMCwwLCJBIl0sWzIsMCwiQiJdLFsxLDEsIkMiXSxbMCwxLCJmIl0sWzAsMiwibCIsMl0sWzIsMSwiZyIsMl0sWzMsMiwiXFxhbHBoYSBcXGF0b3AgXFxzaW1lcSIsMSx7InNob3J0ZW4iOnsic291cmNlIjoyMH0sInN0eWxlIjp7ImJvZHkiOnsibmFtZSI6Im5vbmUifSwiaGVhZCI6eyJuYW1lIjoibm9uZSJ9fX1dXQ==
\[\begin{tikzcd}
	A && B \\
	& C
	\arrow[""{name=0, anchor=center, inner sep=0}, "f", from=1-1, to=1-3]
	\arrow["l"', from=1-1, to=2-2]
	\arrow["g"', from=2-2, to=1-3]
	\arrow["{\alpha \atop \simeq}"{description}, Rightarrow, draw=none, from=0, to=2-2]
\end{tikzcd}\]
use the filler of the morphism of pseudosquare $ l \rightarrow r_f$ provided by the pasting $ \alpha^{-1}\alpha_f$ to get the comparison 2-cell
% https://q.uiver.app/?q=WzAsNCxbMCwxLCJBIl0sWzEsMiwiQyJdLFsxLDAsIkNfZiJdLFsyLDEsIkIiXSxbMCwxLCJsIiwyXSxbMCwyLCJsX2YiXSxbMSwyLCJzX1xcYWxwaGEiLDFdLFsyLDMsInJfZiJdLFsxLDMsImciLDJdLFs2LDAsIlxcbGFtYmRhX1xcYWxwaGEgXFxhdG9wIFxcc2ltZXEiLDEseyJzaG9ydGVuIjp7InRhcmdldCI6MjB9LCJzdHlsZSI6eyJib2R5Ijp7Im5hbWUiOiJub25lIn0sImhlYWQiOnsibmFtZSI6Im5vbmUifX19XSxbMyw2LCJcXHJob19cXGFscGhhIFxcYXRvcCBcXHNpbWVxIiwxLHsic2hvcnRlbiI6eyJ0YXJnZXQiOjIwfSwic3R5bGUiOnsiYm9keSI6eyJuYW1lIjoibm9uZSJ9LCJoZWFkIjp7Im5hbWUiOiJub25lIn19fV1d
\[\begin{tikzcd}
	& {C_f} \\
	A && B \\
	& C
	\arrow["l"', from=2-1, to=3-2]
	\arrow["{l_f}", from=2-1, to=1-2]
	\arrow[""{name=0, anchor=center, inner sep=0}, "{s_\alpha}"{description}, from=3-2, to=1-2]
	\arrow["{r_f}", from=1-2, to=2-3]
	\arrow["g"', from=3-2, to=2-3]
	\arrow["{\lambda_\alpha \atop \simeq}"{description}, Rightarrow, draw=none, from=0, to=2-1]
	\arrow["{\rho_\alpha \atop \simeq}"{description}, Rightarrow, draw=none, from=2-3, to=0]
\end{tikzcd}\]
Similar argument for the right factorization.
\end{proof}

\begin{division}
In the 1-categorical setting, it is easy to see that two right arrows that are equalized by a left arrow must be equals. In the 2-categorical setting, things are not so well behaved: if one has a 2-cell between $\mathcal{R}$-maps that is inverted by a $\mathcal{L}$-map as below
% https://q.uiver.app/?q=WzAsMyxbMSwwLCJBXzIiXSxbMywwLCJBXzMiXSxbMCwwLCJBXzEiXSxbMCwxLCJyXzEiLDAseyJjdXJ2ZSI6LTJ9XSxbMCwxLCJyXzIiLDIseyJjdXJ2ZSI6Mn1dLFsyLDAsImwiXSxbMyw0LCJcXHNpZ21hIiwwLHsic2hvcnRlbiI6eyJzb3VyY2UiOjIwLCJ0YXJnZXQiOjIwfX1dXQ==
\[\begin{tikzcd}
	{A_1} & {A_2} && {A_3}
	\arrow[""{name=0, anchor=center, inner sep=0}, "{r_1}", curve={height=-12pt}, from=1-2, to=1-4]
	\arrow[""{name=1, anchor=center, inner sep=0}, "{r_2}"', curve={height=12pt}, from=1-2, to=1-4]
	\arrow["l", from=1-1, to=1-2]
	\arrow["\sigma", shorten <=3pt, shorten >=3pt, Rightarrow, from=0, to=1]
\end{tikzcd} = \begin{tikzcd}
	{A_1} && {A_3}
	\arrow[""{name=0, anchor=center, inner sep=0}, "{r_1l}", curve={height=-12pt}, from=1-1, to=1-3]
	\arrow[""{name=1, anchor=center, inner sep=0}, "{r_2l}"', curve={height=12pt}, from=1-1, to=1-3]
	\arrow["{\sigma*l \atop \simeq}"{description}, Rightarrow, draw=none, from=0, to=1]
\end{tikzcd}\]
then the diagonalization of the pseudosquare % https://q.uiver.app/?q=WzAsNCxbMCwwLCJBXzEiXSxbMSwxLCJBXzMiXSxbMSwwLCJBXzIiXSxbMCwxLCJBXzIiXSxbMCwyLCJsIl0sWzAsMywibCIsMl0sWzMsMSwicl8xIiwyXSxbMiwxLCJyXzIiXSxbNCw2LCJcXHNpZ21hKmwgXFxhdG9wIFxcc2ltZXEiLDEseyJzaG9ydGVuIjp7InNvdXJjZSI6MjAsInRhcmdldCI6MjB9LCJzdHlsZSI6eyJib2R5Ijp7Im5hbWUiOiJub25lIn0sImhlYWQiOnsibmFtZSI6Im5vbmUifX19XV0=
\[\begin{tikzcd}
	{A_1} & {A_2} \\
	{A_2} & {A_3}
	\arrow[""{name=0, anchor=center, inner sep=0}, "l", from=1-1, to=1-2]
	\arrow["l"', from=1-1, to=2-1]
	\arrow[""{name=1, anchor=center, inner sep=0}, "{r_1}"', from=2-1, to=2-2]
	\arrow["{r_2}", from=1-2, to=2-2]
	\arrow["{\sigma*l \atop \simeq}"{description}, Rightarrow, draw=none, from=0, to=1]
\end{tikzcd}\]
must then be in both $\mathcal{L}$ and $ \mathcal{R}$, hence be an equivalence, and comes equipped with an invertible 2-cell
% https://q.uiver.app/?q=WzAsMyxbMSwxLCJBXzMiXSxbMSwwLCJBXzIiXSxbMCwxLCJBXzIiXSxbMiwwLCJyXzEiLDJdLFsxLDAsInJfMiJdLFsyLDEsImVcXHNpbWVxIl0sWzAsNSwiXFxyaG8gXFxhdG9wIFxcc2ltZXEiLDEseyJzaG9ydGVuIjp7InNvdXJjZSI6MjAsInRhcmdldCI6MjB9LCJzdHlsZSI6eyJib2R5Ijp7Im5hbWUiOiJub25lIn0sImhlYWQiOnsibmFtZSI6Im5vbmUifX19XV0=
\[\begin{tikzcd}
	& {A_2} \\
	{A_2} & {A_3}
	\arrow["{r_1}"', from=2-1, to=2-2]
	\arrow["{r_2}", from=1-2, to=2-2]
	\arrow[""{name=0, anchor=center, inner sep=0}, "e\simeq", from=2-1, to=1-2]
	\arrow["{\rho \atop \simeq}"{description}, Rightarrow, draw=none, from=2-2, to=0]
\end{tikzcd}\]
which relates hence $ r_1$, $r_2$ by an invertible 2-cell. However this does not enforce $\sigma$ itself to be invertible. Similarly, it seems there is no way to enforce that two parallels 2-cells between $\mathcal{R}$-maps that are equified by a $\mathcal{L}$-maps should be equal; and no reason as well that a $\mathcal{L}$-map reflects 2-cells it inserts between two $\mathcal{R}$-maps. This motivates the following definition:
\end{division}

\begin{definition}
A bifactorization systems $ (\mathcal{L},\mathcal{R}) $ shall be said to be \emph{left fully faithful} if for any $l : A_1 \rightarrow A_2 $ in $\mathcal{L}$ and any $A_3$, the restricted composition
% https://q.uiver.app/?q=WzAsMixbMCwwLCJcXG1hdGhjYWx7Un1bQV8yLCBBXzNdIl0sWzEsMCwiXFxtYXRoY2Fse0N9W0FfMSwgQV8zXSJdLFswLDEsIlxcbWF0aGNhbHtDfVtsLEFfM10iXV0=
\[\begin{tikzcd}
	{\mathcal{R}[A_2, A_3]} & {\mathcal{C}[A_1, A_3]}
	\arrow["{\mathcal{C}[l,A_3]}", from=1-1, to=1-2]
\end{tikzcd}\]
is full and faithful: this means that for any $ r_1, r_2 : A_2 \rightarrow A_3$ in $\mathcal{R}$: \begin{itemize}
    \item if we have $ \sigma : r_1 \Rightarrow r_2$ such that $\sigma*l$ is invertible then $\sigma$ is invertible
    \item if we have $\sigma, \sigma' : r_1 \Rightarrow r_2$ such that $ \sigma*l=\sigma'*l$ then $\sigma=\sigma'$
    \item if we have $ \sigma : r_1l \Rightarrow r_2l$ then there exists a (unique) $ \tau : r_1 \Rightarrow r_2$ such that $ \sigma = \tau*l$
\end{itemize}
\end{definition}

Though immediate, the following observation will be important in our examples in the next chapter:

\begin{proposition}\label{discrete implies left ff}
If $ (\mathcal{L},\mathcal{R})$ is a bifactorization system such that morphisms in $ \mathcal{L}$ are discrete. Then $ (\mathcal{L}, \mathcal{R})$ is left-full and faithful.
\end{proposition}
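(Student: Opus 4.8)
The plan is to repackage the three clauses defining \emph{left full faithfulness} into a single statement about the functor obtained by precomposing with $l$. Write $F := \mathcal{C}[l,A_3] : \mathcal{R}[A_2,A_3] \to \mathcal{C}[A_1,A_3]$ for the functor sending a right-map $r$ to $rl$ and a $2$-cell $\sigma$ to $\sigma*l$. Reading off the definition, the second clause ($\sigma*l=\sigma'*l \Rightarrow \sigma=\sigma'$) is exactly \emph{faithfulness} of $F$, the third clause (every $\sigma:r_1l\Rightarrow r_2l$ is $\tau*l$ for a unique $\tau$) is exactly \emph{fullness} of $F$, and the first clause ($\sigma*l$ invertible $\Rightarrow \sigma$ invertible) asserts that $F$ is \emph{conservative}. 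Since a fully faithful functor automatically reflects isomorphisms, the first clause is a formal consequence of the other two: lifting an inverse of $\sigma*l$ through $F$ by fullness and checking, via faithfulness, that the lift is a two-sided inverse of $\sigma$. Thus the whole statement reduces to showing that $F$ is fully faithful.

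It therefore suffices to prove that, when $l$ is discrete, precomposition along $l$ is full and faithful on $2$-cells between parallel right-maps. I would in fact argue that discreteness gives full faithfulness of $\mathcal{C}[l,A_3]$ on all of $\mathcal{C}[A_2,A_3]$; restricting a fully faithful functor to the full subcategory $\mathcal{R}[A_2,A_3]$ keeps it fully faithful, and the $2$-cell $\tau:r_1\Rightarrow r_2$ produced by fullness is automatically a morphism of $\mathcal{R}[A_2,A_3]$ since both its source and target are right-maps. The single point at which the hypothesis is used is precisely here: unwinding what it means for $l$ to be discrete is meant to yield that the whiskering operation $\tau \mapsto \tau*l$ is a bijection on the hom-sets of $2$-cells $r_1 \Rightarrow r_2$, i.e.\ that $\mathcal{C}[l,A_3]$ is full and faithful. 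Granting this, faithfulness gives the second clause verbatim, fullness gives the unique lift $\tau$ with $\tau*l=\sigma$ of the third clause, and conservativity (the first clause) drops out as explained above.

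The main, and essentially only, obstacle is to confirm that discreteness of $l$ delivers \emph{full faithfulness} of precomposition and not merely faithfulness. Faithfulness alone would secure the second clause but neither the fullness of the third nor the conservativity of the first, and the discussion immediately preceding the definition shows that without an extra hypothesis these can genuinely fail: a $2$-cell between right-maps that is inverted by a left-map need not itself be invertible, and a left-map need not reflect or detect the $2$-cells it inserts between right-maps. So the crux is to verify that the defining property of a discrete $l$ lets one transport a $2$-cell along $l$ and lift it back uniquely, turning $\tau \mapsto \tau*l$ into a bijection. Once that bijection is in hand the three clauses follow formally, which is exactly why the observation is labelled immediate.
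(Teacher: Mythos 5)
Your opening reduction is correct as far as it goes: writing $F$ for the whiskering functor $\mathcal{C}[l,A_3]$ restricted to $\mathcal{R}[A_2,A_3]$, the second clause of left full faithfulness is faithfulness of $F$, the third is fullness of $F$, and the first (conservativity) does follow formally from the other two, since a fully faithful functor reflects invertibility --- your lift-the-inverse argument, using that whiskering by $l$ preserves vertical composition, is fine, as is the remark that $\mathcal{R}[A_2,A_3]$ is a \emph{full} subcategory of $\mathcal{C}[A_2,A_3]$. But notice that this reduction is nothing more than a rewording of the definition: left full faithfulness \emph{is}, by definition, the statement that $F$ is conservative, faithful and full. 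The entire mathematical content of the proposition is the implication ``discreteness of the maps in $\mathcal{L}$ implies $F$ is fully faithful'', and this is precisely the step you never carry out: you write that discreteness ``is meant to yield'' the bijectivity of $\tau \mapsto \tau * l$, proceed ``granting this'', and then close by naming this very step as ``the main, and essentially only, obstacle''. Since that is the only point where the hypothesis can enter, what you have produced is a restatement of what must be shown, not a proof of it. (The paper itself offers no proof --- it labels the proposition immediate --- but that does not relieve your proposal of supplying the missing step.)

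To close the gap you would first have to commit to a definition of ``discrete'', which the paper never states; its later usage (etale geometric morphisms are discrete, so that $\mathbf{Et}/\mathcal{E}$ and comma categories into it are $1$-categories) suggests that a $1$-cell is discrete when its hom-categories in the pseudoslice over its codomain are equivalent to sets. Two warning signs then appear. First, your stronger claim --- that precomposition with $l$ is fully faithful on \emph{all} of $\mathcal{C}[A_2,A_3]$ --- would say that every $\mathcal{L}$-map is a lax epimorphism, a much stronger property than any discreteness-type hypothesis and not what the statement asserts; the restriction to $\mathcal{R}$-maps is almost certainly essential, so the argument must use the right class (and plausibly the diagonalization properties of the bifactorization system), not just formal functor calculus. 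Second, under the slice-discreteness reading, discreteness is a statement about $1$-cells and invertible $2$-cells over $A_3$ and says nothing directly about non-invertible globular $2$-cells such as $\sigma : r_1 l \Rightarrow r_2 l$, which are invisible in the pseudoslice (they live in the lax slice); so the lifting bijection you need cannot be obtained by pure unwinding of definitions. Finally, note the mismatch you silently pass over: the hypothesis as stated concerns $\mathcal{L}$, whereas the classes in the paper's examples that are actually discrete (etale, pro-etale) form the right class $\mathcal{R}$; whichever reading is intended, a proof has to engage with it, and yours never does.
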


\subsection{Lax generic bifactorization systems}

In this subsection, we consider an additional laxness property one can consider for a bifactorization system. As observed above, a bifactorization system may not have enough structure to factorize a globular 2-cell in $\mathcal{C}$. To this end we introduce here the following notion - whose terminology is inspired from \cite{walker2020lax}, as the notion we introduce there will connect with its central notion we will make mention of later.  \\

\begin{definition}
A bifactorization system $ (\mathcal{L}, \mathcal{R})$ is said to have \emph{lax-factorization of 2-cells} if for any 2-cell as below 
% https://q.uiver.app/?q=WzAsMixbMCwwLCJBIl0sWzIsMCwiQiJdLFswLDEsImZfMSIsMCx7ImN1cnZlIjotMn1dLFswLDEsImZfMiIsMix7ImN1cnZlIjoyfV0sWzIsMywiXFxzaWdtYSIsMCx7InNob3J0ZW4iOnsic291cmNlIjoyMCwidGFyZ2V0IjoyMH19XV0=
\[\begin{tikzcd}
	A && B
	\arrow[""{name=0, anchor=center, inner sep=0}, "{f_1}", curve={height=-12pt}, from=1-1, to=1-3]
	\arrow[""{name=1, anchor=center, inner sep=0}, "{f_2}"', curve={height=12pt}, from=1-1, to=1-3]
	\arrow["\sigma", shorten <=3pt, shorten >=3pt, Rightarrow, from=0, to=1]
\end{tikzcd}\]
the respective factorizations are related through a composite 2-cell as below % https://q.uiver.app/?q=WzAsNCxbMCwxLCJBIl0sWzIsMSwiQiJdLFsxLDAsIkFfe2ZfMX0iXSxbMSwyLCJBX3tmXzJ9Il0sWzAsMiwibF97Zl8xfSJdLFsyLDEsInJfe2ZfMX0iXSxbMCwzLCJsX3tmXzJ9IiwyXSxbMywxLCJyX3tmXzJ9IiwyXSxbMiwzLCJtX2YiLDFdLFs0LDYsIlxcbGFtYmRhX1xcc2lnbWEiLDAseyJzaG9ydGVuIjp7InNvdXJjZSI6MjAsInRhcmdldCI6MjB9fV0sWzUsNywiXFxyaG9fXFxzaWdtYSIsMix7InNob3J0ZW4iOnsic291cmNlIjoyMCwidGFyZ2V0IjoyMH19XV0=
\[\begin{tikzcd}
	& {A_{f_1}} \\
	A && B \\
	& {A_{f_2}}
	\arrow[""{name=0, anchor=center, inner sep=0}, "{l_{f_1}}", from=2-1, to=1-2]
	\arrow[""{name=1, anchor=center, inner sep=0}, "{r_{f_1}}", from=1-2, to=2-3]
	\arrow[""{name=2, anchor=center, inner sep=0}, "{l_{f_2}}"', from=2-1, to=3-2]
	\arrow[""{name=3, anchor=center, inner sep=0}, "{r_{f_2}}"', from=3-2, to=2-3]
	\arrow["{m_f}"{description}, from=1-2, to=3-2]
	\arrow["{\lambda_\sigma}", shorten <=4pt, shorten >=4pt, Rightarrow, from=0, to=2]
	\arrow["{\rho_\sigma}"', shorten <=4pt, shorten >=4pt, Rightarrow, from=1, to=3]
\end{tikzcd}\] such that we have 
\[  \alpha_{f_2} r_{f_2}*\lambda_{\sigma} l_{f_1}*\rho_\sigma \alpha^{-1} = \sigma  \]
and moreover for any other factorization 
% https://q.uiver.app/?q=WzAsNCxbMCwxLCJBIl0sWzIsMSwiQiJdLFsxLDAsIkFfe2ZfMX0iXSxbMSwyLCJBX3tmXzJ9Il0sWzAsMiwibF97Zl8xfSJdLFsyLDEsInJfe2ZfMX0iXSxbMCwzLCJsX3tmXzJ9IiwyXSxbMywxLCJyX3tmXzJ9IiwyXSxbMiwzLCJtIiwxXSxbNCw2LCJcXGxhbWJkYSIsMCx7InNob3J0ZW4iOnsic291cmNlIjoyMCwidGFyZ2V0IjoyMH19XSxbNSw3LCJcXHJobyIsMix7InNob3J0ZW4iOnsic291cmNlIjoyMCwidGFyZ2V0IjoyMH19XV0=
\[\begin{tikzcd}
	& {A_{f_1}} \\
	A && B \\
	& {A_{f_2}}
	\arrow[""{name=0, anchor=center, inner sep=0}, "{l_{f_1}}", from=2-1, to=1-2]
	\arrow[""{name=1, anchor=center, inner sep=0}, "{r_{f_1}}", from=1-2, to=2-3]
	\arrow[""{name=2, anchor=center, inner sep=0}, "{l_{f_2}}"', from=2-1, to=3-2]
	\arrow[""{name=3, anchor=center, inner sep=0}, "{r_{f_2}}"', from=3-2, to=2-3]
	\arrow["m"{description}, from=1-2, to=3-2]
	\arrow["\lambda", shorten <=4pt, shorten >=4pt, Rightarrow, from=0, to=2]
	\arrow["\rho"', shorten <=4pt, shorten >=4pt, Rightarrow, from=1, to=3]
\end{tikzcd}\]
with the same property, there exists a unique $ \mu : m \Rightarrow m_\sigma$ such that one has factorization of 2-cells:
% https://q.uiver.app/?q=WzAsMyxbMCwxLCJBIl0sWzEsMCwiQV97Zl8xfSJdLFsxLDIsIkFfe2ZfMn0iXSxbMCwxLCJsX3tmXzF9Il0sWzAsMiwibF97Zl8yfSIsMl0sWzEsMiwibSIsMV0sWzMsNCwiXFxsYW1iZGEiLDAseyJzaG9ydGVuIjp7InNvdXJjZSI6MjAsInRhcmdldCI6MjB9fV1d
\[\begin{tikzcd}
	& {A_{f_1}} \\
	A \\
	& {A_{f_2}}
	\arrow[""{name=0, anchor=center, inner sep=0}, "{l_{f_1}}", from=2-1, to=1-2]
	\arrow[""{name=1, anchor=center, inner sep=0}, "{l_{f_2}}"', from=2-1, to=3-2]
	\arrow["m", from=1-2, to=3-2]
	\arrow["\lambda", shorten <=4pt, shorten >=4pt, Rightarrow, from=0, to=1]
\end{tikzcd}  =
\begin{tikzcd}
	& {A_{f_1}} \\
	A \\
	& {A_{f_2}}
	\arrow[""{name=0, anchor=center, inner sep=0}, "{l_{f_1}}", from=2-1, to=1-2]
	\arrow[""{name=1, anchor=center, inner sep=0}, "{l_{f_2}}"', from=2-1, to=3-2]
	\arrow[""{name=2, anchor=center, inner sep=0}, "{m_\sigma}"{description}, from=1-2, to=3-2]
	\arrow[""{name=3, anchor=center, inner sep=0}, "m", curve={height=-30pt}, from=1-2, to=3-2]
	\arrow["{\lambda_\sigma}", shorten <=4pt, shorten >=4pt, Rightarrow, from=0, to=1]
	\arrow["{\mu}"', shorten <=5pt, shorten >=5pt, Rightarrow, from=3, to=2]
\end{tikzcd}  \hskip1cm
% https://q.uiver.app/?q=WzAsMyxbMCwwLCJBX3tmXzF9Il0sWzAsMiwiQV97Zl8yfSJdLFsxLDEsIkIiXSxbMCwxLCJtX1xcc2lnbWEiLDFdLFswLDEsIm0iLDIseyJjdXJ2ZSI6NH1dLFswLDIsInJfe2ZfMX0iXSxbMSwyLCJyX3tmXzJ9IiwyXSxbNCwzLCJcXG11X3sobSxcXGxhbWJkYSxcXHJobyl9IiwwLHsic2hvcnRlbiI6eyJzb3VyY2UiOjIwLCJ0YXJnZXQiOjIwfX1dLFs1LDYsIlxccmhvX1xcc2lnbWEiLDIseyJzaG9ydGVuIjp7InNvdXJjZSI6MjAsInRhcmdldCI6MjB9fV1d
  \begin{tikzcd}
	{A_{f_1}} \\
	& B \\
	{A_{f_2}}
	\arrow["m_\sigma"', from=1-1, to=3-1]
	\arrow[""{name=0, anchor=center, inner sep=0}, "{r_{f_1}}", from=1-1, to=2-2]
	\arrow[""{name=1, anchor=center, inner sep=0}, "{r_{f_2}}"', from=3-1, to=2-2]
	\arrow["\rho_\sigma"', shorten <=4pt, shorten >=4pt, Rightarrow, from=0, to=1]
\end{tikzcd} = 
\begin{tikzcd}
	{A_{f_1}} \\
	& B \\
	{A_{f_2}}
	\arrow[""{name=0, anchor=center, inner sep=0}, "{m}"{description}, from=1-1, to=3-1]
	\arrow[""{name=1, anchor=center, inner sep=0}, "m_\sigma"', curve={height=30pt}, from=1-1, to=3-1]
	\arrow[""{name=2, anchor=center, inner sep=0}, "{r_{f_1}}", from=1-1, to=2-2]
	\arrow[""{name=3, anchor=center, inner sep=0}, "{r_{f_2}}"', from=3-1, to=2-2]
	\arrow["{\mu}", shorten <=5pt, shorten >=5pt, Rightarrow, from=1, to=0]
	\arrow["{\rho}"', shorten <=4pt, shorten >=4pt, Rightarrow, from=2, to=3]
\end{tikzcd}  \]
\end{definition}

The factorizations of those lax 2-cell will be functorial in the following sense:

\begin{definition}
A lax generic bifactorization system will be said to be \emph{pseudofunctorial} if for any two 2-cells as below
% https://q.uiver.app/?q=WzAsMixbMCwwLCJBIl0sWzIsMCwiQiJdLFswLDEsImZfMSIsMCx7ImN1cnZlIjotNH1dLFswLDEsImZfMyIsMix7ImN1cnZlIjo0fV0sWzAsMSwiZl8yIiwxXSxbMiw0LCIiLDAseyJzaG9ydGVuIjp7InNvdXJjZSI6MjAsInRhcmdldCI6MjB9fV0sWzQsMywiIiwwLHsic2hvcnRlbiI6eyJzb3VyY2UiOjIwLCJ0YXJnZXQiOjIwfX1dXQ==
\[\begin{tikzcd}
	A && B
	\arrow[""{name=0, anchor=center, inner sep=0}, "{f_1}", curve={height=-24pt}, from=1-1, to=1-3]
	\arrow[""{name=1, anchor=center, inner sep=0}, "{f_3}"', curve={height=24pt}, from=1-1, to=1-3]
	\arrow[""{name=2, anchor=center, inner sep=0}, "{f_2}"{description}, from=1-1, to=1-3]
	\arrow["\sigma", shorten <=3pt, shorten >=3pt, Rightarrow, from=0, to=2]
	\arrow["\sigma'", shorten <=3pt, shorten >=3pt, Rightarrow, from=2, to=1]
\end{tikzcd}\]
one has a canonical invertible 2-cell 
\[  m_{\sigma'\sigma} \simeq m_{\sigma'}m_\sigma \]
and equality of 2-cells 
% https://q.uiver.app/?q=WzAsNCxbMCwxLCJBIl0sWzEsMCwiQV97Zl8xfSJdLFsxLDEsIkFfe2ZfMn0iXSxbMSwyLCJBX3tmXzN9Il0sWzAsMSwibF97Zl8xfSJdLFsxLDIsIm1fe1xcc2lnbWF9Il0sWzAsMywibF97Zl8zfSIsMl0sWzIsMywibV97XFxzaWdtYSd9Il0sWzAsMiwibF97Zl8yfSIsMV0sWzQsOCwiXFxsYW1iZGFfe1xcc2lnbWF9IiwwLHsic2hvcnRlbiI6eyJzb3VyY2UiOjIwLCJ0YXJnZXQiOjIwfX1dLFs4LDYsIlxcbGFtYmRhX3tcXHNpZ21hJ30iLDAseyJzaG9ydGVuIjp7InNvdXJjZSI6MjAsInRhcmdldCI6MjB9fV1d
\[\begin{tikzcd}[sep=large]
	& {A_{f_1}} \\
	A & {A_{f_2}} \\
	& {A_{f_3}}
	\arrow[""{name=0, anchor=center, inner sep=0}, "{l_{f_1}}", from=2-1, to=1-2]
	\arrow["{m_{\sigma}}", from=1-2, to=2-2]
	\arrow[""{name=1, anchor=center, inner sep=0}, "{l_{f_3}}"', from=2-1, to=3-2]
	\arrow["{m_{\sigma'}}", from=2-2, to=3-2]
	\arrow[""{name=2, pos=0.51,  inner sep=0}, "{l_{f_2}}"{description}, from=2-1, to=2-2]
	\arrow["{\lambda_{\sigma}}", shift left=3, shorten <=1pt, shorten >=3pt, Rightarrow, from=0, to=2]
	\arrow["{\lambda_{\sigma'}}", shift left=3, shorten <=3pt, shorten >=1pt, Rightarrow, from=2, to=1]
\end{tikzcd} = \begin{tikzcd}
	& {A_{f_1}} \\
	A \\
	& {A_{f_2}}
	\arrow[""{name=0, anchor=center, inner sep=0}, "{l_{f_1}}", from=2-1, to=1-2]
	\arrow[""{name=1, anchor=center, inner sep=0}, "{l_{f_2}}"', from=2-1, to=3-2]
	\arrow["m_{\sigma'\sigma}", from=1-2, to=3-2]
	\arrow["\lambda_{\sigma'\sigma}", shorten <=4pt, shorten >=4pt, Rightarrow, from=0, to=1]
\end{tikzcd} \hskip1cm \begin{tikzcd}[sep=large]
	{A_{f_1}} \\
	{A_{f_2}} & B \\
	{A_{f_3}}
	\arrow[""{name=0, anchor=center, inner sep=0}, "{r_{f_1}}", from=1-1, to=2-2]
	\arrow["{m_{\sigma}}"', from=1-1, to=2-1]
	\arrow[""{name=1, anchor=center, inner sep=0}, "{r_{f_3}}"', from=3-1, to=2-2]
	\arrow["{m_{\sigma'}}"', from=2-1, to=3-1]
	\arrow[""{name=2, pos=0.49, inner sep=0}, "{r_{f_2}}"{description}, from=2-1, to=2-2]
	\arrow["{\rho_{\sigma}}"', shift right=3, shorten <=1pt, shorten >=3pt, Rightarrow, from=0, to=2]
	\arrow["{\rho_{\sigma'}}"', shift right=3, shorten <=3pt, shorten >=1pt, Rightarrow, from=2, to=1]
\end{tikzcd} =  \begin{tikzcd}
	{A_{f_1}} \\
	& B \\
	{A_{f_2}}
	\arrow["m_{\sigma'\sigma}"', from=1-1, to=3-1]
	\arrow[""{name=0, anchor=center, inner sep=0}, "{r_{f_1}}", from=1-1, to=2-2]
	\arrow[""{name=1, anchor=center, inner sep=0}, "{r_{f_2}}"', from=3-1, to=2-2]
	\arrow["\rho_{\sigma'\sigma}"', shorten <=4pt, shorten >=4pt, Rightarrow, from=0, to=1]
\end{tikzcd} \]
and for any $ f: A \rightarrow B$, one has
\[ m_{1_f} = 1_{A_f} \hskip1cm \lambda_{1_f} = 1_{l_f} \hskip1cm \rho_{1_f} = 1_{r_f} \]
\end{definition}

\begin{corollary}
If $(\mathcal{L},\mathcal{R})$ is lax-generic and pseudofunctorial and $ \sigma : f_1 \Rightarrow f_2$ as above is invertible, then $ m_\sigma$ is an equivalence and $ \lambda_\sigma$, $\rho_\sigma$ are invertible.  
\end{corollary}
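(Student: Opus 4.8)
The plan is to exploit pseudofunctoriality by factorizing not only $\sigma$ but also its inverse $\sigma^{-1} : f_2 \Rightarrow f_1$, and then to play the two composition laws of the factorization against one another. First I would apply the lax-factorization of 2-cells (available since $(\mathcal{L},\mathcal{R})$ is lax-generic) to both $\sigma$ and $\sigma^{-1}$, obtaining data $(m_\sigma, \lambda_\sigma, \rho_\sigma)$ with $m_\sigma : A_{f_1} \to A_{f_2}$ and $(m_{\sigma^{-1}}, \lambda_{\sigma^{-1}}, \rho_{\sigma^{-1}})$ with $m_{\sigma^{-1}} : A_{f_2} \to A_{f_1}$.

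The equivalence of $m_\sigma$ is the easy half. Since the system is pseudofunctorial, the canonical iso $m_{\sigma^{-1}\sigma} \simeq m_{\sigma^{-1}} m_\sigma$ together with the normalisation $m_{1_{f_1}} = 1_{A_{f_1}}$ gives $m_{\sigma^{-1}} m_\sigma \simeq 1_{A_{f_1}}$, and symmetrically $m_\sigma m_{\sigma^{-1}} \simeq 1_{A_{f_2}}$ from $\sigma\sigma^{-1} = 1_{f_2}$. Hence $m_\sigma$ is an equivalence, with pseudo-inverse $m_{\sigma^{-1}}$.

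The substantive part is the invertibility of $\lambda_\sigma$ and $\rho_\sigma$, and here I would use both composition directions. Reading the pseudofunctoriality equality for $\lambda$ at the pair $(\sigma, \sigma^{-1})$, the pasting $\lambda_{\sigma^{-1}} \circ (m_{\sigma^{-1}} * \lambda_\sigma)$ coincides with $\lambda_{1_{f_1}} = 1_{l_{f_1}}$ up to the coherence iso $m_{\sigma^{-1}} m_\sigma \simeq 1$, hence is invertible; thus $m_{\sigma^{-1}} * \lambda_\sigma$ is a split monomorphism in the relevant hom-category. Reading the same law at $(\sigma^{-1}, \sigma)$ shows $\lambda_\sigma \circ (m_\sigma * \lambda_{\sigma^{-1}})$ is invertible, so $\lambda_\sigma$ is a split epimorphism. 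Now because $m_{\sigma^{-1}}$ is an equivalence in $\mathcal{C}$, postcomposition $m_{\sigma^{-1}} \circ (-)$ is an equivalence of hom-categories, in particular fully faithful, hence reflects split monos; therefore $\lambda_\sigma$ is itself a split mono. A morphism that is simultaneously a split mono and a split epi is invertible, its retraction and section necessarily coinciding. The identical argument applied to the $\rho$ composition law, now using precomposition by the equivalence $m_{\sigma^{-1}}$ to reflect split epis, shows $\rho_\sigma$ invertible.

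The main obstacle I anticipate is precisely this last step: one cannot conclude that $\lambda_\sigma$ and $\rho_\sigma$ are invertible merely because they are comparison cells relating equivalences, since a 2-cell between equivalences need not be invertible; so the two-sided split argument together with the hom-category reflection is essential. Care is also needed to track the direction of each whiskering and to absorb the coherence iso $m_{\sigma'\sigma} \simeq m_{\sigma'}m_\sigma$ correctly when transcribing the pseudofunctoriality equalities into honest equations of 2-cells.
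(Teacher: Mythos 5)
Your proof is correct and is essentially the argument the paper intends: the corollary is stated there without proof, immediately after the definition of pseudofunctoriality, as an immediate consequence of applying the composition and unit laws to the pair $\sigma$, $\sigma^{-1}$. Your two-sided argument --- obtaining $\lambda_\sigma$ (resp. $\rho_\sigma$) as a split epi directly and as a split mono by reflecting along the whiskering equivalence induced by $m_{\sigma^{-1}}$, then combining the two --- correctly supplies the details the paper leaves implicit.
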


\begin{remark}
Then those data will coincide up to invertible 2-cells with the invertible 2-cells provided by the by uniqueness of the bifactorization
% https://q.uiver.app/?q=WzAsNCxbMCwxLCJBIl0sWzEsMCwiQV97Zl8xfSJdLFsxLDIsIkFfe2ZfMn0iXSxbMiwxLCJCIl0sWzAsMSwibF97Zl8xfSJdLFsxLDMsInJfe2ZfMX0iXSxbMCwyLCJsX3tmXzJ9IiwyXSxbMiwzLCJyX3tmXzJ9IiwyXSxbMSwyLCJtX1xcc2lnbWEgXFxhdG9wIFxcc2ltZXEiLDFdLFs0LDYsIlxcbGFtYmRhX1xcc2lnbWEgXFxhdG9wIFxcc2ltZXEiLDEseyJzaG9ydGVuIjp7InNvdXJjZSI6MjAsInRhcmdldCI6MjB9LCJzdHlsZSI6eyJib2R5Ijp7Im5hbWUiOiJub25lIn0sImhlYWQiOnsibmFtZSI6Im5vbmUifX19XSxbNSw3LCJcXHJob19cXHNpZ21hIFxcYXRvcCBcXHNpbWVxIiwxLHsic2hvcnRlbiI6eyJzb3VyY2UiOjIwLCJ0YXJnZXQiOjIwfSwic3R5bGUiOnsiYm9keSI6eyJuYW1lIjoibm9uZSJ9LCJoZWFkIjp7Im5hbWUiOiJub25lIn19fV1d
\[\begin{tikzcd}
	& {A_{f_1}} \\
	A && B \\
	& {A_{f_2}}
	\arrow[""{name=0, anchor=center, inner sep=0}, "{l_{f_1}}", from=2-1, to=1-2]
	\arrow[""{name=1, anchor=center, inner sep=0}, "{r_{f_1}}", from=1-2, to=2-3]
	\arrow[""{name=2, anchor=center, inner sep=0}, "{l_{f_2}}"', from=2-1, to=3-2]
	\arrow[""{name=3, anchor=center, inner sep=0}, "{r_{f_2}}"', from=3-2, to=2-3]
	\arrow["{m \atop \simeq}"{description}, from=1-2, to=3-2]
	\arrow["{\lambda \atop \simeq}"{description}, Rightarrow, draw=none, from=0, to=2]
	\arrow["{\rho \atop \simeq}"{description}, Rightarrow, draw=none, from=1, to=3]
\end{tikzcd}\]
\end{remark}

Pseudofunctorial lax-generic bifactorization systems have a more general diagonalization property: in fact they enjoy diagonalization of lax morphisms from a left map to a right map:

\begin{definition}\label{factorization of lax square in lax fact system}
A bifactorization system $ (\mathcal{L},\mathcal{R})$ will be said to have \emph{lax diagonalization} if \begin{itemize}
    \item for any lax square as below  with $l$ in $\mathcal{L}$ and $r$ in $\mathcal{R}$
% https://q.uiver.app/?q=WzAsNCxbMCwwLCJBIl0sWzEsMCwiQyJdLFswLDEsIkIiXSxbMSwxLCJEIl0sWzAsMiwibCIsMl0sWzAsMSwiZiJdLFsxLDMsInIiXSxbMiwzLCJnIiwyXSxbNyw1LCJcXFVwYXJyb3cgXFxzaWdtYSIsMSx7InNob3J0ZW4iOnsic291cmNlIjoyMCwidGFyZ2V0IjoyMH0sInN0eWxlIjp7ImJvZHkiOnsibmFtZSI6Im5vbmUifSwiaGVhZCI6eyJuYW1lIjoibm9uZSJ9fX1dXQ==
\[\begin{tikzcd}
	A & C \\
	B & D
	\arrow["l"', from=1-1, to=2-1]
	\arrow[""{name=0, anchor=center, inner sep=0}, "f", from=1-1, to=1-2]
	\arrow["r", from=1-2, to=2-2]
	\arrow[""{name=1, anchor=center, inner sep=0}, "g"', from=2-1, to=2-2]
	\arrow["{\Uparrow \sigma}"{description}, Rightarrow, draw=none, from=1, to=0]
\end{tikzcd}\]
there exists a factorization 
\[\begin{tikzcd}[sep=large]
	A & C \\
	B & D
	\arrow["l"', from=1-1, to=2-1]
	\arrow[""{name=0, anchor=center, inner sep=0}, "f", from=1-1, to=1-2]
	\arrow["r", from=1-2, to=2-2]
	\arrow[""{name=1, anchor=center, inner sep=0}, "g"', from=2-1, to=2-2]
	\arrow[""{name=2, anchor=center, inner sep=0}, "{d_\sigma}"{description}, from=2-1, to=1-2]
	\arrow["{\Uparrow \lambda_\sigma}"{description}, shorten <=2pt, shorten >=2pt, draw=none, from=2, to=0]
	\arrow["{\Uparrow \rho_\sigma}"{description}, shorten <=2pt, shorten >=2pt,draw=none, from=1, to=2]
\end{tikzcd}\] 
which is universal amongst factorization of this 2-cell in the sense that for any other factorization of the same square 
\[\begin{tikzcd}[sep=large]
	A & C \\
	B & D
	\arrow["l"', from=1-1, to=2-1]
	\arrow[""{name=0, anchor=center, inner sep=0}, "f", from=1-1, to=1-2]
	\arrow["r", from=1-2, to=2-2]
	\arrow[""{name=1, anchor=center, inner sep=0}, "g"', from=2-1, to=2-2]
	\arrow[""{name=2, anchor=center, inner sep=0}, "{d}"{description}, from=2-1, to=1-2]
	\arrow["{\Uparrow \lambda}"{description}, shorten <=2pt, shorten >=2pt, draw=none, from=2, to=0]
	\arrow["{\Uparrow \rho}"{description}, shorten <=2pt, shorten >=2pt,draw=none, from=1, to=2]
\end{tikzcd}\] 
there exists a unique $ \xi : d \Rightarrow d_\sigma$ such that we have the factorization
% https://q.uiver.app/?q=WzAsMyxbMCwwLCJBIl0sWzEsMCwiQyJdLFswLDEsIkIiXSxbMCwyLCJsIiwyXSxbMCwxLCJmIl0sWzIsMSwid19cXHNpZ21hIiwxXSxbMiwxLCJ3IiwxLHsiY3VydmUiOjN9XSxbNSwwLCJcXFVwYXJyb3cgXFxsYW1iZGFfXFxzaWdtYSIsMSx7InNob3J0ZW4iOnsic291cmNlIjoyMH0sInN0eWxlIjp7ImJvZHkiOnsibmFtZSI6Im5vbmUifSwiaGVhZCI6eyJuYW1lIjoibm9uZSJ9fX1dLFs2LDUsIlxceGkiLDIseyJzaG9ydGVuIjp7InNvdXJjZSI6MjAsInRhcmdldCI6MjB9fV1d
\[\begin{tikzcd}[sep=large]
	A & C \\
	B
	\arrow["l"', from=1-1, to=2-1]
	\arrow["f", from=1-1, to=1-2]
	\arrow[""{name=0, anchor=center, inner sep=0}, "{w_\sigma}"{description}, from=2-1, to=1-2]
	\arrow[""{name=1, anchor=center, inner sep=0}, "w"{description}, curve={height=18pt}, from=2-1, to=1-2]
	\arrow["{\Uparrow \lambda_\sigma}"{description}, Rightarrow, draw=none, from=0, to=1-1]
	\arrow["\xi"', shorten <=3pt, shorten >=3pt, Rightarrow, from=1, to=0]
\end{tikzcd} = \begin{tikzcd}
	A & C \\
	B
	\arrow["l"', from=1-1, to=2-1]
	\arrow["f", from=1-1, to=1-2]
	\arrow[""{name=0, anchor=center, inner sep=0}, "w"{description}, from=2-1, to=1-2]
	\arrow["{\Uparrow \lambda}"{description}, Rightarrow, draw=none, from=0, to=1-1]
\end{tikzcd}\]
% https://q.uiver.app/?q=WzAsMyxbMCwxLCJCIl0sWzEsMCwiQyJdLFsxLDEsIkQiXSxbMCwyLCJnIiwyXSxbMSwyLCJyIl0sWzAsMSwidyIsMV0sWzAsMSwid19cXHNpZ21hIiwwLHsiY3VydmUiOi0zfV0sWzIsNSwiXFxVcGFycm93IFxccmhvX1xcc2lnbWEiLDEseyJzaG9ydGVuIjp7InRhcmdldCI6MjB9LCJzdHlsZSI6eyJib2R5Ijp7Im5hbWUiOiJub25lIn0sImhlYWQiOnsibmFtZSI6Im5vbmUifX19XSxbNSw2LCJcXHhpIiwyLHsic2hvcnRlbiI6eyJzb3VyY2UiOjIwLCJ0YXJnZXQiOjIwfX1dXQ==
\[\begin{tikzcd}[sep=large]
	& C \\
	B & D
	\arrow["g"', from=2-1, to=2-2]
	\arrow["r", from=1-2, to=2-2]
	\arrow[""{name=0, anchor=center, inner sep=0}, "w"{description}, from=2-1, to=1-2]
	\arrow[""{name=1, anchor=center, inner sep=0}, "{w_\sigma}", curve={height=-18pt}, from=2-1, to=1-2]
	\arrow["{\Uparrow \rho_\sigma}"{description}, Rightarrow, draw=none, from=2-2, to=0]
	\arrow["\xi"', shorten <=3pt, shorten >=3pt, Rightarrow, from=0, to=1]
\end{tikzcd}=\begin{tikzcd}
	& C \\
	B & D
	\arrow["g"', from=2-1, to=2-2]
	\arrow["r", from=1-2, to=2-2]
	\arrow[""{name=0, anchor=center, inner sep=0}, "{w_\sigma}", from=2-1, to=1-2]
	\arrow["{\Uparrow \rho_\sigma}"{description}, Rightarrow, draw=none, from=2-2, to=0]
\end{tikzcd}\]

\item Morever, for a morphism of lax square as below
% https://q.uiver.app/?q=WzAsNCxbMCwwLCJBIl0sWzAsMSwiQiJdLFsyLDAsIkMiXSxbMiwxLCJEIl0sWzAsMSwibCIsMl0sWzEsMywiZ18xIiwyXSxbMCwyLCJmXzEiLDFdLFsyLDMsInIiXSxbMCwyLCJmXzIiLDAseyJjdXJ2ZSI6LTN9XSxbNiw4LCJcXHBoaV8xIiwyLHsic2hvcnRlbiI6eyJzb3VyY2UiOjIwLCJ0YXJnZXQiOjIwfX1dLFs1LDYsIlxcc2lnbWFfMSIsMix7InNob3J0ZW4iOnsic291cmNlIjoyMCwidGFyZ2V0IjoyMH19XV0=
\[\begin{tikzcd}
	A && C \\
	B && D
	\arrow["l"', from=1-1, to=2-1]
	\arrow[""{name=0, anchor=center, inner sep=0}, "{g_1}"', from=2-1, to=2-3]
	\arrow[""{name=1, anchor=center, inner sep=0}, "{f_1}"{description}, from=1-1, to=1-3]
	\arrow["r", from=1-3, to=2-3]
	\arrow[""{name=2, anchor=center, inner sep=0}, "{f_2}", curve={height=-18pt}, from=1-1, to=1-3]
	\arrow["{\Uparrow \phi_1}"{description}, shorten <=2pt, shorten >=2pt, draw=none, from=1, to=2]
	\arrow["{\Uparrow \sigma_1}"{description}, shorten <=4pt, shorten >=4pt, draw=none, from=0, to=1]
\end{tikzcd} = \begin{tikzcd}
	A && C \\
	B && D
	\arrow["l"', from=1-1, to=2-1]
	\arrow[""{name=0, anchor=center, inner sep=0}, "{g_1}"', curve={height=18pt}, from=2-1, to=2-3]
	\arrow["r", from=1-3, to=2-3]
	\arrow[""{name=1, anchor=center, inner sep=0}, "{f_2}", from=1-1, to=1-3]
	\arrow[""{name=2, anchor=center, inner sep=0}, "{g_2}"{description}, from=2-1, to=2-3]
	\arrow["{\Uparrow \gamma_2}"{description}, shorten <=2pt, shorten >=2pt, draw=none, from=0, to=2]
	\arrow["{\Uparrow \sigma_2}"{description}, shorten <=4pt, shorten >=4pt, draw=none, from=2, to=1]
\end{tikzcd}\]
there is a unique 2-cell $ \xi : d_1 \Rightarrow d_2$ such that we have the factorization 
% https://q.uiver.app/?q=WzAsMyxbMCwwLCJBIl0sWzAsMSwiQiJdLFsxLDAsIkMiXSxbMCwxLCJsIiwyXSxbMCwyLCJmXzIiXSxbMSwyLCJkXzIiLDFdLFsxLDIsImRfMSIsMix7ImN1cnZlIjoyfV0sWzUsNCwiXFxsYW1iZGFfe1xcc2lnbWFfMn0iLDAseyJzaG9ydGVuIjp7InNvdXJjZSI6MjAsInRhcmdldCI6MjB9fV0sWzYsNSwiXFx4aSIsMCx7InNob3J0ZW4iOnsic291cmNlIjoyMCwidGFyZ2V0IjoyMH19XV0=
\[\begin{tikzcd}[sep=large]
	A & C \\
	B
	\arrow["l"', from=1-1, to=2-1]
	\arrow[""{name=0, anchor=center, inner sep=0}, "{f_2}", from=1-1, to=1-2]
	\arrow[""{name=1, anchor=center, inner sep=0}, "{d_2}"{description}, from=2-1, to=1-2]
	\arrow[""{name=2, anchor=center, inner sep=0}, "{d_1}"', curve={height=18pt}, from=2-1, to=1-2]
	\arrow["{\lambda_{\sigma_2}}", shorten <=2pt, shorten >=2pt, Rightarrow, from=1, to=0]
	\arrow["\xi", shorten <=2pt, shorten >=2pt, Rightarrow, from=2, to=1]
\end{tikzcd} = \begin{tikzcd}[sep=large]
	A & C \\
	B
	\arrow["l"', from=1-1, to=2-1]
	\arrow[""{name=0, anchor=center, inner sep=0}, "{f_2}", curve={height=-18pt}, from=1-1, to=1-2]
	\arrow[""{name=1, anchor=center, inner sep=0}, "{d_1}"', from=2-1, to=1-2]
	\arrow[""{name=2, anchor=center, inner sep=0}, "{f_1}"{description}, from=1-1, to=1-2]
	\arrow["{\lambda_{\sigma_1}}", shorten <=2pt, shorten >=2pt, Rightarrow, from=1, to=2]
	\arrow["{\phi_1}", shorten <=2pt, shorten >=2pt, Rightarrow, from=2, to=0]
\end{tikzcd} \hskip1cm
\begin{tikzcd}[sep=large]
	& C \\
	B & D
	\arrow[""{name=0, anchor=center, inner sep=0}, "{d_1}"{description}, from=2-1, to=1-2]
	\arrow["r", from=1-2, to=2-2]
	\arrow[""{name=1, anchor=center, inner sep=0}, "g"', from=2-1, to=2-2]
	\arrow[""{name=2, anchor=center, inner sep=0}, "{d_2}", curve={height=-18pt}, from=2-1, to=1-2]
	\arrow["\xi"', shorten <=3pt, shorten >=3pt, Rightarrow, from=0, to=2]
	\arrow["{\rho_{\sigma_1}}"', shorten <=2pt, shorten >=2pt, Rightarrow, from=1, to=0]
\end{tikzcd}=
% https://q.uiver.app/?q=WzAsMyxbMCwxLCJCIl0sWzEsMCwiQyJdLFsxLDEsIkQiXSxbMSwyLCJyIl0sWzAsMiwiZ18yIiwxXSxbMCwxLCJkXzIiXSxbMCwyLCJnXzEiLDIseyJjdXJ2ZSI6M31dLFs2LDQsIlxcZ2FtbWFfMiIsMix7InNob3J0ZW4iOnsic291cmNlIjoyMCwidGFyZ2V0IjoyMH19XSxbNCw1LCJcXHJob197XFxzaWdtYV8yfSIsMix7InNob3J0ZW4iOnsic291cmNlIjoyMCwidGFyZ2V0IjoyMH19XV0=
\begin{tikzcd}[sep=large]
	& C \\
	B & D
	\arrow["r", from=1-2, to=2-2]
	\arrow[""{name=0, anchor=center, inner sep=0}, "{g_2}"{description}, from=2-1, to=2-2]
	\arrow[""{name=1, anchor=center, inner sep=0}, "{d_2}", from=2-1, to=1-2]
	\arrow[""{name=2, anchor=center, inner sep=0}, "{g_1}"', curve={height=18pt}, from=2-1, to=2-2]
	\arrow["{\gamma_2}"', shorten <=2pt, shorten >=2pt, Rightarrow, from=2, to=0]
	\arrow["{\rho_{\sigma_2}}"', shorten <=2pt, shorten >=2pt, Rightarrow, from=0, to=1]
\end{tikzcd}
\]
\end{itemize}
\end{definition}

\begin{proposition}
Let $(\mathcal{L},\mathcal{R})$ be a bifactorization system in a bicategory $ \mathcal{C}$: \begin{itemize}
    \item if $(\mathcal{L},\mathcal{R})$ is pseudofunctorial lax-generic then it has lax diagonalization.
    \item if $(\mathcal{L},\mathcal{R})$ has lax diagonalization, then it is lax-generic. 
\end{itemize} 
\end{proposition}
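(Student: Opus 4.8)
The statement has two halves, which I would prove separately, treating the easier converse first.

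\medskip
\noindent\textbf{From lax diagonalization to lax-genericity.} Here the plan is simply to feed the square of $(\mathcal{L},\mathcal{R})$-factorizations into the lax diagonalization. Given a globular $2$-cell $\sigma : f_1 \Rightarrow f_2$ with $f_1,f_2 : A \to B$, factor $f_i \simeq r_{f_i} l_{f_i}$ through $A_{f_i}$ and form the lax square with left leg $l_{f_1} \in \mathcal{L}$, right leg $r_{f_2} \in \mathcal{R}$, top $l_{f_2}$, bottom $r_{f_1}$, and filling $2$-cell $\alpha_{f_2}^{-1}\,\sigma\,\alpha_{f_1} : r_{f_1}l_{f_1} \Rightarrow r_{f_2}l_{f_2}$. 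Lax diagonalization returns a diagonal $m_\sigma : A_{f_1} \to A_{f_2}$ together with $\lambda_\sigma : m_\sigma l_{f_1} \Rightarrow l_{f_2}$ and $\rho_\sigma : r_{f_1} \Rightarrow r_{f_2} m_\sigma$ whose pasting recovers $\sigma$; these are exactly the data asked for by the lax-factorization of $2$-cells. The universal property among competing factorizations, i.e. the existence and uniqueness of the comparison $\mu$, transcribes verbatim from the first clause of the lax diagonalization definition applied to a competing diagonal of the same square. So this direction is essentially a translation, and in particular does not invoke the second (morphism-of-lax-squares) clause.

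\medskip
\noindent\textbf{From pseudofunctorial lax-genericity to lax diagonalization.} I would start from a lax square with $l : A \to B$ in $\mathcal{L}$, $r : C \to D$ in $\mathcal{R}$ and filling $2$-cell $\sigma : gl \Rightarrow rf$, and read $\sigma$ as a \emph{globular} $2$-cell between the composites $gl,\,rf : A \to D$. Using closure of $\mathcal{L}$ and $\mathcal{R}$ under composition and uniqueness of factorizations, the factorization of $gl$ is $(l_g l,\, r_g)$, with middle the middle object $A_g$ of $g$, and that of $rf$ is $(l_f,\, r\,r_f)$, with middle $A_f$. Applying the lax-factorization of $\sigma$ produces a comparison $m_\sigma : A_g \to A_f$ with cells $m_\sigma (l_g l)\Rightarrow l_f$ and $r_g \Rightarrow (r\,r_f)\,m_\sigma$ recovering $\sigma$; this $m_\sigma$ is precisely the lax diagonal of the fully factored square. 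The task is then to \emph{descend} this datum to a diagonal $d_\sigma : B \to C$ of the original square, producing coherence cells $\lambda_\sigma : d_\sigma l \Rightarrow f$ and $\rho_\sigma : g \Rightarrow r d_\sigma$ pasting back to $\sigma$, by combining $m_\sigma$ with the outer factors $l_g, r_f$ and the universal fillers supplied by the bi-orthogonality $l \perp r$ of the system. The second (functoriality) clause of lax diagonalization I would then obtain from the pseudofunctoriality hypothesis, which controls how the comparisons $m_\sigma$ compose along pasting of $2$-cells.

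\medskip
\noindent\textbf{The main obstacle.} The crux is the universal property of $d_\sigma$, and it is genuinely subtle because one \emph{cannot} simply take $d_\sigma = r_f\, m_\sigma\, l_g$. Already in the degenerate case $l=\mathrm{id}$, $r=\mathrm{id}$, a lax diagonal of $\sigma : g \Rightarrow f$ is just a factorization of $\sigma$ in the homcategory $\mathcal{C}[A,C]$, whose universal (terminal) object is the top map $f$ itself with invertible $\lambda_\sigma$; whereas $r_f\, m_\sigma\, l_g$ is merely an \emph{intermediate} factor of $\sigma$, since the lax-generic comparison cells are not invertible. Thus the descent must use bi-orthogonality to complete the lax-generic comparison to the actual terminal diagonal, and the heart of the argument is to show that an arbitrary competing diagonal $(d,\lambda,\rho)$ induces a \emph{unique} $\xi : d \Rightarrow d_\sigma$ compatible with $\lambda$ and $\rho$. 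For this I would factor $d \simeq r_d l_d$, decompose $\sigma = (r * \lambda)\circ(\rho * l)$, and use pseudofunctoriality together with the lemma that pre/post-composition by $\mathcal{L}$/$\mathcal{R}$-maps preserves the generic part of the lax factorization to identify $m_\sigma$ with the composite of the comparisons attached to $\rho * l$ and $r * \lambda$; the orthogonality $l \perp r$ then rigidifies the resulting comparison into the sought $\xi$, and uniqueness follows from the uniqueness built into the lax-factorization of $2$-cells. I expect the morphism-of-lax-squares clause to fall out more routinely once the universal diagonal is in place, again via pseudofunctoriality; the Corollary on invertible $\sigma$ then guarantees the expected behaviour on pseudosquares.
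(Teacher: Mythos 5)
Your second bullet coincides exactly with the paper's proof of that direction: the lax diagonalization of the square with left leg $l_{f_1}$, top $l_{f_2}$, right leg $r_{f_2}$, bottom $r_{f_1}$ and filling cell $\alpha_{f_2}^{-1}\sigma\alpha_{f_1}$ is \emph{literally} a triple $(m,\lambda,\rho)$ as in the definition of lax-factorization of $2$-cells, with the same universal property, so this direction is indeed a pure translation.

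On the first bullet, however, your proposal is not a different route around the paper's argument: the construction you declare impossible --- take $d_\sigma = r_f\, m_\sigma\, l_g$ with coherence cells $r_f * \lambda_\sigma$ and $\rho_\sigma * l_g$, and assert that its universality is ``ensured from'' that of $(m_\sigma,\lambda_\sigma,\rho_\sigma)$ --- is precisely the entirety of the paper's proof of existence and universality; no descent step is given there. And your degenerate-case objection is correct. When $l$ and $r$ are identities, fillers of the square are exactly factorizations of the globular cell $\sigma : g \Rightarrow f$ in the hom-category, and $(f,\mathrm{id}_f,\sigma)$ is terminal among them: the comparison out of any $(d,\lambda,\rho)$ is forced by the first compatibility to be $\xi = \lambda$, and the second compatibility is then the filler equation $\lambda\rho = \sigma$. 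Since terminal fillers are unique up to isomorphism, any universal filler must have invertible $\lambda$-part; but $r_f * \lambda_\sigma$ cannot be invertible when $\sigma$ is not invertible and $\rho_\sigma$ is (else the pasting $\sigma$ would be invertible), which is exactly the right-truncated situation of all the paper's own examples. So you have in fact located a genuine gap in the paper's proof rather than reproduced or bypassed it. (Reading the universal property of lax diagonalization in the opposite, initial, sense does not rescue the construction in general either, only under right-truncation.)

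That said, your own completion leaves the first bullet unproven, and this is the gap in your proposal. You never say what the terminal diagonal $d_\sigma$ \emph{is} --- you only argue what it is not --- and the tool you invoke, bi-orthogonality $l \perp r$, governs pseudosquares only, i.e.\ invertible fillings: it can neither manufacture a terminal object in the category of \emph{lax} fillers nor produce the comparison $\xi$ out of a filler such as $(f,\mathrm{id}_f,\sigma)$, whose cells are not invertible. In the degenerate square the terminal filler exists for trivial reasons unrelated to orthogonality, and already in the half-degenerate case $r = \mathrm{id}$ with $l$ a general $\mathcal{L}$-map, terminality of a filler is a Kan-extension-type condition that neither orthogonality nor lax-genericity visibly supplies. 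Furthermore, the identification $m_\sigma \simeq m_{r*\lambda}\, m_{\rho*l}$ you plan to exploit hands you a comparison cell for $r*\lambda$ that lives \emph{under} $r$, of the form $r r_d \Rightarrow r r_f\, m_{r*\lambda}$; extracting from it the needed cell $r_d \Rightarrow r_f\, m_{r*\lambda}$ requires postcomposition with $\mathcal{R}$-maps to be full and faithful on $2$-cells, a cancellation property that is not among the hypotheses (and is not the paper's ``left fully faithful'' condition, which concerns precomposition with $\mathcal{L}$-maps). As it stands, then, both your argument and the paper's leave the implication from pseudofunctorial lax-genericity to lax diagonalization without proof; repairing it seems to require either weakening the universal property in the definition of lax diagonalization, or adding truncation or faithfulness hypotheses under which the composite $r_f\, m_\sigma\, l_g$ really is terminal.
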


\begin{proof}

For the first item: if $(\mathcal{L},\mathcal{R})$ is pseudofunctorial lax-generic, for a lax square $ \sigma$ as above, one can paste $\sigma$ with the invertible 2-cell in the factorization of $f$ and $g$ to get a 2-cell $ r *\alpha_f^{-1} \sigma \alpha_g*l $ - which we call abusively $\sigma$ - and by lax-genericity we have the following universal factorization of this 2-cell as below
% https://q.uiver.app/?q=WzAsNixbMCwwLCJBIl0sWzEsMCwiQ19mIl0sWzAsMSwiQiJdLFsxLDEsIkNfZyJdLFsyLDEsIkQiXSxbMiwwLCJDIl0sWzAsMiwibCIsMl0sWzAsMSwibF9mIl0sWzMsMSwibV9cXHNpZ21hIiwxXSxbMiwzLCJsX2ciLDJdLFs1LDQsInIiXSxbMSw1LCJyX2YiXSxbMyw0LCJyX2ciLDJdLFsyLDEsIlxcbGFtYmRhX3tcXHNpZ21hfSIsMCx7ImxldmVsIjoyfV0sWzMsNSwiXFxyaG9fXFxzaWdtYSIsMix7ImxldmVsIjoyfV1d
\[\begin{tikzcd}
	A & {C_f} & C \\
	B & {C_g} & D
	\arrow["l"', from=1-1, to=2-1]
	\arrow["{l_f}", from=1-1, to=1-2]
	\arrow["{m_\sigma}"{description}, from=2-2, to=1-2]
	\arrow["{l_g}"', from=2-1, to=2-2]
	\arrow["r", from=1-3, to=2-3]
	\arrow["{r_f}", from=1-2, to=1-3]
	\arrow["{r_g}"', from=2-2, to=2-3]
	\arrow["{\lambda_{\sigma}}", Rightarrow, from=2-1, to=1-2]
	\arrow["{\rho_\sigma}"', Rightarrow, from=2-2, to=1-3]
\end{tikzcd}\]
Then the composite $ r_f m_\sigma l_g$ provides a universal lax diagonalization % https://q.uiver.app/?q=WzAsNCxbMCwwLCJBIl0sWzEsMCwiQyJdLFswLDEsIkIiXSxbMSwxLCJEIl0sWzAsMiwibCIsMl0sWzAsMSwiZiJdLFsxLDMsInIiXSxbMiwzLCJnIiwyXSxbMiwxLCJyX2YgbV9cXHNpZ21hIGxfZyIsMV0sWzgsNSwicl9mKlxcbGFtYmRhX1xcc2lnbWEiLDAseyJzaG9ydGVuIjp7InNvdXJjZSI6MjAsInRhcmdldCI6MjB9fV0sWzcsOCwiXFxyaG9fXFxzaWdtYSpsX2ciLDIseyJzaG9ydGVuIjp7InNvdXJjZSI6MjAsInRhcmdldCI6MjB9fV1d
\[\begin{tikzcd}[sep=large]
	A & C \\
	B & D
	\arrow["l"', from=1-1, to=2-1]
	\arrow[""{name=0, anchor=center, inner sep=0}, "f", from=1-1, to=1-2]
	\arrow["r", from=1-2, to=2-2]
	\arrow[""{name=1, anchor=center, inner sep=0}, "g"', from=2-1, to=2-2]
	\arrow[""{name=2, anchor=center, inner sep=0}, "{r_f m_\sigma l_g}"{description}, from=2-1, to=1-2]
	\arrow["{r_f*\lambda_\sigma}", shorten <=2pt, shorten >=2pt, Rightarrow, from=2, to=0]
	\arrow["{\rho_\sigma*l_g}"', shorten <=2pt, shorten >=2pt, Rightarrow, from=1, to=2]
\end{tikzcd}\]
The universal property of this decomposition is moreover ensured from the universal property of $(m_\sigma, \lambda_\sigma, \rho_\sigma)$. \\

For the morphisms of lax squares, consider the equality of 2-cell provided by pseudofunctoriality 
% https://q.uiver.app/?q=WzAsNyxbMCwwLCJBIl0sWzEsMCwiQV97Zl8yfSJdLFsyLDEsIkMiXSxbMSwxLCJBX3tmXzF9Il0sWzAsMSwiQiJdLFsxLDIsIkFfe2dfMX0iXSxbMiwyLCJEIl0sWzAsMSwibF97Zl8yfSJdLFsxLDIsInJfe2ZfMn0iXSxbMCwzLCJsX3tmXzF9IiwxXSxbMywyLCJyX3tmXzF9IiwxXSxbMCw0LCJsIiwyXSxbNCw1LCJsX3tnXzF9IiwyXSxbNSw2LCJyX3tnXzF9IiwyXSxbMiw2LCJyIl0sWzUsM10sWzMsMV0sWzksNywiXFxsYW1iZGFfe1xccGhpfSIsMix7InNob3J0ZW4iOnsic291cmNlIjoyMCwidGFyZ2V0IjoyMH19XSxbMTAsOCwiXFxyaG9fXFxwaGkiLDAseyJzaG9ydGVuIjp7InNvdXJjZSI6MjAsInRhcmdldCI6MjB9fV0sWzEyLDksIlxcbGFtYmRhX3tcXHNpZ21hXzF9IiwyLHsic2hvcnRlbiI6eyJzb3VyY2UiOjIwLCJ0YXJnZXQiOjIwfX1dLFsxMywxMCwiXFxyaG9fe1xcc2lnbWFfMX0iLDEseyJzaG9ydGVuIjp7InNvdXJjZSI6MjAsInRhcmdldCI6MjB9fV1d
\[\begin{tikzcd}[sep=large]
	A & {A_{f_2}} \\
	B & {A_{f_1}} & C \\
	& {A_{g_1}} & D
	\arrow[""{name=0, pos=0.51, inner sep=0}, "{l_{f_2}}", from=1-1, to=1-2]
	\arrow[""{name=1, anchor=center, inner sep=0}, "{r_{f_2}}", from=1-2, to=2-3]
	\arrow[""{name=2, anchor=center, inner sep=0}, "{l_{f_1}}"{description}, from=1-1, to=2-2]
	\arrow[""{name=3, anchor=center, inner sep=0}, "{r_{f_1}}"{description}, from=2-2, to=2-3]
	\arrow["l"', from=1-1, to=2-1]
	\arrow[""{name=4, anchor=center, inner sep=0}, "{l_{g_1}}"', from=2-1, to=3-2]
	\arrow[""{name=5, anchor=center, inner sep=0}, "{r_{g_1}}"', from=3-2, to=3-3]
	\arrow["r", from=2-3, to=3-3]
	\arrow["m_{\sigma_1}"{description}, from=3-2, to=2-2]
	\arrow["m_\phi"{description}, from=2-2, to=1-2]
	\arrow["{\lambda_{\phi}}"', shorten <=2pt, shorten >=2pt, Rightarrow, from=2, to=0]
	\arrow["{\rho_\phi}", shorten <=2pt, shorten >=2pt, Rightarrow, from=3, to=1]
	\arrow["{\lambda_{\sigma_1}}"', shorten <=4pt, shorten >=4pt, Rightarrow, from=4, to=2]
	\arrow["{\rho_{\sigma_1}}"{description}, shorten <=4pt, shorten >=4pt, Rightarrow, from=5, to=3]
\end{tikzcd} = \begin{tikzcd}[sep=large]
	A & {A_{f_2}} \\
	B & {A_{g_2}} & C \\
	& {A_{g_1}} & D
	\arrow[""{name=0, anchor=center, inner sep=0}, "{l_{f_2}}", from=1-1, to=1-2]
	\arrow[""{name=1, anchor=center, inner sep=0}, "{r_{f_2}}", from=1-2, to=2-3]
	\arrow["l"', from=1-1, to=2-1]
	\arrow[""{name=2, anchor=center, inner sep=0}, "{l_{g_1}}"', from=2-1, to=3-2]
	\arrow[""{name=3, anchor=center, inner sep=0}, "{r_{g_1}}"', from=3-2, to=3-3]
	\arrow["r", from=2-3, to=3-3]
	\arrow["l_{g_2}"{description}, ""{name=4, anchor=center, inner sep=0}, from=2-1, to=2-2]
	\arrow["r_{g_2}"{description}, ""{name=5, anchor=center, inner sep=0}, from=2-2, to=3-3]
	\arrow["m_{\sigma_2}"{description}, from=2-2, to=1-2]
	\arrow["m_\gamma"{description}, from=3-2, to=2-2]
	\arrow["{\lambda_{\sigma_2}}"', shorten <=4pt, shorten >=4pt, Rightarrow, from=4, to=0]
	\arrow["{\rho_{\sigma_2}}"{pos=0.6}, shorten <=4pt, shorten >=4pt, Rightarrow, from=5, to=1]
	\arrow["{\lambda_{\gamma}}"', shorten <=2pt, shorten >=2pt, Rightarrow, from=2, to=4]
	\arrow["{\rho_\gamma}", shorten <=2pt, shorten >=2pt, Rightarrow, from=3, to=5]
\end{tikzcd}\]
But now from the equality of 2-cells 
\[  r*\phi \sigma_1 = \sigma_2 \gamma*l \]
we deduce by pseudofunctoriality the following chain of universal equivalences
\[  m_{\phi}m_{\sigma_1} \simeq m_{r*\phi\sigma_1} \simeq m_{\sigma_2\gamma*l} \simeq m_{\sigma_2}m_\gamma   \]
while we have equality of 2-cells 
\[ \lambda_{\phi} m_{\phi}*\lambda_{\sigma_1} = \lambda_{r*\phi\sigma_1} = \lambda_{\sigma_2\gamma*l}= \lambda_{\sigma_2}m_{\sigma_2}*\lambda_{\gamma}*l \]
\[ r*\rho_{\phi}*m_{\sigma_1}\rho_{\sigma_1} = \rho_{r*\phi\sigma_1} = \rho_{\sigma_2\gamma*l} = m_\gamma* \rho_{\sigma_2}\rho_{\gamma} \]
and the desired $ \xi : d_1 \Rightarrow d_2 $ is obtained as the composite
% https://q.uiver.app/?q=WzAsMyxbMCwwLCJkXzEgPSByX3tmXzF9bV97XFxzaWdtYV8xfWxfe2dfMX0iXSxbMSwwLCJyX3tmXzJ9bV9cXHBoaSBtX3tcXHNpZ21hXzF9bF97Z18xfSBcXHNpbWVxIHJfe2ZfMn1tX3tcXHNpZ21hXzJ9bV97XFxnYW1tYX1sX3tnXzF9ICJdLFsyLDAsInJfe2ZfMn1tX3tcXHNpZ21hXzJ9bF97Z18yfT1kXzIiXSxbMCwxLCJcXHJob197XFxwaGl9KihtX3tcXHNpZ21hXzF9bF97Z18xfSkiLDAseyJsZXZlbCI6Mn1dLFsxLDIsIihyX3tmXzJ9bV97XFxzaWdtYV8yfSkqXFxsYW1iZGFfe1xcZ2FtbWF9IiwwLHsibGV2ZWwiOjJ9XV0=
\[\begin{tikzcd}
	{d_1 = r_{f_1}m_{\sigma_1}l_{g_1}} & {r_{f_2}m_\phi m_{\sigma_1}l_{g_1} \simeq r_{f_2}m_{\sigma_2}m_{\gamma}l_{g_1} } & {r_{f_2}m_{\sigma_2}l_{g_2}=d_2}
	\arrow["{\rho_{\phi}*(m_{\sigma_1}l_{g_1})}", Rightarrow, from=1-1, to=1-2]
	\arrow["{(r_{f_2}m_{\sigma_2})*\lambda_{\gamma}}", Rightarrow, from=1-2, to=1-3]
\end{tikzcd}\]

For the second item, suppose that $(\mathcal{L},\mathcal{R})$ has lax diagonalization; let be a globular 2-cell $\sigma : f_1 \Rightarrow f_2$. Then the decomposition of $ \sigma$ is provided by lax diagonalization of the composite square 
% https://q.uiver.app/?q=WzAsNCxbMCwwLCJBIl0sWzAsMSwiQ197Zl8xfSJdLFsxLDAsIkNfe2ZfMn0iXSxbMSwxLCJCIl0sWzAsMSwibF97Zl8xfSIsMl0sWzEsMywicl97Zl8xfSIsMl0sWzAsMiwibF97Zl8yfSJdLFsyLDMsInJfe2ZfMn0iXSxbNSw2LCJcXGZvb3Rub3Rlc2l6ZXtcXFVwYXJyb3cgXFxhbHBoYV97Zl8yfV57LTF9XFxzaWdtYVxcYWxwaGFfe2ZfMX19IiwxLHsic2hvcnRlbiI6eyJzb3VyY2UiOjIwLCJ0YXJnZXQiOjIwfSwic3R5bGUiOnsiYm9keSI6eyJuYW1lIjoibm9uZSJ9LCJoZWFkIjp7Im5hbWUiOiJub25lIn19fV1d
\[\begin{tikzcd}
	A & {C_{f_2}} \\
	{C_{f_1}} & B
	\arrow["{l_{f_1}}"', from=1-1, to=2-1]
	\arrow[""{name=0, anchor=center, inner sep=0}, "{r_{f_1}}"', from=2-1, to=2-2]
	\arrow[""{name=1, anchor=center, inner sep=0}, "{l_{f_2}}", from=1-1, to=1-2]
	\arrow["{r_{f_2}}", from=1-2, to=2-2]
	\arrow["{\footnotesize{\Uparrow \alpha_{f_2}^{-1}\sigma\alpha_{f_1}}}"{description}, Rightarrow, draw=none, from=0, to=1]
\end{tikzcd}
% https://q.uiver.app/?q=WzAsNCxbMCwwLCJBIl0sWzAsMSwiQ197Zl8xfSJdLFsxLDAsIkNfe2ZfMn0iXSxbMSwxLCJCIl0sWzAsMSwibF97Zl8xfSIsMl0sWzEsMywicl97Zl8xfSIsMl0sWzAsMiwibF97Zl8yfSJdLFsyLDMsInJfe2ZfMn0iXSxbMSwyLCJkX3tcXGZvb3Rub3Rlc2l6ZXtcXFVwYXJyb3dcXGxhbWJkYV97XFxhbHBoYV97Zl8yfV57LTF9XFxzaWdtYVxcYWxwaGFfe2ZfMX19fX0iLDFdLFs4LDAsIlxcZm9vdG5vdGVzaXple1xcVXBhcnJvd1xcbGFtYmRhX3tcXGFscGhhX3tmXzJ9XnstMX1cXHNpZ21hXFxhbHBoYV97Zl8xfX19IiwxLHsic2hvcnRlbiI6eyJzb3VyY2UiOjIwfSwic3R5bGUiOnsiYm9keSI6eyJuYW1lIjoibm9uZSJ9LCJoZWFkIjp7Im5hbWUiOiJub25lIn19fV0sWzMsOCwiXFxmb290bm90ZXNpemV7XFxVcGFycm93XFxyaG9fe1xcYWxwaGFfe2ZfMn1eey0xfVxcc2lnbWFcXGFscGhhX3tmXzF9fX0iLDEseyJzaG9ydGVuIjp7InRhcmdldCI6MjB9LCJzdHlsZSI6eyJib2R5Ijp7Im5hbWUiOiJub25lIn0sImhlYWQiOnsibmFtZSI6Im5vbmUifX19XV0=
\]
\end{proof}
where one just has to take $ (\lambda_{\alpha_{f_2}^{-1}\sigma\alpha_{f_1}}, d_{\alpha_{f_2}^{-1}\sigma\alpha_{f_1}} \rho_{\alpha_{f_2}^{-1}\sigma\alpha_{f_1}})$. 

\begin{remark}
Beware that we do not deduce pseudofunctoriality from the sole lax diagonalization. We do not know whether the result above can be strengthened. 
\end{remark}

\begin{remark}
If $(\mathcal{L}, \mathcal{R})$ is pseudofunctorial lax generic, the lax diagonalization of a pseudosquare $\sigma$ coincides with the diagonalization obtained by bi-orthogonality; in particular the corresponding universal $ \lambda_\sigma, \rho_\sigma$ are invertible.
\end{remark}

\begin{remark}
In practice, one has to determine by \emph{ad hoc} considerations whether a bifactorization system has (op)lax factorization of 2-cells. Moreover, lax-factorization of 2-cells can enjoy further truncations properties, for instance having either the left of right 2-cell invertible in the lax factorization - not both at the same time of course. In those cases we can deduce by cancellation the middle arrow to be either left or right, which may be interpreted in a geometric way as we will see in our examples, or also may be related to lax-stability condition - though this will not be the case in those very examples. 
\end{remark}

\begin{definition}
A bifactorization system $(\mathcal{L}, \mathcal{R})$ with pseudofunctorial lax-factorization of 2-cells will be said to be\begin{itemize}
    \item \emph{left truncated} if for any $ \sigma $ the left 2-cell $ \lambda_\sigma$ of the lax-factorization is invertible;
    \item \emph{right truncated} if for any $\sigma$ the right 2-cell $ \rho_\sigma$ of the lax-factorization is invertible. 
\end{itemize}
\end{definition}

\begin{remark}
By right (resp. left) cancellation of left (resp. right) maps, the middle arrow $ m_\sigma$ in the lax factorization is in $\mathcal{L}$ (resp. in $\mathcal{R}$) whenever the bifactorization system is left (resp. right) truncated. 
\end{remark}

We also have the immediate consequence:

\begin{proposition}
If $ (\mathcal{L}, \mathcal{R})$ has left (resp. right) truncated lax-factorization of 2-cells, then for any lax square $ (f,g, \sigma) : l \rightarrow r$, the left part $ \lambda_\sigma$ (resp. the right part $\rho_\sigma$) is invertible.
\end{proposition}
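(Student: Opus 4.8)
The plan is to read the result off directly from the explicit construction of the lax diagonalization given in the proof that a pseudofunctorial lax-generic bifactorization system has lax diagonalization; the present hypothesis (left, resp. right, truncated) includes pseudofunctoriality, so that proposition applies and the diagonalization is the one built there. First I would record that, given a lax square $(f,g,\sigma):l\rightarrow r$ with $l\in\mathcal{L}$ and $r\in\mathcal{R}$, the two legs $rf$ and $gl$ are parallel $1$-cells $A\rightarrow D$. Hence, after pasting with the factorization isomorphisms $\alpha_f$ and $\alpha_g$, the lax datum $\sigma$ becomes a genuine globular $2$-cell between $rf$ and $gl$, to which the lax-factorization of $2$-cells applies.

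Next I would compute the factorizations of these two legs by cancellation: since $r\in\mathcal{R}$, the factorization of $rf$ is $(r\,r_f)\,l_f$ with $l_f\in\mathcal{L}$ and $r\,r_f\in\mathcal{R}$, so its left part is $l_f$ and its factorization object is $C_f$; dually, since $l\in\mathcal{L}$, the factorization of $gl$ is $r_g\,(l_g\,l)$ with left part $l_g\,l$ and object $C_g$. The globular lax-factorization of $\sigma$ then supplies a middle map $m_\sigma$ together with its left and right comparison cells $\lambda^{\mathrm{glob}}_\sigma$ and $\rho^{\mathrm{glob}}_\sigma$, and the construction in the proof of the preceding proposition yields the lax diagonalization with diagonal $d_\sigma=r_f\,m_\sigma\,l_g$ and comparison cells $\lambda_\sigma=r_f*\lambda^{\mathrm{glob}}_\sigma$ and $\rho_\sigma=\rho^{\mathrm{glob}}_\sigma*l_g$.

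The conclusion is then immediate, which is what justifies calling this an ``immediate consequence''. If the system is left truncated, then by definition $\lambda^{\mathrm{glob}}_\sigma$ is invertible for every globular $2$-cell, in particular for our $\sigma$; since whiskering an invertible $2$-cell on the left by $r_f$ preserves invertibility, $\lambda_\sigma=r_f*\lambda^{\mathrm{glob}}_\sigma$ is invertible. Because the lax diagonalization is unique up to a unique invertible comparison $2$-cell, invertibility of $\lambda_\sigma$ is a property of the lax square itself and is independent of the chosen realization. The right-truncated case is entirely symmetric, replacing $\lambda_\sigma=r_f*\lambda^{\mathrm{glob}}_\sigma$ by $\rho_\sigma=\rho^{\mathrm{glob}}_\sigma*l_g$ and left whiskering by right whiskering with $l_g$.

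The only point requiring care—and the step I would check most attentively—is matching conventions: verifying that the left comparison cell of the lax diagonalization really is $r_f$ whiskered with the left comparison cell $\lambda^{\mathrm{glob}}_\sigma$ of the globular factorization (rather than something entangled with $\rho^{\mathrm{glob}}_\sigma$), and confirming that the object identifications $A_{rf}\simeq C_f$ and $A_{gl}\simeq C_g$ produced by cancellation are exactly those used in that construction. Once these identifications are pinned down, the argument reduces to the one-line whiskering observation above.
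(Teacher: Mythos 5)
Your proof is correct and matches exactly what the paper intends: the paper states this as an immediate consequence of its construction of the lax diagonalization $(d_\sigma, \lambda_\sigma, \rho_\sigma) = (r_f\, m_\sigma\, l_g,\; r_f * \lambda_\sigma,\; \rho_\sigma * l_g)$ from the globular lax-factorization, and your unfolding of that construction (using cancellation to identify the factorizations of $rf$ and $gl$, then observing that whiskering preserves invertibility) is precisely the intended argument. Your closing remark on transferring invertibility across the essentially unique choices of diagonalization and factorization objects is the right point of care and is handled, as you say, by the uniqueness-up-to-invertible-$2$-cell clauses.
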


The right truncated case is actually the one that interests us the most, as it will encompass all our examples below - and even further examples investigated in a future work.

\section{2-dimensional stable inclusions from bifactorization systems}

Now we turn to the main construction of this work: from a bifactorization system, we define a notion of \emph{left objects} - objects whose terminal map is left - and construct a bistable inclusion from the opposite inclusion of left objects and left maps between them. This mirrors the expected statement that \emph{right objects} - those whose terminal map is right - form a full reflective sub-bicategory.

\subsection{Inheritance of bifactorization in pseudoslices}

In the next definitions, $ C $ is an arbitrary object of $\mathcal{C}$. We recall that the pseudoslice over $C$ is the bicategory $ \mathcal{C}/C$ whose \begin{itemize}
    \item 0-cells are 1-cells $f : A \rightarrow C$ in $\mathcal{C}$
    \item 1-cells are invertible 2-cells $(u,\alpha) : f \rightarrow g$ as follows:
    \[ % https://tikzcd.yichuanshen.de/#N4Igdg9gJgpgziAXAbVABwnAlgFyxMJZABgBpiBdUkANwEMAbAVxiRAEEQBfU9TXfIRQBGUsKq1GLNgGFuvEBmx4CRAEzkJ9Zq0QgAQvL7LBRUZWrbpe7hJhQA5vCKgAZgCcIAWyRkQOCCQNSR02JiMQD29fagCkURDrSJBqBjoAIxgGAAV+FSEQdywHAAscCKifRGC4xASrXRAHCs8qgGZYwLrUrDBGyD6UkBKYOig2AdZYuiwGCYIpkDTMnLzTPV7sWCGGtgAdPcY0EroAAgO6ALRzvewvGABHWy4gA
\begin{tikzcd}
A \arrow[rr, "u"] \arrow[rd, "f"'] & {} \arrow[d, "\alpha \atop \simeq", near start, phantom] & B \arrow[ld, "g"] \\
                                   & C                                            &                  
\end{tikzcd} \]
    \item 2-cells $ (u, \alpha) \Rightarrow (u', \alpha')$ are the data of some $ \phi : u \Rightarrow u' $ such that $ \alpha = \alpha' g^*\phi$
    \[ % https://tikzcd.yichuanshen.de/#N4Igdg9gJgpgziAXAbVABwnAlgFyxMJZABgBpiBdUkANwEMAbAVxiRAEEQBfU9TXfIRQBGUsKq1GLNgGFuvEBmx4CRAEzkJ9Zq0QgAQvL7LBRUZWrbpe7hJhQA5vCKgAZgCcIAWyRkQOCCQNSR02JhBqACMYMCgkAGZiHjdPH0Q-AKRREOsQVwiQBjpohgAFfhUhEHcsBwALHCM81KDqTMRsq10QByaPbwS2wI7qBiwwbsgJgrqYOji9KdY2uiwGNiWCopLyk1U9cexYAq62AB0zxjQ6ugACC7oAtHuz7C8YAEc+lvSh1pBorEkABaRKWKTdJgAclsXCAA
\begin{tikzcd}
A \arrow[rr, "u"] \arrow[rd, "f"'] & {} \arrow[d, "\alpha \atop \simeq", near start, phantom] & B \arrow[ld, "g"] \\
                                   & C                                            &                  
\end{tikzcd} 
=
\begin{tikzcd}
A \arrow[rr, "u", bend left=30, ""{name=U, inner sep=0.1pt, below}] \arrow[rd, "f"'] \arrow[rr, "u'"',  ""{name=U', inner sep=0.1pt}] & {} \arrow[d, "\alpha' \atop \simeq", phantom] & B \arrow[ld, "g"] \\ \arrow[from = U, to= U', Rightarrow]{}{\phi}
                                                                           & C                                            &                  
\end{tikzcd}\]
\end{itemize}
 In particular, $\mathcal{C}$ is biequivalent to the pseudoslice $ \mathcal{C}/1$ where $ 1 $ is the biterminal object. However in our example, in the bicategory of Grothendieck toposes, $ \mathcal{S}et$ is a strict terminal object as the homcategories $ \GTop[\mathcal{E}, \Set$ are isomorphic to the terminal category. \\
 
 \begin{remark}\label{Pseudoslice is not pseudofunctorial}
 Observe that any 1-cell $u : C_1 \rightarrow C_2$ in $\mathcal{C}$ defines a pseudofunctor $ \mathcal{C}/u: \mathcal{C}/C_1 \rightarrow \mathcal{C}/C_2$ sending any $ f : A \rightarrow C_1$ on the composite $ uf : A \rightarrow C_2$. However this assignment does not extend to 2-cells: indeed, for a 2-cell $ \phi : u \Rightarrow u'$ in $\mathcal{C}$, one cannot define a pseudonatural transformation between $ \mathcal{C}/u$ and $ \mathcal{C}/u'$, for such a transformation would return morphisms in the \emph{lax} slice $ \mathcal{C}\Downarrow C_2$, as noted when examining the Beck-Chevalley condition of local right biadjoints.
 \end{remark}

\begin{remark}\label{biorthogonality in pseudoslices}

Factorization systems  are inherited be the pseudoslices as follows: for any $ C$ in $\mathcal{C}$, define $\mathcal{L}_C$, resp. $\mathcal{R}_C$, consisting of triangles $(l, \alpha)$ with $ l \in \mathcal{L}$ as on the left, resp. of triangles $(r,\alpha)$ with $ r \in \mathcal{R}$ as on the right:
\[ % https://tikzcd.yichuanshen.de/#N4Igdg9gJgpgziAXAbVABwnAlgFyxMJZABgBpiBdUkANwEMAbAVxiRAEEQBfU9TXfIRQAmclVqMWbAELdeIDNjwEiARlKrx9Zq0QgAwnL5LBasdW1S93cTCgBzeEVAAzAE4QAtkjIgcEJFEJHTY6EGoGOgAjGAYABX5lIRA3LHsACxwjEHcvH2p-JHVgqxzs3O9EYsLEIMtdECjyj0qAZgKA2otJBoAdXsY0dLoAAn66fzQx3uxPGABHcJAGLDAGyDWl9Jg6KDYN1gK6LAZ9gkPl6NiEkxU9VexYGy4gA
\begin{tikzcd}
A \arrow[rd, "a"'] \arrow[rr, "l \in \mathcal{L}"] & {} \arrow[d, "\alpha \atop \simeq", phantom] & B \arrow[ld, "b"] \\
                                   & C                                            &                  
\end{tikzcd} \quad % https://tikzcd.yichuanshen.de/#N4Igdg9gJgpgziAXAbVABwnAlgFyxMJZABgBpiBdUkANwEMAbAVxiRAEEQBfU9TXfIRQAmclVqMWbAELdeIDNjwEiARlKrx9Zq0QgAwnL5LBasdW1S93cTCgBzeEVAAzAE4QAtkjIgcEJFEJHTY6EGoGOgAjGAYABX5lIRA3LHsACxwjEHcvH2p-JHVgqxzs3O9EYsLEIMtdECjyj0qAZgKA2otJBoAdXsY0dLoAAn66fzQx3uxPGABHcJAGLDAGyDWl9Jg6KDYN1gK6LAZ9gkPl6NiEkxU9VexYGy4gA
\begin{tikzcd}
A \arrow[rd, "a"'] \arrow[rr, "r \in \mathcal{R}"] & {} \arrow[d, "\alpha \atop \simeq", phantom] & B \arrow[ld, "b"] \\
                                   & C                                            &                  
\end{tikzcd} \]

Now let us unfold the orthogonality condition in the context of a pseudoslice. Let $ (l : A_1 \rightarrow B_1, \lambda) : a_1 \rightarrow b_1$ and $(r : A_2 \rightarrow B_2,\rho) : a_2 \rightarrow b_2$ be two 1-cells in $ \mathcal{C}/C$; a pseudosquare from $ (l, \lambda)$ to $(r ,\rho)$ in $ \mathcal{C}/C$ consists of the data of two morphisms over $C$, $ (f : A_1 \rightarrow A_2, \beta) : a_1 \rightarrow a_2$ and $ (g : B_1 \rightarrow B_2, \gamma) : b_1 \rightarrow b_2$, and a 2-cell $ \alpha$ in $\mathcal{C}$ forming a pseudosquare $(f,g, \alpha) : l \rightarrow r$: 
\[ % https://tikzcd.yichuanshen.de/#N4Igdg9gJgpgziAXAbVABwnAlgFyxMJZABgBpiBdUkANwEMAbAVxiRAEEB9ARhAF9S6TLnyEUZAExVajFmwBCPfoJAZseAkW6lu0+s1aIQAYWVD1oohPJ7ZhjpwlnVwjWOTWp1fXKOKnAuYimijalN52bM5qwe7aXjIGUYEuFiEkOrZJRtGulijWuhHZIPzSMFAA5vBEoABmAE4QALZIZCA4EEjaIAx0AEYwDAAKeSEgDViVABY4IMW+vc6NLW3UnUgAzAv2dctNrYjbHV2IACw7bA37q4g9G+eXRpXzvQNDo2liE1OzN4ftB4SFIrQ7HIEgg7ddanYEqUFIC4nJBw+pQxAAVhhKKeIAAOnjBjg6AS6J00AACAnYZowACOrwYWDA9kgLNe0xgdCgbDZrHWdCwDF5BH5b0GIzG32Z2Fg-xxyMQADZcQTKnRms06FS8WSIJTqVhaQzqEz2UY+RyuTyLaLXsShSL2ab3pKvmwZVg5ZDbgB2bGIayJRYEvrNfpQbWk8k6mn0xnM1l26ic7lOsUO4W253ij5Sj1gWWsH2HIMPAAcqrxDWmEB1eoNeLjJt6ifTVrT2Yzgqz4GTubdsQLRbKfCAA
\begin{tikzcd}
A_1 \arrow[dd, "l"'] \arrow[rr, "f"] \arrow[rd, "a_1" description] & {} \arrow[d, "\beta\atop \simeq", phantom]                                          & A_2 \arrow[dd, "r"] \arrow[ld, "a_2" description] \\
{} \arrow[r, "\lambda \atop \simeq", phantom]   & C \arrow[d, "\gamma \atop \simeq", phantom] \arrow[r, "\rho \atop \simeq", phantom] & {}                             \\
B_1 \arrow[rr, "g"'] \arrow[ru, "b_1" description]                 & {}                                                                                  & B_2 \arrow[lu, "b_2" description]                
\end{tikzcd} \textrm{ with } % https://tikzcd.yichuanshen.de/#N4Igdg9gJgpgziAXAbVABwnAlgFyxMJZABgBpiBdUkANwEMAbAVxiRAEEB9ARhAF9S6TLnyEUZAExVajFmwBCPfoJAZseAkQnlp9Zq0QdOE5UPWitpKdT1zDik32kwoAc3hFQAMwBOEALZIZCA4EEjc1Ax0AEYwDAAKwhpiID5YrgAWOCA2sgYgDKYgvgFB1KFI2jL6bF5FJYGIVRWIAMy5NYY+9X6NESFhbR12IK45BTFxieaaXelZPaWIwS3t1SMAOhuMaBl0AARbdKFohxvY-jAAjuMMWGD5kA-jGTB0UGxPrOV0WIWGX1ukwSSQshnu2Fg-AofCAA
\begin{tikzcd}
A_1 \arrow[dd, "l"'] \arrow[rr, "f"] \arrow[rrdd, "\alpha \atop \simeq", phantom] &  & A_2 \arrow[dd, "r"] \\
                                                                                  &  &                     \\
B_1 \arrow[rr, "g"']                                                              &  & B_2                
\end{tikzcd} \]
related as follows:
\[ % https://tikzcd.yichuanshen.de/#N4Igdg9gJgpgziAXAbVABwnAlgFyxMJZABgBpiBdUkANwEMAbAVxiRAEEB9ARhAF9S6TLnyEUAJnJVajFmy7j+gkBmx4CRbqW7T6zVohABhJULWjNU6nrmH+0mFADm8IqABmAJwgBbJGRAcCCQtGX02d1MQL19-aiCkSTDbEDoeEGoGOgAjGAYABWF1MRBPLCcACxwomL9EUITEJKzcgqKLQwYYd2rrWQNUzkUBD286gGZ44Ka+8MMAHXncnDoAAkW6ILR1+ewfGABHDJAGLDAByHPjipg6KDZL1ni6LAYHgieTnLzC8w1DM7YWD2PhAA
\begin{tikzcd}
A_1 \arrow[rr, "f"] \arrow[rd, "a_1"'] & {} \arrow[d, "\beta \atop \simeq", phantom] & A_2 \arrow[ld, "a_2"] \\
                                       & C                                           &                      
\end{tikzcd} = % https://tikzcd.yichuanshen.de/#N4Igdg9gJgpgziAXAbVABwnAlgFyxMJZABgBoBGAXVJADcBDAGwFcYkQAhAfXJAF9S6TLnyEUAJgrU6TVu27j+gkBmx4CRcqXHSGLNohABhJULWjNUmnrmHTK4erElSxXbIMgAgj3uqRGhKu7vrsPooCZgHOWm7WHuz80jBQAObwRKAAZgBOEAC2SGQgOBBIWjKhhqn2uQVFNKVIkpW2IABGvjSM9O0wjAAKjhaGOVipABY4tXmFiBVNiC09fYPDgSCMMFnT8VUdXBHKdXMAzI1lS3ttADo3qfT5+fQABHf0pWhvN9j5MACOIG6WDAnkgoKBIAmMHoUHY4LYjXoWEY8IIiM2vX6Q3MGxB2FgM3qiAALBcGq1PKjuli1rixCAxpNppEQCckABWcnza6eHJEuZkkqXLmU9hZAVIABs3POYsM70YaAmr3en2+vwBkMYILB6Mh0NhaIhSJRxoxK2x6wZ+KwhL4lD4QA
\begin{tikzcd}
A_1 \arrow[d, "l"'] \arrow[rr, "f"]    & {} \arrow[d, "\alpha \atop \simeq"{near start}, phantom] & A_2 \arrow[d, "r"]    \\
B_1 \arrow[rr, "g"] \arrow[rd, "b_1"'] & {} \arrow[d, "\gamma \atop \simeq", phantom] & B_2 \arrow[ld, "b_2"] \\
                                       & C                                            &                      
\end{tikzcd} \textrm{ and } % https://tikzcd.yichuanshen.de/#N4Igdg9gJgpgziAXAbVABwnAlgFyxMJZABgBpiBdUkANwEMAbAVxiRAEEB9ARhAF9S6TLnyEUZAExVajFmwBCPfoJAZseAkW6lu0+s1aIQAYWVD1oomV3V9co-2kwoAc3hFQAMwBOEALZIZCA4EEjaMgZsDCDUDHQARjAMAArCGmIg3lguABY4ZiA+-oHUIUgStrKGIHRKAl6+AYjhZYgVEfYg8UqxCUmpFppGWbn59YWNSADMpaFtlZFGADpLcX7xUHQABCt0IWg7S9h+MACOMSAMWGDVkDcXOTB0UGx3rKV0WNFGbxdxiSk0pYjNdsLBHHwgA
\begin{tikzcd}
A_1 \arrow[dd, "l"'] \arrow[rd, "a_1"]        &   \\
{} \arrow[r, "\lambda \atop \simeq", phantom] & C \\
B_1 \arrow[ru, "b_1"']                        &  
\end{tikzcd} = % https://tikzcd.yichuanshen.de/#N4Igdg9gJgpgziAXAbVABwnAlgFyxMJZABgBpiBdUkANwEMAbAVxiRAEEB9ARhAF9S6TLnyEUZAExVajFmwBCPfoJAZseAkW7lp9Zq0QdOE5UPWitpKdT1zDikwLMjN40t12yDIU6uEaxZG0PGy82XzUXQIl3T31wvmkYKABzeCJQADMAJwgAWyQyEBwIJG0ZeMMGEGoGOgAjGAYABX8LQ2ysFIALHF8c-MLqEqQYirsQTP7cgsRykcQAZlDKkBTpwcQxheWQOsaWttcQLDBsWBrx72yN2YAWYdLEAFYViYAdd8Y0broAAk+dBKaAB72weRgAEdLgxTt5IGBWNRujA6FA2AikcU6FhqoZMTCGk1WuZjqdzqwnJMZqNHkgAGxvbx0Yy3JC7BaMq5seqs2pEw6ksQgTo9PpUgazV7FJ5c2zeT7ZboQUFAiAgz7gqEwuEYghYlFovWIy44HF48D6wkHElRNjkrAXRJ8IA
\begin{tikzcd}
A_1 \arrow[dd, "l"'] \arrow[r, "f"]          & A_2 \arrow[dd, "r" description] \arrow[rd, "a_2"] &    \\
{} \arrow[r, "\alpha \atop \simeq", phantom] & {} \arrow[r, "\rho \atop \simeq", phantom]        & {C} \\
B_1 \arrow[r, "g"]                           & B_2 \arrow[ru, "b_2"']                            &   
\end{tikzcd} \]

Now a morphism of pseudosquares $ ((f, \beta), (g, \gamma), \alpha) \Rightarrow ((f', \beta'), (g', \gamma'), \alpha')$ is the data of two 2-cells $ \phi : (f, \beta) \Rightarrow (f', \beta')$ and $ \psi : (g, \gamma) \Rightarrow (g',\gamma')$ in $\mathcal{C}/C$, such that $ (\phi, \psi) : (f,g, \alpha) \Rightarrow (f',g', \alpha')$ is a morphism of pseudosquare in $\mathcal{C}$.\\ 

Now requiring that $ l$ and $ r$ are orthogonal is the same as requiring that for any such pseudosquare as above in $\mathcal{C}/C$, there is a morphism $ (w : B_1 \rightarrow A_2, \omega) : b_1 \rightarrow a_2 $ and a pair of 2-cells $ \nu$, $ \mu$ such that $ (w, \nu, \mu)$ is a universal filler of the pseudosquare $ (f, g, \alpha)$ in $\mathcal{C}$ and the following 2-dimensional equations hold:
\[ \begin{tikzcd}
A_1 \arrow[rr, "f"] \arrow[rd, "a_1"'] & {} \arrow[d, "\beta \atop \simeq", phantom] & A_2 \arrow[ld, "a_2"] \\
                                       & C                                           &                      
\end{tikzcd} = % https://tikzcd.yichuanshen.de/#N4Igdg9gJgpgziAXAbVABwnAlgFyxMJZABgBpiBdUkANwEMAbAVxiRAEEB9ARhAF9S6TLnyEUZblVqMWbAEI9+gkBmx4CRbqUnV6zVohABhJULWiiZAExS9sw1yumVw9WORbKumQZDPVIhooWjbe+mz8UjBQAObwRKAAZgBOEAC2SFogOBBIVmH2IABGitQMdEUwDAAKrhaGWGDYsM4p6Uhk2bmI+dLhhnSKAkmpGYidOZllFVW15kEgyVgxABY4IAW+DK2jU11IAMybbADuGyDllTV1C0ur68MgbWNH+z3HA5xO01dzgWKLZZrHbtRBZSaIAAsHxAAB1YeU0kUoHQAATwug5NDo2HYNIwACO5wYjV8kDArGoKxgdCgbHJlOydCw20MDOJM2u8wBjWarEezz2EIArDD4ekYDE0RisTi8YTiaT6QRGdTacqKeccMzWeAVRzfjceU0sC0+BQ+EA
\begin{tikzcd}
A_1 \arrow[rr, bend left=35, "f", "\nu \atop \simeq"'{inner sep=2pt}] \arrow[rd, "a_1"'] \arrow[r, "l" description]   &    B_1 \arrow[d, "b_1" description, "\omega \atop \simeq"{very near start, inner sep=7pt}, "\lambda \atop \simeq"'{very near start, inner sep=7pt}]      \arrow[r, "w" description]                                                                                      &  A_2 \arrow[ld, "a_2"]     \\ 
    & C  &
\end{tikzcd} \textrm{ and }  % https://tikzcd.yichuanshen.de/#N4Igdg9gJgpgziAXAbVABwnAlgFyxMJZABgBpiBdUkANwEMAbAVxiRAEEB9ARhAF9S6TLnyEUZblVqMWbAEI9+gkBmx4CRbuSn1mrRB04AmJULWjNpSdV2yDAYX5SYUAObwioAGYAnCAFskMhAcCCQjGxl9EC9TGL9AxGDQpC1pPTYGEGoGOgAjGAYABWF1MRAfLFcACxw43wDU6hTECPS7EAB3bJBcguLSiwMsMGxYeoTw5rDEAGZIjIM6YwnGxDSW+fbovMUc-MKS8w0DSpq6vgo+IA
\begin{tikzcd}
A_1 \arrow[r, "f"] \arrow[d, "l"']                & A_2 \arrow[d, "a_2"] \\
B_1 \arrow[ru, "w" description, " \nu \atop \simeq"{inner sep=3pt}, " \omega \atop \simeq"'{inner sep=3pt}] \arrow[r, "b_1"'] & C                   
\end{tikzcd} =  \begin{tikzcd}
A_1 \arrow[rd, "a_1" description, "\lambda \atop \simeq"'{inner sep=3pt}, "\beta \atop \simeq"{inner sep=3pt}] \arrow[r, "f"] \arrow[d, "l"']                & A_2 \arrow[d, "a_2"] \\
B_1  \arrow[r, "b_1"'] & C                   
\end{tikzcd} \] 
\[
% https://tikzcd.yichuanshen.de/#N4Igdg9gJgpgziAXAbVABwnAlgFyxMJZABgBoBGAXVJADcBDAGwFcYkQAhAfXJAF9S6TLnyEU5UsWp0mrdgEEuAJn6CQGbHgJEJVGgxZtEnZaqGbRO0kukG5xgML9pMKAHN4RUADMAThABbJDIQHAgkJX1ZIxA3EBpGegAjGEYABWEtMRAsMGxYMxA-QKQJUPDESJlDdl9C4qDEELCkAGYomuMknniQRJT0zMtjXyw3AAscev9GqpbEdur7EG6VAR8Z4Jp5srsYgHdnPiA
\begin{tikzcd}
                                                                  & A_2 \arrow[dd, bend left=45, "a_2", "\rho \atop \simeq"'{inner sep=3pt}] \arrow[d, "r"]   \\
B_1 \arrow[r, "g" description, "\mu \atop \simeq"{inner sep=7pt, very near end}, "\gamma \atop \simeq"'{inner sep=7pt, very near end}] \arrow[rd, "b_1"'] \arrow[ru, "w"] & B_2 \arrow[d, "b_2"] \\
                                                                  & C                   
\end{tikzcd} = % https://tikzcd.yichuanshen.de/#N4Igdg9gJgpgziAXAbVABwnAlgFyxMJZABgBpiBdUkANwEMAbAVxiRACEB9ARhAF9S6TLnyEUAJnJVajFmwCCncf0EgM2PASLdS3afWatEIAMIqhG0dqnUDc4-2kwoAc3hFQAMwBOEALZIZCA4EEg6MoZsAO7mID7+YdQhSJIR9iB0SrHxAYhByYipdkYgAEY8INQMdKUwDAAKwppiIN5YLgAWONm+uQDMSaGFtrIlADpj-jAudAAEE3QhaPNj2H4wAI6VIAxYYCWQ+9sdMHRQbIesSXRYDBcEVzs1dY2WWsZ72LCOfEA
\begin{tikzcd}
&  A_2 \arrow[dd, "a_2", ] \\
B_1 \arrow[r, phantom, "\omega \atop \simeq" description] \arrow[ru, "w"] \arrow[rd, "b_1"'] & {} \\
                                       & C                              
\end{tikzcd} \textrm{ and }
% https://tikzcd.yichuanshen.de/#N4Igdg9gJgpgziAXAbVABwnAlgFyxMJZABgBoBGAXVJADcBDAGwFcYkQAhAfXJAF9S6TLnyEU5CtTpNW7bgCZ+gkBmx4CRMsSkMWbRCACCXRQKFrRRCdpq7ZBgML8pMKAHN4RUADMAThABbJDIQHAgkeVsZfRAAdyUffyDESNDwxABmKL12eh4EkD9A4JowpAlpHIM3EBpGegAjGEYABWF1MRBfLDcACxwCouSKsszs+xAG-LrG5raLDQNuvoGzQqSI0vSKuxjfWpB6ptb2ywMsMGxYZz4gA
\begin{tikzcd}
A_2 \arrow[r, "a_1"] \arrow[rd, "r" description, "\rho \atop \simeq"{inner sep=3pt}, "\mu \atop \simeq"'{inner sep=3pt}] & C                     \\
B_1 \arrow[u, "w"] \arrow[r, "g"']               & B_2 \arrow[u, "b_1"']
\end{tikzcd} = 
\begin{tikzcd}
A_2 \arrow[r, "a_1"]  & C                     \\
B_1 \arrow[u, "w"] \arrow[r, "g"']   \arrow[ru, "b_1" description, "\omega \atop \simeq"{inner sep=3pt}, "\gamma \atop \simeq"'{inner sep=3pt}]            & B_2 \arrow[u, "b_1"']
\end{tikzcd}
\]
with the desired uniqueness up to a unique invertible 2-cell, and moreover, for any morphism of pseudosquare $(\phi, \psi) : ((f,\beta),(g ,\gamma), \alpha) \Rightarrow ((f',\beta'),(g', \gamma'), \alpha')$, we get a unique 2-cell $ \sigma : (w, \omega) \Rightarrow (w', \omega')$ in $\mathcal{C}/C$ satisfying coherence conditions the reader might guess from the diagrams above. In particular, the domain functor $ \mathcal{C}/C \rightarrow \mathcal{C}$ sends a universal filler of a pseudosquare in $ \mathcal{C}/C$ to a universal filler of the underlying pseudosquare in $\mathcal{C}$.
\end{remark}

\begin{remark}\label{bifactorization in pseudoslice}
Now for a given 1-cell $ (f, \alpha) : a \rightarrow b$ in $ \mathcal{C}/C$, the factorization of $ f$ in $\mathcal{C}$ returns a factorization 
\[ % https://tikzcd.yichuanshen.de/#N4Igdg9gJgpgziAXAbVABwnAlgFyxMJZABgBpiBdUkANwEMAbAVxiRAEEQBfU9TXfIRQAmclVqMWbAELdeIDNjwEiARlKrx9Zq0QgAwnL5LBasdW1S9+gPoAzbuJhQA5vCKg7AJwgBbJGQgOBBIAMwWkrogDPYg1Ax0AEYwDAAK-MpCIFhg2LBGIN5+YdTBSOoSOmxesfFJKekmKno5eaw8nj7+iIFliBXJYFBhgZZRDh2FXQGlIYiilVYgdHHR9WkZpnpeWC4AFjgFRd3hQXMLY2yJNQ51yRtNWa1Y+ZPH5bNIF5FXjlxAA
\begin{tikzcd}[sep=large]
A \arrow[r, "l_f" description] \arrow[rr, "f", bend left=35, "\alpha_f \atop \simeq"'{inner sep=3pt}] \arrow[rd, "a"', " \alpha \alpha_f^{-1} \atop \simeq"{inner sep=1pt}] & C_f \arrow[r, "r_f" description] \arrow[d, "br_f" description] & B \arrow[ld, "b", "="'{inner sep=3pt}] \\
                                                                           & C                                                              &                  
\end{tikzcd} \]
whose right part has the identity $ 1_{br_f}$ as underlying 2-cell in $\mathcal{C}$, while the left part makes use of the inverse of $\alpha_f$ provided by the factorization up to an invertible 2-cell in $\mathcal{C}$, the uniqueness of which inducing the uniqueness of the factorization in $ \mathcal{C}/C$.
\end{remark} 

\begin{proposition}
Let $\mathcal{C}$ be a bicategory with biproducts endowed with a factorization system $(\mathcal{L},\mathcal{R})$: then for any $ C$ in $ \mathcal{C}$, the pseudoslice $ \mathcal{C}/C$ inherits a factorization system $ (\mathcal{L}_C, \mathcal{R}_C)$ as defined above.
\end{proposition}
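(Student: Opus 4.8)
The plan is to assemble the two preparatory remarks, \cref{biorthogonality in pseudoslices} and \cref{bifactorization in pseudoslice}, into the two defining clauses of a bifactorization system on $\mathcal{C}/C$: the existence of $(\mathcal{L}_C,\mathcal{R}_C)$-factorizations, and the bi-orthogonality $\mathcal{L}_C\perp\mathcal{R}_C$. The guiding principle is that the domain projection $\dom\colon \mathcal{C}/C\to\mathcal{C}$ \emph{creates} both the factorizations and the universal fillers: every piece of structure required in the slice is obtained by solving the corresponding problem in $\mathcal{C}$ and then uniquely decorating it with the coherence $2$-cells over $C$. The biproduct hypothesis plays no role in this inheritance and appears superfluous.

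First I would treat factorization. Given a $1$-cell $(f,\alpha)\colon a\to b$ in $\mathcal{C}/C$, I apply the $(\mathcal{L},\mathcal{R})$-factorization of $f$ in $\mathcal{C}$, obtaining $l_f\in\mathcal{L}$, $r_f\in\mathcal{R}$ and $\alpha_f\colon r_fl_f\simeq f$. As in \cref{bifactorization in pseudoslice}, I place the intermediate object at $br_f\colon C_f\to C$; the right factor is $(r_f,1_{br_f})\in\mathcal{R}_C$ and the left factor is $(l_f,\alpha\alpha_f^{-1})\in\mathcal{L}_C$, and their composite recovers $(f,\alpha)$. Uniqueness of this factorization up to equivalence in $\mathcal{C}/C$ is inherited directly from uniqueness in $\mathcal{C}$: an equivalence $i$ comparing two $\mathcal{C}$-factorizations, together with the invertible comparison $2$-cells $\lambda,\rho$, promotes to $\mathcal{C}/C$ because the right comparison carries the identity $2$-cell over $C$, and the uniqueness of the $\mathcal{C}$-comparison forces uniqueness of its slice lift.

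Second I would verify bi-orthogonality. By \cref{biorthogonality in pseudoslices}, a pseudosquare in $\mathcal{C}/C$ from $(l,\lambda)$ to $(r,\rho)$ amounts to a pseudosquare $(f,g,\alpha)\colon l\to r$ in $\mathcal{C}$ together with the over-$C$ triangles $\beta\colon a_2f\simeq a_1$ and $\gamma\colon b_2g\simeq b_1$ satisfying the compatibility equations displayed there. Since $l\perp r$ in $\mathcal{C}$, the underlying square admits a universal filler $w$ with $\nu\colon wl\simeq f$ and $\mu\colon rw\simeq g$. I lift $w$ to a $1$-cell over $C$ by defining the structure $2$-cell $\omega\colon a_2w\simeq b_1$ as the pasting $a_2w\simeq b_2rw\simeq b_2g\simeq b_1$ built from $\rho^{-1}$, $\mu$ and $\gamma$. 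The genuine content of the proof is the verification that $(w,\omega)$, together with $\nu$ and $\mu$, satisfies the four coherence equations of the remark; in particular the compatibility of $\omega$ with the top-face cells $\nu,\beta,\lambda$ holds precisely because $(f,g,\alpha)$ was assumed to commute over $C$, and this single diagram chase is where the hypotheses are actually spent.

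The $2$-dimensional clause then follows formally. Given a morphism of pseudosquares in $\mathcal{C}/C$, its underlying morphism of pseudosquares in $\mathcal{C}$ induces a unique $2$-cell $\sigma\colon w\Rightarrow w'$ between the $\mathcal{C}$-fillers; one checks that $\sigma$ is automatically a $2$-cell in $\mathcal{C}/C$, i.e.\ compatible with the lifted $\omega$'s, by cancelling the invertible structure $2$-cells. Packaging these observations yields an equivalence $\mathcal{C}/C[(l,\lambda),(r,\rho)]\simeq\ps[2,\mathcal{C}/C]((l,\lambda),(r,\rho))$, which is exactly the bipullback condition defining $\mathcal{L}_C\perp\mathcal{R}_C$. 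The main obstacle is thus the lifting step in the previous paragraph — confirming that the $\mathcal{C}$-filler carries a \emph{unique} over-$C$ enhancement and that all coherence squares commute; everything else is bookkeeping transported along $\dom$.
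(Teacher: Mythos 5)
There is a genuine gap, and it sits exactly where the paper's own proof does its work. What you prove is (i) existence of $(\mathcal{L}_C,\mathcal{R}_C)$-factorizations and (ii) mutual bi-orthogonality $\mathcal{L}_C\perp\mathcal{R}_C$, i.e.\ the inclusions $\mathcal{L}_C\subseteq{}^\perp\mathcal{R}_C$ and $\mathcal{R}_C\subseteq\mathcal{L}_C^\perp$. But the paper's notion of bifactorization system asks for a bi-orthogonality \emph{structure}, meaning the two classes are each other's full orthogonal complements: $\mathcal{L}_C={}^\perp\mathcal{R}_C$ and $\mathcal{R}_C=\mathcal{L}_C^\perp$; the paper's proof opens by announcing precisely these equalities and is devoted entirely to the converse inclusions ${}^\perp\mathcal{R}_C\subseteq\mathcal{L}_C$ and $\mathcal{L}_C^\perp\subseteq\mathcal{R}_C$. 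The content you supply in (i) and (ii) is what the paper disposes of beforehand in \cref{bifactorization in pseudoslice} and afterwards in \cref{diagonalization of pseudosquares in pseudoslices}; your proposal never addresses the converse inclusions, so it proves a strictly weaker statement than the proposition.

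Your remark that the biproduct hypothesis ``plays no role in this inheritance and appears superfluous'' is the symptom of this gap. Biproducts are used precisely for the hard inclusion ${}^\perp\mathcal{R}_C\subseteq\mathcal{L}_C$: if $(l,\lambda)$ is left bi-orthogonal in $\mathcal{C}/C$ to every map of $\mathcal{R}_C$, one must show that the underlying $l$ is orthogonal in $\mathcal{C}$ to an \emph{arbitrary} $r\colon A'\to B'$ in $\mathcal{R}$, which carries no structure over $C$ at all. The paper manufactures such structure by replacing $r$ with $(r,1_C)\colon A'\times C\to B'\times C$ (still in $\mathcal{R}$ by stability under bilimits), which lies over $C$ via the second projection; it then diagonalizes the resulting pseudosquare in the slice and recovers a filler for the original square by whiskering with $p_1$, with uniqueness of the comparison 2-cell coming from the projections being jointly full and faithful. (The other inclusion, $\mathcal{L}_C^\perp\subseteq\mathcal{R}_C$, uses a pasting trick with $\rho$ and needs no products.) One could in principle avoid biproducts by a retract argument --- factor a map of ${}^\perp\mathcal{R}_C$ in the slice, diagonalize it against its own right part, conclude that the right part is an equivalence, and invoke closure of $\mathcal{L}$ under composition with equivalences and under invertible 2-cells --- but you would have to actually run that argument; as written, your proof stops at mutual orthogonality plus factorization and leaves the defining equalities of the orthogonality structure unproved.
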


\begin{proof}
We must prove first that $(\mathcal{L}_C, \mathcal{R}_C)$ is an orthogonality structure, that is $ \mathcal{L}_C = ^\perp\mathcal{R}_C$ and $ \mathcal{R}_C = \mathcal{L}_C^\perp$. It is easy to see that the domain functor $ \mathcal{C}/C \rightarrow \mathcal{C}$ sends $ \mathcal{R}_C$ into $ \mathcal{R}= \mathcal{L}^\perp$: indeed, if $ (r, \rho) : a \rightarrow b$ is in $\mathcal{L}_C^\perp$, then for an arrow $ l : A_1 \rightarrow A_2$ in $\mathcal{L}$ and a pseudosquare $ (f,g,\alpha) : l \rightarrow r$ in $\mathcal{C}$, pasting with $ \rho $ produces a pseudosquare $((f, 1_{af}), (g,1_{bg}), \alpha)$ in $ \mathcal{C}/C$
\[ % https://tikzcd.yichuanshen.de/#N4Igdg9gJgpgziAXAbVABwnAlgFyxMJZABgBpiBdUkANwEMAbAVxiRAEEByEAX1PUy58hFAEZyVWoxZt2vfiAzY8BImQBMk+s1aIQAIW58By4UXGbq2mXv3yTQ1SnWlRW6bpABhe4sEqREld3HTZfJUdA8TcrDzCeSRgoAHN4IlAAMwAnCABbJDIQHAgkFylQvQYQagY6ACMYBgAFfzM9LKxkgAscX2y8gupipHFymxAMvpz8xFHhxABmWIqQLOqQWobm1qcQLDBsWCmBxDL5pbHPZOOZuZLEABZl8bobpAv5p8u2OrfEAFYhvcAGzPTwAHXBjDQXToAAJIXRimgEeDsLkYABHdYMfaeSBgVjULowOhQNgEolFOhYKp6Sk4+qNFqmXb7Q6sYwTaZIUFFe5fawQ8FZLoQVFIiAoyHorE4vEUghUklkxWE9Y4Gl08BKxlbFmRNjsrBHBI8IA
\begin{tikzcd}
A' \arrow[dd, "l"'] \arrow[r, "f"]           & A \arrow[dd, "r" description] \arrow[rd, "a"] &   \\
{} \arrow[r, "\alpha \atop \simeq", phantom] & {} \arrow[r, "\rho \atop \simeq", phantom]    & C \\
B' \arrow[r, "g"]                            & B \arrow[ru, "b"]                             &  
\end{tikzcd} \]
Now, as $ (r, \rho)$ is in $ \mathcal{L}_C^\perp$, there exists a universal filler in $ \mathcal{C}/C$ which returns a universal filler for the pseudosquare $ (f,g,\alpha)$ in $\mathcal{C}$, ensuring that $ r$ is in $\mathcal{R}$ so that $ (r, \rho)$ was in $\mathcal{R}_C$. \\ 

The dual statement requires the existence of products in order to produce a pseudosquare over $C$ from an arbitrary pseudosquare in $\mathcal{C}$: let $ (l,\lambda) : a \rightarrow b$ be in $^\perp \mathcal{R}_C $, $r : A' \rightarrow B'$ in $\mathcal{R}$ and $ (f,g, \alpha) : l \rightarrow r$ a pseudosquare in $ \mathcal{C}/C$. Then by stability of $\mathcal{R}$-maps under bilimits, the product map $ (r,1_C) : A'\times C \rightarrow B'\times C$ is in $\mathcal{R}$, and pasting $ \alpha$ with the second projections returns a pseudosquare in $\mathcal{C}/C$ as below 
\[ % https://tikzcd.yichuanshen.de/#N4Igdg9gJgpgziAXAbVABwnAlgFyxMJZABgBpiBdUkANwEMAbAVxiRAEEQBfU9TXfIRQBGclVqMWbdgHIAOnLwBbeAAIAwt14gM2PASJkATOPrNWiEACEtfPYKKiT1M1MtWZqhcrWaedgQMUI1JhU0kLED9tXUChElDw8zZbHX59eNEwlwiUrnEYKABzeCJQADMAJwglJDIQHAgkEIlkywYQagY6ACMYBgAFdIdLSqwigAscVKqauupGpFFWtxAACnLSOgBKGeraxGXFxABmHLb1ytJVYQB9dV2u3v6h+yCQLDBsWD25xBbjmcVpE1kVSD1HiBun1BsN3mNJtN-CBZgcjk1EAAWc6rNC3Iy-A5A47Y4FsPEEp4w15xNgIqaEpAAVgWGIAbDjIgpGGgJnQvHI6I00ALsCoAI6dKGfSKQMCsagTGB0KBsOUKhp0LAdSzqqXQl5woQfL5YH7I1FIDkNDGk1yRAC8+plaoIGqVKtd8qlOC1OvAbv1z1hb2Nn2+rHyXCAA
\begin{tikzcd}
A \arrow[dd, "l"'] \arrow[r, "{(f,a)}"]      & A'\times C \arrow[dd, "{(r, 1_C)}" description] \arrow[rd, "p_2"] &   \\
{} \arrow[r, "\alpha \atop \simeq", phantom] & {} \arrow[r, "=", phantom]                                        & C \\
B \arrow[r, "{(g,b)}"']                      & B' \times C \arrow[ru, "p_2"']                                    &  
\end{tikzcd} = % https://tikzcd.yichuanshen.de/#N4Igdg9gJgpgziAXAbVABwnAlgFyxMJZABgBpiBdUkANwEMAbAVxiRAEEQBfU9TXfIRQBGclVqMWbdgHIAOnLwBbeAAIAwt14gM2PASJkATOPrNWiEACEtfPYKKiT1M1MtWZqhcrWaedgQMRUmFTSQsQP21dQKESELDzNm5xGCgAc3giUAAzACcIJSQyEBwIJCMXcLYGEGoGOgAjGAYABX59IRA8rHSACxxbEHzC4uoypFEJJMsAChzSOgBKIZGixErS8sQAZiqZkFn00kaV+qaW9vsg7t6B1YL1qYnEABZ9tx0AfSMH0d3xtt3tNPmgfnUQA1mm0Og5LD1+oN-MNHmMtkhga4InQIVDLrCblgwNhYH91psXpjqpZGriLjDrl0iSTWMi1kgAKyAjEfCIKBpKRpQOheOR0MpoUXYFQAR1xRIikDArGofRgdCgbCVKtKdCwtUs2rp0KusTYzKwpK4FC4QA
\begin{tikzcd}
A \arrow[dd, "l"'] \arrow[r, "{(f,a)}"] \arrow[rd, "a" description, "="{inner sep=3pt}] & A'\times C \arrow[d, "p_2"]   \\
{} \arrow[r, "\lambda \atop \simeq", phantom]                       & C                             \\
B \arrow[r, "{(g,b)}"'] \arrow[ru, "b" description, "="'{inner sep=3pt}]                 & B' \times C \arrow[u, "p_2"']
\end{tikzcd} \]
which admits a universal filler $ ((w,\omega), \nu, \mu)$ in $\mathcal{C}/C$; by pasting with the limit 2-cell associated to the first projections as below 
\[ % https://tikzcd.yichuanshen.de/#N4Igdg9gJgpgziAXAbVABwnAlgFyxMJZABgBpiBdUkANwEMAbAVxiRAEEQBfU9TXfIRRkAjFVqMWbAELdeIDNjwEiI8uPrNWiDgHIAOvrwBbeAAIAwnL5LBq0mOqapO6QaNZTcS9YX9lQsgATOpOktp6vooCKighjhJaMrrc4jBQAObwRKAAZgBOEMZIZCA4EEghiS4gABS5pHQAlL4FRZXU5UgALGFJOmgA+iKthcWIvWUViACsfTX5o+2IpV2IatURDCDUDHQARjAMAAr+djr5WBkAFjhL4xtrAMzzEbUZpPstuwdHp7axECXG53Hh5MZIF5TJBzTZsIZBHYgPaHE5nQHA273DrQxBQ5xvfIOQYWb7I35ogFCEBYMDYWDY9adaZVAlsADuSJRf3R1Np9NYYJAbXGVTWsLZOgAvFyKf8Yny6VgGbtaRFIGBWNRrjA6FA2BqtWU6FhtjpDakuEA
\begin{tikzcd}[sep=large]
A \arrow[r, "{(f,a)}"] \arrow[d, "l"']              & A'\times C \arrow[r, "p_1"] \arrow[d, "{(r,1_C)}" description] \arrow[rd, "=", phantom] & A' \arrow[d, "r"] \\
B \arrow[r, "{(g,b)}"'] \arrow[ru, "w" description, "\nu \atop \simeq"{inner sep=3pt}, "\mu \atop \simeq"'{inner sep=3pt}] & B'\times C \arrow[r, "p_1"']                                                            & B'               
\end{tikzcd} \]
we get a filler $ (p_1 w, p_1^*\nu, p_1^*\mu)  $ for the pseudosquare $ (f,g,\alpha)$ in $ \mathcal{C}$. We must now check that it possesses the desired universal property. Suppose we have another filler $( w', \nu', \mu')$: then we get another filler 
\[ % https://tikzcd.yichuanshen.de/#N4Igdg9gJgpgziAXAbVABwnAlgFyxMJZABgBpiBdUkANwEMAbAVxiRAEEQBfU9TXfIRRkAjFVqMWbAELdeIDNjwEiI8uPrNWiDgHIAOvrwBbeAAIAwnL5LBq0mOqapO6QaNZTcS93EwoAObwRKAAZgBOEMZIZCA4EEgATE6S2iAAFKGkdACU1iARUTHU8UhqElpsDCDUDHQARjAMAAr8ykIg4VgBABY4+YXRiOWliADMKZU66QGk9Xm1DU2ttio6Xb39PGGRQ8lxCeOTLhnhDgD6FgsgdY0tbXY6WGDYsAO7ZSWH+85p6QDuc2ut2WDzWIGer1YXAoXCAA
\begin{tikzcd}
A \arrow[r, "{(f,a)}"] \arrow[d, "l"']                    & A'\times C \arrow[d, "{(r,1_C)}"] \\
B \arrow[r, "{(g,b)}"'] \arrow[ru, "{(w',b)}" description] & B'\times C                                   
\end{tikzcd} \]
But then by uniqueness of the filler of the pseudocommutative square $ (((f,a), p_2(f,a) \simeq a), ((g,b), p_2(g,b) \simeq b), \alpha)$, there exists a unique invertible 2-cell $ \sigma : w \simeq (w',b) $ such that 
\[ % https://tikzcd.yichuanshen.de/#N4Igdg9gJgpgziAXAbVABwnAlgFyxMJZABgBpiBdUkANwEMAbAVxiRACEQBfU9TXfIRQAmclVqMWbAIIByADry8AW3gACAMLdeIDNjwEiARlJHx9Zq0QgtPPvsHGx1C1OvdxMKAHN4RUABmAE4QykhkIDgQSCYSlmwA7iDUDHQARjAMAAr8BkIgWGDYsNqBIWGIEVEx1BlgUEgAzBGuViAAFAmypGkAlKUgwaHh1NWIonFuIGnJIKkZ2bmO1kFY3gAWOANDFbFjE61saAD6wtvlTaPR4y6SbYqhMN50aop0UWiv8tiqAI6zDEKbUgYFY1HWMDoDWsILBkToWAYbFhAPSmRyDkM1kKxVYXAoXCAA
\begin{tikzcd}
B \arrow[rr, "w" description] \arrow[rr, "{(w',b)}", bend left, end anchor=160, " \sigma \atop \simeq"'{inner sep=3pt}] \arrow[rd, "b"'] & {} \arrow[d, "\omega \atop \simeq", phantom] & A'\times C \arrow[ld, "p_2"] \\
                                                                                 & C                                            &                             
\end{tikzcd} = % https://tikzcd.yichuanshen.de/#N4Igdg9gJgpgziAXAbVABwnAlgFyxMJZABgBpiBdUkANwEMAbAVxiRACEQBfU9TXfIRQAmclVqMWbAIIByADry8AW3gACAMLdeIDNjwEiARlJHx9Zq0QgtPPvsHGx1C1OvdxMKAHN4RUABmAE4QykhkIDgQSKISlmwARiDUDHQJMAwACvwGQiBBWN4AFjjagSFhiCaR0Yixrla6APrCZSDBoUgAzNRRMS6SjQC8ySAMWGCNkJOjRTB0UGzTrL10WAxLBCtjaRnZDobWE9iwbR2VEX1VA-HWABQA7rKkCQCUHlxAA
\begin{tikzcd}
B \arrow[rd, "b"'] \arrow[rr, "{(w',b)}"] & {} \arrow[d, "=", phantom] & A'\times C \arrow[ld, "p_2"] \\
                                          & C                          &                             
\end{tikzcd} \]
and by whiskering we get an invertible 2-cell $ p_1 ^* \sigma : p_1 (w', b) = w' \simeq p_1w$. We then must check the uniqueness of this invertible 2-cell: but this is a consequence of the fact that projections are jointly full and faithful morphisms.
\end{proof}
 
\begin{remark}\label{diagonalization of pseudosquares in pseudoslices}
The diagonalization of pseudosquares in $\mathcal{C}/C$ is obtained from the diagonalization of the underlying pseudosquare in $\mathcal{C}$: if $ ((f,\beta), (g,\gamma), \alpha) : (l, \lambda) \rightarrow (r,\rho)$ is a pseudosquare in $\mathcal{C}/C$ with $ l$ in $\mathcal{L}$ and $ r$ in $\mathcal{R}$, then the pseudosquare $(f,g,\alpha) : l \rightarrow r$ in $ \mathcal{C}$ has a unique diagonalization $(d_\alpha, \lambda_\alpha, \rho_\alpha)$. Then the invertible 2-cell $ \omega_\alpha$ under $ d_\alpha$ can be chosen to be the composite 
% https://q.uiver.app/?q=WzAsNCxbMCwxLCJCXzEiXSxbMSwwLCJBXzIiXSxbMSwxLCJCXzIiXSxbMiwyLCJDIl0sWzAsMSwiZF9cXGFscGhhIl0sWzEsMiwiciJdLFswLDIsImciLDFdLFsyLDMsImJfMiIsMV0sWzEsMywiYV8yIiwwLHsiY3VydmUiOi0zfV0sWzAsMywiYl8xIiwyLHsiY3VydmUiOjN9XSxbNyw5LCJcXGdhbW1hXFxhdG9wIFxcc2ltZXEiLDIseyJzaG9ydGVuIjp7InNvdXJjZSI6MjAsInRhcmdldCI6MjB9LCJzdHlsZSI6eyJib2R5Ijp7Im5hbWUiOiJub25lIn0sImhlYWQiOnsibmFtZSI6Im5vbmUifX19XSxbMiw4LCJcXHJobyBcXGF0b3AgXFxzaW1lcSIsMix7InNob3J0ZW4iOnsidGFyZ2V0IjoyMH0sInN0eWxlIjp7ImJvZHkiOnsibmFtZSI6Im5vbmUifSwiaGVhZCI6eyJuYW1lIjoibm9uZSJ9fX1dLFs0LDIsIlxccmhvX1xcYWxwaGEgXFxhdG9wIFxcc2ltZXEiLDEseyJzaG9ydGVuIjp7InNvdXJjZSI6MjB9LCJzdHlsZSI6eyJib2R5Ijp7Im5hbWUiOiJub25lIn0sImhlYWQiOnsibmFtZSI6Im5vbmUifX19XV0=
\[\begin{tikzcd}
	& {A_2} \\
	{B_1} & {B_2} \\
	&& C
	\arrow[""{name=0, anchor=center, inner sep=0}, "{d_\alpha}", from=2-1, to=1-2]
	\arrow["r", from=1-2, to=2-2]
	\arrow["g"{description}, from=2-1, to=2-2]
	\arrow[""{name=1, anchor=center, inner sep=0}, "{b_2}"{description}, from=2-2, to=3-3]
	\arrow[""{name=2, anchor=center, inner sep=0}, "{a_2}", curve={height=-18pt}, from=1-2, to=3-3]
	\arrow[""{name=3, anchor=center, inner sep=0}, "{b_1}"', curve={height=18pt}, from=2-1, to=3-3]
	\arrow["{\gamma\atop \simeq}"', Rightarrow, draw=none, from=1, to=3]
	\arrow["{\rho \atop \simeq}"', Rightarrow, draw=none, from=2-2, to=2]
	\arrow["{\rho_\alpha \atop \simeq}"{description}, Rightarrow, draw=none, from=0, to=2-2]
\end{tikzcd}\]
Indeed this composite will then satisfy the following equality as a consequence of the 2-dimensional equalities satisfied by the pseudosquare in $\mathcal{C}/C$ as explained in \cref{biorthogonality in pseudoslices}:
% https://q.uiver.app/?q=WzAsNSxbMCwxLCJCXzEiXSxbMSwwLCJBXzIiXSxbMSwxLCJCXzIiXSxbMiwyLCJDIl0sWzAsMCwiQV8xIl0sWzAsMSwiZF9cXGFscGhhIiwxXSxbMSwyLCJyIl0sWzAsMiwiZyIsMV0sWzIsMywiYl8yIiwxXSxbMSwzLCJhXzIiLDAseyJjdXJ2ZSI6LTN9XSxbMCwzLCJiXzEiLDIseyJjdXJ2ZSI6M31dLFs0LDAsImwiLDJdLFs0LDEsImYiXSxbOCwxMCwiXFxnYW1tYVxcYXRvcCBcXHNpbWVxIiwyLHsic2hvcnRlbiI6eyJzb3VyY2UiOjIwLCJ0YXJnZXQiOjIwfSwic3R5bGUiOnsiYm9keSI6eyJuYW1lIjoibm9uZSJ9LCJoZWFkIjp7Im5hbWUiOiJub25lIn19fV0sWzIsOSwiXFxyaG8gXFxhdG9wIFxcc2ltZXEiLDIseyJzaG9ydGVuIjp7InRhcmdldCI6MjB9LCJzdHlsZSI6eyJib2R5Ijp7Im5hbWUiOiJub25lIn0sImhlYWQiOnsibmFtZSI6Im5vbmUifX19XSxbNSwyLCJcXHJob19cXGFscGhhIFxcYXRvcCBcXHNpbWVxIiwxLHsic2hvcnRlbiI6eyJzb3VyY2UiOjIwfSwic3R5bGUiOnsiYm9keSI6eyJuYW1lIjoibm9uZSJ9LCJoZWFkIjp7Im5hbWUiOiJub25lIn19fV0sWzQsNSwiXFxsYW1iZGFfXFxhbHBoYSBcXGF0b3AgXFxzaW1lcSIsMSx7InNob3J0ZW4iOnsidGFyZ2V0IjoyMH0sInN0eWxlIjp7ImJvZHkiOnsibmFtZSI6Im5vbmUifSwiaGVhZCI6eyJuYW1lIjoibm9uZSJ9fX1dXQ==
\[ \begin{tikzcd}
	{A_1} & {A_2} \\
	{B_1} & C
	\arrow[""{name=0, anchor=center, inner sep=0}, "{d_\alpha}"{description}, from=2-1, to=1-2]
	\arrow["{a_2}", from=1-2, to=2-2]
	\arrow["{b_1}"', from=2-1, to=2-2]
	\arrow["l"', from=1-1, to=2-1]
	\arrow["f", from=1-1, to=1-2]
	\arrow["{\omega_\alpha \atop \simeq}"{description}, Rightarrow, draw=none, from=0, to=2-2]
	\arrow["{\lambda_\alpha \atop \simeq}"{description}, Rightarrow, draw=none, from=1-1, to=0]
\end{tikzcd} = \begin{tikzcd}
	{A_1} & {A_2} \\
	{B_1} & {B_2} \\
	&& C
	\arrow[""{name=0, anchor=center, inner sep=0}, "{d_\alpha}"{description}, from=2-1, to=1-2]
	\arrow["r", from=1-2, to=2-2]
	\arrow["g"{description}, from=2-1, to=2-2]
	\arrow[""{name=1, anchor=center, inner sep=0}, "{b_2}"{description}, from=2-2, to=3-3]
	\arrow[""{name=2, anchor=center, inner sep=0}, "{a_2}", curve={height=-18pt}, from=1-2, to=3-3]
	\arrow[""{name=3, anchor=center, inner sep=0}, "{b_1}"', curve={height=18pt}, from=2-1, to=3-3]
	\arrow["l"', from=1-1, to=2-1]
	\arrow["f", from=1-1, to=1-2]
	\arrow["{\gamma\atop \simeq}"', Rightarrow, draw=none, from=1, to=3]
	\arrow["{\rho \atop \simeq}"', Rightarrow, draw=none, from=2-2, to=2]
	\arrow["{\rho_\alpha \atop \simeq}"{description}, Rightarrow, draw=none, from=0, to=2-2]
	\arrow["{\lambda_\alpha \atop \simeq}"{description}, Rightarrow, draw=none, from=1-1, to=0]
\end{tikzcd}=\begin{tikzcd}
	{A_1} & {A_2} \\
	{B_1} & C
	\arrow["{a_2}", from=1-2, to=2-2]
	\arrow["{b_1}"', from=2-1, to=2-2]
	\arrow["l"', from=1-1, to=2-1]
	\arrow["f", from=1-1, to=1-2]
	\arrow[""{name=0, anchor=center, inner sep=0}, from=1-1, to=2-2]
	\arrow["{\lambda \atop \simeq}"{description}, Rightarrow, draw=none, from=0, to=2-1]
	\arrow["{\beta \atop \simeq}"{description}, Rightarrow, draw=none, from=1-2, to=0]
\end{tikzcd}\]
\end{remark}

 \subsection{Left and right objects}
 
\begin{definition}
Define the bicategory of left objects over $C$, denoted $\mathcal{L}\hy\textbf{Obj}_C$, (resp. the bicategory of right objects over $C$, denoted $\mathcal{R}\hy\textbf{Obj}_C) $) as the subbicategory  of $ \mathcal{C}/C$  whose \begin{itemize}
    \item 0-cells are $\mathcal{L}$-maps $ l : A \rightarrow C$ (resp. $ \mathcal{R}$-maps $ r : A \rightarrow C$)
    \item 1-cells are invertible 2-cells $ (a, \alpha) : l \rightarrow l'  $ with $a \in \mathcal{L}$ (resp. $(b, \beta) : r \rightarrow r' $ with $ b \in \mathcal{R}$)
    \item 2-cells are the same as in $ \mathcal{C}/C$
\end{itemize}
In particular we denote by $ \mathcal{L}\hy\textbf{Obj}$ (resp. $\mathcal{R}\hy\textbf{Obj}$) the categories of left and right objects over the biterminal object $1$.
\end{definition}

\begin{remark}
In the factorization of an $ \mathcal{L}$-map $ f : A \rightarrow B$ (resp. an $\mathcal{R}$-map), the right (resp. left) part, which is an equivalence, can be chosen to be the identity 2-cell $(1_A, 1_{f})$ which has the desired universal property of the factorization. Indeed for any factorization of $ f$, the right part is an equivalence by left cancellation of left maps, so that it possesses a weak inverse $ e : B \stackrel{\simeq}{\rightarrow} C_f$ which is the unique up to unique invertible 2-cell equivalence provided by the factorization through the identity, pictured as follows
\[% https://tikzcd.yichuanshen.de/#N4Igdg9gJgpgziAXAbVABwnAlgFyxMJZABgBoBGAXVJADcBDAGwFcYkQBBEAX1PU1z5CKAEwVqdJq3YAhHnxAZseAkXKkREhizaIQAYQD6AM3n9lQtaWJapukHN7nBqlOqo1t0vWcUCVwsgAzBq2OuxGpk5+Fq7IYpqeduw8EjBQAObwRKDGAE4QALZIZCA4EEhikuF6jCYgNIz0AEYwjAAK-pZ6eVgZABY4vvlFlTTlSOrV3iB59Y0tbZ2xwrN9g8MFxYilE4gh0-ZRCiPbB3tTjFhg9lAQzM2MbDT9MPRQ7JA3DWX0WIyfAhsaKnJDnCqIKpeezPEBNVodLquEDXbCwTajHbjCEAFka13sX1hr3egO+4z+AL0RJ+8KWSNWqKw6KSNRAAB12U1Cs0oPQAASc+jlNCC9nYQowACOGO2eLKEKm0PYnIl0tpBLJxLeH2pQJ+OEpWtpi0RK3YTPRIK2YwVSAArPjvno7g8nj8SbrwPqKf9jdbMVM9o7DuxgScbYggxCAGysmaqrCSmULBHLFyMsBo2FXZ3e8kgT3G31U-PAyjcIA
\begin{tikzcd}
                                                                                & B \arrow[rd, equal] \arrow[dd, "e" description] &                                                &     \\
A \arrow[rd, "l_f"'] \arrow[ru, "f"] \arrow[r, "\lambda \atop \simeq", phantom] & {} \arrow[r, "\simeq", phantom]                               & B \arrow[rd, "e"] \arrow[d, "\simeq", phantom] &     \\
                                                                                & C_f \arrow[ru, "r_f", description] \arrow[rr, equal]        & {}                                             & C_f
\end{tikzcd} \] 
where the two unnamed invertible 2-cells are the unit and counit of the equivalence $(r_f, e)$, which are uniquely determined, while $ \lambda$ itself is uniquely determined by the choice of the factorization. Hence one can choose $ (f,1_B)$ to be the factorization of $f$. The dual property is true if we suppose $ f$ to be a right maps.  
\end{remark}

\begin{remark} Alike the 1-dimensional case, we have the following properties:
\begin{itemize}
    \item { $1_C$ is the only object to be both left and right over $C$ up to unique equivalence. }
    \item $ \mathcal{R}\hy\textbf{Obj}_C$ is a 1,2-full subcategory of $\mathcal{C}/C$ by right cancellation of $\mathcal{R}$-maps.
    \item In particular, for $ r : A \rightarrow C $, any $ 1_C \rightarrow r$ is in $\mathcal{R}$ ($C$-points are right).
\end{itemize}
\end{remark}

\begin{proposition}\label{Right objects are relflective}
The 1,2-full subbicategory $ \mathcal{R}\hy\textup{\textbf{Obj}}_C$ is pseudo-reflective in $ \mathcal{C}/C$. In particular $ \mathcal{R}\hy\textup{\textbf{Obj}}$ is pseudo-reflective in $\mathcal{C}$
\end{proposition}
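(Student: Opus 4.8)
The plan is to take the reflector to be the $(\mathcal{L},\mathcal{R})$-factorization itself. Given an object $f : A \to C$ of $\mathcal{C}/C$, I factor the underlying $1$-cell as $A \xrightarrow{l_f} C_f \xrightarrow{r_f} C$ with $l_f \in \mathcal{L}$, $r_f \in \mathcal{R}$ and $\alpha_f : r_f l_f \simeq f$. Since $r_f \in \mathcal{R}$, the object $r_f : C_f \to C$ lies in $\mathcal{R}\hy\textbf{Obj}_C$, and I declare the reflection unit to be the $1$-cell $\eta_f := (l_f, \alpha_f) : f \to r_f$ of $\mathcal{C}/C$. The uniqueness clause of the bifactorization system guarantees that $(r_f, \eta_f)$ is well defined up to an equivalence that is itself unique up to unique invertible $2$-cell, which is what will promote $f \mapsto r_f$ to a pseudofunctor once the universal property is in place.

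The heart of the argument is to show that for every $r : D \to C$ in $\mathcal{R}\hy\textbf{Obj}_C$ the precomposition functor $(-)\circ\eta_f : \mathcal{C}/C[r_f, r] \to \mathcal{C}/C[f, r]$ is an equivalence of categories; because $\mathcal{R}\hy\textbf{Obj}_C$ is $1,2$-full in $\mathcal{C}/C$, this $\mathcal{C}/C$-hom coincides with the $\mathcal{R}\hy\textbf{Obj}_C$-hom, so the stated equivalence is exactly the unit condition for a left biadjoint to the inclusion. I would build a quasi-inverse from the bi-orthogonality $l_f \perp r$. A $1$-cell $(g,\beta) : f \to r$ in $\mathcal{C}/C$ consists of $g : A \to D$ together with $\beta : rg \simeq f$; pasting $\alpha_f^{-1}$ turns this into a pseudosquare $l_f \to r$ with top $g$, bottom $r_f$ and comparison $\alpha_f^{-1}\beta : rg \simeq r_f l_f$. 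Orthogonality provides a universal filler $(w,\lambda,\rho)$ with $w : C_f \to D$, $\lambda : w l_f \simeq g$ and $\rho : r w \simeq r_f$; I set $\Psi(g,\beta) := (w,\rho)$. Crucially $rw \simeq r_f$ with $r, r_f \in \mathcal{R}$, so by the left-pseudocancellation property of $\mathcal{R}$ we get $w \in \mathcal{R}$, i.e. $(w,\rho)$ genuinely is a $1$-cell of $\mathcal{R}\hy\textbf{Obj}_C$.

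It then remains to check that $\Psi$ is quasi-inverse to $(-)\circ\eta_f$. On the one hand $\bigl((-)\circ\eta_f\bigr)\Psi(g,\beta) = (w l_f, \alpha_f\,(\rho * l_f))$, and the filler $2$-cell $\lambda : w l_f \simeq g$ supplies the required invertible $2$-cell in the slice back to $(g,\beta)$, the coherence $\alpha_f(\rho * l_f) = \beta\,(r * \lambda)$ being one of the defining filler equations; on the other hand, applying $\Psi$ to $(-)\circ\eta_f(w,\omega)$ returns the universal filler of the square that it itself determines, which is canonically isomorphic to $(w,\omega)$ by the uniqueness-up-to-unique-invertible-$2$-cell of fillers. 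For the action on $2$-cells I invoke the second half of the bi-orthogonality (the morphism-of-pseudosquares clause), which says exactly that $2$-cells between fillers correspond bijectively to morphisms of the associated pseudosquares; transported into the slice this shows $(-)\circ\eta_f$ is full and faithful on $2$-cells, so it is an equivalence.

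Finally, these equivalences are pseudonatural in $r$ (composition of fillers is associative up to the canonical coherence cells of $\mathcal{C}$), which assembles them into a left biadjoint to the inclusion and hence establishes pseudo-reflectivity; the final clause is the special case $C = 1$. The main obstacle I anticipate is purely bookkeeping: tracking the invertible coherence $2$-cells $\alpha_f, \lambda, \rho$ through the slice $2$-cell conditions so that $(-)\circ\eta_f$ and $\Psi$ are exhibited as mutually quasi-inverse \emph{functors} rather than merely bijective on objects, and verifying the pseudonaturality squares. None of this is conceptually hard given bi-orthogonality and left-cancellation, but it demands the same careful diagram manipulation already carried out in \cref{biorthogonality in pseudoslices}.
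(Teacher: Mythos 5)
Your proof is correct and in substance coincides with the paper's: the reflector sends $f$ to the right part $r_f$ of its $(\mathcal{L},\mathcal{R})$-factorization, the unit is $(l_f,\alpha_f)$, and pseudo-reflectivity is reduced to the equivalence of hom-categories $\mathcal{C}/C[f,r]\simeq \mathcal{R}\hy\textbf{Obj}_C[r_f,r]$, which both arguments extract from orthogonality. The one place you diverge is in how the quasi-inverse is produced: you diagonalize the pseudosquare with top $g$, bottom $r_f$ and comparison $\alpha_f^{-1}\beta$ directly through the bi-orthogonality $l_f\perp r$, and then invoke left-pseudocancellation of $\mathcal{R}$ to see that the filler $w$ lands in $\mathcal{R}$; the paper instead factors $r$ itself, takes the middle comparison map $w_\alpha : C_f \to C_r$ supplied by the functorial factorization of pseudosquares, and then composes with the equivalence $e : C_r \simeq R$ identifying the factorization of $r$ with its trivial factorization $(1_R,r)$. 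Since the paper's functorial-factorization proposition is itself proved from the universal fillers, the two constructions agree up to unique invertible $2$-cell; yours is slightly leaner, dispensing with $e$ and the unit/counit bookkeeping that occupies the second half of the paper's verification, while the paper's route makes the action of the reflector on $1$- and $2$-cells immediate (it is literally the functorial factorization), where you instead obtain functoriality of $\Psi$ from the uniqueness clauses of orthogonality. Both arguments are complete modulo the same coherence bookkeeping you flag at the end.
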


\begin{proof}
The pseudo-reflection returns on 0-cells $ f : A \rightarrow C$ the right part $ r_f : C_f \rightarrow C$, and on 1 (resp. 2-cells), it returns the middle 1-cell and the associated right invertible 2-cell (resp. the morphism of fillers between the right invertible cells) provided by the  pseudo-functoriality of the factorization: that is, $ (u, \alpha) : l \rightarrow g$ in $\mathcal{C}$ is sent to the 1-cell $ (w_\alpha, \rho_\alpha)$ as below
\[ % https://tikzcd.yichuanshen.de/#N4Igdg9gJgpgziAXAbVABwnAlgFyxMJZABgBpiBdUkANwEMAbAVxiRAEEQBfU9TXfIRQBGUsKq1GLNgGFuvEBmx4CRAEzkJ9Zq0QgAQvL7LBRUZWrbpe7hJhQA5vCKgAZgCcIAWyRkQOCCQNSR02JiMQD29fagCkURDrSJBqBjoAIxgGAAV+FSEQdywHAAscCKifRGC4xASrXRAHCs8qgGZYwLrUrDBGyD6UkBKYOig2AdZYuiwGCYIpkDTMnLzTPV7sWCGGtgAdPcY0EroAAgO6ALRzvewvGABHWy4gA
\begin{tikzcd}
A \arrow[rr, "u"] \arrow[rd, "f"'] & {} \arrow[d, "\alpha \atop \simeq", near start, phantom] & B \arrow[ld, "g"] \\
                                   & C                                            &                  
\end{tikzcd} \mapsto % https://tikzcd.yichuanshen.de/#N4Igdg9gJgpgziAXAbVABwnAlgFyxMJZABgBpiBdUkANwEMAbAVxiRAGEB9AMxAF9S6TLnyEUARlLiqtRizbt+gkBmx4CRAEzkZ9Zq0QdOAcyVC1oopMrU98w-xkwox+EVDcAThAC2SMiA4EEiSsvpsnjwg1Ax0AEYwDAAKwupiIJ5YxgAWOGYgXr5I2oHBiKF2Bhkm+YV+iAFBxbZyVQDunAA6nYxo2XS13vUAzNRN5TFYYFWQ09Eg2TB0UGyzrGN0WAyrBOsgsQnJqZaGU9iw85Vs3b39AAQ3QWgPndg+MACOjnxAA
\begin{tikzcd}
C_f \arrow[rd, "r_f"'] \arrow[rr, "w_\alpha"] & {} \arrow[d, "\rho_\alpha \atop \simeq", near start, phantom] & C_g \arrow[ld, "r_g"] \\
                                              & C                                            &                      
\end{tikzcd}  \]
where $ w_\alpha$ is forced to be a $\mathcal{R}$-map by right cancellation of $\mathcal{R}$-maps (similarly, a 2-cell $ \phi : (u, \alpha) \Rightarrow (u', \alpha') $ is sent to $ \sigma : w_\alpha \Rightarrow w_{\alpha'} $ as provided above).\\

For a $\mathcal{R}$-object over $C$ $ r : B \rightarrow C$, whose factorization can be identified with the composite $ (1_B, r)$ up to a unique pair of 2-cells and an equivalence which unique up to unique invertible 2-cell, and for a map $ f : A \rightarrow C$, we claim there is a equivalence of categories 
\[ \mathcal{C}/C [f, r] \simeq \mathcal{R}\hy\textbf{Obj}_C[r_f, r] \]
sending: \begin{itemize}
    \item a 1-cell $ (u, \alpha) : f \rightarrow r $ to the composite 2-cell
    \[ % https://tikzcd.yichuanshen.de/#N4Igdg9gJgpgziAXAbVABwnAlgFyxMJZABgBpiBdUkANwEMAbAVxiRAGEB9AMxAF9S6TLnyEUARnJVajFmy4AnfoJAZseAkQBMU6vWatEIAErKh60UUnjp+uUfZnVwjWOQ6be2YZBO1IzRQyTxkDNn5pGCgAc3giUG4FCABbJDIQHAgkAGYvMKMFHhBqBjoAIxgGAAUXSwKsaIALHCdElLTqTKRxAQSk1MRJDKzEXND7EEKlXpA2gaGuxB1xn1YZuaRlxbG7H2mVDcHOkYAWPImAHQv8HDoAAiu6TLQHi+xkmABHYpAGLDAfJAAT9GjA6FA2EDWJ06FgGJCCNDfuVKjULIEQP9sLBWv1usckABWc4+K4KRoQTiPBhoRr3R7PV7vL4-P7AoxQkFgiEcxE-W5whHAkoo6q1DFYrA4vgUPhAA
\begin{tikzcd}[column sep= large]
C_f \arrow[rd, "r_f"'] \arrow[r, "w_\alpha"] & C_r \arrow[d, "r_r"] \arrow[r, "e"] \arrow[rd, "\iota \atop \simeq", phantom, very near start] \arrow[ld, "\rho_\alpha \atop \simeq", phantom, very near start] & R \arrow[ld, "r"] \\
{}                               & C                                                                                                                             & {}               
\end{tikzcd} \]
where $e $ is the equivalence, unique up to unique invertible 2-cell, relating the trivial factorization $ (1_R, r)$ with $(l_f, r_r)$ as pictured below
\[ % https://tikzcd.yichuanshen.de/#N4Igdg9gJgpgziAXAbVABwnAlgFyxMJZABgBpiBdUkANwEMAbAVxiRACUQBfU9TXfIRRkAjFVqMWbAMIB9AE7deIDNjwEiZAEzj6zVohDSlfNYKIjSY6nqmHOPUwI0pLlG5IMgTK-uqHIljoe+mzc4jBQAObwRKAAZvIQALZIZCA4EEiWEqGGDAog1Ax0AEYwDAAKfuaG8lhRABY4Pokp2dSZSFohdiDyhcVlFdVmLv0Nza1JqYjpXYgAzMVYYF5QEEylDKzUjTB0UEhgTAwMnXRYDGyQa9PtS51ZiD25fYqOIG2zOQvLb15WJ9vh0Ms8ACy9LwAHWh2GSMAAjkUQAxVl5brsQPtDjcCFicJdroZMSiSuUqjVxqtsLB7j8nkgAKxQtiw+FIsnovF3PYHI4k-EowlXHlY8kjKlCEA0rB0rgULhAA
\begin{tikzcd}[column sep= large]
R \arrow[d, "l_r"'] \arrow[rd, equal]                                                       & {}                \\
C_r \arrow[d, "r_r"'] \arrow[r, "e"] \arrow[ru, " \epsilon \atop \simeq", phantom, very near start] \arrow[rd, "\iota \atop \simeq", phantom, very near start] & R \arrow[ld, "r"] \\
C                                                                                                & {}               
\end{tikzcd} \]
that is, to the composite  $ (ew_\alpha, i^*w_\alpha\rho_\alpha) : r_f \rightarrow r $, and a 2-cell $ \phi : (u_1, \alpha_1) \Rightarrow (u_2, \alpha_2)$ to the induced 2-cell between fillers $ \sigma : (w_{\alpha_1}, \rho_{\alpha_1}) \Rightarrow (w_{\alpha_2}, \rho_{\alpha_2}) $ in one direction,
    \item a 1-cell $ (v,\beta) : r_f \rightarrow r $ to the 2-cell obtained as the pasting $ l_f^*\beta \alpha_f : $
    \[ % https://tikzcd.yichuanshen.de/#N4Igdg9gJgpgziAXAbVABwnAlgFyxMJZARgBpiBdUkANwEMAbAVxiRAGEQBfU9TXfIRQAmUgAYqtRizYAhbrxAZseAkTITq9Zq0QcA+gDMFfFYKJjxk7TL0BBE0v6qhyUZS3TdIR8oFqUSw8pHTZuSRgoAHN4IlBDACcIAFskMhAcCCRLENsQBMdElKRRDKzEHJtvBKMQagY6ACMYBgAFZ3M9LDBsWEKk1MRSzLTPUL0afuLEAGZqEYqxvON6ppb2swD8rCiACxwpwbmykqXvBlqeeIHTk8QAFnru70gwVmpdmDooNlf3jLoWAYvwI-wazTaHS23V6-yqbAAOgjmjg6IdbgsAKxnREIxhoXZ0WpPN4g0kfL4-PR-OoAoFksFrSGbIQgGFYPpcChcIA
\begin{tikzcd}[column sep=large]
A \arrow[rd, "f"'] \arrow[r, "l_f"] & C_f \arrow[d, "r_f" description] \arrow[r, "v"] \arrow[rd, "\beta \atop \simeq", very near start, phantom] \arrow[ld, "\alpha_f \atop \simeq", very near start, phantom] & R \arrow[ld, "r"] \\
{}                                  & C                                                                                                            & {}               
\end{tikzcd}  \] 
and a 2-cell $ \psi : (v_1, \beta_1) \Rightarrow (v_2, \beta_2)$ to $ l_f ^*\psi : (l_f^*v_1, \alpha_1 \alpha_f) \Rightarrow (l_f^*v_2, \alpha_2 \alpha_f)  $. 
\end{itemize}   

Those processes are easily seen to be mutual inverse up to natural equivalence: \begin{itemize}
    \item If one starts with $ (u, \alpha) : f \rightarrow r$, then going back and forth produces a diagram 
    \[ % https://tikzcd.yichuanshen.de/#N4Igdg9gJgpgziAXAbVABwnAlgFyxMJZAJgBoAGAXVJADcBDAGwFcYkQAlEAX1PU1z5CKMgEZqdJq3YBhAPoAnHnxAZseAkTLEJDFm0QgZy-uqFEAzKXE090w116nBmlFaq2pBkCdUCNwshWOp767L5qLoGi1rpesnIAZhH+5igxIZJhhilmrsjksaH2RrlRRDEeWSU8EjBQAObwRKCJChAAtkiFIDgQSDHV3oyKIDSM9ABGMIwACqmuIApYDQAWOL5tnQM0fUhkQ+wKo+NTM-N5wksr65vtXYg9e4hWIIxYYN5QEMyTjGw0VYwehQJBgZiMRi7ehYRjsSCfO7bF67fqIA52bxKJwgLYPQbPV6Y9hsHF4na9NEAFmK3gAOnTsB0YABHMZvD7eBEAkBAkHwgg8nAwuGGbnsibTOYLK4fbCwJH41FIACstPYDKZrIlnIFiMBwNBYsF7OFsL1PMl5xl7DlWAVZPuSAAbMrEINiYYAO5yBlMNCreiKl1ujHxQzHZKnKUXcoRm4bR3IglogDs6sMDIUqwgvrp-sDAAI-X00MXGVhmWzxrrjfreYaLaaRU3o9bLrawPLSSpyYgABxu51Jh6Dyn7GjTMBGgC0Fh6ntxErO0o78bWid7TsQrvHiHTh0z+cYAfoSXL9FL5a11Y5iLrPL5RvAJuh5ofy5jNsMdodW+RY7PD0U5GvOI4hnuACcGYgAyEwdJMUBnn6J5FiWEBlpqlbajW94vvWT6tr0LYfm2q5xiAv6kpQ3BAA
\begin{tikzcd}[column sep= large]
                                                                 & {}                                                                                                                                         & R \arrow[d, "l_r"'] \arrow[rd, equal]                                                                                                       & {}                \\
C \arrow[r, "l_f"] \arrow[rrd, "f"', bend right] \arrow[rru, bend left, "u"] & C_f \arrow[r, "w_\alpha"] \arrow[rd, "r_f"'] \arrow[d, "\alpha_f \atop \simeq", phantom] \arrow[u, "\lambda_\alpha \atop \simeq", phantom] & C_r \arrow[d, "r_r" description] \arrow[r, "e"] \arrow[ru, "\epsilon \atop \simeq", phantom, very near start] \arrow[rd, "\iota \atop \simeq", phantom, very near start] \arrow[ld, "\rho_\alpha \atop \simeq", phantom, very near start] & R \arrow[ld, "r"] \\
                                                                 & {}                                                                                                                                         & C                                                                                                                                                & {}               
\end{tikzcd} \]
whose pasting defines an invertible 2-cell $  \epsilon^*u \lambda_\alpha : (u, \alpha) \simeq (e w_\alpha l_f, \iota^*(w_\alpha l_f) \rho_\alpha^*l_f \alpha_f) $. 
\item If conversely one starts with $(v,\beta) : r_f \rightarrow r$, then going forth and back returns a composite 2-cell
\[ % https://tikzcd.yichuanshen.de/#N4Igdg9gJgpgziAXAbVABwnAlgFyxMJZARgBoAGAXVJADcBDAGwFcYkQBhAfQCcQBfUuky58hFGWLU6TVuw4ChIDNjwEiAJgrSGLNohAAlRcNVjNpKTV1yDJ5SLXjk5bddn7OXAGb2Vo9RRXKxk9dgFpGCgAc3giUG8eCABbJFcQHAgkMlDbEB5eEBpGegAjGEYABUdzAx4saIALHHtElKQtDKzEHJtPPkEEpNTEdMyO9zCDNkGQNpGx7oBmSbyAHTX8HHoAAg36TLQ9texkmABHIpBGLDBPSDurxph6KHYHthptrEZ3gk-rmUKtUzIEQLdsLBWsMkAAWL7ddJ9dgAdy4wH2jDQjXoAD0AFSMHzHJjY+g+fjQ9qIeFdbKrfo+K4lcpVGpg+pNFqzeZpBFIACsDPYGx4jQg6MxZIJRO8xxKyVKUHJ3n4JMOx1OF2Zt3u-yeLzeBg+V2+v2N+uKQLZoPE4LAkJmlH4QA
\begin{tikzcd}[column sep= large]
C_f \arrow[r, "w_{\beta^*l_f \alpha_f}"] \arrow[rd, "r_f"'] & C_r \arrow[d, "r_r" description] \arrow[r, "e"] \arrow[rd, "\iota \atop \simeq", phantom, very near start] \arrow[ld, "\rho_{\beta^*l_f \lambda_f} \atop \simeq", phantom, very near start] & R \arrow[ld, "r"] \\
{}                                                           & C                                                                                                                                               & {}               
\end{tikzcd} \]
produced by the functorial factorization of the pseudosquare $(vl_f, 1_C, \beta^l_f \alpha_f) : f \Rightarrow r$ in $ ps[2, \mathcal{C}]$; 
but the diagram 
\[ % https://tikzcd.yichuanshen.de/#N4Igdg9gJgpgziAXAbVABwnAlgFyxMJZABgBpiBdUkANwEMAbAVxiRAGEQBfU9TXfIRQAmclVqMWbAErdeIDNjwEiZAIzj6zVog5y+SwUTWkN1LVN2yeBgSpGnNknRwD6AJ30L+yochPCTtpsnDbehvbIooHmzmxeinZ+ZDESwbrc4jBQAObwRKAAZu4QALZIZCA4EEgmaZa0DK6FXsVlSAAs1NVIAKyx6SDuHq0l5YiiVTWI-fUuwy3UDHQARjAMAAo+RrruWDkAFjij7RPd0wDMAw00J+NXU33X8yBLq+tbEUIgWGDYsHckA8eogunM2E1PGE2uNKiDJhYXE1FiBlmtNtt7EN9kdAYg6iCwYiISNoWMKudOktfi5IGBWNQDjA6FA2HSGVU6FgGGyCBy0R9Md9fv8OcTdAAdCXYUowACOeOB0wAbM82FL8Dg6AACKV0apoXXSrCyhXU+m8i2M5ms3Ts16c7mW-nvDFfNgirAAsmnJVIADsaslErWWqN+oghqlMvlDoFbqSHr+Xv5NOdDqZLPT3S5PLtfMyXCAA
\begin{tikzcd}
C \arrow[rr, "vl_f"] \arrow[d, "l_f"'] \arrow[rrd, "\simeq", phantom] &                                                                                                                                   & R \arrow[d, "l_r"]    \\
C \arrow[rd, "r_f"'] \arrow[r, "v"]                                   & R \arrow[d, "r" description] \arrow[r, "l_r"] \arrow[rd, "\iota \atop \simeq", phantom, very near start] \arrow[ld, "\beta \atop \simeq", phantom, very near start] & C_r \arrow[ld, "r_r"] \\
{}                                                                    & C                                                                                                                                 & {}                   
\end{tikzcd} \]
(where the upper square is the associator of $ l_r v l_f$) provides us with another functorial factorization of the pseudosquare $(vl_f, 1_C, \beta^*l_f \alpha_f)$, inducing a unique invertible 2-cell 
\[ % https://tikzcd.yichuanshen.de/#N4Igdg9gJgpgziAXAbVABwnAlgFyxMJZABgBoBGAXVJADcBDAGwFcYkQBhAfQDMQBfUuky58hFACYK1Ok1btuAJwFCQGbHgJFypYjIYs2iECuEax20hP1yjJ-jJhQA5vCKgeiiAFskZEDgQSDogAEYwYFBIAMz+BvLGAO5cwAA6qeE49AB6AFSMvAAE6UxoABb0vPymIJ4+fjSBwTThkUgAtLE08XYFirQgNIz04YwACiKa4iCKWM5lODV1vohSAUGI0d227OkwAB5YcDhwhQCExanYzt70l-SBaJfY3jAAjoMgjFhgdpC-nzKMHoUWM-zYjXoWEY7HBn2Gowm5i0xh+2FgAko-CAA
\begin{tikzcd}
                                                                                     & {} \arrow[dd, "\exists ! \sigma \atop \simeq", phantom] &     \\
C_f \arrow[rr, "w_{\beta^*l_f \alpha_f}", bend left] \arrow[rr, "l_rv"', bend right] &                                                         & C_r \\
                                                                                     & {}                                                      &    
\end{tikzcd} \]
Pasting the whiskering of $ e$ with $ \sigma $ and the counit $ \epsilon^*v : e l_r v \Rightarrow v$ of the equivalence $ e \dashv l_r $ in $v $ returns the desired invertible 2-cell 
\[ % https://tikzcd.yichuanshen.de/#N4Igdg9gJgpgziAXAbVABwnAlgFyxMJZABgBoBGAXVJADcBDAGwFcYkQBhAfQDMQBfUuky58hFACYK1Ok1bsASgKEgM2PASLlSxGQxZtEIZcPVitpCXrmHj-GTCgBzeEVA8AThAC2SMiBwIJG0QACMYMCgkAGZ-fXkjGAB3LmAAHTTwnHoAPQAqRl4AAgymNAALel5+ExBPHz8aQOCacMikAFpYmnjbWhAaRnpwxgAFEQ1xEA8sJ3KcWvrfRCkAoMRonpt2DJg0bEYCfNoimHyM7CdvehK0+kC0W+xvGABHAZBGLDBbSB+P8owehRIx-NhNehYRjsMEfIYjcZmTRGb7YWACSj8IA
\begin{tikzcd}
                                                                                   & {} \arrow[dd, "\epsilon^*v e^*\sigma \atop \simeq", phantom] &   \\
C_f \arrow[rr, "ew_{\beta^*l_f \alpha_f}", bend left] \arrow[rr, "v"', bend right] &                                                              & R \\
                                                                                   & {}                                                           &  
\end{tikzcd} \]
\end{itemize}
\end{proof}

Now we turn to the an alternative way to construct bistable pseudofunctor from a bifactorization system:

\begin{theorem}\label{Bistable inclusion of left objects}
Let be $ (\mathcal{L},\mathcal{R})$ be a bifactorization system in a bicategory $\mathcal{C}$ with a biterminal object. Then:
\begin{itemize}
    \item we have a bistable inclusion 
   % https://q.uiver.app/?q=WzAsMixbMCwwLCJcXG1hdGhjYWx7TH1cXGh5XFx0ZXh0YmZ7T2JqfV57XFxvcH0iXSxbMSwwLCJcXG1hdGhjYWx7Q31ee1xcb3B9Il0sWzAsMSwiXFxpb3RhX1xcbWF0aGNhbHtMfV57XFxvcH0iXV0=
\[\begin{tikzcd}
	{\mathcal{L}\hy\textup{\textbf{Obj}}^{\op}} & {\mathcal{C}^{\op}}
	\arrow["{\iota_\mathcal{L}^{\op}}", hook, from=1-1, to=1-2]
\end{tikzcd}\]
   \item for each object $C$ we have a bistable inclusion 
   \[\begin{tikzcd}
	{\mathcal{L}\hy\textup{\textbf{Obj}}_C^{\op}} & {\mathcal{C}/C^{\op}}
	\arrow["{\iota_{\mathcal{L},C}^{\op}}", hook, from=1-1, to=1-2]
\end{tikzcd}\]
\end{itemize}

\end{theorem}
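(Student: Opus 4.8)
The plan is to verify directly that $U := \iota_\mathcal{L}^{\op}$ satisfies the definition of a bistable pseudofunctor, the generic factorization being supplied by the bifactorization system itself. First I would unwind the $\op$ bookkeeping: an object $A$ of $\mathcal{L}\hy\textbf{Obj}^{\op}$ is a left object, i.e. an object of $\mathcal{C}$ whose terminal map $A \to 1$ lies in $\mathcal{L}$; a $1$-cell $f : B \to U(A)$ in $\mathcal{C}^{\op}$ is a $1$-cell $f : A \to B$ in $\mathcal{C}$ with $A$ a left object and $B$ arbitrary; and a morphism $U(u) : U(A') \to U(A)$ in the range of $U$ is an $\mathcal{L}$-map $u : A \to A'$ between left objects. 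Likewise, a pseudosquare of the shape appearing in the definition of a $U$-generic morphism unwinds, in $\mathcal{C}$, into a pseudocommutative square
\[
\begin{tikzcd}
A_2 \ar[r,"u"] \ar[d,"v"'] & A_1 \ar[d,"f"] \\
A \ar[r,"n"'] & B
\end{tikzcd}
\]
with $u,v \in \mathcal{L}$ between left objects, $n$ the candidate generic, and $f,B$ arbitrary, together with an invertible $2$-cell $nv \simeq fu$.

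Second, I would show that every $\mathcal{R}$-map $n : A \to B$ out of a left object is $U$-generic. Reading the square above as a pseudosquare from the left map $u$ to the right map $n$, the bi-orthogonality $\mathcal{L}\perp\mathcal{R}$ furnishes a universal filler $w : A_1 \to A$ together with invertible $2$-cells $\omega_\alpha : wu \simeq v$ and $\nu_\alpha : nw \simeq f$; this is exactly the generic diagonalization data, once one checks that $w$ is itself a morphism of $\mathcal{L}\hy\textbf{Obj}$, which follows from right-pseudocancellativity of $\mathcal{L}$ applied to $v \simeq wu$ with $u,v \in \mathcal{L}$. Uniqueness of $w$ up to unique invertible $2$-cell is the uniqueness built into the universal filler, and the required functoriality in morphisms of pseudosquares (the restricted shape carrying a $2$-cell only on the top side) is exactly the second clause of bi-orthogonality, transported through the $\op$.

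Third, for the factorization itself I would take the $(\mathcal{L},\mathcal{R})$-factorization of $f : A \to B$ in $\mathcal{C}$,
\[
\begin{tikzcd}
A \ar[r,"l_f"] & C_f \ar[r,"r_f"] & B,
\end{tikzcd}
\]
with $l_f \in \mathcal{L}$ and $r_f \in \mathcal{R}$. The object $C_f$ is again a left object: its terminal map $t$ satisfies $(A \to 1) \simeq t\, l_f$, so right-pseudocancellativity of $\mathcal{L}$ forces $t \in \mathcal{L}$. Thus $u_f := l_f$ is a morphism of $\mathcal{L}\hy\textbf{Obj}$ and $n_f := r_f$ is $U$-generic by the previous step, giving the sought generic factorization of $f$ in $\mathcal{C}^{\op}$. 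Its uniqueness in the sense required by bistability — comparison with any other factorization $f \simeq n\,u$ through a left object with $u \in \mathcal{L}$ and $n$ arbitrary — is precisely the biterminality of the bifactorization amongst factorizations with a left map on the left, which supplies the equivalence $A' \simeq C_f$ together with its coherent invertible $2$-cells. This establishes the first bullet. For the second bullet I would run the very same argument inside the pseudoslice $\mathcal{C}/C$, whose biterminal object is $\mathrm{id}_C$ and whose inherited system $(\mathcal{L}_C,\mathcal{R}_C)$, together with the explicit descriptions of bi-orthogonality and of diagonalization in pseudoslices (\cref{biorthogonality in pseudoslices}, \cref{bifactorization in pseudoslice}, \cref{diagonalization of pseudosquares in pseudoslices}), provide exactly the data used above; here the left objects of $\mathcal{C}/C$ are the $\mathcal{L}$-maps into $C$, i.e. the objects of $\mathcal{L}\hy\textbf{Obj}_C$.

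The main obstacle I anticipate is not the $1$-dimensional content — which reduces cleanly to lifting against $\mathcal{L}\perp\mathcal{R}$ and two applications of right-pseudocancellation — but the $2$-dimensional coherence: one must check that the functoriality of universal fillers matches, under the $\op$, the precise shape of morphisms of pseudosquares admitted in the definition of $U$-generic morphism (a $2$-cell on the top side but none on the bottom), and that the pasting identities relating $\nu_\alpha$, $\omega_\alpha$ and the induced comparison $2$-cells $\sigma_\phi$ are exactly the ones demanded. A secondary point to flag is that the relative statement leans on the inheritance of bifactorization systems to pseudoslices, and hence on the hypotheses (biproducts) under which that inheritance was established.
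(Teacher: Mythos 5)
Your proposal is, in substance, the paper's own argument: you identify the generic morphisms with (duals of) $\mathcal{R}$-maps out of left objects, extract their generic property from the universal fillers supplied by bi-orthogonality, and use right-pseudocancellativity of $\mathcal{L}$ exactly twice --- once to see that the filler is a map of left objects, once to see that the middle object $C_f$ of the $(\mathcal{L},\mathcal{R})$-factorization is again a left object. The only structural difference is the direction of the absolute/relative reduction: the paper proves the relative statement over an arbitrary $C$ directly, using \cref{bifactorization in pseudoslice} and \cref{diagonalization of pseudosquares in pseudoslices}, and obtains the first bullet as the case where $C$ is biterminal, whereas you prove the absolute case in $\mathcal{C}$ and then rerun the argument in $\mathcal{C}/C$. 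Your direction is legitimate, with the caveat you yourself raise: you must use only the remark-level facts (slice factorizations and fillers for slice squares from $\mathcal{L}_C$-maps to $\mathcal{R}_C$-maps, both inherited from $\mathcal{C}$ with no extra hypotheses), and not the full inheritance proposition, whose proof in the paper requires biproducts --- a hypothesis absent from the theorem. Since you cite precisely those remarks, this is repairable phrasing rather than a real obstruction.

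The genuine gap is your uniqueness step. Biterminality of $((l_f,r_f),\alpha_f)$ amongst factorizations with a left map on the left supplies, for a factorization $f \simeq n\,u$ with $u \in \mathcal{L}$ and $n$ \emph{arbitrary}, only a comparison 1-cell $s : A' \rightarrow C_f$ together with coherent invertible 2-cells $su \simeq l_f$ and $r_f s \simeq n$; it does not supply an equivalence, and none can exist in general: take $u = 1_A$ and $n = f$, a perfectly good factorization ``through the image'', and note that an equivalence $A' = A \simeq C_f$ compatible with the triangles would force $l_f$ to be an equivalence, i.e.\ force $f \in \mathcal{R}$. So under the literal arbitrary-$n$ reading of the uniqueness clause your claim fails (and so would the theorem itself); the reading under which the statement is true compares the canonical factorization only against factorizations whose right-hand part is itself generic --- here, an $\mathcal{R}$-map out of a left object --- and then the required equivalence $A' \simeq C_f$ is supplied not by biterminality but by the essential-uniqueness clause built into the definition of a bifactorization system (any two $(\mathcal{L},\mathcal{R})$-factorizations of the same 1-cell are related by a unique equivalence and coherent invertible 2-cells), combined once more with right cancellation to see that this equivalence is a 1-cell of $\mathcal{L}\hy\textbf{Obj}$. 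The paper's own proof is silent on this clause, so you are not diverging from it by discussing uniqueness --- but as written you have replaced a missing step with an incorrect one, and it should be repaired along these lines.
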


\begin{proof}
The first item is the instance of the second item where $ C$ is the biterminal object. In this case, generic morphisms are the duals of right morphisms of the form 
% https://q.uiver.app/?q=WzAsMyxbMCwwLCJBIl0sWzIsMCwiQiJdLFsxLDEsIkMiXSxbMCwxLCJyIl0sWzEsMiwiYiJdLFswLDIsImwiLDJdLFszLDIsIlxccmhvXFxhdG9wIFxcc2ltZXEiLDEseyJzaG9ydGVuIjp7InNvdXJjZSI6MjB9LCJzdHlsZSI6eyJib2R5Ijp7Im5hbWUiOiJub25lIn0sImhlYWQiOnsibmFtZSI6Im5vbmUifX19XV0=
\[\begin{tikzcd}
	A && B \\
	& C
	\arrow[""{name=0, anchor=center, inner sep=0}, "r", from=1-1, to=1-3]
	\arrow["b", from=1-3, to=2-2]
	\arrow["l"', from=1-1, to=2-2]
	\arrow["{\rho\atop \simeq}"{description}, Rightarrow, draw=none, from=0, to=2-2]
\end{tikzcd}\]
Indeed, for any morphism between left objects as below
% https://q.uiver.app/?q=WzAsMyxbMCwwLCJBXzEiXSxbMiwwLCJBXzIiXSxbMSwxLCJDIl0sWzAsMSwibCJdLFswLDIsImFfMSIsMl0sWzEsMiwiYV8yIl0sWzMsMiwiXFxsYW1iZGEgXFxhdG9wIFxcc2ltZXEiLDEseyJzaG9ydGVuIjp7InNvdXJjZSI6MjB9LCJzdHlsZSI6eyJib2R5Ijp7Im5hbWUiOiJub25lIn0sImhlYWQiOnsibmFtZSI6Im5vbmUifX19XV0=
\[\begin{tikzcd}
	{A_1} && {A_2} \\
	& C
	\arrow[""{name=0, anchor=center, inner sep=0}, "l", from=1-1, to=1-3]
	\arrow["{a_1}"', from=1-1, to=2-2]
	\arrow["{a_2}", from=1-3, to=2-2]
	\arrow["{\lambda \atop \simeq}"{description}, Rightarrow, draw=none, from=0, to=2-2]
\end{tikzcd}\]
with $ a_1, a_2$ and $l$ in $\mathcal{L}$. Now if one has a pseudosquare $ ((m, \mu),(f, \beta), \alpha) : (l,\lambda) \rightarrow (r,\rho)$ as below
% https://q.uiver.app/?q=WzAsNSxbMCwwLCJBXzEiXSxbMCwyLCJBXzIiXSxbMiwwLCJBIl0sWzIsMiwiQiJdLFszLDEsIkMiXSxbMCwyLCJtIl0sWzIsMywiciIsMV0sWzAsMSwibCIsMl0sWzEsMywiZiIsMl0sWzIsNCwiYSJdLFszLDQsImIiLDJdLFswLDMsIlxcYWxwaGEgXFxhdG9wIFxcc2ltZXEiLDEseyJzdHlsZSI6eyJib2R5Ijp7Im5hbWUiOiJub25lIn0sImhlYWQiOnsibmFtZSI6Im5vbmUifX19XSxbNiw0LCJcXHJobyBcXGF0b3AgXFxzaW1lcSIsMSx7InNob3J0ZW4iOnsic291cmNlIjoyMH0sInN0eWxlIjp7ImJvZHkiOnsibmFtZSI6Im5vbmUifSwiaGVhZCI6eyJuYW1lIjoibm9uZSJ9fX1dXQ==
% https://q.uiver.app/?q=WzAsMyxbMCwwLCJBXzEiXSxbMiwwLCJBIl0sWzEsMSwiQyJdLFswLDEsIm0iXSxbMCwyLCJhXzEiLDJdLFsxLDIsImEiXSxbMywyLCJcXG11IFxcYXRvcCBcXHNpbWVxIiwxLHsic2hvcnRlbiI6eyJzb3VyY2UiOjIwfSwic3R5bGUiOnsiYm9keSI6eyJuYW1lIjoibm9uZSJ9LCJoZWFkIjp7Im5hbWUiOiJub25lIn19fV1d
\[\begin{tikzcd}[sep=small]
	{A_1} && A \\
	& C
	\arrow[""{name=0, anchor=center, inner sep=0}, "m", from=1-1, to=1-3]
	\arrow["{a_1}"', from=1-1, to=2-2]
	\arrow["a", from=1-3, to=2-2]
	\arrow["{\mu \atop \simeq}"{description}, Rightarrow, draw=none, from=0, to=2-2]
\end{tikzcd} = \begin{tikzcd}[sep=small]
	{A_1} && A \\
	{A_2} && B \\
	& C
	\arrow["m", from=1-1, to=1-3]
	\arrow["l"', from=1-1, to=2-1]
	\arrow[""{name=0, anchor=center, inner sep=0}, "f"{description}, from=2-1, to=2-3]
	\arrow["r", from=1-3, to=2-3]
	\arrow["{a_2}"', from=2-1, to=3-2]
	\arrow["b", from=2-3, to=3-2]
	\arrow["{\alpha \atop \simeq}"{description}, draw=none, from=2-1, to=1-3]
	\arrow["{\beta \atop \simeq}"{description}, Rightarrow, draw=none, from=0, to=3-2]
\end{tikzcd} \textrm{ and } \begin{tikzcd}[sep=small]
	{A_1} & A & \\
	&& C \\
	{A_2} & B &
	\arrow["m", from=1-1, to=1-2]
	\arrow[""{name=0, anchor=center, inner sep=0}, "r"{description}, from=1-2, to=3-2]
	\arrow["l"', from=1-1, to=3-1]
	\arrow["f"', from=3-1, to=3-2]
	\arrow["a", from=1-2, to=2-3]
	\arrow["b"', from=3-2, to=2-3]
	\arrow["{\alpha \atop \simeq}"{description}, draw=none, from=1-1, to=3-2]
	\arrow["{\rho \atop \simeq}"{description}, Rightarrow, draw=none, from=0, to=2-3]
\end{tikzcd} = \begin{tikzcd}[sep=small]
	A \\
	& C \\
	B
	\arrow[""{name=0, anchor=center, inner sep=0}, "r"', from=1-1, to=3-1]
	\arrow["a", from=1-1, to=2-2]
	\arrow["b"', from=3-1, to=2-2]
	\arrow["{\rho \atop \simeq}"{description}, Rightarrow, draw=none, from=0, to=2-2]
\end{tikzcd}\]
with $ m$ itself in $\mathcal{L}$. Then by \cref{diagonalization of pseudosquares in pseudoslices} we know that the diagonalization of the underlying pseudosquare $ (m,f, \alpha)$ in $\mathcal{C}$ can be used to produce the diagonalization in the pseudoslice $ \mathcal{C}$, with the 2-cell below it being the composite 
% https://q.uiver.app/?q=WzAsNCxbMCwxLCJBXzIiXSxbMSwwLCJBIl0sWzEsMSwiQiJdLFsyLDIsIlxcYnVsbGV0Il0sWzAsMSwibF9cXGFscGhhIl0sWzEsMiwiciIsMV0sWzAsMiwiZiIsMV0sWzIsMywiYiIsMV0sWzEsMywiYSIsMCx7ImN1cnZlIjotM31dLFswLDMsImFfMiIsMix7ImN1cnZlIjozfV0sWzQsMiwiXFxyaG9fXFxhbHBoYSBcXGF0b3AgXFxzaW1lcSIsMSx7InNob3J0ZW4iOnsic291cmNlIjoyMH0sInN0eWxlIjp7ImJvZHkiOnsibmFtZSI6Im5vbmUifSwiaGVhZCI6eyJuYW1lIjoibm9uZSJ9fX1dLFs4LDIsIlxcbGFtYmRhIFxcYXRvcCBcXHNpbWVxIiwxLHsic2hvcnRlbiI6eyJzb3VyY2UiOjIwLCJ0YXJnZXQiOjIwfSwic3R5bGUiOnsiYm9keSI6eyJuYW1lIjoibm9uZSJ9LCJoZWFkIjp7Im5hbWUiOiJub25lIn19fV0sWzIsOSwiXFxiZXRhIFxcYXRvcCBcXHNpbWVxIiwxLHsic2hvcnRlbiI6eyJ0YXJnZXQiOjIwfSwic3R5bGUiOnsiYm9keSI6eyJuYW1lIjoibm9uZSJ9LCJoZWFkIjp7Im5hbWUiOiJub25lIn19fV1d
\[\begin{tikzcd}
	& A \\
	{A_2} & B \\
	&& C
	\arrow[""{name=0, anchor=center, inner sep=0}, "{l_\alpha}", from=2-1, to=1-2]
	\arrow["r"{description}, from=1-2, to=2-2]
	\arrow["f"{description}, from=2-1, to=2-2]
	\arrow["b"{description}, from=2-2, to=3-3]
	\arrow[""{name=1, anchor=center, inner sep=0}, "a", curve={height=-18pt}, from=1-2, to=3-3]
	\arrow[""{name=2, anchor=center, inner sep=0}, "{a_2}"', curve={height=18pt}, from=2-1, to=3-3]
	\arrow["{\rho_\alpha \atop \simeq}"{description}, Rightarrow, draw=none, from=0, to=2-2]
	\arrow["{\lambda \atop \simeq}"{description}, Rightarrow, draw=none, from=1, to=2-2]
	\arrow["{\beta \atop \simeq}"{description}, Rightarrow, draw=none, from=2-2, to=2]
\end{tikzcd}\]
but as $ \lambda_\alpha$ is invertible, we have that $ l_\alpha$ is in $\mathcal{L}$ by right cancellation. Hence the 2-cell above defines a left map in $\mathcal{L}_C$ in the pseudoslice $\mathcal{C}/C$. Moreover the 2-dimensional property of the generic morphisms follows from the 2-dimensional property of the diagonalization.\\

Finally, for the factorization is inherited in $\mathcal{C}/C$, we have, as seen in \cref{bifactorization in pseudoslice}, that a morphism in the pseudoslice factorizes as below
% https://q.uiver.app/?q=WzAsNCxbMCwwLCJBIl0sWzEsMCwiQV9mIl0sWzIsMCwiQiJdLFsxLDEsIkMiXSxbMCwxLCJsX2YiXSxbMSwyLCJyX2YiXSxbMCwzLCJhIiwyXSxbMSwzLCJhX2YiLDFdLFsyLDMsImIiXSxbMSw2LCJcXGxhbWJkYV97KGYsXFxhbHBoYSl9IFxcYXRvcCBcXHNpbWVxIiwxLHsic2hvcnRlbiI6eyJ0YXJnZXQiOjIwfSwic3R5bGUiOnsiYm9keSI6eyJuYW1lIjoibm9uZSJ9LCJoZWFkIjp7Im5hbWUiOiJub25lIn19fV0sWzEsOCwiXFxyaG9feyhmLFxcYWxwaGEpfSAgXFxhdG9wIFxcc2ltZXEiLDEseyJzaG9ydGVuIjp7InRhcmdldCI6MjB9LCJzdHlsZSI6eyJib2R5Ijp7Im5hbWUiOiJub25lIn0sImhlYWQiOnsibmFtZSI6Im5vbmUifX19XV0=
\[\begin{tikzcd}[sep=large]
	A & {A_f} & B \\
	& C
	\arrow["{l_f}", from=1-1, to=1-2]
	\arrow["{r_f}", from=1-2, to=1-3]
	\arrow[""{name=0, anchor=center, inner sep=0}, "a"', from=1-1, to=2-2]
	\arrow["{a_f}"{description}, from=1-2, to=2-2]
	\arrow[""{name=1, anchor=center, inner sep=0}, "b", from=1-3, to=2-2]
	\arrow["{\lambda_{(f,\alpha)} \atop \simeq}"{description, pos=0.4}, Rightarrow, draw=none, from=1-2, to=0]
	\arrow["{\rho_{(f,\alpha)}  \atop \simeq}"{description, pos=0.4}, Rightarrow, draw=none, from=1-2, to=1]
\end{tikzcd}\]
where $ l_f, r_f$ is the $(\mathcal{L},\mathcal{R})$ factorization of $f$ in $\mathcal{C}$. Hence if $ a$ is in $\mathcal{L}$, so is $ a_f$ by right cancellation, and $ \rho_{(f,\alpha)} : a_f \rightarrow b$ is dual to a generic morphism. This proves the inclusion above to be bistable. 
\end{proof}

\subsection{Specification of left objects}

In the examples listed at the end of this work, the bistable inclusion obtained from bifactorizations systems does not encode by themselves exactly what we would expect from a geometry: indeed, those bifactorizations systems are all part of a ``connected-truncated" paradigm where the left maps code for some notion of connectedness rather than punctualness, and are somewhat too coarse to represent focal data. However, this can be corrected by choosing, amongst left maps, a subclass of left maps with a convenient cancellation property which will ensure that one can restrict to left objects with a specified left map as local objects, and arbitrary left maps between them. In particular, while left maps are seldom - but sometime, see below the hyperconnected-localic case - stable under bipullbacks, we can choose the specified left maps to be so to encode more local-like behavior. 

\begin{definition}
Let be $ \mathcal{L}$ a class of 1-cell in a bicategory $ \mathcal{C}$ and $ \mathcal{L}' \subseteq \mathcal{L}$ a subclass. We say that $ \mathcal{L}'$ has \emph{right cancellation along $\mathcal{L}$-maps} if for any invertible 2-cell as below
% https://q.uiver.app/?q=WzAsMyxbMCwwLCJBIl0sWzIsMCwiQiJdLFsxLDEsIkMiXSxbMCwxLCJsIl0sWzAsMiwiYSIsMl0sWzEsMiwiZiJdLFszLDIsIlxcYWxwaGEgXFxhdG9wIFxcc2ltZXEiLDEseyJzaG9ydGVuIjp7InNvdXJjZSI6MjB9LCJzdHlsZSI6eyJib2R5Ijp7Im5hbWUiOiJub25lIn0sImhlYWQiOnsibmFtZSI6Im5vbmUifX19XV0=
\[\begin{tikzcd}
	A && B \\
	& C
	\arrow[""{name=0, anchor=center, inner sep=0}, "l", from=1-1, to=1-3]
	\arrow["a"', from=1-1, to=2-2]
	\arrow["f", from=1-3, to=2-2]
	\arrow["{\alpha \atop \simeq}"{description}, Rightarrow, draw=none, from=0, to=2-2]
\end{tikzcd}\]
where $ a$ is in $\mathcal{L}'$ and $ l$ is in $\mathcal{L}$ then $f$ is in $\mathcal{L}'$. 
\end{definition}

Again, in the presence of a biterminal object, we can define a class of $ \mathcal{L}'$-objects as those objects whose terminal map is in $\mathcal{L}'$, and also the relativized version over arbitrary objects. The key idea is that left cancellation along $\mathcal{L}$-maps encodes exactly the gliding condition required to have bistable factorization. In the following we denote as ${\mathcal{L}'\hy\textup{\textbf{Obj}}^{\mathcal{L}}}$ (resp. $\mathcal{L}'\hy\textup{\textbf{Obj}}^{\mathcal{L}}$) the (1,2)-full sub-bicategory of $ \mathcal{L}\hy\textbf{Obj}$ (resp. $ \mathcal{L}\hy\textbf{Obj}_C$) whose objects are $\mathcal{L}'$-objects (resp. $ \mathcal{L}'$-maps with codomain $C$). 

\begin{theorem}\label{bistable for specified left objects}
Let be $ (\mathcal{L},\mathcal{R})$ a bifactorization system on a bicategory $\mathcal{C}$ and $\mathcal{L}' \subseteq \mathcal{L}$ with right cancellation along $\mathcal{L}$-maps. Then:
\begin{itemize}
    \item if $\mathcal{C}$ has a biterminal object and $ \mathcal{L}'$ contains identities then we have a bistable inclusion 
   % https://q.uiver.app/?q=WzAsMixbMCwwLCJcXG1hdGhjYWx7TH1cXGh5XFx0ZXh0YmZ7T2JqfV57XFxvcH0iXSxbMSwwLCJcXG1hdGhjYWx7Q31ee1xcb3B9Il0sWzAsMSwiXFxpb3RhX1xcbWF0aGNhbHtMfV57XFxvcH0iXV0=
\[\begin{tikzcd}
	({\mathcal{L}'\hy\textup{\textbf{Obj}}^{\mathcal{L}})^{\op}} & {\mathcal{C}^{\op}}
	\arrow["{\iota_\mathcal{L}^{\op}}", hook, from=1-1, to=1-2]
\end{tikzcd}\]
   \item For each object $C$ we have a bistable inclusion 
   \[\begin{tikzcd}
	{(\mathcal{L}'\hy\textup{\textbf{Obj}}^{\mathcal{L}_C}_C)^{\op}} & {\mathcal{C}/C^{\op}}
	\arrow["{\iota_{\mathcal{L},C}^{\op}}", hook, from=1-1, to=1-2]
\end{tikzcd}\]
\end{itemize}
\end{theorem}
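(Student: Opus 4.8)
The plan is to deduce both statements from the construction already carried out in \cref{Bistable inclusion of left objects}, the only genuinely new input being that the chosen subclass $\mathcal{L}'$ is preserved by the intermediate objects arising in the generic factorization. First I would reduce the first item to the second: since $1$ is biterminal, $\mathcal{C}$ is biequivalent to $\mathcal{C}/1$ through an equivalence carrying $\mathcal{L}$ onto $\mathcal{L}_1$ and $\mathcal{L}'\hy\textbf{Obj}^{\mathcal{L}}$ onto $\mathcal{L}'\hy\textbf{Obj}^{\mathcal{L}_1}_1$. The hypothesis that $\mathcal{L}'$ contains identities makes $1$ itself an $\mathcal{L}'$-object, so the absolute picture is exactly the relative one over the biterminal object, and it suffices to treat the inclusion $U := \iota_{\mathcal{L},C}^{\op}$ over a fixed $C$.

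Next I would recall, as in \cref{Bistable inclusion of left objects}, that along the duality the $U$-generic morphisms are the duals of the right morphisms — triangles $(r,\rho)$ over $C$ with $r \in \mathcal{R}$ — and that an arrow $f : \mathcal{B} \to U(\mathcal{A})$ of $\mathcal{C}/C^{\op}$ is just a $1$-cell $(m,\mu) : l_A \to b$ of $\mathcal{C}/C$ out of the $\mathcal{L}'$-object $l_A : A \to C$ toward an arbitrary $b : B \to C$. The generic factorization is produced by the inherited bifactorization of \cref{bifactorization in pseudoslice}: factor $m \simeq r_m l_m$ in $\mathcal{C}$ with $l_m \in \mathcal{L}$ and $r_m \in \mathcal{R}$, and take the intermediate object over $C$ to be $c := b\,r_m$. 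The first factor $(l_m, -) : l_A \to c$ has underlying $\mathcal{L}$-map $l_m$, hence is a $1$-cell of $\mathcal{L}'\hy\textbf{Obj}^{\mathcal{L}_C}_C$, dual to a map in the range of $U$, while the second factor $(r_m, 1) : c \to b$ is a right morphism, dual to a generic.

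The crucial point, and the sole place where the new hypothesis is used, is that $c$ must be an $\mathcal{L}'$-object and not merely an $\mathcal{L}$-object. This is exactly where right cancellation along $\mathcal{L}$-maps enters: from $\mu$ one obtains an invertible $2$-cell $c\,l_m \simeq l_A$ with $l_A \in \mathcal{L}'$ and $l_m \in \mathcal{L}$, whence $c \in \mathcal{L}'$. Thus the middle object of every generic factorization lands in $\mathcal{L}'\hy\textbf{Obj}^{\mathcal{L}_C}_C$, which is precisely the property that the previous theorem could not guarantee for a proper subclass. The remaining verifications are inherited verbatim: uniqueness of the factorization up to unique invertible $2$-cell follows from the essential uniqueness of the $(\mathcal{L},\mathcal{R})$-factorization in $\mathcal{C}/C$, while the one- and two-dimensional universal properties of the (dualized) generic morphisms come from the diagonalization of pseudosquares in pseudoslices of \cref{diagonalization of pseudosquares in pseudoslices}. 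For a pseudosquare between an $\mathcal{L}$-map in the range of $U$ and a generic, the diagonal $l_\alpha$ is forced into $\mathcal{L}$ by right cancellation in $\mathcal{L}$ — its comparison $2$-cell $\lambda_\alpha$ being invertible — so it is again a $1$-cell of $\mathcal{L}'\hy\textbf{Obj}^{\mathcal{L}_C}_C$, its endpoints being $\mathcal{L}'$-objects already present in the data.

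I expect the only real obstacle to be bookkeeping rather than anything conceptual: one must check that restricting the $1$-cells to \emph{general} $\mathcal{L}$-maps (not $\mathcal{L}'$-maps) between $\mathcal{L}'$-objects is compatible with both the factorization and the diagonalization, that is, that every $\mathcal{L}$-map arising as a local unit or as a diagonal has $\mathcal{L}'$-objects at both ends. This is guaranteed once the cancellation property is invoked at each step, so no coherence data beyond that of \cref{Bistable inclusion of left objects} is required.
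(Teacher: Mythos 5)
Your proposal is correct and follows essentially the same route as the paper's own (much terser) proof: both reduce the absolute statement to the relative one over $C$, reuse the generic factorization and diagonalization already established in \cref{Bistable inclusion of left objects}, and invoke right cancellation of $\mathcal{L}'$ along $\mathcal{L}$-maps at exactly one point — to place the middle object of the inherited $(\mathcal{L}_C,\mathcal{R}_C)$-factorization in $\mathcal{L}'$. Your write-up simply makes explicit the bookkeeping (the biequivalence $\mathcal{C}\simeq\mathcal{C}/1$, the identification $c\,l_m\simeq l_A$ feeding the cancellation hypothesis, and the check that diagonals land between $\mathcal{L}'$-objects) that the paper leaves implicit.
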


\begin{proof}
Again the first item is a specific case of the second item. The generic morphisms are now dual to invertible 2-cells
% https://q.uiver.app/?q=WzAsMyxbMCwwLCJBIl0sWzIsMCwiQiJdLFsxLDEsIkMiXSxbMCwxLCJyIl0sWzAsMiwiYSIsMl0sWzEsMiwiYiJdLFszLDIsIlxcYWxwaGEgXFxhdG9wIFxcc2ltZXEiLDEseyJzaG9ydGVuIjp7InNvdXJjZSI6MjB9LCJzdHlsZSI6eyJib2R5Ijp7Im5hbWUiOiJub25lIn0sImhlYWQiOnsibmFtZSI6Im5vbmUifX19XV0=
\[\begin{tikzcd}
	A && B \\
	& C
	\arrow[""{name=0, anchor=center, inner sep=0}, "r", from=1-1, to=1-3]
	\arrow["a"', from=1-1, to=2-2]
	\arrow["b", from=1-3, to=2-2]
	\arrow["{\alpha \atop \simeq}"{description}, Rightarrow, draw=none, from=0, to=2-2]
\end{tikzcd}\]
with $ a$ in $\mathcal{L}'$ and $ r$ in $\mathcal{R}$: the genericness is a restriction of the genericness of right maps toward left objects. Now just use the right cancellation of $\mathcal{L}'$-objects along $ \mathcal{L}$-map to ensure that the $ (\mathcal{L}_C, \mathcal{R}_C)$ factorization the middle object is in $\mathcal{L}'$. 
\end{proof}

\section{Bifactorization geometries for Grothendieck topoi}

In this section, we apply \cref{Bistable inclusion of left objects} to produce bistable sub-bicategories of the opposite bicategory of Grothendieck topoi - that are, bistable sub-bicategories of the bicategory of \emph{logoi}. \\

We investigate examples for the following bifactorizations systems:\begin{itemize}
    \item $(\textbf{Hyp}, \textbf{Loc})$ in $\GTop$, the \emph{hyperconnected-localic} bifactorization system on the bicategory of Grothendieck topoi and geometric morphisms
    \item $ (\textbf{Co}, \textbf{Et})$ in $\textbf{Loco}\GTop$, the \emph{connected-etale} bifactorization system on the bicategory of locally connected Grothendieck topoi and \emph{locally connected} geometric morphisms; this bifactorization system is to be used together with the following specifications of left maps \begin{itemize}
        \item $\textbf{Foc}$, the class of \emph{local} geometric morphism
        \item $\textbf{Toc}$, the class of \emph{totally connected} geometric morphisms
    \end{itemize}
    \item $(\textbf{TCo}, \textbf{Et})$ on $\textbf{Loco}\GTop^{\textbf{Ess}}$, the \emph{terminally connected-etale} bifactorization system on the bicategory of locally connected Grothendieck topoi and \emph{essential} geometric morphisms; this bifactorization system generalizes the previous one and is to be used with the same specifications as above. 
    \item Though we do not know yet a characterization of the right class, we prove there exists an orthogonal bifactorization in $ \GTop$ with terminally connected geometric morphisms on the left; this is also to be used with the local and totally connected specification. 
    \item $(\textbf{Surj}, \textbf{Emb})$ in $\GTop$, the \emph{surjection-embedding} factorization system in the bicategory of Grothendieck topoi.
\end{itemize}

The key idea of this section relies on the observation that the geometric properties of Grothendieck topoi are often encoded in their terminal map above $\Set$. \\

In all the following examples, we will end with bistable factorization from (opposite bicategories of) left objects and left maps - with eventual specification of left objects - with the right maps toward left objects as generic morphisms. However it is interesting to notify that in none of those examples, the right maps are lax-generic; in fact, those factorization systems all are instances of bifactorization systems with right-truncated lax-factorizations, so that in particular diagonalization of lax squares returns diagonal maps in the right class, not in the left class. This seems to be related to the fact that all those factorization systems are part of a ``connected-truncated" scheme where the right class exhibits some level of truncation as opposed to the left class, so that 2-dimensional data are ``expelled" on the left side in the factorization of globular 2-cells. We do not know currently a bifactorization systems for geometric morphisms such that the duals of right maps have the lax-generic property. \\

We also conjecture that the terminally connected-etale bifactorization generalizes to a bifactorization system on the category of Grothendieck topoi without restrictions on geometric morphisms, where the left maps will be part of a wider class of \emph{pro-etale geometric morphisms}; this factorization would be probably related to the connected-disconnected factorization mentioned in \cite{topologie}[3.2.15], and will certainly involve a generalization of Grothendieck-Verdier localization. However we do not know yet whether the local geometric morphisms have right cancellation along terminally connected geometric morphisms in the general case - nor for the totally connected geometric morphisms.  

\subsection{The hyperconnected-localic geometry }

This is a situation where the left maps are both sufficient to encode the local objects and the map between them for they enjoy good properties. We first recall some generalities about those two classes of maps. 

\begin{definition}
Let $\mathcal{E}$ be a topos; a \emph{subquotient} of an object $E$ is an object $X$ related to $E$ by either a span or a cospan as follows% https://q.uiver.app/?q=WzAsMyxbMSwwLCJYIl0sWzAsMCwiUyJdLFswLDEsIkUiXSxbMSwyLCJtIiwyLHsic3R5bGUiOnsidGFpbCI6eyJuYW1lIjoiaG9vayIsInNpZGUiOiJ0b3AifX19XSxbMSwwLCJxIiwwLHsic3R5bGUiOnsiaGVhZCI6eyJuYW1lIjoiZXBpIn19fV1d
\[\begin{tikzcd}
	S & X \\
	E
	\arrow["m"', tail, from=1-1, to=2-1]
	\arrow["q", two heads, from=1-1, to=1-2]
\end{tikzcd} \hskip1cm 
% https://q.uiver.app/?q=WzAsMyxbMSwwLCJYIl0sWzEsMSwiUSJdLFswLDEsIkUiXSxbMiwxLCJxIiwwLHsic3R5bGUiOnsiaGVhZCI6eyJuYW1lIjoiZXBpIn19fV0sWzAsMSwibSIsMCx7InN0eWxlIjp7InRhaWwiOnsibmFtZSI6Imhvb2siLCJzaWRlIjoidG9wIn19fV1d
\begin{tikzcd}
	& X \\
	E & Q
	\arrow["q", two heads, from=2-1, to=2-2]
	\arrow["m", tail, from=1-2, to=2-2]
\end{tikzcd}\]
with $m$ a mono and $q$ an epi.
\end{definition}

\begin{remark}
Those two conditions are equivalent: if $X$ a subobject of a quotient of $E$, the pullback exhibits $X$ as a quotient of a subobject by stability of epi and mono under pullback in a topos, for topos are regular categories; if $X$ is a quotient of a subobject, the pushout exhibits $X$ as a subobject of a quotient by stability of mono and epi under pushout for topos are adhesive categories.  
\end{remark}

\begin{definition}
A geometric morphism $ f : \mathcal{E} \rightarrow \mathcal{F}$ is said to be \begin{itemize}
    \item \emph{hyperconnected} if it is connected ($f^*$ is full and faithful) and the essential image of $ f^*$ is closed in $\mathcal{E}$ under subquotient
    \item \emph{localic} if the essential image of $f^*$ generates $ \mathcal{E}$ under subquotient.
\end{itemize}
\end{definition}

Localic morphisms are related to the notion of internal locale; we choose here not to define the notion of internal locale, as the internale version of the completenes under arbitrary suprema is tiresome to express; for this we refer to \cite{elephant}[C.1]. For any Grothendieck topos $ \mathcal{E}$ we can define the locally posetal 2-category $ \textbf{Loc}[\mathcal{E}]$ of internal locales in $\mathcal{E}$; in particular internal locales in $\Set$ are ordinary locales. For any topos, the subobject classifier is in particular an internal locale. Similarly, it is well known that the direct image part of a geometric morphism lift to the 2-categories of internal locales.  

\begin{proposition}
A geometric morphism $f : \mathcal{E} \rightarrow \mathcal{F}$ is localic if and only if there exists an internal local $ L_f $ in $\textbf{Loc}[\mathcal{F}]$ such that one has a geometric equivalence together with an invertible 2-cell as below
% https://q.uiver.app/?q=WzAsMyxbMCwwLCJcXG1hdGhjYWx7RX0iXSxbMSwxLCJcXG1hdGhjYWx7Rn0iXSxbMiwwLCJcXFNoX1xcbWF0aGNhbHtGfShMX2YpIl0sWzAsMSwiZiIsMl0sWzAsMiwiXFxzaW1lcSJdLFsyLDEsImxfZiJdLFszLDUsIlxcc2ltZXEiLDAseyJzaG9ydGVuIjp7InNvdXJjZSI6MjAsInRhcmdldCI6MjB9LCJzdHlsZSI6eyJib2R5Ijp7Im5hbWUiOiJub25lIn0sImhlYWQiOnsibmFtZSI6Im5vbmUifX19XV0=
\[\begin{tikzcd}
	{\mathcal{E}} && {\Sh_\mathcal{F}(L_f)} \\
	& {\mathcal{F}}
	\arrow[""{name=0, anchor=center, inner sep=0}, "f"', from=1-1, to=2-2]
	\arrow["\simeq"{description}, from=1-1, to=1-3]
	\arrow[""{name=1, anchor=center, inner sep=0}, "{l_f}", from=1-3, to=2-2]
	\arrow["\simeq", Rightarrow, draw=none, from=0, to=1]
\end{tikzcd}\]
In particular we have an equivalence of categories
\[ \textbf{Loc}/\mathcal{F} \simeq \textbf{Loc}[\mathcal{F}]  \]
\end{proposition}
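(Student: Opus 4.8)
The plan is to exhibit explicit pseudofunctors in both directions between $\textbf{Loc}[\mathcal{F}]$ and the sub-bicategory of $\GTop/\mathcal{F}$ spanned by localic morphisms, and to show they are mutually pseudoinverse; the displayed characterization of a single localic $f$ then falls out as the object-level content of this equivalence. From locales to toposes, I would send an internal locale $L$ in $\mathcal{F}$ to its topos of internal sheaves $\Sh_\mathcal{F}(L)$ equipped with the canonical structure morphism $l_L : \Sh_\mathcal{F}(L) \rightarrow \mathcal{F}$; that $l_L$ is always localic is essentially built into the construction of $\Sh_\mathcal{F}(L)$, since the objects pulled back from $\mathcal{F}$ are the ``constant'' sheaves and their subquotients generate everything. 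In the reverse direction, the only reasonable candidate for the internal locale attached to a localic $f : \mathcal{E} \rightarrow \mathcal{F}$ is the direct image $L_f := f_*(\Omega_\mathcal{E})$ of the subobject classifier.

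First I would verify that $L_f$ is a genuine internal locale in $\mathcal{F}$. This rests on the two facts recalled above: that $\Omega_\mathcal{E}$ carries its canonical internal frame structure, and that the direct image $f_*$ lifts to the $2$-categories of internal locales, hence sends internal frames to internal frames while preserving the relevant meets and directed joins. Next I would produce the comparison morphism $c : \mathcal{E} \rightarrow \Sh_\mathcal{F}(L_f)$ over $\mathcal{F}$ from the universal property of internal sheaves: a geometric morphism into $\Sh_\mathcal{F}(L_f)$ over $\mathcal{F}$ amounts to a morphism of internal locales out of $L_f$, and the identity of $L_f = f_*\Omega_\mathcal{E}$ furnishes exactly such data, yielding $c$ together with an invertible $2$-cell $l_{L_f}\, c \simeq f$.

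The heart of the argument, and the step I expect to be the main obstacle, is showing that $c$ is an equivalence precisely because $f$ is localic. Here I would invoke the defining property directly: the subquotients of the objects $f^*(F)$ form a generating family for $\mathcal{E}$, so every object of $\mathcal{E}$ sits in a span-and-cospan diagram (a mono out of an epimorphic image, in the sense of the subquotient definition recalled above) built from objects of the form $f^*(F) = c^*\, l_{L_f}^*(F)$. I would then argue that $c$ is an embedding, i.e.\ $c_*$ is full and faithful, and simultaneously a surjection, i.e.\ $c^*$ is conservative — the latter being exactly where the subquotient-generation hypothesis is consumed, since $c^*$ preserves monos and epis and therefore preserves the subquotient presentations that exhaust $\mathcal{E}$. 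Being both an embedding and a surjection for the $(\textbf{Surj},\textbf{Emb})$ factorization recalled earlier, $c$ is an equivalence. Making the surjectivity half precise is the delicate point, as one must match the internal-sheaf description of $\Sh_\mathcal{F}(L_f)$ against the concrete subquotient presentation of objects of $\mathcal{E}$.

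Finally, for the asserted equivalence $\textbf{Loc}/\mathcal{F} \simeq \textbf{Loc}[\mathcal{F}]$ I would upgrade the object-level correspondence to $1$- and $2$-cells. A morphism of internal locales induces, by functoriality of $\Sh_\mathcal{F}(-)$, a geometric morphism of the associated sheaf toposes commuting with the structure maps; conversely a morphism of localic toposes over $\mathcal{F}$ induces a morphism $f_*\Omega \rightarrow g_*\Omega$ of internal locales via $(-)_*\Omega$ together with the canonical comparison $2$-cells of pseudofunctoriality. The round trip on the locale side, $(l_L)_*\Omega_{\Sh_\mathcal{F}(L)} \cong L$, and the round trip on the topos side supplied by the comparison equivalence $c$ of the previous paragraph assemble, after a routine verification of the triangle identities up to coherent invertible $2$-cells, into the desired biadjoint equivalence. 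The coherence bookkeeping here is tedious but poses no genuine difficulty once the object-level equivalence is established.
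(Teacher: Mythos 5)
The paper never proves this proposition: it is recalled as a standard fact about internal locales, with the pointer to \cite{elephant}[C.1] given just above, so your proposal has to be judged on its own merits rather than against a proof in the text. Your overall architecture (internal sheaves in one direction, $f_*(\Omega_\mathcal{E})$ in the other, a comparison morphism $c : \mathcal{E} \rightarrow \Sh_\mathcal{F}(L_f)$ from the universal property, then 1- and 2-cell bookkeeping) is the standard one. The genuine gap is at the step you yourself flag as the heart of the argument: showing that $c$ is an equivalence exactly when $f$ is localic. You propose to split this into surjection plus embedding, you locate the use of the localic hypothesis in the surjection half (conservativity of $c^*$), and you offer no argument at all for the embedding half. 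This is backwards. Surjectivity of $c$ holds for an \emph{arbitrary} geometric morphism $f$: the comparison $c$ is always connected (indeed hyperconnected --- it is in effect the first leg of the hyperconnected--localic factorization, and this can be checked directly from the construction), so $c^*$ is fully faithful, hence conservative, with no subquotient-generation hypothesis on $f$ consumed. Moreover, the reason you give --- that $c^*$ preserves monos and epis and therefore preserves subquotient presentations --- is not an argument for conservativity: preserving presentations says nothing about reflecting isomorphisms. The half that genuinely requires $f$ to be localic is precisely the one you left unargued, namely that $c$ is an embedding (equivalently, since $c$ is already hyperconnected, that it is an equivalence); for non-localic $f$ this is exactly what fails, since $c$ is then a proper hyperconnected morphism, which is still a surjection. (A smaller imprecision: geometric morphisms over $\mathcal{F}$ into $\Sh_\mathcal{F}(L)$ correspond to internal frame homomorphisms \emph{out of} $L$, i.e.\ locale maps \emph{into} $L$ from the localic reflection of the domain; here $L = L_f$ makes the slip invisible, but the direction matters when you later upgrade to the equivalence of categories.)

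The repair is short and makes the $(\textbf{Surj},\textbf{Emb})$ detour unnecessary. First check, for arbitrary $f$, that $c$ is hyperconnected: $c^*$ is fully faithful and its essential image is closed under subquotients, both following from the identification $\Sub_{\Sh_\mathcal{F}(L_f)}(l_{L_f}^*F) \simeq \mathcal{F}[F, L_f] \simeq \Sub_\mathcal{E}(f^*F)$ together with the fact that $\Sh_\mathcal{F}(L_f)$ is generated under subquotients by the objects $l_{L_f}^*F$. Next, if $f$ is localic then $c$ is localic too, for a trivial reason: the essential image of $f^* \simeq c^*\, l_{L_f}^*$ is contained in that of $c^*$, so if subquotients of the former exhaust $\mathcal{E}$, so do subquotients of the latter. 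Finally, a morphism that is both hyperconnected and localic is an equivalence: every object of $\mathcal{E}$ is a subquotient of some $c^*(X)$, and the image of $c^*$ is closed under subquotients, so $c^*$ is essentially surjective as well as fully faithful. With this step fixed, the converse direction (that $l_L$ is localic for every internal locale $L$) and the upgrade to the biequivalence $\textbf{Loc}/\mathcal{F} \simeq \textbf{Loc}[\mathcal{F}]$ go through as you describe.
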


\begin{proposition}
Hyperconnected geometric morphisms are left bi-orthogonal to localic geometric morphisms. 
\end{proposition}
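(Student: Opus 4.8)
The plan is to verify the defining bipullback condition for $\textbf{Hyp} \perp \textbf{Loc}$ directly, exploiting the posetal nature of internal locales to make the two-dimensional bookkeeping essentially automatic. Fix a hyperconnected $f : \mathcal{E} \to \mathcal{F}$ and a localic $g : \mathcal{G} \to \mathcal{H}$; I must show that the comparison functor
\[
\GTop[\mathcal{F}, \mathcal{G}] \longrightarrow \GTop[\mathcal{E}, \mathcal{G}] \times_{\GTop[\mathcal{E}, \mathcal{H}]} \GTop[\mathcal{F}, \mathcal{H}]
\]
is an equivalence of categories, which is exactly the bi-orthogonality condition recalled above.

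First I would reduce to a statement over a single base. Since localic morphisms are stable under bipullback, for each $v : \mathcal{F} \to \mathcal{H}$ I can form the bipullback $g' : \mathcal{G}' := \mathcal{F} \times_{\mathcal{H}} \mathcal{G} \to \mathcal{F}$, which is again localic, and identify the fibre of the comparison functor over $v$ with the category of pseudosections $s$ of $g'$ satisfying $s f \simeq u'$, where $u' : \mathcal{E} \to \mathcal{G}'$ is the map over $\mathcal{F}$ induced by the given pseudosquare. Thus it suffices to treat the absolute case: $f$ hyperconnected, $g'$ localic over $\mathcal{F}$, and a $1$-cell $u'$ with $g' u' \simeq f$; I want a section $s$ of $g'$ with $s f \simeq u'$, unique up to unique invertible $2$-cell.

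Then I construct the section from the classical factorization. Factoring $u'$ as a hyperconnected morphism followed by a localic morphism, $u' \simeq g'' u''$, gives $f \simeq (g' g'')\, u''$ with $g' g''$ localic (the right class is closed under composition) and $u''$ hyperconnected. Since $f$ is itself hyperconnected, uniqueness of the hyperconnected-localic factorization forces the localic part $g' g''$ to be an equivalence; choosing a pseudo-inverse $w$ and setting $s := g'' w$ yields $g' s \simeq \mathrm{id}_{\mathcal{F}}$ and $s f \simeq g'' (w f) \simeq g'' u'' \simeq u'$, so $s$ is the desired filler.

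Finally I would settle the uniqueness and functoriality clauses using the internal-locale description. Writing $\mathcal{G}' \simeq \Sh_{\mathcal{F}}(v^* L)$ and invoking hyperconnectedness of $f$ — in particular that $f^*$ is fully faithful with image closed under subquotients, whence $f^* \Omega_{\mathcal{F}} \cong \Omega_{\mathcal{E}}$ — I would identify the hom-category of sections with the poset of internal frame homomorphisms (generalized points) of $v^* L$, and the restriction along $f^*$ with the corresponding poset of points of $f^* v^* L$. Because these hom-categories are \emph{posets}, any two fillers are related by at most one invertible $2$-cell and every comparison $2$-cell demanded by a morphism of pseudosquares is uniquely determined; this discharges both the ``unique up to unique invertible $2$-cell'' requirement and the two-dimensional functoriality simultaneously, establishing the bipullback. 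I expect the genuine obstacle to be exactly this last identification: one must verify that the closure of $\im f^*$ under subquotients reflects and creates the internal frame structure fully and faithfully between $\mathcal{E}$ and $\mathcal{F}$, so that points transfer along $f^*$, rather than merely the underlying objects.
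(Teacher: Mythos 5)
The paper states this proposition without proof (it is the classical hyperconnected--localic orthogonality of Johnstone's factorization theory), so your argument has to stand on its own, and it does not quite do so yet. Your existence step is correct: pulling back $g$ along $v$ (localic morphisms are stable under bipullback and closed under composition), factoring the induced map $u'$, and comparing with the trivial factorization $f \simeq 1_{\mathcal{F}} \circ f$ of the hyperconnected $f$ does produce a pseudosection, hence a filler. But watch the logical order: you invoke the essential uniqueness of the hyperconnected--localic factorization to derive orthogonality, whereas in the paper's framework (and in general bifactorization-system theory) that uniqueness is normally a consequence of orthogonality. The argument is non-circular only because uniqueness of this particular factorization can be established independently -- the middle topos is forced to be the closure of the image of $f^*$ under subquotients -- and this needs to be said explicitly.

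The genuine gap is the one you flag yourself at the end: the identification of sections of $g'$ with morphisms over $\mathcal{F}$ out of $\mathcal{E}$, i.e.\ that restriction along $f$ is an equivalence of the relevant hom-categories. Without it you have essential surjectivity of the comparison functor but not full faithfulness, so the ``unique up to unique invertible 2-cell'' clause and the two-dimensional condition -- the actual content of bi-orthogonality -- remain unproven; posetality gives \emph{at most one} comparison 2-cell, never its existence. The missing lemma is where the real proof lives, and it follows from facts the paper already recalls: writing $\mathcal{G}' \simeq \Sh_{\mathcal{F}}(L')$, one has a natural equivalence $\GTop/\mathcal{F}[(\mathcal{K},k), \Sh_{\mathcal{F}}(L')] \simeq \mathbf{Frm}_{\mathcal{F}}[L', k_*\Omega_{\mathcal{K}}]$ (frame homomorphisms internal to $\mathcal{F}$), and $f$ hyperconnected means precisely that $f$ is connected and $f^*$ preserves the subobject classifier, so the canonical frame map $\Omega_{\mathcal{F}} \to f_*\Omega_{\mathcal{E}}$ is an isomorphism. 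Hence precomposition with $f$ induces an isomorphism of posets $\mathbf{Frm}_{\mathcal{F}}[L', \Omega_{\mathcal{F}}] \cong \mathbf{Frm}_{\mathcal{F}}[L', f_*\Omega_{\mathcal{E}}]$, which yields existence \emph{and} uniqueness of fillers over a fixed $v$ in one stroke -- and incidentally makes your factorization detour redundant. Even then, the paper's definition of bi-orthogonality requires compatibility 2-cells for morphisms of pseudosquares whose bottom components $\psi : v \Rightarrow v'$ vary; your fixed-$v$ reduction does not see these, so you would still need the pseudonaturality in $v$ of the pullback construction to finish.
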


\begin{proposition}\label{Hyperconnected-localic factorization}
Hyperconnected and localic geometric morphisms from a bifactorization system on $\GTop$: any geometric morphism factorizes in an essentially unique way as an hyperconnected geometric morphism followed by a localic geometric morphism.
\end{proposition}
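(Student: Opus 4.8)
The statement to prove is the existence of the hyperconnected-localic factorization: every geometric morphism $f : \mathcal{E} \rightarrow \mathcal{F}$ factorizes essentially uniquely as a hyperconnected morphism followed by a localic one. Since the orthogonality $\textbf{Hyp} \perp \textbf{Loc}$ has already been established in the preceding proposition, the essential uniqueness of any such factorization will follow automatically from the general bi-orthogonality machinery; hence the only genuine content is the \emph{existence} of the factorization.

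\textbf{Construction of the intermediate topos.} The plan is to build the localic reflection of $\mathcal{E}$ over $\mathcal{F}$ explicitly. Starting from $f^* : \mathcal{F} \rightarrow \mathcal{E}$, I would close the essential image of $f^*$ under subquotients inside $\mathcal{E}$: let $\mathcal{G}$ be the smallest full replete subcategory of $\mathcal{E}$ containing the objects $f^*(F)$ and closed under the subquotient operation described in the definitions above (using that topoi are both regular and adhesive, so the two formulations of subquotient coincide). The key idea is that this subquotient-closure of $f^*(\mathcal{F})$ should itself be a Grothendieck topos $\mathcal{I}$ sitting between $\mathcal{E}$ and $\mathcal{F}$; equivalently, I would produce an internal locale $L_f$ in $\textbf{Loc}[\mathcal{F}]$ and set $\mathcal{I} = \Sh_\mathcal{F}(L_f)$, invoking the characterization of localic morphisms via internal locales from the proposition just above the statement. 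Concretely, the subobject classifier $\Omega_\mathcal{E}$ and its image data give rise to the relevant internal locale, and the generators of $\mathcal{E}$ over the subquotient closure assemble into $L_f$.

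\textbf{Verifying the two factors.} With $\mathcal{I}$ in hand and geometric morphisms $\mathcal{E} \xrightarrow{h} \mathcal{I} \xrightarrow{g} \mathcal{F}$ with $f \simeq gh$, I would check that $h$ is hyperconnected and $g$ is localic. For $g$: by construction the essential image of $g^*$ is exactly the subquotient closure of $f^*(\mathcal{F})$, which generates $\mathcal{I}$ under subquotients, so $g$ is localic by definition. For $h$: one must show $h^*$ is full and faithful (connectedness) and that its essential image is closed under subquotient in $\mathcal{E}$. Fullness and faithfulness follow because $\mathcal{I}$ was built as a subcategory of $\mathcal{E}$ with the inherited structure, so $h^*$ is (up to equivalence) the inclusion; closure under subquotient is precisely the defining property of the subquotient-closure $\mathcal{I}$. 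These verifications reduce to standard topos-theoretic facts (regularity, adhesivity, and the behavior of the subobject classifier under the relevant operations), for which I would cite \cite{elephant}[C.1] rather than reproving them.

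\textbf{Main obstacle.} The hard part will be establishing that the subquotient-closure of $f^*(\mathcal{F})$ is genuinely a Grothendieck topos and that the inclusion $\mathcal{I} \hookrightarrow \mathcal{E}$ underlies a geometric morphism $h$ with the claimed properties — in other words, that the naive closure construction actually produces the sheaf topos $\Sh_\mathcal{F}(L_f)$ on an internal locale. This is where the internal-locale formalism does the real work: one has to identify the correct internal locale $L_f$ whose sheaves realize the subquotient closure, and confirm that the resulting morphisms compose to $f$ up to the required invertible 2-cell. Once this identification is made, hyperconnectedness of $h$ and localicness of $g$ are essentially bookkeeping, and essential uniqueness is delivered gratis by the already-proven orthogonality $\textbf{Hyp} \perp \textbf{Loc}$, completing the verification that $(\textbf{Hyp}, \textbf{Loc})$ is a bifactorization system on $\GTop$.
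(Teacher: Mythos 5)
Your proposal is correct and follows essentially the same route as the paper, which states this classical factorization without proof and, in the remark immediately following it, describes exactly your two constructions: the subquotient closure $\overline{\textup{Im}\, f}$ of the essential image of $f^*$ (asserted, as you do, to be directly verifiable as a Grothendieck topos), and the internal-locale description of the localic part. One small sharpening: the internal locale you leave vague is precisely $f_*\Omega_\mathcal{E}$, the direct image of the subobject classifier, which is the paper's second description of the factorization.
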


\begin{remark}
The factorization can be alternatively described in two ways:\begin{itemize}
    \item from the point of view of the subquotients, the factorization is obtained from the factorization of the inverse image part in $\Cat$
    % https://q.uiver.app/?q=WzAsMyxbMCwwLCJcXG1hdGhjYWx7RX0iXSxbMiwwLCJcXG1hdGhjYWx7Rn0iXSxbMSwxLCJcXG92ZXJsaW5le1xcdGV4dHVwe0ltfSBmfSJdLFsxLDAsImZeKiIsMl0sWzEsMiwibF9mXioiXSxbMiwwLCJoX2ZeKiJdLFsyLDMsIlxcYWxwaGFfZiBcXGF0b3AgXFxzaW1lcSIsMSx7InNob3J0ZW4iOnsidGFyZ2V0IjoyMH0sInN0eWxlIjp7ImJvZHkiOnsibmFtZSI6Im5vbmUifSwiaGVhZCI6eyJuYW1lIjoibm9uZSJ9fX1dXQ==
\[\begin{tikzcd}
	{\mathcal{E}} && {\mathcal{F}} \\
	& {\overline{\textup{Im} f}}
	\arrow[""{name=0, anchor=center, inner sep=0}, "{f^*}"', from=1-3, to=1-1]
	\arrow["{l_f^*}", from=1-3, to=2-2]
	\arrow["{h_f^*}", hook, from=2-2, to=1-1]
	\arrow["{\alpha_f \atop \simeq}"{description}, Rightarrow, draw=none, from=2-2, to=0]
\end{tikzcd}\]
where $ \overline{\textup{Im} f}$ is the closure of the essential image of $ f^*$ in $\mathcal{E}$ under subquotient, which can be proven directly to be a Grothendieck topos.
\item from the point of view of internal locales, the factorization is obtained from the direct image of the subobject classifier as 
% https://q.uiver.app/?q=WzAsMyxbMCwwLCJcXG1hdGhjYWx7RX0iXSxbMiwwLCJcXG1hdGhjYWx7Rn0iXSxbMSwxLCJcXFNoX1xcbWF0aGNhbHtGfShmXypcXE9tZWdhX1xcbWF0aGNhbHtFfSkiXSxbMCwxLCJmIl0sWzIsMSwibF9mIiwyXSxbMCwyLCJoX2YiLDJdLFsyLDMsIlxcYWxwaGFfZiBcXGF0b3AgXFxzaW1lcSIsMSx7InNob3J0ZW4iOnsidGFyZ2V0IjoyMH0sInN0eWxlIjp7ImJvZHkiOnsibmFtZSI6Im5vbmUifSwiaGVhZCI6eyJuYW1lIjoibm9uZSJ9fX1dXQ==
\[\begin{tikzcd}
	{\mathcal{E}} && {\mathcal{F}} \\
	& {\Sh_\mathcal{F}(f_*\Omega_\mathcal{E})}
	\arrow[""{name=0, anchor=center, inner sep=0}, "f", from=1-1, to=1-3]
	\arrow["{l_f}"', from=2-2, to=1-3]
	\arrow["{h_f}"', from=1-1, to=2-2]
	\arrow["{\alpha_f \atop \simeq}"{description}, Rightarrow, draw=none, from=2-2, to=0]
\end{tikzcd}\]
That is, $ f_*\Omega_\mathcal{E}$ is the desired internal local corresponding to the localic geometric morphism $l_f$.
\end{itemize}
\end{remark}

\begin{definition}
A Grothendieck topoi will be said to be \emph{localic} if it is of the form $ \Sh(L)$ for $L$ a locale - that is, an internal locale in $\Set$. In the following, we denote as $ \textbf{LocGTop}$ the bicategory of localic Grothendieck topoi and localic geometric morphisms between them.  
\end{definition}

\begin{remark}
In particular, observe that a Grothendieck topos is localic if and only if its terminal map $ !_\mathcal{E} : \mathcal{E} \rightarrow \Set$ is localic. For any topos, the global section functor $ \Gamma_\mathcal{E} = !_{\mathcal{E}*}$ sends the subobject classifier $\Omega_\mathcal{E}$ to a locale in $\Set$ which we also denote in the same way. 
\end{remark}

The following standard fact can be exhibited as a corollary of \cref{Right objects are relflective} for localic topoi are the right objects for the hyperconnected-localic bifactorization system:

\begin{corollary}
We have a (1,2)-full, bireflective, sub-bicategory 
% https://q.uiver.app/?q=WzAsMixbMCwwLCJcXHRleHRiZntMb2NHVG9wfSJdLFsyLDAsIlxcR1RvcCJdLFswLDEsIiIsMCx7ImN1cnZlIjoyLCJzdHlsZSI6eyJ0YWlsIjp7Im5hbWUiOiJob29rIiwic2lkZSI6InRvcCJ9fX1dLFsxLDAsIlxcdGV4dGJme0x9IiwyLHsiY3VydmUiOjJ9XSxbMywyLCIiLDAseyJsZXZlbCI6MSwic3R5bGUiOnsibmFtZSI6ImFkanVuY3Rpb24ifX1dXQ==
\[\begin{tikzcd}
	{\textup{\textbf{LocGTop}}} && \GTop
	\arrow[""{name=0, anchor=center, inner sep=0}, bend right=30, hook, from=1-1, to=1-3]
	\arrow[""{name=1, anchor=center, inner sep=0}, "{\textbf{L}}"', bend right=30, from=1-3, to=1-1]
	\arrow["\dashv"{anchor=center, rotate=-90}, draw=none, from=1, to=0]
\end{tikzcd}\]
where the pseudofunctor $ \textbf{L}$, known as the \emph{localic reflection}, sends a Grothendieck topos $ \mathcal{E} $ on the localic topos $ \Sh(\Omega_\mathcal{E})$. This generalizes, for any Grothendieck topos $ \mathcal{E}$, as (1,2)-full bireflective sub-bicategory 
\[\begin{tikzcd}
	{\textup{\textbf{Loc}}/\mathcal{E}} && \GTop/\mathcal{E}
	\arrow["\iota_\mathcal{E}"', ""{name=0, anchor=center, inner sep=0}, bend right=30, hook, from=1-1, to=1-3]
	\arrow[""{name=1, anchor=center, inner sep=0}, "{\textbf{L}_\mathcal{E}}"', bend right=30, from=1-3, to=1-1]
	\arrow["\dashv"{anchor=center, rotate=-90}, draw=none, from=1, to=0]
\end{tikzcd}\]
where the pseudofunctor $ \textbf{L}_\mathcal{E}$, known as the \emph{localic reflection}, sends a geometric morphism $ f : \mathcal{F} \rightarrow \mathcal{E} $ on the localic geometric morphism $ \Sh_\mathcal{E}(f_*(\Omega_\mathcal{F}))$.
\end{corollary}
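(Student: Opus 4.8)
The statement follows by instantiating \cref{Right objects are relflective} at the bicategory $\mathcal{C} = \GTop$ equipped with the hyperconnected-localic bifactorization system $(\textbf{Hyp}, \textbf{Loc})$, which is a genuine bifactorization system by the biorthogonality of hyperconnected and localic morphisms together with \cref{Hyperconnected-localic factorization}. The plan is first to identify the bicategories of localic topoi appearing in the statement with the bicategories of right objects for this system, and then to read off the explicit form of the reflection from the internal-locale description of the factorization. The real content is already carried by \cref{Right objects are relflective} and the factorization propositions, so the work reduces to a translation of vocabulary plus one explicit computation.

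First I would check that $\textbf{LocGTop} = \textbf{Loc}\hy\textbf{Obj}$ and, relatively, $\textbf{Loc}/\mathcal{E} = \textbf{Loc}\hy\textbf{Obj}_\mathcal{E}$. By the remark that a Grothendieck topos $\mathcal{E}$ is localic precisely when its terminal map $!_\mathcal{E} : \mathcal{E} \rightarrow \Set$ is localic, the $0$-cells of $\textbf{Loc}\hy\textbf{Obj}$ — the $\textbf{Loc}$-maps with codomain $\Set$ — are exactly the localic topoi, and similarly the $\textbf{Loc}$-objects over $\mathcal{E}$ are the localic geometric morphisms into $\mathcal{E}$, which by the equivalence $\textbf{Loc}/\mathcal{F} \simeq \textbf{Loc}[\mathcal{F}]$ are classified by internal locales. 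The $1$-cells of $\textbf{Loc}\hy\textbf{Obj}$ are the $\textbf{Loc}$-maps between such objects; but since $\textbf{Loc}$ is left-pseudocancellative, any geometric morphism $g$ between localic topoi satisfies $!\circ g \simeq !$ with both legs localic, whence $g$ itself is localic. Thus $\textbf{Loc}\hy\textbf{Obj}$ is the $(1,2)$-full sub-bicategory of $\GTop$ on the localic topoi — in fact full, by the cancellation just used — matching the definition of $\textbf{LocGTop}$; the same argument over $\mathcal{E}$ gives the relative identification, and the $(1,2)$-fullness is precisely the one recorded for $\mathcal{R}\hy\textbf{Obj}_C$.

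With these identifications in place, \cref{Right objects are relflective} immediately yields that the inclusion is pseudo-reflective, i.e. bireflective, the reflection sending each object to the right part $r_f$ of its $(\textbf{Hyp},\textbf{Loc})$-factorization and each $1$-cell (resp. $2$-cell) to the induced comparison map (resp. morphism of fillers) supplied by pseudofunctoriality of the factorization. It then remains to compute this right part. For the terminal map $!_\mathcal{E}$, the internal-locale description of the hyperconnected-localic factorization gives the right leg $\Sh_{\Set}(!_{\mathcal{E}*}\Omega_\mathcal{E}) \rightarrow \Set$; since $!_{\mathcal{E}*} = \Gamma_\mathcal{E}$ sends $\Omega_\mathcal{E}$ to the locale $\Omega_\mathcal{E}$ in $\Set$, this is $\Sh(\Omega_\mathcal{E})$, so $\mathbf{L}(\mathcal{E}) = \Sh(\Omega_\mathcal{E})$ as claimed. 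Relatively, the factorization of $f : \mathcal{F} \rightarrow \mathcal{E}$ has right leg $\Sh_\mathcal{E}(f_*\Omega_\mathcal{F}) \rightarrow \mathcal{E}$, giving $\mathbf{L}_\mathcal{E}(f) = \Sh_\mathcal{E}(f_*\Omega_\mathcal{F})$.

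The only genuine verifications are the translation between ``localic topos/morphism'' and ``right object/map'', for which the left-pseudocancellativity of $\textbf{Loc}$ is the key point, and the explicit reading-off of the reflection from the internal-locale form of the factorization. The latter is where I expect the only real care to be needed, namely in matching the convention $f_*\Omega = \Gamma\Omega$ with the description of the right leg as sheaves on the direct image of the subobject classifier; everything else is formal, the heavy lifting having been done in \cref{Right objects are relflective}.
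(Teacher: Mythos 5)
Your proposal is correct and matches the paper's intended argument: the paper presents this statement precisely as a corollary of \cref{Right objects are relflective} applied to the $(\textbf{Hyp},\textbf{Loc})$ bifactorization system of \cref{Hyperconnected-localic factorization}, localic topoi being exactly the right objects, with the explicit formulas $\Sh(\Omega_\mathcal{E})$ and $\Sh_\mathcal{E}(f_*\Omega_\mathcal{F})$ read off from the internal-locale description of the factorization. Your additional verifications (the identification $\textup{\textbf{LocGTop}} = \textbf{Loc}\hy\textbf{Obj}$ via left-pseudocancellativity of $\textbf{Loc}$, and the matching of $!_{\mathcal{E}*}\Omega_\mathcal{E}$ with the locale $\Omega_\mathcal{E}$) are exactly the details the paper leaves implicit.
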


Now let us look to left objects:

\begin{definition}
A Grothendieck topos is said to be \emph{hyperconnected} if its terminal map $ !_\mathcal{E} : \mathcal{E} \rightarrow \Set$ is hyperconnected. 
\end{definition}

\begin{proposition}
A Grothendieck topos is hyperconnected if and only if its localic reflection is $ \Set$. 
\end{proposition}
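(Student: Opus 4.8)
The plan is to read the statement off the hyperconnected--localic factorization (\cref{Hyperconnected-localic factorization}) together with the general properties of a bifactorization system, so that the whole argument becomes formal. First I would recall, from the description of the factorization given after \cref{Hyperconnected-localic factorization}, that the terminal map factors as
\[ \mathcal{E} \xrightarrow{\; h_\mathcal{E} \;} \Sh_\Set(!_{\mathcal{E}*}\Omega_\mathcal{E}) \xrightarrow{\; l_\mathcal{E} \;} \Set \]
with $h_\mathcal{E}$ hyperconnected and $l_\mathcal{E}$ localic, and that the localic reflection $\textbf{L}(\mathcal{E}) = \Sh(\Omega_\mathcal{E})$ is \emph{precisely} the middle topos $\Sh_\Set(!_{\mathcal{E}*}\Omega_\mathcal{E})$ of this factorization (here $\Omega_\mathcal{E}$ stands for the locale $\Gamma_\mathcal{E}\Omega_\mathcal{E} = !_{\mathcal{E}*}\Omega_\mathcal{E}$). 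Thus the hypothesis ``the localic reflection is $\Set$'' means exactly that the localic leg $l_\mathcal{E}$ is an equivalence.

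For the forward direction, suppose $\mathcal{E}$ is hyperconnected, i.e. $!_\mathcal{E} \in \textbf{Hyp}$. Then in the factorization above both $!_\mathcal{E}$ and its left leg $h_\mathcal{E}$ lie in the left class $\textbf{Hyp}$, so by right-pseudocancellation of left maps the localic leg $l_\mathcal{E}$ also lies in $\textbf{Hyp}$. As $l_\mathcal{E} \in \textbf{Loc}$ as well, it lies in $\textbf{Hyp} \cap \textbf{Loc}$, whence it is an equivalence; equivalently, viewing $l_\mathcal{E}$ as an object of the slice over $\Set$, it is both a left object and a right object, and the only such object is $1_{\Set}$ up to equivalence. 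Hence $\textbf{L}(\mathcal{E}) = \Sh(\Omega_\mathcal{E}) \simeq \Set$.

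For the converse, suppose $\textbf{L}(\mathcal{E}) \simeq \Set$, i.e. $l_\mathcal{E}$ is an equivalence. Since the left class $\textbf{Hyp}$ contains all equivalences and is closed under composition and invertible 2-cells, the composite $!_\mathcal{E} \simeq l_\mathcal{E} \comp h_\mathcal{E}$ of the hyperconnected map $h_\mathcal{E}$ with the equivalence $l_\mathcal{E}$ is again hyperconnected. Therefore $!_\mathcal{E}$ is hyperconnected, i.e. $\mathcal{E}$ is a hyperconnected topos.

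The argument is essentially immediate once the identification of the localic reflection with the middle object of the factorization is in place; the only point deserving a little care, which I regard as the main (minor) obstacle, is confirming that $\Sh(\Omega_\mathcal{E}) \simeq \Set$ in $\GTop$ is genuinely the same as $l_\mathcal{E}$ being an equivalence over $\Set$. This holds because $\Set$ is biterminal, so a localic geometric morphism into $\Set$ is an equivalence if and only if its domain is equivalent to $\Set$ --- the internal locale $!_{\mathcal{E}*}\Omega_\mathcal{E}$ being then the terminal locale, with $\Sh$ of it recovering $\Set$.
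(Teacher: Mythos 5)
Your proof is correct. The paper states this proposition without proof, treating it as an immediate consequence of the preceding material, and your argument supplies exactly the formal verification one would expect in the paper's framework: the localic reflection is the right leg of the $(\textbf{Hyp},\textbf{Loc})$-factorization of $!_\mathcal{E}$, so the forward direction follows from right-pseudocancellativity of the left class together with the fact that a map lying in both classes (equivalently, an object over $\Set$ that is both left and right) is an equivalence, while the converse follows from closure of $\textbf{Hyp}$ under composition with equivalences and invertible 2-cells. Your closing care about identifying ``$\Sh(\Omega_\mathcal{E})\simeq\Set$'' with ``$l_\mathcal{E}$ is an equivalence'' is well placed and correctly resolved by biterminality of $\Set$ (which the paper even notes is strict).
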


\begin{remark}
Beware that we do not claim however that any geometric morphism which is sent to an equivalence the localic reflection is hyperconnected. We do not know how to characterize the class of geometric morphism that are localized by the localic reflection - as characterizing the morphisms that are localized by a reflection can be a difficult problem even in the 1-dimensional case.  
\end{remark}

Now by the gliding property of left objects, we know that a Grothendieck topos which is the codomain of a hyperconnected geometric morphism with hyperconnected domain is itself hyperconnected.

\begin{proposition}
We have a bistable inclusion  
% https://q.uiver.app/?q=WzAsMixbMCwwLCIoXFx0ZXh0YmZ7SHlwR1RvcH1ee1xcdGV4dGJme0h5cH19KV57XFxvcH0iXSxbMSwwLCJcXEdUb3Bee1xcb3B9Il0sWzAsMSwiIiwwLHsic3R5bGUiOnsidGFpbCI6eyJuYW1lIjoiaG9vayIsInNpZGUiOiJ0b3AifX19XV0=
\[\begin{tikzcd}
	{({\textbf{\textup{HypGTop}}^{\textbf{\textup{Hyp}}}})^{\op}} & {\GTop^{\op}}
	\arrow[hook, from=1-1, to=1-2]
\end{tikzcd}\]
of the dual of the inclusion of the bicategory of hyperconnected Grothendieck topoi and hyperconnected geometric morphisms between them is dual to a bistable pseudofunctor
\end{proposition}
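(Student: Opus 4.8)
The plan is to invoke the general machinery already established in \cref{bistable for specified left objects}, applied to the hyperconnected-localic bifactorization system on $\GTop$. The entire statement should follow as a direct instance of the specified-left-objects construction, provided we verify the hypotheses. Specifically, I would take $\mathcal{C} = \GTop$, the bifactorization system $(\mathcal{L}, \mathcal{R}) = (\textbf{Hyp}, \textbf{Loc})$ whose factorization property is guaranteed by \cref{Hyperconnected-localic factorization}, and the specified subclass $\mathcal{L}' = \textbf{Hyp}$ itself (so $\mathcal{L}' = \mathcal{L}$). Under this choice, $\textbf{HypGTop}^{\textbf{Hyp}}$ is precisely the bicategory $\mathcal{L}'\hy\textbf{Obj}^{\mathcal{L}}$ over the biterminal object $\Set$: its objects are Grothendieck topoi whose terminal map to $\Set$ is hyperconnected, and its morphisms are hyperconnected geometric morphisms between them, matching the definition of a hyperconnected topos given just above.

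**First I would** confirm that $\Set$ is a biterminal object of $\GTop$, which is recalled in the excerpt. Then the two nontrivial hypotheses of \cref{bistable for specified left objects} to check are: (i) $\mathcal{L}' = \textbf{Hyp}$ has right cancellation along $\mathcal{L} = \textbf{Hyp}$-maps, and (ii) $\mathcal{L}'$ contains identities. Since here $\mathcal{L}' = \mathcal{L}$, the right cancellation condition reduces to the statement that $\textbf{Hyp}$ is right-pseudocancellative, which is exactly one of the closure properties of the left class of any bifactorization system (it holds automatically by the general proposition on properties of $(\mathcal{L},\mathcal{R})$-systems recalled earlier). Condition (ii) holds because identities are equivalences and $\mathcal{L}$ contains all equivalences, again by the same general proposition. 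The gliding remark immediately preceding the statement — that a topos receiving a hyperconnected morphism from a hyperconnected domain is itself hyperconnected — is precisely this right-cancellation property made concrete, and I would cite it to motivate the application.

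**Concretely,** once the hypotheses are in place, \cref{bistable for specified left objects} delivers the bistable inclusion
\[
\begin{tikzcd}
	({\textbf{HypGTop}}^{\textbf{Hyp}})^{\op} & {\GTop^{\op}}
	\arrow[hook, from=1-1, to=1-2]
\end{tikzcd}
\]
as the first item of that theorem, with $C = \Set$. The generic morphisms are, as described in the proof of \cref{Bistable inclusion of left objects}, the duals of invertible triangles with a localic map on top and a hyperconnected map on the side; the generic factorization of a dualized geometric morphism is the dual of its hyperconnected-localic factorization, whose middle topos lands again in $\textbf{HypGTop}^{\textbf{Hyp}}$ by right cancellation. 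There is essentially nothing to compute beyond this identification.

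**The main obstacle**, and really the only substantive point, is ensuring that the abstract hypotheses genuinely hold for $\textbf{Hyp}$ in $\GTop$: in particular that $\textbf{Hyp}$ is right-pseudocancellative as a left class. This is subsumed by the general structure theorem for bifactorization systems, so no separate topos-theoretic argument is needed, but it is worth stating explicitly since the whole reduction hinges on $\mathcal{L}' = \mathcal{L}$ being legitimate. I would therefore keep the proof short: cite \cref{Hyperconnected-localic factorization} for the existence of the bifactorization system, note that taking $\mathcal{L}' = \mathcal{L} = \textbf{Hyp}$ trivially satisfies right cancellation along $\mathcal{L}$-maps and contains identities by the closure properties, and conclude by \cref{bistable for specified left objects} (equivalently, directly by \cref{Bistable inclusion of left objects}, since no proper specification is being made). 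Indeed, since $\mathcal{L}' = \mathcal{L}$ here, one could equally invoke \cref{Bistable inclusion of left objects} verbatim with $\textbf{HypGTop} = \mathcal{L}\hy\textbf{Obj}$.
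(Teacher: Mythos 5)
Your proposal is correct and matches the paper's intended argument: the proposition is an immediate instance of \cref{Bistable inclusion of left objects} applied to the hyperconnected-localic bifactorization system of \cref{Hyperconnected-localic factorization} over the (strict) terminal object $\Set$, with hyperconnected topoi being exactly the left objects and the gliding remark preceding the statement playing the role of right cancellation. Your detour through \cref{bistable for specified left objects} with $\mathcal{L}'=\mathcal{L}$ is harmless and collapses, as you note yourself, to the unspecified theorem, so there is no substantive difference from the paper's route.
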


This suggests that hyperconnected topoi can also play the role of focal components of a geometry.

\begin{proposition}
The hyperconnected-localic factorization system has oplax right truncated factorization of 2-cells.
\end{proposition}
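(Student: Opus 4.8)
The plan is to exhibit the required factorization of a globular $2$-cell directly from the internal-locale description of the hyperconnected-localic factorization recalled after \cref{Hyperconnected-localic factorization}: the localic part of $f:\mathcal{E}\to\mathcal{F}$ is $\Sh_\mathcal{F}(f_*\Omega_\mathcal{E})$, with middle topos $A_f=\overline{\mathrm{Im}\,f^*}$, localic leg $\ell_f$ and hyperconnected leg $h_f$, and I shall use the equivalence $\textbf{Loc}/\mathcal{F}\simeq\textbf{Loc}[\mathcal{F}]$. Fix $f_1,f_2:\mathcal{E}\to\mathcal{F}$ and a $2$-cell $\sigma:f_1\Rightarrow f_2$, taken in the paper's convention as a transformation of the direct-image parts; it is this variance that will make the factorization \emph{oplax} rather than lax. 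First I would produce the middle $1$-cell $m_\sigma$. For each $X\in\mathcal{F}$ the relevant component of (the mate of) $\sigma$ is a map between $f_1^*X$ and $f_2^*X$ in $\mathcal{E}$, and pulling subobjects back along it is a frame homomorphism between $\Sub_\mathcal{E}(f_1^*X)$ and $\Sub_\mathcal{E}(f_2^*X)$ — it preserves finite meets and arbitrary joins because pullback along any arrow of a topos has both a left and a right adjoint ($\exists\dashv(-)^{-1}\dashv\forall$) — and this is natural in $X$. Read internally, these components assemble into an internal frame homomorphism between $f_{1*}\Omega_\mathcal{E}$ and $f_{2*}\Omega_\mathcal{E}$ in $\mathcal{F}$, that is a morphism of internal locales over $\mathcal{F}$; through $\textbf{Loc}/\mathcal{F}\simeq\textbf{Loc}[\mathcal{F}]$ this is exactly a localic geometric morphism $m_\sigma$ between the middle topoi $A_{f_1}$ and $A_{f_2}$ lying over $\mathcal{F}$, its orientation being fixed by the variance of geometric $2$-cells.

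Next I would extract the two comparison $2$-cells. Since $m_\sigma$ is manufactured as a morphism of $\textbf{Loc}/\mathcal{F}$, it carries a canonical \emph{invertible} $2$-cell relating the two localic legs $\ell_{f_1},\ell_{f_2}$ over $\mathcal{F}$: this is the right-hand cell $\rho_\sigma$, and its invertibility is exactly the \emph{right truncated} condition. The genuine content of $\sigma$ is then pushed onto the hyperconnected side, as the left-hand cell $\lambda_\sigma$ comparing the composites of $m_\sigma$ with $h_{f_1}$ and $h_{f_2}$. Using the subquotient description $A_{f_i}=\overline{\mathrm{Im}\,f_i^*}\hookrightarrow\mathcal{E}$, with $h_{f_i}^*$ the full inclusion, I would check on the generators $f_i^*X$ that $\lambda_\sigma$ is induced by the components of $\sigma$ itself, is non-invertible in general, and points in the oplax direction; pasting $\lambda_\sigma$ and $\rho_\sigma$ with the factorization isomorphisms $\alpha_{f_1},\alpha_{f_2}$ then recovers $\sigma$, i.e. verifies the decomposition equation of the (op)lax-factorization axiom.

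For the universal property and pseudofunctoriality I would appeal to the bireflection of localic objects established in the corollary following \cref{Right objects are relflective}: morphisms over $\mathcal{F}$ into a localic topos that are compatible with the hyperconnected unit are controlled by the localic reflection $\textbf{L}_\mathcal{F}$, so any competing factorization $(m,\lambda,\rho)$ of $\sigma$ receives a unique mediating $2$-cell $\mu$ to $(m_\sigma,\lambda_\sigma,\rho_\sigma)$, and the assignment $\sigma\mapsto(m_\sigma,\lambda_\sigma,\rho_\sigma)$ preserves vertical composition and identities because both $f\mapsto f_*\Omega_\mathcal{E}$ and the mate-and-pullback construction are pseudofunctorial. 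Invertibility of every $\rho_\sigma$ is the right-truncatedness, and by the remark on truncation above it re-confirms that the middle map $m_\sigma$ lands in $\mathcal{R}=\textbf{Loc}$, consistent with its construction.

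The main obstacle is not building $m_\sigma$ — that is frame-theoretic bookkeeping — but fixing the \emph{oplax} orientation precisely: one must verify that the hyperconnected-side comparison points the way demanded by the dualized axiom and is genuinely non-invertible, while the localic-side comparison is invertible, and then match this internal-locale data against the abstract diagrammatic data of the definition so that the decomposition and its universal property hold on the nose. Settling these variance and coherence bookkeeping points — including recording that the oplax notion is the evident dual of the lax-factorization of $2$-cells of the earlier definition — is where the real care lies.
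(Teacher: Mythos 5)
Your proposal follows the same route as the paper's own (very terse) proof: take the mate of $\sigma$ on direct images, evaluate it at $\Omega_\mathcal{E}$, regard the resulting map $f_{2*}\Omega_\mathcal{E}\to f_{1*}\Omega_\mathcal{E}$ as a morphism of internal locales over $\mathcal{F}$, and apply $\Sh_\mathcal{F}(-)$ to obtain the middle $1$-cell together with an invertible $2$-cell on the localic side. But the step you dismiss as ``frame-theoretic bookkeeping'' is exactly where the argument breaks, and the gap is genuine. Fibrewise your claim is fine: pullback along $\sigma^\flat_X$ is a frame homomorphism $\Sub_\mathcal{E}(f_2^*X)\to\Sub_\mathcal{E}(f_1^*X)$, naturally in $X$. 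However, naturality plus fibrewise join-preservation does \emph{not} make the assembled map $f_{2*}\Omega_\mathcal{E}\to f_{1*}\Omega_\mathcal{E}$ an \emph{internal} frame homomorphism: internal joins are indexed by arbitrary objects of $\mathcal{F}$, and preserving them amounts to a Beck--Chevalley condition against the left adjoints $\exists_{f_i^*u}$ for every $u:X\to Y$ in $\mathcal{F}$, i.e.\ to the requirement that the naturality squares of $\sigma^\flat$ be pullbacks ($\sigma^\flat$ cartesian). This fails for general geometric $2$-cells, so $m_\sigma$ simply does not exist as a localic morphism over $\mathcal{F}$.

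A concrete counterexample: let $\mathcal{F}=\Sh(X)$ with $X$ the Sierpi\'nski space, and let $f_1,f_2:\Set\to\Sh(X)$ be its two points, $\sigma$ the non-invertible specialization $2$-cell between them. Both points are localic, so their hyperconnected--localic factorizations are $(\mathrm{id}_{\Set},f_i)$, and $f_{i*}\Omega_{\Set}$ is the skyscraper frame at the corresponding point. The map $\sigma^\sharp_{\Omega_{\Set}}$ preserves finite meets but not the internal join indexed by the open subterminal $V$: on global sections it is the identity of $2$, yet $\bigvee_V\top=\top$ in one skyscraper and $\bigvee_V\top=\bot$ in the other. Indeed, under $\Loc[\Sh(X)]\simeq \textbf{Loc}/X$ an internal frame homomorphism between these two internal frames would be a locale map $1\to 1$ over $X$, which exists only if the two points coincide. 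Worse, the \emph{statement} itself fails for this $\sigma$: both middle topoi are $\Set$, so any middle map is equivalent to $\mathrm{id}_{\Set}$ and any hyperconnected-side $2$-cell is a $2$-cell between identities of $\Set$, hence an identity; the pasting condition then forces the localic-side $2$-cell to carry all of $\sigma$, so it cannot be invertible. Thus no right-truncated factorization of $\sigma$ exists at all --- the non-invertible content of a $2$-cell between localic morphisms necessarily sits on the localic side. Note that the paper's own proof asserts precisely the same unjustified claim (that $\sigma^\sharp_{\Omega_\mathcal{E}}$ is a morphism in $\Loc[\mathcal{F}]$), so you have reproduced its strategy faithfully; but the construction, and with it the proposition as stated, only goes through under an additional hypothesis such as cartesianness of $\sigma^\flat$, for which the Beck--Chevalley condition you need does hold.
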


\begin{proof}
Let be a globular 2-cell $ \sigma : f_1 \Rightarrow f_2$ in $\GTop$: this corresponds to a pair of adjoint 2-cell between the inverse and direct image parts, respectively $ \sigma^\flat : f_1^* \Rightarrow f^*_2$ and $ \sigma^\sharp : f_{2*} \Rightarrow f_{1*}$; in particular the direct image part $ \sigma^\sharp $ lifts to the induced direct image part functors for internal locales, producing a morphism $ \sigma^\sharp_{\Omega_\mathcal{E}} : f_{2*}(\Omega_\mathcal{E}) \rightarrow f_{1*}(\Omega_\mathcal{E})$ in $\textbf{Loc}[\mathcal{F}]$, which is sent by the localic reflection to a morphism in the pseudoslice over $\mathcal{F}$ 
% https://q.uiver.app/?q=WzAsMyxbMCwwLCJcXFNoKGZfezIqfShcXE9tZWdhX1xcbWF0aGNhbHtFfSkpIl0sWzIsMCwiXFxTaChmX3sxKn0oXFxPbWVnYV9cXG1hdGhjYWx7RX0pKSJdLFsxLDEsIlxcbWF0aGNhbHtGfSJdLFswLDIsImxfe2ZfMn0iLDJdLFsxLDIsImxfe2ZfMX0iXSxbMCwxLCJcXFNoKFxcc2lnbWFeXFxzaGFycF97XFxPbWVnYV9cXG1hdGhjYWx7RX19KSJdLFs1LDIsIlxcc2ltZXEiLDEseyJzaG9ydGVuIjp7InNvdXJjZSI6MjB9LCJzdHlsZSI6eyJib2R5Ijp7Im5hbWUiOiJub25lIn0sImhlYWQiOnsibmFtZSI6Im5vbmUifX19XV0=
\[\begin{tikzcd}
	{\Sh(f_{2*}(\Omega_\mathcal{E}))} && {\Sh(f_{1*}(\Omega_\mathcal{E}))} \\
	& {\mathcal{F}}
	\arrow["{l_{f_2}}"', from=1-1, to=2-2]
	\arrow["{l_{f_1}}", from=1-3, to=2-2]
	\arrow[""{name=0, anchor=center, inner sep=0}, "{\Sh(\sigma^\sharp_{\Omega_\mathcal{E}})}", from=1-1, to=1-3]
	\arrow["\alpha_{\sigma^\sharp_{\Omega_{\mathcal{E}}}} \atop \simeq"{description}, Rightarrow, draw=none, from=0, to=2-2]
\end{tikzcd}\]
\end{proof}

\subsection{Connected-Etale bifactorization of locally connected geometric morphisms}

Here we investigate a restricted version of what should actually be a more general geometry related to the construction of spectra. 

\begin{definition}
A geometric morphism $ f : \mathcal{E} \rightarrow \mathcal{F}$ is said to be \emph{locally connected} if for any $ F$ in $\mathcal{F}$ the slice of the inverse image part $ (f/A)^* : \mathcal{F}/F \rightarrow \mathcal{E}/f^*F$ is cartesian closed. Equivalently, this means that $f^* $ has a further left adjoint $f_!$ defining a $\mathcal{F}$-indexed functor. A Grothendieck topos $\mathcal{E}$ is \emph{locally connected} if its terminal map is locally connected. In the following we denote as $ \textbf{LocoGTop}$ the 2-full sub-bicategory of locally connected Grothendieck topoi and locally connected geometric morphisms. 
\end{definition}

\begin{remark}
It can be shown that a Grothendieck topos is locally connected if its terminal map is essential. This condition is also equivalent to require that any object $ E$ is a coproduct of connected objects indexed by the set $f_!(E)$ of connected components. 
\end{remark}

\begin{definition}
A geometric morphism $f : \mathcal{E} \rightarrow \mathcal{F}$ is said to be \begin{itemize}
    \item \emph{connected} if $ f^*$ is full and faithful
    \item \emph{etale} if there exists an object $ F$ in $\mathcal{F} $ such that one has an invertible 2-cell
    % https://q.uiver.app/?q=WzAsMyxbMCwwLCJcXG1hdGhjYWx7RX0iXSxbMiwwLCJcXG1hdGhjYWx7Rn0vRiJdLFsxLDEsIlxcbWF0aGNhbHtGfSJdLFswLDIsImYiLDJdLFswLDEsIlxcc2ltZXEiXSxbMSwyLCJcXHBpX0YiXSxbNCwyLCJcXHNpbWVxIiwxLHsic2hvcnRlbiI6eyJzb3VyY2UiOjIwfSwic3R5bGUiOnsiYm9keSI6eyJuYW1lIjoibm9uZSJ9LCJoZWFkIjp7Im5hbWUiOiJub25lIn19fV1d
\[\begin{tikzcd}
	{\mathcal{E}} && {\mathcal{F}/F} \\
	& {\mathcal{F}}
	\arrow["f"', from=1-1, to=2-2]
	\arrow[""{name=0, anchor=center, inner sep=0}, "\simeq", from=1-1, to=1-3]
	\arrow["{\pi_F}", from=1-3, to=2-2]
	\arrow["\simeq"{description}, Rightarrow, draw=none, from=0, to=2-2]
\end{tikzcd}\]
\end{itemize}
\end{definition}

\begin{remark}
As localic morphisms were indexed by internal locales, etale geometric morphisms are indexed by objects, that is, we have an equivalence of categories 
\[ \textbf{Et}/\mathcal{E} \simeq \mathcal{E} \]
\end{remark}

\begin{remark}
In fact etale geometric morphisms are locally connected; however beware that not all connected geometric morphisms are locally connected. 
\end{remark}

\begin{proposition}
Connected geometric morphisms are left biorthogonal to etale geometric morphisms.
\end{proposition}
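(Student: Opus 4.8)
**

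The plan is to establish the bipullback property defining bi-orthogonality $f \perp g$ for a connected geometric morphism $f : \mathcal{E} \rightarrow \mathcal{F}$ and an etale geometric morphism $g : \mathcal{G} \rightarrow \mathcal{H}$. The key observation is that etale geometric morphisms are indexed by objects, so that $\textbf{Et}/\mathcal{H} \simeq \mathcal{H}$; thus an etale morphism $g$ over $\mathcal{H}$ corresponds to an object $H$ of $\mathcal{H}$ with $\mathcal{G} \simeq \mathcal{H}/H$ and $g \simeq \pi_H$. I would first recast the data of a pseudocommutative square from $f$ to $g$, together with its fillers, purely in terms of this indexing. Concretely, a pseudosquare $(u,v,\alpha) : f \Rightarrow g$ with $g = \pi_H$ amounts, via the universal property of the etale morphism as a slice projection, to a choice of object in $\mathcal{E}$ together with a comparison datum, and a filler $s : \mathcal{F} \rightarrow \mathcal{H}/H$ corresponds to an object of $\mathcal{F}$ lifting the relevant data.

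First I would make precise the equivalence $\GTop[\mathcal{K}, \mathcal{H}/H] \simeq \GTop[\mathcal{K},\mathcal{H}] \downarrow$-style description: a geometric morphism into the etale topos $\mathcal{H}/H$ is equivalent to the data of a geometric morphism $k : \mathcal{K} \rightarrow \mathcal{H}$ together with a point of $k^* H$ in $\mathcal{K}$, i.e. a global section $1 \rightarrow k^* H$. This is the standard characterization of maps into a slice/etale topos. Under this dictionary, the bi-orthogonality square
\[
\begin{tikzcd}
{\GTop[\mathcal{F},\mathcal{G}]} \arrow[r] \arrow[d] \arrow[rd, "\simeq", phantom] & {\GTop[\mathcal{E},\mathcal{G}]} \arrow[d] \\
{\GTop[\mathcal{F},\mathcal{H}]} \arrow[r]                                          & {\GTop[\mathcal{E},\mathcal{H}]}
\end{tikzcd}
\]
reduces to comparing global sections of $f^\flat$-pullbacks of $H$ along $\mathcal{E} \rightarrow \mathcal{F}$. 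The crux is then that, since $f$ is connected, $f^* : \mathcal{F} \rightarrow \mathcal{E}$ is fully faithful; consequently for any object $H'$ of $\mathcal{F}$ the induced map on global sections $\Gamma_{\mathcal{F}}(H') \rightarrow \Gamma_{\mathcal{E}}(f^* H')$ is a bijection, and more generally $f^*$ induces an equivalence on hom-sets of the form $\mathcal{F}[F', H'] \simeq \mathcal{E}[f^*F', f^*H']$. The fully faithfulness of $f^*$ is exactly what is needed to show the comparison functor from $\GTop[\mathcal{F},\mathcal{G}]$ to the pseudo-pullback is essentially surjective and fully faithful on 2-cells.

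I would carry out the argument in three steps. First, use fully faithfulness of $f^*$ to produce, from a pseudosquare, a canonical filler: the square supplies an object (a point) over $\mathcal{E}$ together with descent data witnessing it comes from $\mathcal{F}$ up to the coherence isomorphism $\alpha$, and connectedness lets one transport this uniquely back to $\mathcal{F}$, yielding $s$ with the required invertible $\lambda, \rho$. Second, check the universal property: any two fillers differ by a unique invertible 2-cell, again by fully faithfulness applied to the 2-cells (which are natural transformations between inverse image functors). Third, verify the 2-dimensional clause for morphisms of pseudosquares, which follows formally once the hom-category equivalence $\GTop[\mathcal{F},\mathcal{G}] \simeq \ps[2,\GTop](f,g)$ is established, since that equivalence automatically handles 2-cells. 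The main obstacle I anticipate is the coherent bookkeeping of the equivalence $\GTop[\mathcal{K},\mathcal{H}/H] \simeq \int_{k} \Gamma_{\mathcal{K}}(k^*H)$ at the level of 2-cells, and ensuring that connectedness (fully faithfulness of $f^*$) is leveraged not just on objects and global sections but compatibly with the pseudonaturality constraints; once that dictionary is set up cleanly, the orthogonality is a direct consequence, and I would likely cite the etale indexing equivalence $\textbf{Et}/\mathcal{E} \simeq \mathcal{E}$ recorded above to avoid recomputing it.
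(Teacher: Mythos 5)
Your proposal is correct and follows essentially the same route the paper takes: reduce morphisms into an etale topos $\mathcal{H}/H$ to global sections of the pulled-back object (the dictionary $\GTop/\mathcal{E}[f,\pi_E] \simeq \mathcal{F}[1_\mathcal{F}, f^*E]$ recorded in the paper), and then use the fact that the left-hand morphism induces a bijection on such global sections to produce and uniquely compare fillers. Worth noting: your argument only ever uses the consequence $\Gamma_{\mathcal{F}}(H') \cong \Gamma_{\mathcal{E}}(f^*H')$ of full faithfulness, i.e.\ \emph{terminal} connectedness, which is exactly how the paper proves the stronger statement that terminally connected geometric morphisms are left biorthogonal to etale ones (from which the connected case follows, since connected implies terminally connected).
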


\begin{remark}
Beware that not any geometric morphism that is right biorthogonal to connected geometric morphism has to be etale: for instance, 2-cofiltered limits of etale geometric morphisms are so, though they are not etale, not even locally connected.  
\end{remark}

It is well known that the bicategory of locally connected topoi and geometric morphisms enjoy the following factorization system:

\begin{proposition}

Any locally connected geometric morphism factorizes uniquely as a connected, locally connected geometric morphism followed by an etale geometric morphism as follows:
% https://q.uiver.app/?q=WzAsMyxbMCwwLCJcXG1hdGhjYWx7RX0iXSxbMiwwLCJcXG1hdGhjYWx7Rn0iXSxbMSwxLCJcXG1hdGhjYWx7Rn0vZl8hKDEpIl0sWzAsMiwiY19mIiwyXSxbMCwxLCJmIl0sWzIsMSwiXFxwaV97ZiEoMSl9IiwyXSxbNCwyLCJcXHNpbWVxIiwxLHsic2hvcnRlbiI6eyJzb3VyY2UiOjIwfSwic3R5bGUiOnsiYm9keSI6eyJuYW1lIjoibm9uZSJ9LCJoZWFkIjp7Im5hbWUiOiJub25lIn19fV1d
\[\begin{tikzcd}
	{\mathcal{E}} && {\mathcal{F}} \\
	& {\mathcal{F}/f_!(1)}
	\arrow["{c_f}"', from=1-1, to=2-2]
	\arrow[""{name=0, anchor=center, inner sep=0}, "f", from=1-1, to=1-3]
	\arrow["{\pi_{f!(1)}}"', from=2-2, to=1-3]
	\arrow["\simeq"{description}, Rightarrow, draw=none, from=0, to=2-2]
\end{tikzcd}\]
where $ (c_f)_!$ sends $ E $ to the image of the terminal map $ f_!(!_E) : f_!(E) \rightarrow f_!(1)$.
\end{proposition}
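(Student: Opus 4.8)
The plan is to build the factorization from the universal property of étale geometric morphisms, read off local connectedness and connectedness of the left factor by a direct adjunction computation, and then obtain uniqueness for free from the biorthogonality already established. First I would take the étale part to be $\pi_{f_!(1)}\colon \mathcal{F}/f_!(1)\to\mathcal{F}$, the étale morphism classified by the object $f_!(1)\in\mathcal{F}$ under the equivalence $\textbf{Et}/\mathcal{F}\simeq\mathcal{F}$. Using the universal property of étale morphisms (factorizations of $f$ through $\pi_F$ over $\mathcal{F}$ are classified by global points $1\to f^*F$), I would define $c_f\colon\mathcal{E}\to\mathcal{F}/f_!(1)$ to be the one classified by the unit $\eta_1\colon 1\to f^*f_!(1)$ of $f_!\dashv f^*$ at the terminal object, so that $c_f^*$ sends $(A\xrightarrow{a}f_!(1))$ to the pullback of $f^*(a)$ along $\eta_1$. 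The equivalence $\pi_{f_!(1)}\,c_f\simeq f$ is then immediate: since $\pi_{f_!(1)}^*(F)=(F\times f_!(1)\to f_!(1))$, pulling its $f^*$-image back along $\eta_1$ collapses the product and yields $c_f^*\pi_{f_!(1)}^*\simeq f^*$.

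Next I would exhibit a left adjoint to $c_f^*$, namely the functor sending $E$ to the comparison arrow $f_!(!_E)\colon f_!(E)\to f_!(1)$ regarded as an object of $\mathcal{F}/f_!(1)$. A short transposition along $f_!\dashv f^*$, using the naturality identity $\eta_1\circ{!_E}=f^*(f_!(!_E))\circ\eta_E$, identifies morphisms $E\to c_f^*(A,a)$ in $\mathcal{E}$ with morphisms $f_!(E)\to A$ over $f_!(1)$, establishing $(c_f)_!\dashv c_f^*$; composing with $\pi_!=\mathrm{dom}$ recovers $f_!\simeq\pi_!\,(c_f)_!$, as it must. To upgrade this to local connectedness in the sense of the definition — a left adjoint on every slice, stable under reindexing — I would rerun the same transposition on the slices $\mathcal{E}/c_f^*(X)\to(\mathcal{F}/f_!(1))/X$, invoking the hypothesis that $f_!$ is an $\mathcal{F}$-indexed functor to supply the required Beck--Chevalley and Frobenius compatibilities.

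Connectedness of $c_f$ is then read off at once: $(c_f)_!(1)=(f_!(1)\xrightarrow{\mathrm{id}}f_!(1))$ is the terminal object of $\mathcal{F}/f_!(1)$, and a locally connected geometric morphism whose left adjoint preserves the terminal object is connected (its inverse image is full and faithful). Thus $c_f$ lies in $\textbf{Co}$ and $\pi_{f_!(1)}$ in $\textbf{Et}$, giving the asserted factorization. For uniqueness I would invoke the previously established biorthogonality $\textbf{Co}\perp\textbf{Et}$: any two $(\textup{connected},\textup{\'etale})$ factorizations of $f$ are compared by the universal filler of the evident pseudosquare, which, lying in both classes, is forced to be an equivalence, so the factorization is essentially unique.

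The main obstacle is the indexed refinement in the second step. The global adjunction $(c_f)_!\dashv c_f^*$ is an easy transposition, but certifying that $c_f$ is \emph{genuinely} locally connected — that the fibrewise left adjoints exist and cohere under pullback in $\mathcal{F}/f_!(1)$ — is precisely where the $\mathcal{F}$-indexedness of $f_!$ must be deployed, and is the only point at which real work is needed. Everything else is formal once the étale universal property and the biorthogonality proposition are in hand.
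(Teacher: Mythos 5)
Your proposal is correct, but there is nothing in the paper to compare it against: the paper states this proposition without any proof, introducing it as ``well known'' (it is the classical connected--etale factorization, cf.\ \cite{elephant}[C3.3.5]). Your argument is essentially the standard one, and the steps check out: the classification $\GTop/\mathcal{F}[f,\pi_F]\simeq\mathcal{E}[1_\mathcal{E},f^*F]$ (which the paper only records later, in the pro-etale subsection) applied to the unit $\eta_1\colon 1\to f^*f_!(1)$; the transposition giving $(c_f)_!\dashv c_f^*$ with $(c_f)_!(E)=f_!(!_E)$; connectedness from preservation of the terminal object; and uniqueness from bi-orthogonality. Three remarks. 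First, your transposition pins down the correct reading of the paper's phrase ``the image of the terminal map'': it must mean the arrow $f_!(!_E)$ itself, i.e.\ the image of $!_E$ under the functor $f_!$, regarded as an object of $\mathcal{F}/f_!(1)$; taking the epi--mono image instead would destroy the adjunction (already for $f=\mathrm{id}$ it would return subterminal objects only). Second, two of your steps silently invoke standard lemmas worth making explicit: the inference ``$(c_f)_!$ preserves $1$, hence $c_f$ is connected'' needs Frobenius reciprocity $f_!(f^*A\times E)\cong A\times f_!(E)$, which is precisely what the $\mathcal{F}$-indexedness of $f_!$ buys; and your uniqueness argument (``the filler lies in both classes'') needs right cancellation of connected morphisms together with the fact that any geometric morphism over $\mathcal{F}$ between etale morphisms is itself etale --- alternatively, the purely formal argument (compose the two fillers in both orders and compare each composite with the identity filler of the corresponding square) gives essential uniqueness from bi-orthogonality with no cancellation at all. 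Third, the indexed refinement you rightly flag as the only hard point can be short-circuited: under the equivalence $(\mathcal{F}/f_!(1))/(A,a)\simeq\mathcal{F}/A$, the slice of $c_f$ at $(A,a)$ is the composite of the etale morphism $\mathcal{E}/c_f^*(A,a)\to\mathcal{E}/f^*A$ (induced by the projection $c_f^*(A,a)\to f^*A$) with the slice $f/A\colon\mathcal{E}/f^*A\to\mathcal{F}/A$, and both of these are locally connected; so every slice of $c_f^*$ is cartesian closed, i.e.\ $c_f$ is locally connected in the sense of the paper's definition, without re-running the transposition fibrewise.
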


\begin{proposition}
The connected-etale factorization on $\textup{\textbf{LocoGTop}}$ has pseudofunctorial oplax right truncated factorization of 2-cells.
\end{proposition}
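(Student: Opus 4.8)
The plan is to follow the same strategy as in the hyperconnected--localic case treated above, replacing the direct image of the subobject classifier by the object of connected components $f_!(1)$, which indexes the étale part of the factorization. Recall that a globular $2$-cell $\sigma : f_1 \Rightarrow f_2$ in $\textbf{LocoGTop}$ between $f_1,f_2:\mathcal{E}\to\mathcal{F}$ amounts to a natural transformation of inverse image parts $\sigma^\flat : f_1^* \Rightarrow f_2^*$. Since $f_1$ and $f_2$ are locally connected, both inverse image parts carry further left adjoints $f_{1!}\dashv f_1^*$ and $f_{2!}\dashv f_2^*$; I would take the mate of $\sigma^\flat$ across these adjunctions, obtaining a transformation of the leftmost adjoints in the reverse direction, $\hat\sigma : f_{2!}\Rightarrow f_{1!}$.

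Evaluating this mate at the terminal object of $\mathcal{E}$ yields a morphism $\hat\sigma_1 : f_{2!}(1)\to f_{1!}(1)$ in $\mathcal{F}$. Under the equivalence $\textbf{Et}/\mathcal{F}\simeq\mathcal{F}$, this morphism is the transpose of a morphism of étale geometric morphisms over $\mathcal{F}$, namely the étale geometric morphism $m_\sigma : \mathcal{F}/f_{2!}(1)\to\mathcal{F}/f_{1!}(1)$ obtained by pullback along $\hat\sigma_1$, together with an invertible $2$-cell $\pi_{f_{1!}(1)}\,m_\sigma \simeq \pi_{f_{2!}(1)}$ exhibiting $m_\sigma$ as a morphism in the pseudoslice $\GTop/\mathcal{F}$. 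Writing $A_{f_i}=\mathcal{F}/f_{i!}(1)$, this middle map goes from $A_{f_2}$ to $A_{f_1}$, which is exactly the oplax direction, and the invertible comparison over $\mathcal{F}$ is the right $2$-cell $\rho_\sigma$ of the factorization; its invertibility is precisely right truncation, and forces $m_\sigma$ to lie in the right class, i.e.\ to be étale, as expected. The remaining left $2$-cell $\lambda_\sigma$, relating the connected parts $c_{f_1},c_{f_2}$, is then determined by the requirement that the resulting pasting recover $\sigma$; it carries the genuinely non-invertible $2$-dimensional data, which gets expelled onto the connected (left) side.

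For pseudofunctoriality, I would argue that the assignment $\sigma\mapsto(m_\sigma,\lambda_\sigma,\rho_\sigma)$ is compatible with vertical composition because the mate construction is functorial: the mate of a composite $\sigma'\sigma$ is the composite of the mates in the opposite order, so that $\widehat{\sigma'\sigma}_1$ factors as $\widehat{\sigma'}_1$ followed by $\hat\sigma_1$. Transporting this factorization through the equivalence $\textbf{Et}/\mathcal{F}\simeq\mathcal{F}$, which is pseudofunctorial, yields the canonical invertible comparison $m_{\sigma'\sigma}\simeq m_\sigma m_{\sigma'}$ (the composite being taken in the order dictated by the reversed middle maps) together with the compatibility of the $\lambda$'s and $\rho$'s; the mate of an identity being an identity gives $m_{1_f}=1_{A_f}$ with $\lambda_{1_f},\rho_{1_f}$ identities.

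The main obstacle I anticipate is not the construction of the middle map but the verification of the universal property of the $2$-cell factorization --- the existence and uniqueness of the comparison $\mu$ for any competing factorization --- and the explicit identification of the left $2$-cell $\lambda_\sigma$ together with its coherence. Concretely, one must check that a competing middle map and its invertible right part are forced, via the equivalence $\textbf{Et}/\mathcal{F}\simeq\mathcal{F}$ and the universal property of $f_{2!}(1)$ as the object of connected components, to receive a unique $2$-cell to $m_\sigma$ compatible with both $\lambda$ and $\rho$; this reduces to the full faithfulness built into the étale indexing $\textbf{Et}/\mathcal{F}\simeq\mathcal{F}$ and to the mate calculus for locally connected geometric morphisms, both routine but requiring care with the oplax orientation.
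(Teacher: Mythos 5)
Your proposal matches the paper's proof essentially step for step: the paper likewise takes the mate $\sigma_! : f_{2!} \Rightarrow f_{1!}$ of $\sigma^\flat$, evaluates it at $1_\mathcal{E}$, and transports $\sigma_{!1_\mathcal{E}}$ through the equivalence $\Et/\mathcal{F} \simeq \mathcal{F}$ to obtain the \'etale middle map $\mathcal{F}/\sigma_{!1_\mathcal{E}}$ with invertible right comparison over $\mathcal{F}$, deriving pseudofunctoriality from the pseudofunctoriality of the externalization, exactly as you do. The one point you flag as the remaining obstacle---the explicit identification of $\lambda_\sigma$---is resolved in the paper by the naturality squares of $\sigma_!$ at each $E$ in $\mathcal{E}$, which assemble directly into the 2-cell between $c_{f_2}$ and $c_{f_1}$, the pasting equation recovering $\sigma$ being then checked on the level of the further left adjoints $(-)_!$.
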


\begin{proof}
Let be $ \sigma : f_1 \Rightarrow f_2$ a 2-cell between locally connected geometric morphisms. Then the inverse image part $ \sigma^\flat : f_1^* \Rightarrow f_2^*$ has a mate $ \sigma_! : f_{2!} \Rightarrow f_{1*}$, whose component $ \sigma_{!1_\mathcal{E}} : f_{2!}(1_\mathcal{E}) \rightarrow f_{1!}(1_\mathcal{E})$ at $ 1_\mathcal{E}$ is sent to an invertible 2-cell between etale geometric morphism over $\mathcal{F}$
% https://q.uiver.app/?q=WzAsMyxbMCwwLCJcXG1hdGhjYWx7Rn0vZl97MiF9KDFfXFxtYXRoY2Fse0V9KSJdLFsyLDAsIlxcbWF0aGNhbHtGfS9mX3sxIX0oMV9cXG1hdGhjYWx7RX0pIl0sWzEsMSwiXFxtYXRoY2Fse0Z9Il0sWzAsMiwiXFxwaV97Zl97MiF9KDFfXFxtYXRoY2Fse0V9KX0iLDJdLFswLDEsIlxcbWF0aGNhbHtGfS9cXHNpZ21hX3shMV9cXG1hdGhjYWx7RX19Il0sWzEsMiwiXFxwaV97Zl97MSF9KDFfXFxtYXRoY2Fse0V9KX0iXSxbNCwyLCJcXHNpbWVxIiwxLHsic2hvcnRlbiI6eyJzb3VyY2UiOjIwfSwic3R5bGUiOnsiYm9keSI6eyJuYW1lIjoibm9uZSJ9LCJoZWFkIjp7Im5hbWUiOiJub25lIn19fV1d
\[\begin{tikzcd}
	{\mathcal{F}/f_{2!}(1_\mathcal{E})} && {\mathcal{F}/f_{1!}(1_\mathcal{E})} \\
	& {\mathcal{F}}
	\arrow["{\pi_{f_{2!}(1_\mathcal{E})}}"', from=1-1, to=2-2]
	\arrow[""{name=0, anchor=center, inner sep=0}, "{\mathcal{F}/\sigma_{!1_\mathcal{E}}}", from=1-1, to=1-3]
	\arrow["{\pi_{f_{1!}(1_\mathcal{E})}}", from=1-3, to=2-2]
	\arrow["\simeq"{description}, Rightarrow, draw=none, from=0, to=2-2]
\end{tikzcd}\]
where the intermediate geometric morphism has postcomposition with $ f_!(!_E)$ as left adjoint to the inverse image part. Now we have by naturality of $\sigma_!$ a square in $\mathcal{F}$ for each $E$ in $\mathcal{E}$
% https://q.uiver.app/?q=WzAsNCxbMCwwLCJmX3syIX0oRSkiXSxbMCwxLCJmX3sxIX0oRSkiXSxbMSwwLCJmX3syIX0oMV9cXG1hdGhjYWx7RX0pIl0sWzEsMSwiZl97MSF9KDFfXFxtYXRoY2Fse0V9KSJdLFswLDEsIlxcc2lnbWFfeyFFfSIsMl0sWzIsMywiXFxzaWdtYV97ITFfe1xcbWF0aGNhbHtFfX19Il0sWzAsMiwiZl97MiF9KCFfRSkiXSxbMSwzLCJmX3sxIX0oIV9FKSIsMl1d
\[\begin{tikzcd}
	{f_{2!}(E)} & {f_{2!}(1_\mathcal{E})} \\
	{f_{1!}(E)} & {f_{1!}(1_\mathcal{E})}
	\arrow["{\sigma_{!E}}"', from=1-1, to=2-1]
	\arrow["{\sigma_{!1_{\mathcal{E}}}}", from=1-2, to=2-2]
	\arrow["{f_{2!}(!_E)}", from=1-1, to=1-2]
	\arrow["{f_{1!}(!_E)}"', from=2-1, to=2-2]
\end{tikzcd}\]
which provides us with a 2-cell 
% https://q.uiver.app/?q=WzAsMyxbMSwwLCJcXG1hdGhjYWx7RX0iXSxbMCwxLCJcXG1hdGhjYWx7Rn0vZl97MiF9KDFfXFxtYXRoY2Fse0V9KSJdLFsyLDEsIlxcbWF0aGNhbHtGfS9mX3sxIX0oMV9cXG1hdGhjYWx7RX0pIl0sWzAsMSwiY197Zl8yfSIsMl0sWzEsMiwiXFxtYXRoY2Fse0Z9L1xcc2lnbWFfeyExX1xcbWF0aGNhbHtFfX0iLDJdLFswLDIsImNfe2ZfMX0iXSxbNCwwLCJcXGxhbWJkYV9cXHNpZ21hIFxcYXRvcCBcXExlZnRhcnJvdyIsMSx7InNob3J0ZW4iOnsic291cmNlIjoyMCwidGFyZ2V0IjoyMH0sInN0eWxlIjp7ImJvZHkiOnsibmFtZSI6Im5vbmUifSwiaGVhZCI6eyJuYW1lIjoibm9uZSJ9fX1dXQ==
\[\begin{tikzcd}
	& {\mathcal{E}} \\
	{\mathcal{F}/f_{2!}(1_\mathcal{E})} && {\mathcal{F}/f_{1!}(1_\mathcal{E})}
	\arrow["{c_{f_2}}"', from=1-2, to=2-1]
	\arrow[""{name=0, anchor=center, inner sep=0}, "{\mathcal{F}/\sigma_{!1_\mathcal{E}}}"', from=2-1, to=2-3]
	\arrow["{c_{f_1}}", from=1-2, to=2-3]
	\arrow["{\lambda_\sigma \atop \Leftarrow}"{description}, Rightarrow, draw=none, from=0, to=1-2]
\end{tikzcd}\]
Those 2-cells can be shown to provide with a decomposition of $\sigma$, by observing that their adjoint part provide such an adjoint decomposition of $\sigma_!$. Similarly, pseudofunctoriality processes from pseudofunctoriality of the externalization functor.
\end{proof}

\begin{proposition}
We have a bistable inclusion 
\[\begin{tikzcd}
	{(\textup{\textbf{CoLocoGTop}}^{\textup{\textbf{Co}}})^{\op}} & {\textup{\textbf{Loco}}\GTop^{\op}}
	\arrow[hook, from=1-1, to=1-2]
\end{tikzcd}\]
of the dual of the inclusion of the bicategory of connected, locally connected topoi and connected, locally connected geometric morphisms between them into the bicategory of locally connected Grothendieck topoi and locally connected geometric morphisms between them.   
\end{proposition}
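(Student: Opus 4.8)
The plan is to obtain this as a direct instance of \cref{Bistable inclusion of left objects}, applied to the bicategory $\mathcal{C} = \textbf{Loco}\GTop$ equipped with the bifactorization system $(\textbf{Co}, \textbf{Et})$. The two ingredients required by that theorem are a bifactorization system on $\mathcal{C}$ and a biterminal object; the first is supplied by the biorthogonality of connected and etale geometric morphisms together with the connected-etale factorization of locally connected geometric morphisms recalled above, so the remaining work is to identify the biterminal object and to match the left objects and left maps with the stated bicategory.

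First I would verify that $\Set$ is biterminal in $\textbf{Loco}\GTop$. For any Grothendieck topos $\mathcal{E}$ the homcategory $\GTop[\mathcal{E}, \Set]$ is the terminal category, and by definition a locally connected topos is exactly one whose terminal map $!_\mathcal{E} : \mathcal{E} \rightarrow \Set$ is a locally connected geometric morphism; hence for every object $\mathcal{E}$ of $\textbf{Loco}\GTop$ the unique geometric morphism to $\Set$ already lies in $\textbf{Loco}\GTop$, and $\textbf{Loco}\GTop[\mathcal{E}, \Set]$ remains terminal. Thus $\Set$ is a (strict) biterminal object in $\textbf{Loco}\GTop$.

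Next I would unfold the description of left objects and left maps over this biterminal object. A left object is a locally connected topos whose terminal map lies in $\textbf{Co}$, i.e.\ is connected; since it is already locally connected, this is precisely a connected, locally connected topos. Likewise a left map between left objects is a geometric morphism of $\textbf{Loco}\GTop$ lying in $\textbf{Co}$, i.e.\ a connected, locally connected geometric morphism. Hence $\textbf{Co}\hy\textbf{Obj}$ for this factorization system is exactly the bicategory $\textbf{CoLocoGTop}^{\textbf{Co}}$ appearing in the statement, and the first bullet of \cref{Bistable inclusion of left objects} yields the bistable inclusion $(\textbf{CoLocoGTop}^{\textbf{Co}})^{\op} \hookrightarrow \textbf{Loco}\GTop^{\op}$, as desired.

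The only genuinely delicate point is that the whole construction must take place inside $\textbf{Loco}\GTop$ rather than $\GTop$, because both the connected-etale factorization and the relevant biorthogonality are only available for locally connected geometric morphisms. In particular one must check that the factorization of a locally connected geometric morphism produced by the proposition stays within $\textbf{Loco}\GTop$ --- the connected part is connected and locally connected, while the etale part is automatically locally connected --- so that $(\textbf{Co}, \textbf{Et})$ is genuinely a bifactorization system on $\textbf{Loco}\GTop$ and not merely a pair of orthogonal classes inside $\GTop$. Once this closure is secured, the abstract theorem applies verbatim and no further computation is needed.
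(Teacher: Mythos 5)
Your proposal is correct and is exactly the argument the paper intends: the proposition is stated as a direct instance of \cref{Bistable inclusion of left objects} applied to $\textbf{Loco}\GTop$ with the $(\textbf{Co},\textbf{Et})$ bifactorization system, and the paper's own remark immediately after the statement (``we must restrict also the objects to locally connected topoi in order to preserve terminalness of $\Set$'') confirms that the biterminality of $\Set$ in $\textbf{Loco}\GTop$ is precisely the point you single out and verify. Your additional check that both factors of the connected-etale factorization stay inside $\textbf{Loco}\GTop$ matches the paper's observations that the connected part is connected and locally connected while etale morphisms are automatically locally connected.
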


\begin{remark}
Beware that we must restrict also the objects to locally connected topoi in order to preserve terminalness of $\Set$. 
\end{remark}

However, connected geometric morphisms are not stable under bipullback, and cannot be seen as a notion of focal objects as lacking a focal point. However, there are two classes of geometric morphisms that are known to have right cancellation relatively to connected geometric morphisms: local and totally connected ones.

\subsection{Terminally connected-Etale bifactorization of essential geometric morphisms}

A generalization of the previous bifactorization system was recently described in \cite{caramello2020denseness}[4.7], where one relax the condition of local connectedness to essentialness: there one can still use the further left adjoint to the inverse image part to produce a factorization, but the left maps are in a wider class generalizing connected geometric morphisms. 

\begin{definition}
A geometric morphism $ f : \mathcal{E} \rightarrow \mathcal{F}$ is \emph{essential} if $f^*$ has a further left adjoint $ f_!$. 
\end{definition}

\begin{remark}
One can show that the terminal map of a Grothendieck topos $!_\mathcal{E} : \mathcal{E} \rightarrow \Set$ is automatically locally connected whenever it is essential: hence over set, there is no distinction between locally connected topoi and essential topoi. 
\end{remark}

\begin{definition}
A geometric morphism $f : \mathcal{E} \rightarrow \mathcal{F}$ is sais to be \emph{terminally connected} if its inverse image $ f^*$ lifts uniquely global elements, that is, if one has for each $ F$ a natural equivalence 
\[  \mathcal{E}[1_\mathcal{E}, f^*(F)] \simeq \mathcal{F}[1_\mathcal{F}, F]  \]
\end{definition}

The following observation is immediate by adjunction:

\begin{proposition}
An essential geometric morphism is terminally connected if and only if one has $ f_!(1_\mathcal{E}) \simeq 1_\mathcal{F}$. 
\end{proposition}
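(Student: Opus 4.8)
The plan is to read the equivalence defining terminal connectedness against the extra adjunction $f_! \dashv f^*$ provided by essentialness, and then invoke a Yoneda/representability argument in the hom-categories. The whole content is the identification of two representing objects for the same (pseudo)functor $F \mapsto \mathcal{E}[1_\mathcal{E}, f^*(F)]$.

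First I would record the canonical comparison. Since $f^*$ is an inverse image, it preserves the terminal object, so there is a canonical invertible 2-cell $1_\mathcal{E} \simeq f^*(1_\mathcal{F})$; whiskering with $f^*$ and composing induces a natural comparison functor
\[ \mathcal{F}[1_\mathcal{F}, F] \longrightarrow \mathcal{E}[1_\mathcal{E}, f^*(F)], \]
and terminal connectedness is precisely the assertion that this canonical comparison is an equivalence for every $F$. Independently, the adjunction $f_! \dashv f^*$ supplies a natural equivalence
\[ \mathcal{E}[1_\mathcal{E}, f^*(F)] \simeq \mathcal{F}[f_!(1_\mathcal{E}), F] \]
pseudonatural in $F$. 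I would check that, under this adjunction equivalence, the canonical comparison above corresponds to precomposition with the unique-up-to-equivalence map $t \colon f_!(1_\mathcal{E}) \to 1_\mathcal{F}$ (the mate of $1_\mathcal{E} \simeq f^*(1_\mathcal{F})$, equivalently the terminal map of $f_!(1_\mathcal{E})$ in $\mathcal{F}$).

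Combining the two displays yields a pseudonatural comparison $\mathcal{F}[1_\mathcal{F}, F] \to \mathcal{F}[f_!(1_\mathcal{E}), F]$ given by precomposition with $t$. For the forward direction, terminal connectedness makes this an equivalence for all $F$, so by the (bicategorical) Yoneda lemma the map $t \colon f_!(1_\mathcal{E}) \to 1_\mathcal{F}$ is an equivalence, i.e. $f_!(1_\mathcal{E}) \simeq 1_\mathcal{F}$. For the converse, if $f_!(1_\mathcal{E}) \simeq 1_\mathcal{F}$ then the adjunction equivalence immediately gives
\[ \mathcal{E}[1_\mathcal{E}, f^*(F)] \simeq \mathcal{F}[f_!(1_\mathcal{E}), F] \simeq \mathcal{F}[1_\mathcal{F}, F], \]
pseudonaturally in $F$, which is exactly terminal connectedness; one verifies that this composite equivalence is the canonical comparison, so the definition is met on the nose and not merely up to abstract isomorphism.

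The only genuine point requiring care — and the reason I would not treat this as a one-line triviality despite its billing as "immediate by adjunction" — is the bookkeeping that the equivalences in play are the canonical comparison maps rather than arbitrary natural equivalences, so that the Yoneda step really pins down $f_!(1_\mathcal{E}) \to 1_\mathcal{F}$ as an equivalence and the terminal object of $\mathcal{F}$ is recovered as $f_!(1_\mathcal{E})$. Once the comparison is identified with precomposition by $t$, the Yoneda argument and the converse are formal.
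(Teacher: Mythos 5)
Your proof is correct and is exactly the ``immediate by adjunction'' argument the paper has in mind (the paper states the proposition without writing out a proof): the adjunction $f_!\dashv f^*$ identifies $\mathcal{E}[1_\mathcal{E},f^*(-)]$ with $\mathcal{F}[f_!(1_\mathcal{E}),-]$, the canonical comparison becomes precomposition with the terminal map $t\colon f_!(1_\mathcal{E})\to 1_\mathcal{F}$, and Yoneda finishes both directions. One small remark: since $\mathcal{E}$ and $\mathcal{F}$ are $1$-categories the hom-objects here are sets, so the ``pseudonatural''/``bicategorical Yoneda'' language is unnecessary --- ordinary natural bijections and the ordinary Yoneda lemma suffice, and the converse direction only needs the observation that any object isomorphic to a terminal object is terminal, whence $t$ itself is invertible.
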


A connected geometric morphism is terminally connected ; beware that despite the terminology, terminal connectedness is a weaker condition that connectedness, as it requires only full faithfulness relatively to global elements and not arbitrary generalized elements. \\

\begin{remark}
Now we should give a few remarks about etale geometric morphisms. Being locally connected, they are always essential, with the further left adjoint of the projection $ \pi_{E} : \mathcal{E}/E \rightarrow \mathcal{E}$ given by postcomposition with the terminal map $ !_E : E \rightarrow 1_\mathcal{E}$. Moreover, we should give here an interpretation of what encodes a factorization through an etale geometric morphism on the right. Suppose we have an invertible 2-cell
% https://q.uiver.app/?q=WzAsMyxbMCwwLCJcXG1hdGhjYWx7Rn0iXSxbMiwwLCJcXG1hdGhjYWx7RX0iXSxbMSwxLCJcXG1hdGhjYWx7RX0vRSJdLFswLDIsImEiLDJdLFsyLDEsIlxccGlfRSIsMl0sWzAsMSwiZiJdLFs1LDIsIlxcc2ltZXEiLDEseyJzaG9ydGVuIjp7InNvdXJjZSI6MjB9LCJzdHlsZSI6eyJib2R5Ijp7Im5hbWUiOiJub25lIn0sImhlYWQiOnsibmFtZSI6Im5vbmUifX19XV0=
\[\begin{tikzcd}
	{\mathcal{F}} && {\mathcal{E}} \\
	& {\mathcal{E}/E}
	\arrow["a"', from=1-1, to=2-2]
	\arrow["{\pi_E}"', from=2-2, to=1-3]
	\arrow[""{name=0, anchor=center, inner sep=0}, "f", from=1-1, to=1-3]
	\arrow["\simeq"{description}, Rightarrow, draw=none, from=0, to=2-2]
\end{tikzcd}\]
Then we have a factorization of $a$ through the base change 
% https://q.uiver.app/?q=WzAsNSxbMSwyLCJcXG1hdGhjYWx7Rn0iXSxbMiwyLCJcXG1hdGhjYWx7RX0iXSxbMiwxLCJcXG1hdGhjYWx7RX0vRSJdLFsxLDEsIlxcbWF0aGNhbHtGfS9mXipFIl0sWzAsMCwiXFxtYXRoY2Fse0Z9Il0sWzIsMSwiXFxwaV9FIl0sWzAsMSwiZiIsMl0sWzMsMl0sWzQsMCwiIiwwLHsiY3VydmUiOjIsImxldmVsIjoyLCJzdHlsZSI6eyJoZWFkIjp7Im5hbWUiOiJub25lIn19fV0sWzMsMCwiXFxwaV97Zl4qRX0iLDFdLFszLDEsIiIsMSx7InN0eWxlIjp7Im5hbWUiOiJjb3JuZXIifX1dLFs0LDIsImEiLDEseyJjdXJ2ZSI6LTJ9XSxbNCwzLCIiLDEseyJzdHlsZSI6eyJib2R5Ijp7Im5hbWUiOiJkYXNoZWQifX19XV0=
\[\begin{tikzcd}
	{\mathcal{F}} \\
	& {\mathcal{F}/f^*E} & {\mathcal{E}/E} \\
	& {\mathcal{F}} & {\mathcal{E}}
	\arrow["{\pi_E}", from=2-3, to=3-3]
	\arrow["f"', from=3-2, to=3-3]
	\arrow[from=2-2, to=2-3]
	\arrow[curve={height=12pt}, Rightarrow, no head, from=1-1, to=3-2]
	\arrow["{\pi_{f^*E}}"{description}, from=2-2, to=3-2]
	\arrow["\lrcorner"{anchor=center, pos=0.125}, draw=none, from=2-2, to=3-3]
	\arrow["a"{description}, curve={height=-12pt}, from=1-1, to=2-3]
	\arrow[dashed, from=1-1, to=2-2]
\end{tikzcd}\]
which also defines a global section of $ \pi_{f^*E}$, which is the name of a global element $ a : 1_\mathcal{F} \rightarrow f^*E$: that is 
\[ \GTop/\mathcal{E}[f,\pi_E] \simeq \mathcal{F}[1_\mathcal{F},f^*E] \]
\end{remark}

\begin{proposition}
Terminally connected geometric morphisms are left orthogonal to etale geometric morphisms.
\end{proposition}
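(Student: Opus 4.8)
The plan is to verify the bipullback formulation of bi-orthogonality directly, reducing the whole statement to the defining equivalence of terminal connectedness. Fix a terminally connected $t : \mathcal{F} \to \mathcal{G}$. Since both classes are closed under invertible $2$-cells, it suffices to test orthogonality against an étale morphism presented as a projection $\pi_E : \mathcal{E}/E \to \mathcal{E}$ for some object $E$ of $\mathcal{E}$. By definition we must then show that the square of hom-categories
\[
\begin{array}{ccc}
\GTop[\mathcal{G}, \mathcal{E}/E] & \longrightarrow & \GTop[\mathcal{F}, \mathcal{E}/E] \\
\downarrow & & \downarrow \\
\GTop[\mathcal{G}, \mathcal{E}] & \longrightarrow & \GTop[\mathcal{F}, \mathcal{E}]
\end{array}
\]
whose horizontal arrows precompose with $t$ and whose vertical arrows postcompose with $\pi_E$, is a bipullback in $\Cat$. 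Phrasing the argument as this single equivalence of categories is what makes it efficient: it simultaneously packages the existence of universal fillers and the functoriality in morphisms of pseudosquares, so no separate treatment of the $2$-dimensional clause of orthogonality is required.

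First I would invoke the description of étale morphisms recalled just above the statement: for any topos $\mathcal{X}$ and any $a : \mathcal{X} \to \mathcal{E}$, the equivalence $\GTop/\mathcal{E}[a, \pi_E] \simeq \mathcal{X}[1_\mathcal{X}, a^{*}E]$ exhibits $\GTop[\mathcal{X}, \mathcal{E}/E]$ as fibred over $\GTop[\mathcal{X}, \mathcal{E}]$ through $\pi_E \circ (-)$, with fibre over $a$ the category of global elements $\mathcal{X}[1_\mathcal{X}, a^{*}E]$ and reindexing along a $2$-cell $\theta : a \Rightarrow a'$ given by $\theta_E : a^{*}E \to a'^{*}E$. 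Under this description the canonical comparison functor from $\GTop[\mathcal{G}, \mathcal{E}/E]$ to the bipullback $\GTop[\mathcal{F}, \mathcal{E}/E] \times_{\GTop[\mathcal{F}, \mathcal{E}]} \GTop[\mathcal{G}, \mathcal{E}]$ is the identity on the base component $\GTop[\mathcal{G}, \mathcal{E}]$, while on the fibre over a fixed $g : \mathcal{G} \to \mathcal{E}$ it is the map
\[
\mathcal{G}[1_\mathcal{G}, g^{*}E] \longrightarrow \mathcal{F}[1_\mathcal{F}, t^{*}(g^{*}E)], \qquad x \longmapsto t^{*}(x),
\]
where we use the canonical identification $1_\mathcal{F} \simeq t^{*}1_\mathcal{G}$ coming from the fact that inverse images preserve the terminal object.

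The crucial point is then immediate: this fibrewise map is precisely the comparison exhibiting terminal connectedness of $t$ at the object $F = g^{*}E$, namely the equivalence $\mathcal{F}[1_\mathcal{F}, t^{*}(F)] \simeq \mathcal{G}[1_\mathcal{G}, F]$ from the definition. Thus the comparison functor is a fibred functor over a fixed base which is a fibrewise equivalence, hence an equivalence; the square above is a bipullback, and $t \perp \pi_E$. Because a universal filler $s : \mathcal{G} \to \mathcal{E}/E$ of a pseudosquare corresponds under this chain to the global element of $v^{*}E$ produced by transporting the element named by the top arrow $u$ across the terminal-connectedness equivalence, the construction is entirely explicit.

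The main obstacle is not conceptual but bookkeeping: one must make precise the claim that $\GTop[-, \mathcal{E}/E]$ is \emph{pseudonaturally} fibred over $\GTop[-, \mathcal{E}]$, so that precomposition with $t$ genuinely acts as $t^{*}$ on fibres and the comparison functor decomposes as stated. This requires checking that the étale equivalence $\GTop/\mathcal{E}[a, \pi_E] \simeq \mathcal{X}[1_\mathcal{X}, a^{*}E]$ is pseudonatural in $\mathcal{X}$, and keeping track of the fact that the fibres carry their own $2$-cells (coming from $2$-cells of the base) rather than being discrete, so that ``fibrewise equivalence implies equivalence'' must be applied to the fibred total categories rather than naively to sets of global sections. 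Once these coherences are pinned down, everything else follows formally from the two cited descriptions.
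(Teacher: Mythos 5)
Your proof is correct, and it rests on exactly the two ingredients the paper's own proof uses: the correspondence $\GTop/\mathcal{E}[a,\pi_E] \simeq \mathcal{X}[1_\mathcal{X}, a^*E]$ between factorizations through an etale projection and global elements, and the reading of terminal connectedness as unique lifting of global elements along $t^*$. Where you genuinely differ is in which formulation of bi-orthogonality gets verified, and how. The paper argues square by square: a pseudosquare from the terminally connected map to $\pi_E$ is translated into a global element $1_{\mathcal{G}} \to t^*f^*E$, terminal connectedness lifts it uniquely to a global element $1_{\mathcal{F}} \to f^*E$, and the name of that lift is the filler, unique because any other filler would name another antecedent. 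You instead verify the bipullback-of-hom-categories formulation wholesale: postcomposition with $\pi_E$ is a fibred (discrete-fibre) functor whose fibre over $g$ is the set of global elements of $g^*E$, and the canonical comparison functor into the bipullback is, fibre by fibre, precisely the terminal-connectedness equivalence at $g^*E$, hence an equivalence of total categories. Your packaging costs extra bookkeeping (pseudonaturality of the etale correspondence in $\mathcal{X}$, the variance of reindexing, strict versus pseudo fibres), all of which you correctly flag as the real work; what it buys is that the whole universal property, including functoriality of fillers with respect to morphisms of pseudosquares (the 2-dimensional clause of bi-orthogonality), falls out in one stroke, whereas the paper's pointwise argument constructs fillers and checks uniqueness but leaves that functoriality implicit.
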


\begin{proof}
Let be an invertible square as below
% https://q.uiver.app/?q=WzAsNCxbMSwwLCJcXG1hdGhjYWx7RX0vRSJdLFsxLDEsIlxcbWF0aGNhbHtFfSJdLFswLDEsIlxcbWF0aGNhbHtHfSJdLFswLDAsIlxcbWF0aGNhbHtGfSJdLFszLDIsInQiLDJdLFsyLDEsImciLDJdLFszLDAsImYiXSxbMCwxLCJcXHBpX3tFfSJdLFszLDEsIlxcc2ltZXEiLDEseyJzdHlsZSI6eyJib2R5Ijp7Im5hbWUiOiJub25lIn0sImhlYWQiOnsibmFtZSI6Im5vbmUifX19XV0=
\[\begin{tikzcd}
	{\mathcal{G}} & {\mathcal{E}/E} \\
	{\mathcal{F}} & {\mathcal{E}}
	\arrow["t"', from=1-1, to=2-1]
	\arrow["f"', from=2-1, to=2-2]
	\arrow["a", from=1-1, to=1-2]
	\arrow["{\pi_{E}}", from=1-2, to=2-2]
	\arrow["\simeq"{description}, draw=none, from=1-1, to=2-2]
\end{tikzcd}\]
corresponding to a global element $ a : 1_\mathcal{G} \rightarrow t^*f^*E  $ in $\mathcal{G}$. If $ t$ is terminally connected, then this element comes uniquely from a global element $ \overline{a} : 1_\mathcal{F} \rightarrow f^*E$ in $\mathcal{F}$, whose name is a diagonalization of the pseudosquare above $ \mathcal{F} \rightarrow \mathcal{E}/E$. As any other possible diagonalization would name another antecedent of $a$, this diagonalization has to be unique. The converse is exactly the converse reasoning. 
\end{proof}

Then the connected-etale factorization of locally connected geometric morphisms generalizes to essential geometric morphisms as below - this is \cite{caramello2020denseness}[Proposition 4.62], where the bifactorization is obtained exactly in the same way:

\begin{lemma}
Any essential geometric morphism factorizes uniquely as a terminally connected geometric morphism followed by an etale geometric morphism: that is, we have a bifactorization system $ (\textbf{TCoEss}, \textbf{Et})$ on $\GTop^{\textbf{Ess}}$.
\end{lemma}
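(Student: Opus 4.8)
The plan is to mirror the connected--etale factorization recalled above, replacing the connected left factor by a merely terminally connected one; the obstruction forcing this replacement is precisely that for a general essential $f$ the left adjoint $f_!$ need not be $\mathcal{F}$-indexed. Given an essential geometric morphism $f : \mathcal{E} \rightarrow \mathcal{F}$, I would set $A := f_!(1_\mathcal{E})$ and take as right factor the etale morphism $\pi_A : \mathcal{F}/A \rightarrow \mathcal{F}$. It then remains to produce a left factor $t_f : \mathcal{E} \rightarrow \mathcal{F}/A$, essential and terminally connected, together with an invertible $2$-cell $\pi_A \circ t_f \simeq f$.

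For the left factor I would use the unit $\eta_{1_\mathcal{E}} : 1_\mathcal{E} \rightarrow f^* f_!(1_\mathcal{E}) = f^* A$ of the adjunction $f_! \dashv f^*$, a global point of $f^* A$ in $\mathcal{E}$. It names a section $s_\eta : \mathcal{E} \rightarrow \mathcal{E}/f^* A$ of the etale morphism $\pi_{f^*A}$, whose inverse image is pullback along $\eta_{1_\mathcal{E}}$, and which is essential since its pullback inverse image has a left adjoint $\Sigma_{\eta_{1_\mathcal{E}}}$. Composing with the slice $(f/A) : \mathcal{E}/f^*A \rightarrow \mathcal{F}/A$ of $f$ --- whose inverse image sends $(X \rightarrow A)$ to $(f^*X \rightarrow f^*A)$, and which is essential because $f$ is, via $(f/A)_!(Z \xrightarrow{z} f^*A) = (f_!Z \xrightarrow{\bar z} A)$ --- yields $t_f := (f/A) \circ s_\eta$, an essential geometric morphism with inverse image $t_f^*(X \xrightarrow{x} A) = f^*X \times_{f^*A} 1_\mathcal{E}$, the pullback of $f^*x$ along $\eta_{1_\mathcal{E}}$. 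A direct computation $t_f^* \pi_A^*(G) = (f^*A \times f^*G) \times_{f^*A} 1_\mathcal{E} \cong f^* G$ supplies the comparison $\pi_A \circ t_f \simeq f$.

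The heart of the argument is then to check that $t_f$ is terminally connected, which I would do through the global-elements criterion rather than through $(t_f)_!(1_\mathcal{E}) \simeq 1_{\mathcal{F}/A}$. A global element of $t_f^*(X \xrightarrow{x} A)$ is a map $s : 1_\mathcal{E} \rightarrow f^*X$ with $f^*(x)\, s = \eta_{1_\mathcal{E}}$; transposing across $f_! \dashv f^*$ sends $s$ to some $\bar s : A \rightarrow X$, and by naturality of the adjunction bijection in $X$ together with the fact that the unit $\eta_{1_\mathcal{E}}$ is the transpose of $\mathrm{id}_A$, the constraint becomes $x\,\bar s = \mathrm{id}_A$. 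Hence $\mathcal{E}[1_\mathcal{E}, t_f^*(X\rightarrow A)]$ is naturally the set of sections of $x$, i.e. $(\mathcal{F}/A)[1_{\mathcal{F}/A}, (X\rightarrow A)]$, so $t_f$ is terminally connected and lies in $\textbf{TCoEss}$.

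Finally, uniqueness of the factorization and the bi-orthogonality structure follow formally from the orthogonality $\textbf{TCoEss} \perp \textbf{Et}$ established above: given the existence just produced, the standard retract argument --- factor a map in ${}^\perp\textbf{Et}$ and use orthogonality against its own right factor to invert the latter --- yields $\textbf{TCoEss} = {}^\perp\textbf{Et}$ and $\textbf{Et} = \textbf{TCoEss}^\perp$ inside $\GTop^{\textbf{Ess}}$, while the bipullback description of orthogonality gives the essentially unique fillers and their functoriality in $2$-cells. The main obstacle is genuinely the terminal-connectedness verification and, upstream of it, confirming that the candidate $t_f$ is a bona fide essential geometric morphism with $\pi_A \circ t_f \simeq f$; once the unit $\eta_{1_\mathcal{E}}$ is identified as the classifying point this is bookkeeping, but it is exactly where the failure of $f_!$ to be $\mathcal{F}$-indexed forces ``terminally connected'' in place of ``connected.''
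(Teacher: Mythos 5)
Your proposal is correct and follows essentially the same route as the paper, which defers to Caramello's Proposition 4.62: factor the essential $f$ through $\mathcal{F}/f_!(1_\mathcal{E})$, with the left factor induced by the unit $\eta_{1_\mathcal{E}} : 1_\mathcal{E} \rightarrow f^*f_!(1_\mathcal{E})$ and the etale projection on the right. Your explicit verification of terminal connectedness via the global-elements criterion (equivalently, $(t_f)_!(1_\mathcal{E}) \simeq \mathrm{id}_A$) and the appeal to the already-established orthogonality for uniqueness simply fill in the details the paper leaves to the citation.
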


%Moreover, we can also use the specification of local objects for this bifactorization system over $\Set$ - as soon as we remain in $\GTop^{\textbf{Ess}}$:

\begin{proposition}
We have a bistable inclusion
% https://q.uiver.app/?q=WzAsMixbMCwwLCIoXFx0ZXh0YmZ7TEVzc0dUb3B9XntcXHRleHRiZntUQ29Fc3N9fSlee1xcb3B9Il0sWzEsMCwiKFxcR1RvcF57XFx0ZXh0YmZ7RXNzfX0pXntcXG9wfSJdLFswLDEsIiIsMCx7InN0eWxlIjp7InRhaWwiOnsibmFtZSI6Imhvb2siLCJzaWRlIjoidG9wIn19fV1d
\[\begin{tikzcd}
	{(\textup{\textbf{CoLocoGTop}}^{\textup{\textbf{TCo}}})^{\op}} & {(\textup{\textbf{Loco}}\GTop^{\textup{\textbf{Ess}}})^{\op}}
	\arrow[hook, from=1-1, to=1-2]
\end{tikzcd}\]
of the dual of the inclusion of the bicategory of connected, locally connected Grothendieck topoi and terminally connected essential geometric morphisms between them into the bicategory of locally connected Grothendieck topoi and essential geometric morphisms between them.
\end{proposition}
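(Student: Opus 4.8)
The plan is to recognize the statement as a direct instance of \cref{Bistable inclusion of left objects} applied to the terminally connected--etale bifactorization system, exactly mirroring the connected--etale case treated earlier; the only genuine work lies in pinning down the correct ambient bicategory so that a biterminal object is available.

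First I would fix the ambient bicategory to be $\textbf{Loco}\GTop^{\textbf{Ess}}$ and check that $\Set$ is biterminal there. Since the homcategories $\GTop[\mathcal{E},\Set]$ are already terminal, the only point to verify is that the unique geometric morphism $!_\mathcal{E}:\mathcal{E}\to\Set$ is essential; by the remark recalled above, $!_\mathcal{E}$ is essential precisely when $\mathcal{E}$ is locally connected, so every object of $\textbf{Loco}\GTop^{\textbf{Ess}}$ carries a unique essential map to $\Set$, making $\Set$ biterminal. This is exactly why one must restrict the objects to locally connected topoi even though the morphisms are only essential: in $\GTop^{\textbf{Ess}}$ itself $\Set$ fails to be biterminal. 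Next I would check that the bifactorization system $(\textbf{TCoEss},\textbf{Et})$ of the preceding lemma, a priori defined on $\GTop^{\textbf{Ess}}$, restricts to $\textbf{Loco}\GTop^{\textbf{Ess}}$. Given an essential $f:\mathcal{E}\to\mathcal{F}$ between locally connected topoi with factorization $\mathcal{E}\xrightarrow{t}\mathcal{G}\xrightarrow{e}\mathcal{F}$, the right leg $e$ is etale, hence $\mathcal{G}\simeq\mathcal{F}/F$ is locally connected as a slice of the locally connected $\mathcal{F}$; etale morphisms are essential and $t$ is terminally connected essential, so both legs and the intermediate topos lie in $\textbf{Loco}\GTop^{\textbf{Ess}}$ (closure under composition being clear, as composites of left adjoints are left adjoints).

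With these two facts in hand, \cref{Bistable inclusion of left objects} immediately yields a bistable inclusion $(\mathcal{L}\hy\textbf{Obj})^{\op}\hookrightarrow(\textbf{Loco}\GTop^{\textbf{Ess}})^{\op}$ for the class $\mathcal{L}=\textbf{TCoEss}$, whose $0$-cells are the topoi whose terminal map is terminally connected essential and whose $1$-cells are the terminally connected essential geometric morphisms. It then remains to identify $\mathcal{L}\hy\textbf{Obj}$ with $\textbf{CoLocoGTop}^{\textbf{TCo}}$. For this I would invoke the earlier proposition that an essential geometric morphism $f$ is terminally connected iff $f_!(1)\simeq 1$: applied to $!_\mathcal{E}$ it reads $!_{\mathcal{E}!}(1_\mathcal{E})\simeq 1$, i.e. the terminal object has a single connected component, which for a locally connected topos is precisely connectedness of $\mathcal{E}$. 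Hence the left objects are exactly the connected locally connected topoi and the left maps the terminally connected essential morphisms, so the inclusion is the one asserted.

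I expect the main obstacle to be the bookkeeping around the ambient bicategory rather than any deep structural phenomenon: one must carefully justify biterminality of $\Set$ among \emph{essential} morphisms and the stability of local connectedness under the factorization (especially under slicing), since, in contrast to the connected--etale situation, here the morphisms are only required to be essential and not locally connected. By contrast, all the genuinely $2$-categorical content — the generic/diagonalization data, the uniqueness up to equivalence, and the verification that the opposite inclusion is bistable — is supplied wholesale by \cref{Bistable inclusion of left objects} and need not be reproved.
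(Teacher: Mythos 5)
Your proof is correct and takes essentially the same route as the paper: the paper gives no explicit argument for this proposition beyond the remark that one must restrict to locally connected topoi so that $\Set$ remains biterminal, the statement being read off from \cref{Bistable inclusion of left objects} applied to the $(\textbf{TCoEss},\textbf{Et})$ bifactorization system on $\textbf{Loco}\GTop^{\textbf{Ess}}$, which is exactly what you do. Your write-up in fact supplies the bookkeeping the paper leaves implicit — biterminality of $\Set$ among essential morphisms, stability of local connectedness under the factorization via slicing, and the identification of left objects as connected locally connected topoi through the criterion $!_{\mathcal{E}!}(1_\mathcal{E})\simeq 1$.
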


\begin{remark}
Again, we must restrict to locally connected topoi to ensure terminalness of $\Set$, though the bifactorization still exists if we only restrict to essential geometric morphisms. 
\end{remark}

\subsection{Local topoi as specified left objects}

In the connected-etale and its generalization, the left map encode something a bit ``coarse"; for connected - nor terminally connected - geometric morphisms are stable under bipullback, one might not want them to be local objects; however we can specify two kinds of left objects: \emph{local} and \emph{totally connected topoi}, the first specification being of special interest. 

\begin{definition}
A geometric morphism $ f : \mathcal{F} \rightarrow \mathcal{E}$ is said to be \emph{local}\index{geometric morphism!local} if its inverse image part $f^*$ is full and faithful and moreover the direct image part $ f_*$ has a further right adjoint $ f^!$. \\

In particular, a Grothendieck topos $\mathcal{E}$ is said to be \emph{local}\index{topos!local} if the global section functor $ !_{\mathcal{E}*} : \mathcal{E} \rightarrow \mathcal{S}$ has a further right adjoint.
\end{definition}

\begin{remark}
Observe that, for a Grothendieck topos to be local, the full-and-faithfulness condition is automatic as soon as the existence of the further left adjoint is established: indeed, for $\Set$ is the terminal topos, the pair $ \gamma_{\mathcal{E}*} \dashv !_{\mathcal{E}}^!$ must form a retract of $ !_\mathcal{E}$ in the bicategory of Grothendieck topoi: hence we must automatically have natural isomorphisms $ !_{\mathcal{E}*}!_{\mathcal{E}}^* \simeq \textup{id}_{\Set}$ and $ !_{\mathcal{E}*}!_{\mathcal{E}}^! \simeq \textup{id}_{\Set} \simeq !_{\mathcal{E}*}!_{\mathcal{E}}^*$, which force both $ !_{\mathcal{E}}^*$ and $!_{\mathcal{E}}^!$ to be full and faithful.
\end{remark}

Now we have the following property over $\Set$ expressing why local topoi can serve as a specification of left objects:

\begin{lemma}
Local topoi have the gliding property along terminally connected geometric morphisms. 
\end{lemma}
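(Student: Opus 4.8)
The plan is to read the \emph{gliding property} as the right-cancellation condition from the earlier definition, instantiated at the biterminal object $\Set$: given a terminally connected geometric morphism $t : \mathcal{F} \to \mathcal{E}$ together with an invertible $2$-cell $!_\mathcal{E} \circ t \simeq !_\mathcal{F}$ (automatic by biterminality of $\Set$) and the hypothesis that $\mathcal{F}$ is local, I must conclude that $\mathcal{E}$ is local. By the remark following the definition of local topos, full-and-faithfulness of $!_\mathcal{E}^*$ is automatic, so the whole problem reduces to producing a further right adjoint to the global sections functor $\Gamma_\mathcal{E} = !_{\mathcal{E}*} = \mathcal{E}[1_\mathcal{E}, -]$.

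The heart of the argument is a single functorial identity. Applying the defining property of a terminally connected morphism to $t$ gives, naturally in $E \in \mathcal{E}$, an equivalence $\mathcal{F}[1_\mathcal{F}, t^*(E)] \simeq \mathcal{E}[1_\mathcal{E}, E]$; reading both sides as functors $\mathcal{E} \to \Set$ this is exactly $\Gamma_\mathcal{E} \simeq \Gamma_\mathcal{F} \circ t^*$. Now $\Gamma_\mathcal{F}$ has a right adjoint $!_\mathcal{F}^!$ because $\mathcal{F}$ is local, and $t^*$ has the right adjoint $t_*$ by definition of a geometric morphism. Composing adjoints, $\Gamma_\mathcal{E} \simeq \Gamma_\mathcal{F} \circ t^*$ acquires the right adjoint $t_* \circ !_\mathcal{F}^!$, which exhibits $\mathcal{E}$ as local. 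I would stress that this uses only terminal connectedness of $t$ and not its essentialness, so the same proof covers the more general terminally connected setting discussed later (and it is consistent with the fact that $t$ need not preserve the further structure, only lift global elements).

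The routine but genuinely necessary checks are all $2$-categorical coherence. I would verify that the objectwise equivalence supplied by terminal connectedness is natural in $E$, so that $\Gamma_\mathcal{E} \simeq \Gamma_\mathcal{F} \circ t^*$ holds as a pseudonatural equivalence of functors, and that the composite adjunction's unit and counit are the pasted ones, so that $t_* \circ !_\mathcal{F}^!$ really serves as $!_\mathcal{E}^!$. I expect the main (mild) obstacle to be bookkeeping the invertible $2$-cell $!_\mathcal{E} \circ t \simeq !_\mathcal{F}$ through the two identifications $\Gamma_\mathcal{E} \circ t_* \simeq \Gamma_\mathcal{F}$ and $\Gamma_\mathcal{F} \circ t^* \simeq \Gamma_\mathcal{E}$ in a mutually compatible way; once these coherences are fixed, the conclusion is immediate.
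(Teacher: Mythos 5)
Your proof is correct and is essentially the paper's own argument: the paper likewise uses terminal connectedness to identify $!_{\mathcal{E}*}(E) \simeq !_{\mathcal{L}*}(t^*E)$ and then chains the two adjunctions hom-set by hom-set to exhibit $t_* \circ !_{\mathcal{L}}^!$ as the further right adjoint, with connectedness (hence locality) following from the automatic full-and-faithfulness remark. Your phrasing via composition of adjoints is just a more compact packaging of the same computation.
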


\begin{proof}
Suppose we have a triangle as below in $\GTop$
% https://q.uiver.app/?q=WzAsMyxbMCwwLCJcXG1hdGhjYWx7TH0iXSxbMSwxLCJcXFNldCJdLFsyLDAsIlxcbWF0aGNhbHtFfSJdLFswLDEsIiFfXFxtYXRoY2Fse0x9IiwyXSxbMCwyLCJ0Il0sWzIsMSwiIV97XFxtYXRoY2Fse0V9fSJdLFs0LDEsIlxcc2ltZXEiLDEseyJzaG9ydGVuIjp7InNvdXJjZSI6MjB9LCJzdHlsZSI6eyJib2R5Ijp7Im5hbWUiOiJub25lIn0sImhlYWQiOnsibmFtZSI6Im5vbmUifX19XV0=
\[\begin{tikzcd}
	{\mathcal{L}} && {\mathcal{E}} \\
	& \Set
	\arrow["{!_\mathcal{L}}"', from=1-1, to=2-2]
	\arrow[""{name=0, anchor=center, inner sep=0}, "t", from=1-1, to=1-3]
	\arrow["{!_{\mathcal{E}}}", from=1-3, to=2-2]
	\arrow["\simeq"{description}, Rightarrow, draw=none, from=0, to=2-2]
\end{tikzcd}\]
where $ !_\mathcal{L}$ is local and $t$ is terminally connected. By what precedes, it suffices to prove that $ !_{\mathcal{E}*}$ has a further right adjoint. Recall that the direct image part of the terminal map is the global section functor returning the set of global elements. But as $t$ is terminally connected one has for any $E$ in $\mathcal{E}$ 
\begin{align*}
    !_{\mathcal{L}*}(t^*E) &\simeq \mathcal{L}[1_\mathcal{L}, t^*E] \\
    &\simeq \mathcal{E}[1_\mathcal{E}, E] \\
    &\simeq !_{\mathcal{E}*}(E)
\end{align*}
Hence for any $E$ in $\mathcal{E}$ and any $X$ in $\Set$ we have the following isomorphisms of homsets
\begin{align*}
    \Set[!_{\mathcal{E}*}(E), X] &\simeq \Set[!_{\mathcal{L}*}(t^*E), X] \\
    &\simeq \mathcal{L}[t^*E, !_{\mathcal{L}}^!(X)] \\
    &\simeq \mathcal{E}[E, t_*!_{\mathcal{L}}^!(X)]
\end{align*}
Hence one can choose the composite $t_*!_{\mathcal{L}}^!$ as a further right adjoint to $ !_{\mathcal{E}*}$. As $!_\mathcal{E}$ is moreover connected by what precedes, it is actually local. 
\end{proof}

\begin{proposition}
We have a bistable inclusion
% https://q.uiver.app/?q=WzAsMixbMCwwLCIoXFx0ZXh0YmZ7TEVzc0dUb3B9XntcXHRleHRiZntUQ29Fc3N9fSlee1xcb3B9Il0sWzEsMCwiKFxcR1RvcF57XFx0ZXh0YmZ7RXNzfX0pXntcXG9wfSJdLFswLDEsIiIsMCx7InN0eWxlIjp7InRhaWwiOnsibmFtZSI6Imhvb2siLCJzaWRlIjoidG9wIn19fV1d
\[\begin{tikzcd}
	{(\textup{\textbf{LocoFocGTop}}^{\textup{\textbf{TCoEss}}})^{\op}} & {(\textup{\textbf{Loco}}\GTop^{\textup{\textbf{Ess}}})^{\op}}
	\arrow[hook, from=1-1, to=1-2]
\end{tikzcd}\]
of the dual of the inclusion of the bicategory of locally connected local Grothendieck topoi and terminally connected essential geometric morphisms between them into the bicategory of locally connected Grothendieck topoi and essential geometric morphisms between them.\\

This restricts to a bistable inclusion 
\[\begin{tikzcd}
	{(\textup{\textbf{LocoFocGTop}}^{\textup{\textbf{Co}}})^{\op}} & {(\textup{\textbf{Loco}}\GTop)^{\op}}
	\arrow[hook, from=1-1, to=1-2]
\end{tikzcd}\]
of the dual of the inclusion of the bicategory of local, locally connected Grothendieck topoi and connected, locally connected geometric morphisms between them into the bicategory of locally connected Grothendieck topoi and locally connected geometric morphisms.
\end{proposition}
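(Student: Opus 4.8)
The plan is to obtain both inclusions as direct instances of \cref{bistable for specified left objects}. For the first inclusion I would take the ambient bicategory to be $\mathcal{C} = \textbf{Loco}\GTop^{\textbf{Ess}}$, equipped with the terminally connected--etale bifactorization system $(\textbf{TCoEss}, \textbf{Et})$ restricted from $\GTop^{\textbf{Ess}}$ (which exists by the bifactorization lemma just proved), and take the specified subclass to be $\mathcal{L}' = \textbf{Foc}$, the local essential geometric morphisms. Then the $\textbf{Foc}$-objects are precisely the local, locally connected topoi, and the specified sub-bicategory is exactly $\textbf{LocoFocGTop}^{\textbf{TCoEss}}$ of the statement; so it suffices to check the hypotheses of the theorem.

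First, $\Set$ is biterminal in $\textbf{Loco}\GTop^{\textbf{Ess}}$ — this is precisely the reason we restrict to locally connected topoi, as flagged in the preceding remarks — and the identity geometric morphisms are trivially local, so $\textbf{Foc}$ contains identities. Second, $\textbf{Foc} \subseteq \textbf{TCoEss}$: a local geometric morphism is connected by definition, hence terminally connected, and it is essential by the standing hypothesis on the ambient bicategory. Third, and this is the only genuinely substantive point, $\textbf{Foc}$ must have right cancellation along $\textbf{TCoEss}$-maps; but an invertible triangle $!_{\mathcal{L}} \simeq {!_{\mathcal{E}}} \comp t$ over $\Set$ with $!_{\mathcal{L}}$ local and $t$ terminally connected is exactly the situation resolved by the gliding lemma, which concludes $!_{\mathcal{E}}$ local. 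With these three verifications in place the theorem produces the first bistable inclusion immediately.

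For the second inclusion I would rerun the same argument with $\mathcal{C} = \textbf{Loco}\GTop$ (locally connected topoi and locally connected geometric morphisms) and the connected--etale bifactorization $(\textbf{Co}, \textbf{Et})$, again with $\mathcal{L}' = \textbf{Foc}$. Here the specified left maps between local objects are the connected locally connected morphisms, yielding $\textbf{LocoFocGTop}^{\textbf{Co}}$; the inclusion $\textbf{Foc} \subseteq \textbf{Co}$ holds because a local morphism is connected outright, and the cancellation hypothesis is once more supplied by the gliding lemma, since connected morphisms are in particular terminally connected. Because a locally connected geometric morphism is essential, this second situation embeds into the first along $\textbf{Loco}\GTop \hookrightarrow \textbf{Loco}\GTop^{\textbf{Ess}}$, so I would present the second inclusion as the evident restriction of the first rather than as an independent computation.

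The hard part will not be any single orthogonality or diagonalization estimate — these are imported wholesale from \cref{bistable for specified left objects} and the two preceding lemmas — but rather confirming that the various restrictions are mutually compatible. Concretely, I must check that the $(\textbf{TCoEss}, \textbf{Et})$ factorization of a morphism between locally connected topoi again factors through a locally connected topos, which is true because the intermediate topos $\mathcal{F}/f_!(1_{\mathcal{E}})$ is a slice of a locally connected topos and hence itself locally connected, and that cutting the ambient bicategory down to locally connected topoi leaves $\Set$ biterminal. Once these compatibility points are secured, bistability is formal and both statements follow.
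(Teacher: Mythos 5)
Your proposal is correct and follows essentially the same route as the paper: the proposition is obtained as a direct instance of \cref{bistable for specified left objects}, with local (geometric) morphisms as the specified subclass $\mathcal{L}'$ inside the terminally connected (resp. connected) left class, the right-cancellation hypothesis being exactly the gliding lemma for local topoi along terminally connected morphisms, and the restriction to locally connected topoi serving precisely to keep $\Set$ biterminal. Your explicit verification that the intermediate topos $\mathcal{F}/f_!(1_{\mathcal{E}})$ of the factorization remains locally connected, and your treatment of the second inclusion as a restriction of the first, match what the paper leaves implicit.
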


\begin{remark}
Generic morphisms for this bistable inclusion are duals of the etale geometric morphisms $\pi_E : \mathcal{E}/E \rightarrow \mathcal{E}$ such that $ \mathcal{E}/E$ is local. One can characterize objects $E$ in $\mathcal{E}$ with this properties as those that are \emph{connected projective}. Recall that $E$ is \emph{projective} if for any epimorphism $ e : F \twoheadrightarrow G$ and $ a :E \rightarrow G$ one has a lift 
% https://q.uiver.app/?q=WzAsMyxbMCwxLCJFIl0sWzEsMSwiRyJdLFsxLDAsIkYiXSxbMiwxLCIiLDAseyJzdHlsZSI6eyJoZWFkIjp7Im5hbWUiOiJlcGkifX19XSxbMCwxLCJhIiwyXSxbMCwyLCIiLDAseyJzdHlsZSI6eyJib2R5Ijp7Im5hbWUiOiJkYXNoZWQifX19XV0=
\[\begin{tikzcd}
	& F \\
	E & G
	\arrow["e",two heads, from=1-2, to=2-2]
	\arrow["a"', from=2-1, to=2-2]
	\arrow[dashed, from=2-1, to=1-2]
\end{tikzcd}\]
so that $\mathcal{E}[E,e]$ is epic, while $E$ is \emph{connected} if for any family $(E_i)_{i \in I}$ one has 
\[ \mathcal{E}[ E, \coprod_{i \in I}E_i] \simeq \coprod_{i \in I} \mathcal{E}[E,E_i]  \]
If $ E$ is then both connected and projective, then the hom functor $ \mathcal{E}[E,-] : \mathcal{E} \rightarrow \Set$ preserves colimits. But now for colimit in $\mathcal{E}/E$ are computed in $\mathcal{E}$, we have that $ 1_\mathcal{E}$ is connected-projective in $\mathcal{E}/E$. Then $ \mathcal{E}/E$ is local.
\end{remark}

%Observe that the only step where we required essentialness was to force $ !_\mathcal{E}$ to be actually connected, not just terminally connected; in fact neither $ t$ nor $ !_\mathcal{L}$ have to be essential in our proof. 

% In particular, for the global section functor expresses as $ \Gamma = \mathcal{E}[1,-]$, a topos $\mathcal{E}$ is local if and only if $\Gamma$ preserves small colimits if and only if $ 1$ is projective and connected.\\

%For a local geometric morphism, the pair of adjoint functors $ f_* \dashv f^!$ is a geometric morphism $ \mathcal{E} \rightarrow \mathcal{F}$ which can be shown to be right

The prototypical example of local geometric morphisms is the universal domain map $\partial_0 : \mathcal{E}^2 \rightarrow \mathcal{E}$ of a topos. Now for any point $ p : \mathcal{S} \rightarrow \mathcal{E}$, we can consider the \emph{Grothendieck Verdier localization at $p$}\index{Grothendieck-Verdier localization}, which is defined as the pseudopullback\[\begin{tikzcd}
	{\mathcal{E}_p} & {\mathcal{E}^2} \\
	{\mathcal{S}} & {\mathcal{E}}
	\arrow["{\partial_0}", from=1-2, to=2-2]
	\arrow["p"', from=2-1, to=2-2]
	\arrow["{p^*\partial_0}"', from=1-1, to=2-1]
	\arrow[from=1-1, to=1-2]
	\arrow["\lrcorner"{anchor=center, pos=0.125}, draw=none, from=1-1, to=2-2]
\end{tikzcd}\]% https://q.uiver.app/?q=WzAsNCxbMCwwLCJcXG1hdGhjYWx7RX1fcCJdLFsxLDAsIlxcbWF0aGNhbHtFfV4yIl0sWzEsMSwiXFxtYXRoY2Fse0V9Il0sWzAsMSwiXFxtYXRoY2Fse1N9Il0sWzEsMiwiXFxwYXJ0aWFsXzAiXSxbMywyLCJwIiwyXSxbMCwzLCJwXipcXHBhcnRpYWxfMCIsMl0sWzAsMV0sWzAsMiwiIiwxLHsic3R5bGUiOnsibmFtZSI6ImNvcm5lciJ9fV1d
Its universal property is that for any Grotendieck topos $ \mathcal{F}$, we have an equivalence with the cocomma category
\[ \GTop[\mathcal{F}, \mathcal{E}_p] \simeq p \; !_\mathcal{F} \! \downarrow \GTop[\mathcal{F}, \mathcal{E}] \]
In particular, when $ \mathcal{F}$ is $\mathcal{S}$, we have an equivalence of category
\[ \pt(\mathcal{E}_p) \simeq p\downarrow \pt(\mathcal{E})  \]
From \cite{localmap}[Theorem 3.7] we know that if $ \mathcal{E}$ has $ (\mathcal{C},J)$ as a lex site of definition, then $ \mathcal{E}_p$ can be expressed as a cofiltered pseudolimit of etales geometric morphism 
\[ \mathcal{E}_p \simeq \underset{(C,a) \in \int p^*}{\bilim} \mathcal{E}/\mathfrak{a}_J\hirayo^*_C  \]
where $ \int p^*$ is the cofiltered category of elements of the $J$-flat functor $ p^*: \mathcal{C} \rightarrow \mathcal{S}$. \\

Now we discuss how the terminally connected-etale factorization generalizes to the bicategory of Grothendieck topoi and all geometric morphisms between them. While the left class will be unchanged, the right class has to be generalized; though we do not know the best characterization of it, we can still tell something about.

\begin{definition}
    A geometric morphism will be called \emph{pro-etale} if it decomposes as a cofiltered bilimit of a etale geometric morphisms over its codomain. 
\end{definition}

\begin{remark}
Pro-etale geometric morphisms are those of the form 
% https://q.uiver.app/?q=WzAsMyxbMCwwLCJcXHVuZGVyc2V0e2kgXFxpbiBJfXtcXGJpbGltfSBcXCwgXFxtYXRoY2Fse0V9L0VfaSJdLFsyLDAsIlxcbWF0aGNhbHtFfS9FX2kiXSxbMSwxLCJcXG1hdGhjYWx7RX0iXSxbMCwxLCJwX2kiXSxbMCwyLCJcXHVuZGVyc2V0e2kgXFxpbiBJfXtcXGJpbGltfSBcXCwgXFxwaV97RV9pfSIsMl0sWzEsMiwiXFxwaV97RV9pfSJdLFszLDIsIlxcYWxwaGEgXFxhdG9wIFxcc2ltZXEiLDEseyJzaG9ydGVuIjp7InNvdXJjZSI6MjB9LCJzdHlsZSI6eyJib2R5Ijp7Im5hbWUiOiJub25lIn0sImhlYWQiOnsibmFtZSI6Im5vbmUifX19XV0=
\[\begin{tikzcd}
	{\underset{i \in I}{\bilim} \, \mathcal{E}/E_i} && {\mathcal{E}/E_i} \\
	& {\mathcal{E}}
	\arrow[""{name=0, anchor=center, inner sep=0}, "{p_i}", from=1-1, to=1-3]
	\arrow["{\underset{i \in I}{\bilim} \, \pi_{E_i}}"', from=1-1, to=2-2]
	\arrow["{\pi_{E_i}}", from=1-3, to=2-2]
	\arrow["{\alpha \atop \simeq}"{description}, Rightarrow, draw=none, from=0, to=2-2]
\end{tikzcd}\]
with $ I$ a small cofiltered category. Observe that the induced map $ \bilim_{i \in I}f_i$ really is the bilimit in $\GTop/\mathcal{E}$ for the domain projection $\GTop/\mathcal{E} \rightarrow \GTop$ must preserve cofiltered bilimits. 
\end{remark}

\begin{remark}\label{cofiltered bilimt of etale geometric morphisms}
From \cite{dubuc2011construction}[Theorem 2.4 and 2.5] we know that a cofiltered bilimit in $\GTop$ of a diagram of geometric morphisms induced by morphisms of sites between them is computed as the corresponding filtered bicolimits of the underlying sites, that is, the filtered pseudocolimit of the underlying categories with the induced topology. If $ \mathcal{E}$ has $(\mathcal{C},J)$ as a small standard site of definition, then for each object $ C$ of $\mathcal{C}$, a site for $\mathcal{E}/\hirayo_C$ can be constructed by $ (\mathcal{C}/C, J_C)$ where $ J_C$ is the relativised topology. 
Suppose now that the objects $E_i$ are chosen as arising from representable $ \hirayo_{C_i}$; for etale geometric morphisms have left cancellation, all the transition morphisms in the diagram must be etale, but then they correspond to morphisms in $\mathcal{E}$ between the corresponding object, which, by full faithfulness of the Yoneda embedding, come uniquely from morphisms $ C_i \rightarrow C_j$ in $\mathcal{C}$. Then a the cofiltered bilimit $ \bilim_{i \in I} \mathcal{E}/E_i$ has the following presentation
\[ \underset{i \in I}{\bilim} \, \mathcal{E}/E_i \simeq \Sh(\underset{i \in I}{\bicolim} \, \mathcal{C}/C_i, \langle \iota_i (J_{C_i}) \rangle )  \]
where $\bicolim_{i \in I} \mathcal{C}/C_i $ is computed in $\Cat$ where one can chose the pseudocolimit, and $\langle \iota_i (J_{C_i}) \rangle$ is the Grothendieck topology induced by the bicolimit inclusions.  
\end{remark}

The following fact is an immediate consequence of closure properties of right classes under limits in the pseudoslices:
\begin{lemma}
Pro-etale geometric morphisms are right biorthogonal to terminally connected geometric morphisms. 
\end{lemma}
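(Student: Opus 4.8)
The plan is to deduce the statement formally from two ingredients already at hand: the orthogonality of terminally connected geometric morphisms against \emph{etale} ones, and the stability of right-orthogonality classes under bilimits. Fix a terminally connected geometric morphism $t : \mathcal{G} \rightarrow \mathcal{F}$ and a pro-etale geometric morphism $p$, and write $P = \underset{i \in I}{\bilim}\, \mathcal{E}/E_i$ for its domain, so that $p = \underset{i \in I}{\bilim}\, \pi_{E_i}$ is a cofiltered bilimit of the etale projections $\pi_{E_i} : \mathcal{E}/E_i \rightarrow \mathcal{E}$ computed in the pseudoslice $\GTop/\mathcal{E}$. Proving $t \perp p$ amounts to showing that the square
\[\begin{tikzcd}
	{\GTop[\mathcal{F}, P]} & {\GTop[\mathcal{G}, P]} \\
	{\GTop[\mathcal{F}, \mathcal{E}]} & {\GTop[\mathcal{G}, \mathcal{E}]}
	\arrow[from=1-1, to=1-2]
	\arrow[from=1-1, to=2-1]
	\arrow[from=1-2, to=2-2]
	\arrow[from=2-1, to=2-2]
\end{tikzcd}\]
is a bipullback in $\Cat$.

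First I would use the proposition that terminally connected geometric morphisms are left orthogonal to etale geometric morphisms: applied to each $\pi_{E_i}$, it says that the analogous square with $P$ replaced by $\mathcal{E}/E_i$ is a bipullback. This is the only step where the specific content — terminal connectedness lifting global elements and etale morphisms classifying objects — is invoked; the remainder is purely formal.

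Next I would invoke continuity of the representable pseudofunctors $\GTop[\mathcal{F}, -]$ and $\GTop[\mathcal{G}, -]$, which carry the cofiltered bilimit $P$ to the cofiltered bilimits $\underset{i\in I}{\bilim}\,\GTop[\mathcal{F}, \mathcal{E}/E_i]$ and $\underset{i\in I}{\bilim}\,\GTop[\mathcal{G}, \mathcal{E}/E_i]$. Under these equivalences the displayed square is identified with the $I$-indexed bilimit of the family of bipullback squares attached to the $\pi_{E_i}$, the two bottom corners being constant in $i$ (their bilimit over the connected category $I$ returning themselves). Since the bipullback is itself a bilimit and bilimits commute with bilimits, the bilimit of a pointwise bipullback diagram is again a bipullback; hence the assembled square is a bipullback and $t \perp p$. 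As $t$ was an arbitrary terminally connected morphism, every pro-etale morphism is right orthogonal to the whole class, as claimed.

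The delicate point — and essentially the only thing to check — is the interchange of the cofiltered bilimit over $I$ with the bipullback shape. Concretely, one verifies that the comparison functor from $\GTop[\mathcal{F}, P]$ to the bipullback of the cospan $\GTop[\mathcal{G}, P] \rightarrow \GTop[\mathcal{G}, \mathcal{E}] \leftarrow \GTop[\mathcal{F}, \mathcal{E}]$ is computed as the bilimit over $I$ of the comparison functors for the individual $\pi_{E_i}$. Each of the latter is an equivalence by the first step, and since being an equivalence is preserved by bilimits, so is the comparison for $p$. No characterization of the right class is required — only that right orthogonality is, by definition, a bilimit condition in the hom-categories — which is exactly why the result is an immediate consequence of the closure of right-orthogonality classes under bilimits in the pseudoslices.
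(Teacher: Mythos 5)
Your proof is correct and is essentially the paper's own argument: the paper dispatches this lemma in one line as an immediate consequence of the closure of right biorthogonality classes under bilimits in the pseudoslices, and your argument is precisely the unfolding of that closure property in this instance (orthogonality expressed as a bipullback of hom-categories, the representables $\GTop[\mathcal{F},-]$ and $\GTop[\mathcal{G},-]$ carrying the cofiltered bilimit defining a pro-etale morphism to bilimits of hom-categories, and commutation of bilimits with the bipullback shape). The only substantive input, namely that terminally connected geometric morphisms are left biorthogonal to etale ones, is invoked identically in both treatments.
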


\begin{theorem}\label{terminally connected-proetale factorization}
Any geometric morphism factorizes in an essentially unique way as a terminally connected geometric morphism followed by a pro-etale geometric morphism.
\end{theorem}

\begin{proof}
Let be $ f : \mathcal{E} \rightarrow \mathcal{F}$ be a geometric morphism and $ (\mathcal{C},J)$ a small standard site for $\mathcal{F}$. Then consider the category of global elements of the inverse image part $ f^*$ restricted along the Yoneda embedding $ \mathcal{C} \hookrightarrow \mathcal{F}$ (which we denote as $f^*$), that is, the comma category $ 1_\mathcal{E}\downarrow f^*$. Then we have an equivalence of categories
\[  1_\mathcal{E} \downarrow f^* \simeq f \downarrow \textbf{Et}/\mathcal{E}  \]
(where the latter is a 1-category for etale geometric morphisms are discrete) as global elements of $f^*$ are in correspondence with factorization of $f^*$ along etale geometric morphisms. Moreover, $f^*$ defines a representably flat functor $ f^* : \mathcal{C} \rightarrow \mathcal{E}$ so in particular $1_\mathcal{E} \downarrow f^*$ is cofiltered. Hence we have a unique factorization as below
% https://q.uiver.app/?q=WzAsMyxbMCwwLCJcXG1hdGhjYWx7RX0iXSxbMiwwLCJcXG1hdGhjYWx7Rn0iXSxbMSwxLCJcXHVuZGVyc2V0eyhDLGEpIFxcaW4gMV97XFxtYXRoY2Fse0V9fVxcZG93bmFycm93IGZeKn17XFxiaWxpbX0gXFwsIFxcbWF0aGNhbHtGfS9cXGhpcmF5b19DIl0sWzAsMiwibF9jIiwyXSxbMiwxLCJcXHVuZGVyc2V0eyhDLGEpIFxcaW4gMV97XFxtYXRoY2Fse0V9fVxcZG93bmFycm93IGZeKn17XFxiaWxpbX0gXFwsIFxccGlfe1xcaGlyYXlvX0N9IiwyXSxbMCwxLCJmIl0sWzIsNSwiXFxhbHBoYV9mIFxcYXRvcCBcXHNpbWVxIiwxLHsic2hvcnRlbiI6eyJ0YXJnZXQiOjIwfSwic3R5bGUiOnsiYm9keSI6eyJuYW1lIjoibm9uZSJ9LCJoZWFkIjp7Im5hbWUiOiJub25lIn19fV1d
\[\begin{tikzcd}
	{\mathcal{E}} && {\mathcal{F}} \\
	& {\underset{(C,a) \in 1_{\mathcal{E}}\downarrow f^*}{\bilim} \, \mathcal{F}/\hirayo_C}
	\arrow["{l_c}"', from=1-1, to=2-2]
	\arrow["{\underset{(C,a) \in 1_{\mathcal{E}}\downarrow f^*}{\bilim} \, \pi_{\hirayo_C}}"', from=2-2, to=1-3]
	\arrow[""{name=0, anchor=center, inner sep=0}, "f", from=1-1, to=1-3]
	\arrow["{\alpha_f \atop \simeq}"{description}, Rightarrow, draw=none, from=2-2, to=0]
\end{tikzcd}\]
with $\bilim_{(C,a) \in 1_\mathcal{E} \downarrow f^*} \pi_{\hirayo_C}$ pro-etale, and moreover this factorization is initial amongst all factorization of $f$ through a pro-etale geometric morphism on the right. \\

Now we must prove $ l_f$ to be terminally connected. From \cref{cofiltered bilimt of etale geometric morphisms}, as $ 1_\mathcal{E}\downarrow f^*$ we know $\bilim_{(C,a) \in 1_\mathcal{E} \downarrow f^*} \mathcal{F}/\hirayo_C$ to have $ (\bicolim_{(C,a) \in 1_\mathcal{E} \downarrow f^*} \mathcal{C}/C_i, \langle \iota_{(C,a)} (J_C) \rangle_{(C,a) \in 1_\mathcal{E} \downarrow f^*}) $ as presentation site, and the inverse image part $ l_f^*$ is the map induced by the universal property of the bicolimit as below
% https://q.uiver.app/?q=WzAsNCxbMCwwLCJcXG1hdGhjYWx7RX0iXSxbMiwwLCJcXG1hdGhjYWx7Q30iXSxbMiwxLCJcXG1hdGhjYWx7Q30vQyJdLFswLDEsIlxcdW5kZXJzZXR7KEMsYSkgXFxpbiAxX1xcbWF0aGNhbHtFfSBcXGRvd25hcnJvdyBmXip9e1xccHNjb2xpbX0gXFwsIFxcbWF0aGNhbHtDfS9DIl0sWzMsMCwibF9mXioiXSxbMSwwLCJmXioiLDJdLFsxLDIsIlxccGlfQ14qIl0sWzIsMywiXFxpb3RhX3soQyxhKX0iLDAseyJzdHlsZSI6eyJ0YWlsIjp7Im5hbWUiOiJob29rIiwic2lkZSI6ImJvdHRvbSJ9fX1dLFsyLDAsIiIsMSx7InN0eWxlIjp7ImJvZHkiOnsibmFtZSI6Im5vbmUifSwiaGVhZCI6eyJuYW1lIjoibm9uZSJ9fX1dLFsyLDAsImFeKiIsMV1d
\[\begin{tikzcd}
	{\mathcal{E}} && {\mathcal{C}} \\
	{\underset{(C,a) \in 1_\mathcal{E} \downarrow f^*}{\pscolim} \, \mathcal{C}/C} && {\mathcal{C}/C}
	\arrow["{l_f^*}", from=2-1, to=1-1]
	\arrow["{f^*}"', from=1-3, to=1-1]
	\arrow["{\pi_C^*}", from=1-3, to=2-3]
	\arrow["{\iota_{(C,a)}}", hook', from=2-3, to=2-1]
	\arrow[draw=none, from=2-3, to=1-1]
	\arrow["{a^*}"{description}, from=2-3, to=1-1]
\end{tikzcd}\]
where $ a^*$ is the inverse image part of the name $ \mathcal{E} \rightarrow \mathcal{F}/\hirayo_C$ of the global element $ a: 1_\mathcal{E} \rightarrow f^*(C)$. But recall that objects of $ \pscolim_{(C,a) \in 1_\mathcal{E} \downarrow f^*} \mathcal{C}/C$ is a pair $ ((C,a), h)$ with $ {(C,a) \in 1_\mathcal{E} \downarrow f^*}$ and $ h : D \rightarrow C$ an object of $ \mathcal{C}/C$, and as the map induced by the bicolimit property, one has $ l_f^*((C,a),h) = a^*(h)$.\\

Let us describe the action of $a^*$ in term of global sections, that is of the functor $ \mathcal{E}[1_\mathcal{E}, a^*]$: this latter sends an object $ h : D \rightarrow C$ of the slice $ \mathcal{C}/C$ to the set of global elements $ b : 1_\mathcal{E} \rightarrow f^*(D)$ such that we have a factorization 
% https://q.uiver.app/?q=WzAsMyxbMCwwLCIxX1xcbWF0aGNhbHtFfSJdLFsxLDAsImZeKihEKSJdLFsxLDEsImZeKihDKSJdLFswLDEsImIiXSxbMCwyLCJhIiwyXSxbMSwyLCJmXiooaCkiXV0=
\[\begin{tikzcd}
	{1_\mathcal{E}} & {f^*(D)} \\
	& {f^*(C)}
	\arrow["b", from=1-1, to=1-2]
	\arrow["a"', from=1-1, to=2-2]
	\arrow["{f^*(h)}", from=1-2, to=2-2]
\end{tikzcd}\]

But from $ l_f^*((C,a),h) = a^*(h)$, we know such a factorization is exactly what a global element $ b : 1 \rightarrow l_f^*((C,a),h) $ codes for. We want to prove that any such global element comes from a unique global element in the pseudocolimit. \\

We must first understand how are global elements in $ \pscolim_{(C,a) \in 1_\mathcal{E} \downarrow f^*} \mathcal{C}/C$, which require a description of its terminal object. The terminal object in the oplax colimit $\oplaxcolim_{(C,a) \in 1_\mathcal{E} \downarrow f^*} \mathcal{C}/C $ is the triple $((1_\mathcal{C},1_\mathcal{E}), 1_{1_\mathcal{C}})$. Now recall that the pseudocolimit in $\Cat$ is obtained as the localization of the oplax colimit at the cartesian morphisms. Hence here we know that for any $ (D,b)$ in $1_\mathcal{E}\downarrow f^*$, the triple $ ((D,b), 1_D)$ is a representant for the terminal object as one always has the commutation 
% https://q.uiver.app/?q=WzAsMyxbMCwwLCIxX1xcbWF0aGNhbHtFfSJdLFsxLDAsImZeKihEKSJdLFsxLDEsImZeKigxX1xcbWF0aGNhbHtDfSkiXSxbMCwxLCJiIl0sWzEsMiwiISJdLFswLDIsIiIsMix7ImxldmVsIjoyLCJzdHlsZSI6eyJoZWFkIjp7Im5hbWUiOiJub25lIn19fV1d
\[\begin{tikzcd}
	{1_\mathcal{E}} & {f^*(D)} \\
	& {f^*(1_\mathcal{C})}
	\arrow["b", from=1-1, to=1-2]
	\arrow["{!}", from=1-2, to=2-2]
	\arrow[Rightarrow, no head, from=1-1, to=2-2]
\end{tikzcd}\]
for $ f^*$ preserves the terminal element and $ f^*(1_\mathcal{C})$ has only one global element, while the vertical  is related by a pullback square
% https://q.uiver.app/?q=WzAsNCxbMSwwLCIxX1xcbWF0aGNhbHtDfSJdLFsxLDEsIjFfXFxtYXRoY2Fse0N9Il0sWzAsMCwiRCJdLFswLDEsIkQiXSxbMCwxLCIiLDAseyJsZXZlbCI6Miwic3R5bGUiOnsiaGVhZCI6eyJuYW1lIjoibm9uZSJ9fX1dLFsyLDMsIiIsMCx7ImxldmVsIjoyLCJzdHlsZSI6eyJoZWFkIjp7Im5hbWUiOiJub25lIn19fV0sWzMsMSwiISIsMl0sWzIsMCwiISJdLFsyLDEsIiIsMix7InN0eWxlIjp7Im5hbWUiOiJjb3JuZXIifX1dXQ==
\[\begin{tikzcd}
	D & {1_\mathcal{C}} \\
	D & {1_\mathcal{C}}
	\arrow[Rightarrow, no head, from=1-2, to=2-2]
	\arrow[Rightarrow, no head, from=1-1, to=2-1]
	\arrow["{!}"', from=2-1, to=2-2]
	\arrow["{!}", from=1-1, to=1-2]
	\arrow["\lrcorner"{anchor=center, pos=0.125}, draw=none, from=1-1, to=2-2]
\end{tikzcd}\]
which codes for a cartesian morphism $ ((D,b), 1_D) \rightarrow ((1_\mathcal{C}, 1_\mathcal{E}), 1_{1_\mathcal{C}})$ identifying them in the pseudocolimit. \\

But then we see that a global element $1 \rightarrow l_f^*((C,a),h) $ defines uniquely a morphism 
% https://q.uiver.app/?q=WzAsMixbMCwwLCIoKEQsYiksIDFfRCkiXSxbMSwwLCIoKEMsYSksaCkiXSxbMCwxLCIoaCwgMV9oKSJdXQ==
\[\begin{tikzcd}
	{((D,b), 1_D)} & {((C,a),h)}
	\arrow["{(h, 1_D)}", from=1-1, to=1-2]
\end{tikzcd}\]
in the oplax colimit, which induces a global element % https://q.uiver.app/?q=WzAsMixbMCwwLCIoKDFfXFxtYXRoY2Fse0N9LDFfXFxtYXRoY2Fse0V9KSwgMV97MV9cXG1hdGhjYWx7Q319KSJdLFsxLDAsIigoQyxhKSxoKSJdLFswLDEsIlsoaCwgMV9oKV1fXFxzaW0iXV0=
\[\begin{tikzcd}
	{((1_\mathcal{C},1_\mathcal{E}), 1_{1_\mathcal{C}})} & {((C,a),h)}
	\arrow["{[(h, 1_D)]_\sim}", from=1-1, to=1-2]
\end{tikzcd}\]
after the localization to the pseudocolimit. This achieves to prove that $ l^*$ is terminally connected.

\end{proof}

Hence, though we cannot yet characterize exactly the right maps, we have an biorthogonal bifactorization in $\GTop$. Moreover, we can use the gliding property of local topoi along terminally connected geometric morphisms to get the following: 

\begin{proposition}
We have a bistable inclusion 
% https://q.uiver.app/?q=WzAsMixbMCwwLCIoXFx0ZXh0dXB7XFx0ZXh0YmZ7Rm9jR1RvcH19XntcXHRleHR1cHtcXHRleHRiZntUQ299fX0pXntcXG9wfSJdLFsxLDAsIlxcR1RvcF57XFxvcH0iXSxbMCwxLCIiLDAseyJzdHlsZSI6eyJ0YWlsIjp7Im5hbWUiOiJob29rIiwic2lkZSI6InRvcCJ9fX1dXQ==
\[\begin{tikzcd}
	{(\textup{\textbf{FocGTop}}^{\textup{\textbf{TCo}}})^{\op}} & {\GTop^{\op}}
	\arrow[hook, from=1-1, to=1-2]
\end{tikzcd}\]
dual to the inclusion of the bicategory of local topoi and terminally connected geometric morphisms between them into the bicategory of Grothendieck topoi.
\end{proposition}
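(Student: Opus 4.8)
The plan is to realize this inclusion as an instance of the first item of \cref{bistable for specified left objects}, taking $\mathcal{C} = \GTop$ with its biterminal object $\Set$, the bifactorization system $(\mathcal{L}, \mathcal{R}) = (\textbf{TCo}, \textbf{ProEt})$, and the specified subclass $\mathcal{L}' = \textbf{Foc}$ of local geometric morphisms. The first task is to certify that $(\textbf{TCo}, \textbf{ProEt})$ is indeed a bifactorization system on $\GTop$: the factorization of an arbitrary geometric morphism, in an essentially unique way, as a terminally connected morphism followed by a pro-etale one is exactly \cref{terminally connected-proetale factorization}, while the bi-orthogonality condition is supplied by the lemma that pro-etale geometric morphisms are right bi-orthogonal to terminally connected ones. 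Together these provide the orthogonal bifactorization structure on $\GTop$ needed as input.

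It then remains to verify the hypotheses placed on $\mathcal{L}' = \textbf{Foc}$. First, $\textbf{Foc} \subseteq \textbf{TCo}$: a local geometric morphism has full and faithful inverse image part, hence is connected, and connectedness entails terminal connectedness as recalled above, so every local morphism is terminally connected. Second, the identity geometric morphisms are local, since $\textup{id}^*$ is full and faithful and $\textup{id}_*$ admits the identity as a further right adjoint; thus $\textbf{Foc}$ contains identities. Third, and most importantly, I must produce the right-cancellation of $\textbf{Foc}$ along $\textbf{TCo}$-maps at the biterminal object: this is precisely the content of the gliding lemma for local topoi, which asserts that in an invertible triangle $!_{\mathcal{E}} \, t \simeq {!_{\mathcal{L}}}$ with $!_{\mathcal{L}}$ local and $t$ terminally connected, the map $!_{\mathcal{E}}$ is again local. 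Instantiated at $C = \Set$, this is exactly the gliding condition consumed in the proof of \cref{bistable for specified left objects} when one checks that the middle object of the inherited $(\mathcal{L}_C, \mathcal{R}_C)$-factorization is again a local topos.

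With these verifications in hand, the first item of \cref{bistable for specified left objects} yields the desired bistable inclusion $(\textbf{FocGTop}^{\textbf{TCo}})^{\op} \hookrightarrow \GTop^{\op}$, where $\textbf{FocGTop}^{\textbf{TCo}}$ is the $(1,2)$-full sub-bicategory of local Grothendieck topoi with terminally connected geometric morphisms between them. Unwinding the construction, the $U$-generic morphisms are the duals of the pro-etale geometric morphisms whose domain is a local topos, in exact analogy with the etale case where they were the duals of the projections $\pi_E$ with $\mathcal{E}/E$ local. The one point demanding care, and the main obstacle, is that—unlike the blanket hypothesis of \cref{bistable for specified left objects} as stated—we do not have right-cancellation of $\textbf{Foc}$ along $\textbf{TCo}$ over an arbitrary base, this being still open as noted above; consequently only the absolute statement over $\Set$ is available, not a relativized version over an arbitrary $C$. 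This is harmless here precisely because the first item invokes gliding solely in the pseudoslice over the biterminal object, so the gliding lemma suffices and no relativized cancellation is required.
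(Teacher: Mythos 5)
Your proposal is correct and is exactly the paper's (implicit) argument: the proposition is obtained by feeding the terminally connected--pro-etale bifactorization of \cref{terminally connected-proetale factorization} (together with the lemma that pro-etale morphisms are right biorthogonal to terminally connected ones) and the gliding lemma for local topoi into the first item of \cref{bistable for specified left objects}, with $\mathcal{L}' = \textbf{Foc}$ inside $\mathcal{L} = \textbf{TCo}$. Your closing observation --- that only gliding over the biterminal object $\Set$ is consumed, since the first item invokes cancellation solely in the pseudoslice over $1$ --- is precisely why the paper can assert this result while explicitly leaving the relativized cancellation of local morphisms along terminally connected ones as an open question.
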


\subsection{Totally connected topoi as specified left objects}

\begin{definition}[Theorem C3.6.16 \cite{elephant}]
	A geometric morphism $f$ is said to be \emph{totally connected} if the following equivalent conditions are fulfilled:\begin{itemize}
		\item $f^*$ has cartesian left adjoint $ f_!$;
		\item $ f$ has a right adjoint in the 2-category of Grothendieck toposes;
		\item $ f$ has a terminal section.
\end{itemize}\end{definition}
Morally, a topos is totally connected if it has a terminal point, dually to local toposes who have an initial point. In particular, the universal codomain morphism $ \partial_1$ of a topos always is totally connected (as the universal domain morphism $ \partial_0$ always is local). Totally connected geometric morphisms are stable under pullback (cf. Lemma C3.6.18 \cite{elephant}). Observe that totally connected geometric morphisms are both connected and locally connected.\\

The following is well known, see \cite{elephant}[C3.6.18]: 

\begin{proposition}
Totally connected geometric morphisms have right cancellation along connected geometric morphisms.
\end{proposition}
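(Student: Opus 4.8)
The plan is to verify the first of the three equivalent characterizations of total connectedness recalled above, namely that the inverse image functor of $f$ admits a cartesian left adjoint, and to build this adjoint directly out of the data already carried by $a$ and $l$. Adopting the notation of the right-cancellation definition, I write $l : A \rightarrow B$ for the connected morphism, $a : A \rightarrow C$ for the totally connected one, and $f : B \rightarrow C$ for the morphism whose total connectedness is to be established, together with an invertible 2-cell $\alpha : fl \simeq a$. Note that total connectedness is indeed a subclass of connectedness, as observed earlier, so this is a legitimate instance of right cancellation along connected maps.

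First I would pass to inverse image functors: the equivalence $\alpha$ yields a natural isomorphism $a^* \simeq l^* f^*$, since inverse images compose contravariantly. Because $l$ is connected, $l^*$ is fully faithful, so the unit of the adjunction $l^* \dashv l_*$ is invertible, giving $l_* l^* \simeq \mathrm{id}_B$. Whiskering with $f^*$ then produces the key natural isomorphism
\[ f^* \simeq l_* l^* f^* \simeq l_* a^* . \]

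Next I would extract the left adjoint. Since $a$ is totally connected, $a^*$ carries a cartesian left adjoint $a_!$, and $l^*$ is left adjoint to $l_*$; composing these two adjunctions exhibits $a_! l^*$ as a left adjoint to the composite $l_* a^*$, and hence, transporting along the isomorphism displayed above, as a left adjoint $f_! := a_! l^*$ to $f^*$. It then remains to observe that $f_!$ is cartesian: $a_!$ preserves finite limits by hypothesis, $l^*$ preserves them because it is the inverse image part of a geometric morphism, and a composite of finite-limit-preserving functors preserves finite limits. This establishes the cartesian-left-adjoint criterion, whence $f$ is totally connected.

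The only delicate point, and the step I expect to require the most care, is checking that $f_! = a_! l^*$ is not merely an ordinary left adjoint but respects the indexed structure demanded by the full definition of total connectedness over the base, so that the passage to the other two characterizations (a right adjoint in the 2-category of toposes, or a terminal section) genuinely applies. This amounts to noting that $a_!$ is $C$-indexed and $l^*$ is indexed, so their composite is an indexed functor and $f_! \dashv f^*$ is an indexed adjunction; the transport of the adjunction along the isomorphism $f^* \simeq l_* a^*$ is then routine bookkeeping of units and counits.
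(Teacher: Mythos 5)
Your proof is correct. Note that the paper itself gives no argument for this proposition --- it is stated as well known, with a citation to the Elephant (C3.6.18) --- so there is no in-paper proof to compare against; your argument is essentially the standard one and fits the paper's conventions. The reduction $f^* \simeq l_* l^* f^* \simeq l_* (fl)^* \simeq l_* a^*$ (using that connectedness of $l$ makes the unit of $l^* \dashv l_*$ invertible) is exactly the right move, and composing the adjunctions $a_! \dashv a^*$ and $l^* \dashv l_*$ then exhibits $f_! := a_! l^*$ as a left adjoint of $f^*$, cartesian because it is a composite of cartesian functors; since the paper records that totally connected morphisms are in particular connected, the configuration is a legitimate instance of right cancellation along connected maps. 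One remark: your closing paragraph worries more than it needs to. The definition the paper works with (quoted from Theorem C3.6.16 of the Elephant) already asserts the equivalence of ``$f^*$ has a cartesian left adjoint'' with the other two characterizations, with no indexedness clause attached to that left adjoint; so once you have produced a cartesian $f_! \dashv f^*$ the proof is complete, and the transport of indexed structure you describe --- while true, and provable by the bookkeeping you indicate, since $a_!$ is indexed by local connectedness of $a$ and inverse image functors are canonically indexed --- is not required for the statement being proved.
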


\begin{proposition}
We have a bistable inclusion 
\[\begin{tikzcd}
	{(\textup{\textbf{LocoTocGTop}}^{\textup{\textbf{Co}}})^{\op}} & {(\textup{\textbf{Loco}}\GTop)^{\op}}
	\arrow[hook, from=1-1, to=1-2]
\end{tikzcd}\]
of totally connected topoi and connected, locally connected geometric morphisms between them into the bicategory of locally connected Grothendieck topoi and locally connected geometric morphisms. 
\end{proposition}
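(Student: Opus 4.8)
The plan is to apply \cref{bistable for specified left objects} to the connected-etale bifactorization system $(\textbf{Co},\textbf{Et})$ on $\textbf{Loco}\GTop$, taking as the specified subclass $\mathcal{L}'$ the class $\textbf{Toc}$ of totally connected geometric morphisms. First I would record that this is a legitimate instance of the theorem. The bifactorization system $(\textbf{Co},\textbf{Et})$ exists on $\textbf{Loco}\GTop$ by the connected-etale factorization of locally connected geometric morphisms recalled above, and $\Set$ remains biterminal once we restrict both objects and morphisms to locally connected ones: its terminal endomorphism is the identity, hence locally connected, and any locally connected topos has by definition a locally connected terminal map. Thus the first item of \cref{bistable for specified left objects} is available, provided we verify its two hypotheses for $\mathcal{L}'=\textbf{Toc}$.

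Next I would check the inclusion $\textbf{Toc}\subseteq\textbf{Co}$ together with the containment of identities. As noted in the preceding discussion, totally connected geometric morphisms are both connected and locally connected, so they indeed form a subclass of the left class $\textbf{Co}$; and every equivalence is totally connected, since a pseudoinverse furnishes both a right adjoint in $\GTop$ and a terminal section, so in particular $\textbf{Toc}$ contains all identities. The one genuine input is the right cancellation hypothesis, for which I would invoke the proposition recalled above (Elephant C3.6.18) stating that totally connected geometric morphisms have right cancellation along connected geometric morphisms. Since every $\textbf{Co}$-map is in particular connected, right cancellation along all connected maps specializes to right cancellation along $\textbf{Co}$-maps; concretely, in an invertible triangle $a\simeq f\, l$ with $a$ totally connected and $l$ connected locally connected, the morphism $f$ is again totally connected, which is exactly the condition required by the definition of right cancellation along $\mathcal{L}$-maps.

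With both hypotheses in hand, \cref{bistable for specified left objects} yields the bistable inclusion directly. The $\mathcal{L}'$-objects over the biterminal $\Set$ are precisely the topoi whose terminal map is totally connected, that is, the totally connected (necessarily locally connected) topoi, and the $\mathcal{L}$-maps between them are the connected locally connected geometric morphisms; this is exactly the sub-bicategory $\textbf{LocoTocGTop}^{\textbf{Co}}$, whose opposite therefore includes bistably into $(\textbf{Loco}\GTop)^{\op}$, with generic morphisms dual to the etale maps $\pi_E:\mathcal{E}/E\to\mathcal{E}$ whose slice $\mathcal{E}/E$ is totally connected. I do not expect a substantive obstacle here: the statement is a formal consequence of \cref{bistable for specified left objects}, and its only nontrivial ingredient, the right cancellation of totally connected maps along connected ones, is supplied externally. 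The sole points demanding care are bookkeeping ones, namely confirming that the orientation of the cancellation matches the definition of right cancellation along $\mathcal{L}$-maps, and that restricting to locally connected morphisms leaves $\Set$ biterminal; both are immediate.
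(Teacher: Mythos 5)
Your proof is correct and is essentially the paper's own (implicit) argument: the proposition is stated as a direct application of \cref{bistable for specified left objects} to the connected-etale system $(\textbf{Co},\textbf{Et})$ on $\textbf{Loco}\GTop$ with $\mathcal{L}'=\textbf{Toc}$, the only substantive input being the right cancellation of totally connected morphisms along connected ones (Elephant C3.6.18), which the paper records in the immediately preceding proposition. Your additional bookkeeping — that $\textbf{Toc}\subseteq\textbf{Co}$, that identities are totally connected, that $\Set$ stays biterminal after restricting to locally connected morphisms, and that cancellation along all connected maps specializes to cancellation along $\textbf{Co}$-maps — is exactly what the paper leaves tacit.
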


\begin{remark}
For now it is not clear whether totally connected geometric morphisms, though being themselves terminally connected, have right cancellation along terminally connected geometric morphisms.
\end{remark}

\printbibliography

\end{document}